\documentclass[10pt,a4paper,oneside,reqno]{amsart}
\setcounter{tocdepth}{5}
\setcounter{secnumdepth}{5}
\usepackage{setspace}
\usepackage[totalwidth=14.6cm,totalheight=23cm]{geometry}
\usepackage[T1]{fontenc}
\usepackage[autostyle]{csquotes}
   \MakeAutoQuote{‘}{’}
   \MakeOuterQuote{"}
\usepackage{graphicx}
\usepackage{caption}
\usepackage{epsfig}
\usepackage{amsmath,amssymb,amscd,amsthm}
\usepackage{subfigure}
\usepackage{latexsym}
\usepackage{graphics}
\usepackage{colortbl} 
\usepackage{color}
\usepackage{ae}
\usepackage{enumitem}
\usepackage[all]{xy}
\usepackage{scalerel}
\usepackage{tikz}
\usepackage{cancel}
\usepackage{bbm,amsbsy}
\usepackage{mathrsfs}  
\usepackage[colorlinks=true,pagebackref=true]{hyperref}
\usepackage[normalem]{ulem}
\usepackage{nicefrac}
\usepackage{xfrac}
\usepackage{faktor}
\usepackage{tikz-cd} 
\usepackage{systeme}
\usepackage{nomencl}
\usepackage[normalem]{ulem}
\setcounter{tocdepth}{2}

\makenomenclature

\linespread{1.10}

\makeatletter
\newtheorem*{rep@theorem}{\rep@title}
\newcommand{\newreptheorem}[2]{%
\newenvironment{rep#1}[1]{%
 \def\rep@title{#2 \ref{##1}}%
 \begin{rep@theorem}}%
 {\end{rep@theorem}}}
\makeatother

\makeatletter
\newtheorem*{rep@cor}{\rep@title}
\newcommand{\newrepcor}[2]{%
\newenvironment{rep#1}[1]{%
 \def\rep@title{#2 \ref{##1}}%
 \begin{rep@cor}}%
 {\end{rep@cor}}}
\makeatother

\makeatletter
\newtheorem*{rep@prop}{\rep@title}
\newcommand{\newrepprop}[2]{%
\newenvironment{rep#1}[1]{%
 \def\rep@title{#2 \ref{##1}}%
 \begin{rep@prop}}%
 {\end{rep@prop}}}
\makeatother

\newtheorem{cor}{Corollary}[section]

\newtheorem{theorem}[cor]{Theorem}

\newtheorem{prop}[cor]{Proposition}

 \allowdisplaybreaks
\newrepcor{cor}{Corollary}
\newreptheorem{theorem}{Theorem}
\newrepprop{prop}{Proposition}

\newtheorem{lemma}[cor]{Lemma}

\theoremstyle{definition}
\newtheorem{defi}[cor]{Definition}
\theoremstyle{remark}
\newtheorem{remark}[cor]{Remark}
\newtheorem*{remark*}{Remark}

\newtheorem*{notation*}{Notation}

\newlist{steps}{enumerate}{1}
\setlist[steps, 1]{itemsep=8pt,leftmargin=0cm,itemindent=.5cm,labelwidth=\itemindent,labelsep=0cm,align=left,label = \textbf{\emph{Step \arabic*}:\,}}

\makeatletter
\newcommand{\myitem}[1]{%
\item[#1]\protected@edef\@currentlabel{#1}%
}
\makeatother

\newcommand{\bb}{\mathrm{b}}
\newcommand{\minko}{\mathbb{R}^{1,2}}

\newcommand{\ads}{\mathbb{A}\mathrm{d}\mathbb{S}^3}
\newcommand{\HP}{\mathbb{HP}^3}
\newcommand{\isom}{\mathrm{Isom}_0}

\newcommand{\I}{\mathrm{I}}
\newcommand{\II}{\mathrm{I}\hspace{-0.04cm}\mathrm{I}}

\newcommand{\grad}{\mathrm{grad}^{\mathbb{H}^2}}
\newcommand{\hess}{\mathrm{Hess}^{\mathbb{H}^2}}
\newcommand{\J}{\mathrm{J}}

\newcommand{\B}{\mathrm{B}}

\newcommand{\inner}[1] {\langle #1 \rangle}
\newcommand{\inners}{\langle \cdot, \cdot \rangle}

\begin{document}\raggedbottom

\title[Harmonic Lagrangian vector field]{Mean surfaces in Half-Pipe space and infinitesimal Teichmüller theory}

\author[Farid Diaf]{Farid Diaf}
\address{Farid Diaf: Univ. Grenoble Alpes, CNRS, IF, 38000 Grenoble, France.} \email{farid.diaf@univ-grenoble-alpes.fr}
\thanks{}
\maketitle
\begin{abstract}
We study a correspondence between smooth spacelike surfaces in Half-Pipe space $\HP$ and divergence-free vector fields on the hyperbolic plane \(\mathbb{H}^2\). We show that a particular case involves harmonic Lagrangian vector fields on \(\mathbb{H}^2\), which are related to mean surfaces in \(\HP\). Consequently, we prove that the infinitesimal Douady-Earle extension is a harmonic Lagrangian vector field that corresponds to a mean surface in \(\HP\) with prescribed boundary data at infinity.

We establish both existence and, under certain assumptions, uniqueness results for harmonic Lagrangian extension of a vector field on the circle. Finally, we characterize the Zygmund and little Zygmund conditions and provide quantitative bounds in terms of the Half-Pipe width.

\end{abstract}

\tableofcontents

\section{Introduction}
The goal of this paper is threefold:
\begin{enumerate}
    \item Study a correspondence between smooth spacelike surfaces in three-dimensional \textit{Half-pipe} space $\HP$ and divergence-free vector fields on the hyperbolic plane $\mathbb{H}^2$. This can be seen as an infinitesimal version of a well-known correspondence between smooth spacelike surfaces in three-dimensional \textit{Anti-de Sitter} space and area-preserving diffeomorphisms of the hyperbolic plane $\mathbb{H}^2$.
    \item Study \textit{harmonic Lagrangian} vector fields on $\mathbb{H}^2$ by giving several characterizations of them. In particular, we show that under the above correspondence, they correspond to \textit{mean surfaces} in $\HP$. Furthermore, we will see that the so-called \textit{infinitesimal Douady-Earle extension} is a particular case of harmonic Lagrangian vector fields. Hence, we obtain an interpretation of the infinitesimal Douady-Earle extension in terms of three-dimensional geometry.
    \item Finally, we show that any continuous vector field on the circle can be extended to a harmonic Lagrangian vector field on the hyperbolic plane. Moreover, such an extension is unique if the associated mean surface in $\HP$ has bounded principal curvature. In this way, we relate the regularity of harmonic Lagrangian vector fields with Half-pipe geometry.
\end{enumerate}

\subsection{Motivation from AdS geometry and conformally natural extension}
Following the groundbreaking ideas of Mess \cite{Mess}, the relationship between Lorentzian space forms and two-dimensional hyperbolic geometry has become a powerful tool in Teichmüller theory. Several contributions have been made on this subject; see, for example, \cite{Note_on_paper_mess,flatspacetimes_bonsante,Barbot_flatspacetime,canorot,BS_flat_conical}.

Mess emphasized the significance of studying Anti-de Sitter geometry in dimension $3$, namely the Lorentzian geometry of constant curvature $-1$. Anti-de Sitter space $\ads$ can be identified with the Lie group \(\text{Isom}_0(\mathbb{H}^2)\) of orientation-preserving isometries of the hyperbolic plane \(\mathbb{H}^2\), endowed with its bi-invariant metric induced by its Killing form. The study of $\ads$ is often motivated by its similarities to three-dimensional hyperbolic geometry and its connections to the Teichmüller theory of hyperbolic surfaces.

A main idea of Mess is the \textit{Gauss map construction}, which associates to a spacelike surface \(S\) in Anti-de Sitter space, a map \(\Phi\) between domains of \(\mathbb{H}^2\). Mess then observed that the connected component of the boundary of the convex hull in Anti-de Sitter space provides an earthquake map of \(\mathbb{H}^2\), leading to a new proof of Thurston's Earthquake Theorem \cite{Thurston} (the construction works even though the convex hull boundary is not a smooth surface).

Since Mess's work, interest in three-dimensional Anti-de Sitter space has grown, and the Gauss map construction has been used to provide several interesting extensions of circle homeomorphisms to the hyperbolic plane; see \cite{Maximalsurface, Areapreserving, SEP19}. For instance, Bonsante and Schlenker used the Gauss map construction to prove that any quasisymmetric homeomorphism \(\phi:\mathbb{S}^1 \to \mathbb{S}^1\) of the circle is the extension of a unique \textit{minimal Lagrangian diffeomorphism} \(\Phi:\mathbb{H}^2 \to \mathbb{H}^2\). These are diffeomorphisms of \(\mathbb{H}^2\) for which the graph is a minimal Lagrangian surface in \(\mathbb{H}^2 \times \mathbb{H}^2\). The crucial observation is that minimal Lagrangian maps are precisely those associated, via the Gauss map construction, to \textit{maximal surfaces} in $\ads$ (i.e., smooth surfaces with zero mean curvature).

Minimal Lagrangian diffeomorphisms are a particular class of \textit{conformally natural} extensions. Specifically, if \(A\) and \(B\) are isometries of \(\mathbb{H}^2\) and \(\phi\) is a quasisymmetric homeomorphism with a minimal Lagrangian extension \(\Phi\), then \(A \circ \Phi \circ B^{-1}\) is the minimal Lagrangian diffeomorphism that extends \(A \circ \phi \circ B^{-1}\). Nevertheless, in general, minimal Lagrangian diffeomorphisms are not stable under composition. In fact, a general theorem by Epstein and Markovic \cite{Stopdreamingthm} states that it is not possible to extend, in a homomorphic fashion, each quasisymmetric homeomorphism of the circle to a quasiconformal homeomorphism of \(\mathbb{H}^2\). However, the infinitesimal situation is completely different. Indeed, denote by $\Gamma(\mathbb{S}^1)$ and $\Gamma(\mathbb{H}^2)$ the spaces of continuous vector fields on $\mathbb{S}^1$ and $\mathbb{H}^2$, respectively. We say that a linear map $L: \Gamma(\mathbb{S}^1) \to \Gamma(\mathbb{H}^2)$ is \textit{conformally natural} if 
$$L(A_* X) = A_* L(X),$$
for all vector fields $X$ on the circle and for all isometries $A$ of the hyperbolic plane. The infinitesimal Douady-Earle extension $L_0: \Gamma(\mathbb{S}^1) \to \Gamma(\mathbb{H}^2)$ is an example of such a linear map. According to a theorem in \cite{Uniqueness_of_operator_L}, this is the unique (up to a constant) continuous linear operator that is conformally natural.

\subsection{Spacelike surfaces in \(\HP\) and vector fields of \(\mathbb{H}^2\)}

The first goal of this paper is to give an infinitesimal version of the Anti-de Sitter Gauss map construction, now between smooth spacelike surfaces in the \textit{Half-Pipe space} \(\HP\) on the one hand, and vector fields on \(\mathbb{H}^2\) on the other hand. Such a construction has been investigated by the author in \cite{diaf2023Infearth} for convex hulls in \(\HP\) (which are not smooth surfaces), thus yielding infinitesimal earthquakes of \(\mathbb{H}^2\), similar to how convex hulls in Anti-de Sitter space lead to earthquake maps on \(\mathbb{H}^2\).

The Half-Pipe space, also known as the \textit{Co-Minkowski space}, is the space of all spacelike planes in Minkowski space \(\mathbb{R}^{2,1}\). Recall that Minkowski space is the flat model of Lorentzian geometry, which can be described as the three-dimensional vector space \(\mathbb{R}^3\) endowed with a bilinear form of signature \((-,+,+)\). The Half-Pipe space can be identified as the geometry of the infinite cylinder \(\mathbb{H}^2 \times \mathbb{R}\) with respect to projective transformations induced from isometries of \(\mathbb{R}^{2,1}\). Indeed, for each pair \((\eta,t) \in \mathbb{H}^2 \times \mathbb{R}\), one can associate a spacelike plane in \(\mathbb{R}^{2,1}\) for which the normal is given by \(\eta\), and the oriented distance from the origin through the normal direction is \(t\).

We now describe what we could call the \textit{Gauss map construction} in Half-Pipe space. A plane in the projective model of Half-Pipe space \(\mathbb{H}^2 \times \mathbb{R}\) is said to be \textit{spacelike} if it is not vertical. It turns out that there is a projective duality between spacelike planes in Half-Pipe space and points in Minkowski space. This correspondence can be viewed as the infinitesimal version of the projective duality between points and spacelike planes in \(\ads\). The key idea in the Gauss map construction in \(\HP\) is that one of the models of Minkowski space is the Lie algebra \(\mathfrak{isom}(\mathbb{H}^2)\) of the Lie group \(\text{Isom}_0(\mathbb{H}^2)\), where each element of \(\mathfrak{isom}(\mathbb{H}^2)\) corresponds to a Killing vector field on \(\mathbb{H}^2\). Consequently, we establish the following homeomorphism:

\begin{equation}\label{introduction_duality}
\mathcal{K}: \{\text{Spacelike planes in } \HP\} \cong \{\text{Killing vector fields in } \mathbb{H}^2\}.
\end{equation}

Based on the identification \eqref{introduction_duality}, we can associate to each properly embedded spacelike surface \(S \subset \HP\), which is the graph of some smooth function \(u: \mathbb{H}^2 \to \mathbb{R}\), a vector field \(V_S\) on \(\mathbb{H}^2\). Specifically, take \(p \in \mathbb{H}^2\) and consider \(\mathrm{P}_p\), the tangent plane at \((p,u(p))\) of \(S\). This plane is spacelike, and therefore, by duality \eqref{introduction_duality}, we define the vector field associated with \(S\) as: 
\begin{equation}\label{2}
    V_S(p):=\mathcal{K}(\mathrm{P}_p)(p).
\end{equation}

The first result of this paper characterizes the vector field \(V_S\) in terms of the geometry of the tangent bundle of \(\mathbb{H}^2\). It turns out that \(\mathrm{T}\mathbb{H}^2\) is endowed with a natural \textit{pseudo-Kähler} structure, namely a triple \((\mathbb{G}, \mathbb{J}, \Omega)\) such that \(\mathbb{G}\) is a pseudo-Riemannian metric of signature \((2,2)\), \(\mathbb{J}\) is an integrable almost complex structure, and \(\Omega = \mathbb{G}(\mathbb{J} \cdot, \cdot)\) is a symplectic form (a non-degenerate closed 2-form). We will come back to this in detail in Section \ref{The_geometry_of_the_tangent bundle}.
\begin{theorem}\label{intro_divergence}
Let \(V: \mathbb{H}^2 \to \mathrm{T}\mathbb{H}^2\) be a smooth vector field on \(\mathbb{H}^2\). The following are equivalent:
\begin{enumerate}
\item There exists a smooth function \(u: \mathbb{H}^2 \to \mathbb{R}\) such that \(V\) is the vector field associated with the surface \(S = \mathrm{gr}(u) \subset \HP\), that is \(V = V_S\).
\item \(V(\mathbb{H}^2)\) is a \textit{Lagrangian} surface in \(\mathrm{T}\mathbb{H}^2\) with respect to the symplectic form \(\Omega\). (See Theorem \ref{canonical_pseudo}).
\item \(V\) is a divergence-free vector field on \(\mathbb{H}^2\).
\end{enumerate}
\end{theorem}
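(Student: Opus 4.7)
My plan is to derive an explicit formula for $V_S(p)$ in terms of the $1$-jet of $u$ at $p$ by unpacking the projective duality $\mathcal{K}$ of \eqref{introduction_duality}, and then to use this formula to establish (1)$\Leftrightarrow$(3) directly and (1)$\Leftrightarrow$(2) via the pseudo-K\"ahler structure of Theorem \ref{canonical_pseudo}.

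For the formula, I will identify $\mathcal{K}(\mathrm{P}_p) \in \mathfrak{isom}(\mathbb{H}^2) \cong \mathbb{R}^{2,1}$ by viewing each $(\eta, t) \in \mathbb{H}^2 \times \mathbb{R}$ as the spacelike affine plane $\{\inner{\cdot, \eta} = -t\} \subset \mathbb{R}^{2,1}$: the Minkowski point dual to $\mathrm{P}_p$ is the common intersection of the one-parameter family of dual planes foliating $\mathrm{P}_p$. Solving this incidence relation at $(p, u(p))$ with slope $du_p$, using the inclusions $p \in \mathbb{H}^2 \subset \mathbb{R}^{2,1}$ and $\grad u(p) \in T_p\mathbb{H}^2 \subset \mathbb{R}^{2,1}$, should yield
\begin{equation*}
q \;=\; u(p)\, p \;-\; \grad u(p).
\end{equation*}
Since the Killing field associated to $q$ acts on $\mathbb{H}^2$ by the Minkowski cross product $\eta \mapsto q \times \eta$, evaluating at $\eta = p$ annihilates the $p$-component of $q$ and produces, up to sign,
\begin{equation*}
V_S(p) \;=\; \J\, \grad u(p),
\end{equation*}
where $\J$ is the complex structure on $\mathbb{H}^2$. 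The implication (1)$\Rightarrow$(3) then follows from the standard identity $\mathrm{div}(\J\,\grad u) = 0$ valid on any two-dimensional K\"ahler manifold; conversely (3)$\Rightarrow$(1) comes from the fact that on the simply connected surface $\mathbb{H}^2$ any divergence-free vector field is globally Hamiltonian, so a primitive $u$ of the closed $1$-form $\inner{\J V, \cdot}$ satisfies $V = \J\, \grad u = V_S$ for $S := \mathrm{gr}(u)$.

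For the equivalence (1)$\Leftrightarrow$(2), I will appeal to the description of the pseudo-K\"ahler structure $(\mathbb{G}, \mathbb{J}, \Omega)$ on $\mathrm{T}\mathbb{H}^2$ provided by Theorem \ref{canonical_pseudo}. Using the Levi-Civita splitting of $\mathrm{T}(\mathrm{T}\mathbb{H}^2)$ into horizontal and vertical bundles, I expect to establish the pullback formula $V^*\Omega = \mathrm{div}(V)\,\omega_{\mathbb{H}^2}$ for every section $V: \mathbb{H}^2 \to \mathrm{T}\mathbb{H}^2$, where $\omega_{\mathbb{H}^2}$ is the hyperbolic area form. Since $V(\mathbb{H}^2)$ is a half-dimensional submanifold of the symplectic $4$-manifold $\mathrm{T}\mathbb{H}^2$, the Lagrangian condition amounts to $V^*\Omega \equiv 0$ and hence to divergence-freeness. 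The main technical step I expect to wrestle with is exactly this pullback computation: it requires carefully unpacking the form of $\Omega$ from Theorem \ref{canonical_pseudo} in horizontal/vertical coordinates, but once set up the identity should come out by a direct check modulo sign conventions.
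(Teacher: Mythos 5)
Your proposal is correct and follows essentially the same route as the paper: the explicit formula $V_S=\J\grad u$ is the paper's Lemma \ref{sigma_lemma} and Corollary \ref{cor_formule_XS}, and your pullback identity $V^*\Omega=\mathrm{div}(V)\,\omega^{\mathbb{H}^2}$ is exactly what the paper establishes (via the dual $1$-form $\alpha=g^{\mathbb{H}^2}(\J V,\cdot)$ and the splitting $\mathrm{d}V(X)=(X,\nabla_X V)$ of Proposition \ref{dV(X)}), with the same simple-connectedness argument giving the primitive $u$.
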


Theorem \ref{intro_divergence} is a local result, meaning that one may replace $\mathbb{H}^2$ with any simply connected open set of $\mathbb{H}^2$ and the result still holds. It is worth noting that in the case of Anti-de Sitter geometry, an important feature of the Gauss map construction is the fact that the space of timelike geodesics of \(\ads\) is naturally identified with \(\mathbb{H}^2 \times \mathbb{H}^2\). In our case, the tangent bundle \(\mathrm{T}\mathbb{H}^2\) can be interpreted as the space of oriented timelike geodesics in Minkowski space \(\mathbb{R}^{2,1}\). Since the seminal paper of Hitchin \cite{Monopole_Hitchin}, who observed the existence of a natural complex structure on the space of oriented geodesics in Euclidean three-space, there has been a growing interest in the geometry of the space of geodesics of certain Riemannian and pseudo-Riemannian manifolds (see \cite{An_indi_Kaehler,Anciaux_space_of_geodesic,Salvai_Godoy_smooth_foliation,ML_in_tangent_bundle,Bon_Emam,Emam_Seppi1}).

\subsection{Harmonic Lagrangian extension}

The second goal of this paper is to use the Gauss map construction in Half-Pipe space to obtain interesting extensions of vector fields on the circle to the hyperbolic plane, similarly to how the Gauss map construction in Anti-de Sitter space provides extensions of circle homeomorphisms. Let us briefly explain how such a construction can be used to extend a vector field on the circle. Given that the tangent bundle of \(\mathbb{S}^1\) is trivial, each vector field \(X\) on the circle can be represented as a function \(\phi_X: \mathbb{S}^1 \to \mathbb{R}\), called the \textit{support function}, where \(X(z) = iz\phi_X(z)\) for every \(z \in \mathbb{S}^1\).

Identifying a vector field \(X\) with its support function \(\phi_X: \mathbb{S}^1 \to \mathbb{R}\), we can view the graph of \(\phi_X\) in \(\mathbb{S}^1 \times \mathbb{R} \cong \partial \mathbb{H}^2 \times \mathbb{R}\) as a curve on the boundary at infinity of \(\HP\). The key point is that certain surfaces in \(\HP\) that admit these curves as their "boundary at infinity" give rise to vector fields on the hyperbolic plane that extend \(X\).

In this paper, we are interested in \textit{harmonic Lagrangian} vector fields on \(\mathbb{H}^2\). These can be seen as the infinitesimal analogue of minimal Lagrangian maps of \(\mathbb{H}^2\). That is, if \(\Phi_t: \mathbb{H}^2 \to \mathbb{H}^2\) is a one-parameter family of minimal Lagrangian maps such that \(\Phi_t = \mathrm{Id}\), then \(V = \frac{d}{dt}\big\lvert_{t=0} \Phi_t\) is a harmonic Lagrangian vector field. To define this class of vector fields intrinsically, we use the geometry of the tangent bundle \(\mathrm{T}\mathbb{H}^2\), similarly to how minimal Lagrangian maps correspond to minimal Lagrangian surfaces in \(\mathbb{H}^2 \times \mathbb{H}^2\).

\begin{defi}\label{intro_def1}
We say that a vector field \(V: \mathbb{H}^2 \to \mathrm{T}\mathbb{H}^2\) is \textit{harmonic Lagrangian} if it satisfies the following conditions:
\begin{enumerate}
\item \(V: (\mathbb{H}^2, g^{\mathbb{H}^2}) \to (\mathrm{T}\mathbb{H}^2, \mathbb{G})\) is a harmonic map.
\item \(V(\mathbb{H}^2)\) is a Lagrangian surface in \(\mathrm{T}\mathbb{H}^2\) with respect to the symplectic structure \(\Omega\).
\end{enumerate}
\end{defi}
In Definition \ref{intro_def1}, \(g^{\mathbb{H}^2}\) denotes the hyperbolic metric on \(\mathbb{H}^2\) and \(\mathbb{G}\) and \(\Omega\) are defined in Theorem \ref{The_geometry_of_the_tangent bundle}. The harmonicity condition is defined as the critical points of an energy functional among compactly supported variations, and an equivalent analytic condition is given in Definition \ref{harmonic_def}. The next result gives characterizations of harmonic Lagrangian vector fields.

\begin{theorem}\label{intro_chara}
Let \(V: \mathbb{H}^2 \to \mathrm{T}\mathbb{H}^2\) be a smooth vector field on \(\mathbb{H}^2\). The following are equivalent:
\begin{enumerate}
\item There exists a smooth function \(u: \mathbb{H}^2 \to \mathbb{R}\) such that \(S\) is a mean surface in \(\HP\) and \(V\) is the vector field associated with the surface \(S\), that is \(V = V_S\).
\item \(V\) is harmonic Lagrangian.
\item The unique self-adjoint \((1, 1)\)-tensor \(\bb: \mathrm{T}\mathbb{H}^2 \to \mathrm{T}\mathbb{H}^2\) such that \(\mathcal{L}_V g^{\mathbb{H}^2} = g^{\mathbb{H}^2}(\bb \cdot, \cdot)\) satisfies the conditions:
\begin{equation}\label{intro_eq_traceless}
\mathrm{tr}(\bb) = 0 \quad \text{and} \quad \mathrm{d}^{\nabla} \bb = 0,
\end{equation}
where \(\nabla\) is the Levi-Civita connection of \(\mathbb{H}^2\).
\end{enumerate}
\end{theorem}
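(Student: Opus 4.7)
The strategy is to treat the three conditions as parallel refinements of the Lagrangian/divergence-free structure provided by Theorem \ref{intro_divergence}. Since $\mathrm{tr}(\mathcal{L}_V g^{\mathbb{H}^2}) = 2\,\mathrm{div}(V)$, the equation $\mathrm{tr}(\bb) = 0$ in (3) is equivalent to $V$ being divergence-free; condition (2) assumes the Lagrangian property directly; and condition (1) yields it through Theorem \ref{intro_divergence}. What must be shown is that, under this common Lagrangian structure, the mean-surface condition in (1), harmonicity in (2), and the Codazzi-type condition $\mathrm{d}^{\nabla} \bb = 0$ in (3) coincide. I would therefore organize the argument as the chain $(1) \Leftrightarrow (3) \Leftrightarrow (2)$.

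For $(1) \Leftrightarrow (3)$, the first step is to derive an explicit formula for $\bb$ when $V = V_S$ with $S = \mathrm{gr}(u)$. Using the duality \eqref{introduction_duality} together with the description \eqref{2}, a direct computation should express $V_S$ as a combination of $\J \grad u$ and a Killing contribution depending on $u(p)$. Taking the Lie derivative of $g^{\mathbb{H}^2}$ then identifies $\bb$, up to a normalization, with the shape operator of the spacelike graph $S \subset \HP$. Under this identification, $\mathrm{tr}(\bb) = 0$ is precisely the equation characterizing a mean surface in $\HP$, while $\mathrm{d}^{\nabla} \bb = 0$ reduces to the Codazzi equation for spacelike graphs in $\HP$, which follows from the Gauss--Codazzi formalism in Half-Pipe geometry. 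Hence (1) and (3) agree.

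For $(2) \Leftrightarrow (3)$, the central computation is that of the tension field of $V: (\mathbb{H}^2, g^{\mathbb{H}^2}) \to (\mathrm{T}\mathbb{H}^2, \mathbb{G})$ expressed in terms of $\bb$. Using the horizontal/vertical splitting of $\mathrm{T}(\mathrm{T}\mathbb{H}^2)$ induced by the Levi-Civita connection of $g^{\mathbb{H}^2}$ together with the pseudo-Kähler structure on $\mathrm{T}\mathbb{H}^2$, one can write $\mathrm{d}V$ as the horizontal identity plus a vertical piece $\nabla V$. The Lagrangian hypothesis on $V(\mathbb{H}^2)$ ensures that this vertical piece is fully encoded by the symmetric tensor $\bb$ via $\mathcal{L}_V g^{\mathbb{H}^2} = g^{\mathbb{H}^2}(\bb\cdot,\cdot)$. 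Taking the trace of the covariant derivative of $\mathrm{d}V$, the tension field should split into a horizontal component proportional to $\mathrm{d}\,\mathrm{tr}(\bb) - \mathrm{div}\,\bb$ and a vertical component combining $\mathrm{tr}(\bb)$ with $\mathrm{d}^{\nabla} \bb$. The vanishing of the tension field is therefore equivalent to the joint vanishing $\mathrm{tr}(\bb) = 0$ and $\mathrm{d}^{\nabla} \bb = 0$.

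The main obstacle is the tension-field computation in $(2) \Leftrightarrow (3)$: unwinding the pseudo-Kähler geometry of $\mathrm{T}\mathbb{H}^2$ and inserting the Lagrangian assumption at the right moment to convert the second-order data $\nabla(\mathrm{d}V)$ into an intrinsic expression in $\bb$ and its covariant derivatives. Once this structural translation is in place, both equivalences reduce to a trace computation and to the Gauss--Codazzi identity for graphs in $\HP$, and the cycle $(1) \Leftrightarrow (3) \Leftrightarrow (2)$ closes.
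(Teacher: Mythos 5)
Your overall architecture is reasonable, but the pivotal identification in your $(1)\Leftrightarrow(3)$ step is wrong, and the error propagates. For $V=V_S=\J\grad(u)$ the tensor $\bb$ defined by $\mathcal{L}_Vg^{\mathbb{H}^2}=g^{\mathbb{H}^2}(\bb\cdot,\cdot)$ is \emph{not} the shape operator up to normalization: a direct computation (the paper's Lemma \ref{Lie_derivative_of_Ju}) gives $\bb=2\J\B_0$, where $\B_0=\B-\frac{\mathrm{tr}(\B)}{2}\mathbbm{1}$ is the \emph{traceless part} of $\B=\hess(u)-u\mathbbm{1}$, twisted by the complex structure. Consequently $\mathrm{tr}(\bb)=0$ holds automatically for the graph of \emph{any} smooth $u$ --- it only encodes that $V$ is divergence-free (equivalently Lagrangian) --- and it is not ``the equation characterizing a mean surface.'' Conversely, $\mathrm{d}^{\nabla}\bb=0$ does \emph{not} follow from the Gauss--Codazzi formalism: the shape operator $\B$ of every spacelike graph is Codazzi, but $\bb=2\J\B_0$ is Codazzi if and only if $\mathrm{tr}(\B)\mathbbm{1}$ is, i.e.\ iff $\mathrm{tr}(\B)=\Delta^{\mathbb{H}^2}u-2u$ is \emph{constant}. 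If your reading were correct, condition (3) would be satisfied by every divergence-free vector field, which would falsify the theorem. The roles of the two conditions are essentially swapped: the mean-surface condition lives in $\mathrm{d}^{\nabla}\bb=0$, not in $\mathrm{tr}(\bb)=0$, and one must also absorb the constant $c=\mathrm{tr}(\B)$ by replacing $u$ with $u+c/2$ to land on a genuine mean surface.

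Your $(2)\Leftrightarrow(3)$ step has a second gap: the Lagrangian hypothesis does \emph{not} make the vertical part of $\mathrm{d}V$ ``fully encoded by $\bb$.'' One has $\nabla_YV=\mathrm{A}Y+\phi\J Y$ with $\bb=2\mathrm{A}$ and $\phi=-\frac{1}{2}\mathrm{div}(\J V)$; for $V=\J\grad(u)$ this skew part is $\phi=\frac{1}{2}\Delta^{\mathbb{H}^2}u$, which is nonzero even for Lagrangian $V$ and contributes the term $\J\grad(\phi)$ to the rough Laplacian. Moreover the tension field of $V:(\mathbb{H}^2,g^{\mathbb{H}^2})\to(\mathrm{T}\mathbb{H}^2,\mathbb{G})$ contains a zeroth-order curvature contribution: harmonicity is equivalent to $\overline{\Delta}V=V$ (Konderak's criterion), not to the vanishing of a purely second-order expression in $\bb$. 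The paper combines exactly these two contributions to obtain $\overline{\Delta}V-V=\J\grad\bigl(\tfrac{\mathrm{tr}(\B)}{2}\bigr)$, so that under the Lagrangian hypothesis harmonicity is precisely the constancy of $\mathrm{tr}(\B)$, matching $\mathrm{d}^{\nabla}\bb=0$ as above. Your stated endpoints for the equivalences are the correct ones, but the computations you sketch to reach them would not close as written.
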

Mean surfaces in \(\HP\) are smooth spacelike surfaces of zero mean curvature. They can be thought as the infinitesimal analogue of maximal surfaces in Anti-de Sitter space \(\ads\). However, due to the lack of a natural pseudo-Riemannian metric in \(\HP\), mean surfaces are not local minima or maxima for the area functional, which is why they are called mean surfaces and not minimal or maximal surfaces. Observe that the third characterization in Theorem \ref{intro_chara} implies in particular that \(V\) is divergence-free (that is the third characterization in Theorem \ref{intro_divergence}); in fact, this is equivalent to the traceless condition of the tensor \(\bb\) in \eqref{intro_eq_traceless}.

As we mentioned, a harmonic Lagrangian vector field can be seen as the infinitesimal version of a minimal Lagrangian map of \(\mathbb{H}^2\). The graphs of these maps are by definition Lagrangian and minimal surfaces in \(\mathbb{H}^2 \times \mathbb{H}^2\). By analogy, one may wonder if for a harmonic Lagrangian vector field \(V\), the section \(V(\mathbb{H}^2)\) is a minimal surface in \((\mathrm{T}\mathbb{H}^2, \mathbb{G})\). It turns out that this is not possible according to a theorem in \cite[Proposition 2.2]{ML_in_tangent_bundle}, which shows that if \(V: \mathbb{H}^2 \to \mathrm{T}\mathbb{H}^2\) is a vector field (not necessarily harmonic) such that \(V(\mathbb{H}^2)\) is a Lagrangian surface in \(\mathrm{T}\mathbb{H}^2\), then \(V(\mathbb{H}^2)\) cannot be minimal in \(\mathrm{T}\mathbb{H}^2\). We can now state our third main result.
\begin{theorem}\label{intro_ex_HL}
Let \(X\) be a continuous vector field on \(\mathbb{S}^1\). There exists a harmonic Lagrangian vector field on \(\mathbb{H}^2\) which extends continuously to \(X\) on \(\mathbb{S}^1\).
\end{theorem}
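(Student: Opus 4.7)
The plan is to reformulate the existence problem as a Dirichlet problem at infinity for a linear elliptic PDE on $\mathbb{H}^2$ and solve it by approximation. By the equivalence (1)$\Leftrightarrow$(2) of Theorem \ref{intro_chara}, constructing a harmonic Lagrangian extension of $X$ is equivalent to producing a smooth function $u:\mathbb{H}^2\to\mathbb{R}$ whose graph $S=\mathrm{gr}(u)$ is a mean surface in $\HP$ and whose boundary behaviour at $\partial\HP\cong\mathbb{S}^1\times\mathbb{R}$ matches the graph of the support function $\phi_X$ of $X$. The vector field $V_S$ defined by \eqref{2} is then automatically Lagrangian by Theorem \ref{intro_divergence} and harmonic by Theorem \ref{intro_chara}, so the remaining task reduces to showing that $V_S$ extends continuously to $X$ on $\mathbb{S}^1$.

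The first step is to exhibit the mean surface equation for a graph in $\HP$ as a linear second-order elliptic operator $\mathcal{M} u = 0$ on $(\mathbb{H}^2,g^{\mathbb{H}^2})$. Linearity is to be expected since Half-Pipe geometry is the infinitesimal analogue of Anti-de Sitter geometry, so $\mathcal{M}$ should essentially be the linearisation of the maximal surface operator in $\ads$ around the totally geodesic slice, expressible through the hyperbolic Hessian and Laplacian of $u$. From the explicit form one simultaneously reads off ellipticity and a maximum principle.

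The second step is to solve $\mathcal{M} u = 0$ on $\mathbb{H}^2$ with $u \to \phi_X$ at infinity. Since $\phi_X$ is only continuous, I would first approximate it uniformly by a sequence of smooth functions $\phi_n:\mathbb{S}^1\to\mathbb{R}$. For each smooth $\phi_n$, the Dirichlet problem at infinity admits a solution $u_n \in C^\infty(\mathbb{H}^2)\cap C^0(\overline{\mathbb{H}^2})$ with $u_n|_{\mathbb{S}^1}=\phi_n$, for example via Perron's method with barriers built from spacelike planes in $\HP$ (whose associated vector fields under \eqref{2} are Killing, so $\mathcal{M}$ annihilates their support functions). The $L^\infty$-contraction $\|u_n - u_m\|_\infty \le \|\phi_n - \phi_m\|_\infty$ coming from the maximum principle forces $\{u_n\}$ to be Cauchy uniformly on $\overline{\mathbb{H}^2}$; the limit $u$ is continuous up to the boundary with $u|_{\mathbb{S}^1}=\phi_X$, and interior elliptic regularity makes it a smooth solution of $\mathcal{M} u=0$ on $\mathbb{H}^2$.

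The main obstacle I anticipate is the boundary analysis, both in solving the smooth Dirichlet problem and in translating uniform boundary convergence of $u_n$ into continuous convergence of the vector fields $V_{S_n}$ up to $\mathbb{S}^1$. Producing good barriers in $\HP$ for general smooth data requires some care, since the boundary at infinity of $\HP$ is cylindrical rather than conformal; fortunately, the flat spacelike planes of $\HP$ already furnish an abundant family of explicit mean surfaces from which appropriate upper and lower barriers can be assembled. A possible safety net, should the direct PDE approach encounter technical difficulties at the boundary, is to invoke the paper's earlier identification of the infinitesimal Douady-Earle extension $L_0(X)$ as a harmonic Lagrangian vector field, which is defined for every continuous $X$ and extends $X$ continuously to $\mathbb{S}^1$ by construction.
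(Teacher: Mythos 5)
Your overall strategy --- produce the mean surface in $\HP$ with boundary data $\mathrm{gr}(\phi_X)$ by solving the linear elliptic equation $\Delta^{\mathbb{H}^2}u-2u=0$ with $\overline{u}|_{\mathbb{S}^1}=\phi_X$, and then take the associated vector field $V_S=\J\grad(u)$ --- is the same as the paper's second proof (Proposition \ref{second_proof}); the solvability of this Dirichlet problem is imported there from Barbot--Fillastre (Proposition \ref{BF_platau_HP}), so your Perron/approximation discussion reconstructs a known ingredient. The genuine gap is the step you flag as "the main obstacle" and then leave unresolved: knowing that $\overline{u}$ attains the boundary value $\phi_X$ continuously does \emph{not} by itself imply that $V_S=\J\grad(u)$ extends continuously to $X$, because $V_S$ is built from the \emph{gradient} of $u$, and uniform convergence of $u$ to its boundary values gives no control on $\grad u$ near $\partial\mathbb{H}^2$, where the hyperbolic gradient can a priori blow up. This implication is the technical core of the paper's argument: one subtracts from $u$ the affine function $\langle\cdot,s_n\rangle_{1,2}$ dual to a support plane of $\partial_-\mathcal{C}(X)$ at the point of interest, uses the strong maximum principle (Lemma \ref{mean_up_convex_core}) to see that the difference $h_n$ is a \emph{positive} solution of the same equation, applies the Li--Yau gradient estimate $\lVert\grad h_n\rVert_{\mathbb{H}^2}\le C_0\,h_n$ (Corollary \ref{Li_Yau_Cor}) to show that the contribution of $h_n$ to the vector field vanishes in the limit, and finally invokes the previously established continuous extension of the infinitesimal earthquake $\mathcal{E}^-_X$ (Proposition \ref{infinitesimal_earthquake_thm}) to identify the limit of the remaining Killing part with $X$. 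None of this is present or hinted at in your proposal, and no soft argument substitutes for it.

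Your "safety net" is in substance the paper's first proof, but as stated it is not available to you: the fact that $L_0(X)$ extends $X$ continuously is not "by construction" from the integral formula \eqref{inf_DE} --- it is the Reich--Chen theorem (Theorem \ref{extension_Reich}) --- and the fact that $L_0(X)$ is harmonic Lagrangian is itself a nontrivial statement (Proposition \ref{L_0_harmonic_lagrangian}), proved via Earle's uniqueness theorem for conformally natural continuous linear operators. If you grant both of those as prior results, the safety net works and is exactly the paper's first proof; if not, it begs the question.
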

Using mean surfaces in Half-Pipe space, we will give an explicit construction of the harmonic Lagrangian vector field extending \(X\). This works as follows: take a continuous vector field \(X(z) = iz\phi_X(z)\), for some continuous function \(\phi_X: \mathbb{S}^1 \to \mathbb{R}\). Then, based on a theorem from \cite{barbotfillastre}, there exists a unique mean surface \(S\) in \(\HP\) with "boundary at infinity" \(\partial S\) given by the graph of \(\phi_X\). We then define
\begin{equation}\label{intro_HL}
    \mathrm{HL}(X) := V_{S_X}.
\end{equation}
We will show in Proposition \ref{second_proof} that \(\mathrm{HL}(X)\) is a harmonic Lagrangian vector field which extends continuously to \(X\).

The next result, proved in Section \ref{subsec_inf_DEE}, shows that the infinitesimal Douady-Earle extension coincides with the vector field \(\mathrm{HL}(X)\).

\begin{prop}[Proposition \ref{HL_DE}]\label{intro_pro_DE}
    Let \(X\) be a continuous vector field on the circle and \(L_0: \Gamma(\mathbb{S}^1) \to \Gamma(\mathbb{H}^2)\) be the infinitesimal Douady-Earle extension. Then 
    \[
    \mathrm{HL}(X) = L_0(X).
    \]
\end{prop}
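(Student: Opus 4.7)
The plan is to invoke the characterization from \cite{Uniqueness_of_operator_L} of $L_0$ as the unique (up to a multiplicative constant) continuous linear operator $\Gamma(\mathbb{S}^1) \to \Gamma(\mathbb{H}^2)$ that is conformally natural. It therefore suffices to verify that $\mathrm{HL}$ enjoys these three properties, and then to pin down the constant on a single well-chosen test field.

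Linearity comes from the fact that the harmonic Lagrangian condition, as characterized in Theorem \ref{intro_chara}(3) by $\mathrm{tr}(\bb)=0$ and $\mathrm{d}^{\nabla}\bb=0$ with $\mathcal{L}_V g^{\mathbb{H}^2}=g^{\mathbb{H}^2}(\bb\cdot,\cdot)$, is linear in $V$. Equivalently, the mean surface equation for the graph function $u$ is a linear elliptic PDE (reflecting the infinitesimal nature of $\HP$ geometry), so superposition holds: if $u_1, u_2$ are mean graphs with boundary data $\phi_{X_1}, \phi_{X_2}$, then $u_1+u_2$ is a mean graph with data $\phi_{X_1}+\phi_{X_2}$. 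Combined with the uniqueness from \cite{barbotfillastre} and the linear dependence of the Gauss map $V_S(p)=\mathcal{K}(\mathrm{P}_p)(p)$ on the affine parameters $(u(p), du_p)$ describing the tangent plane, this gives $\mathrm{HL}(X_1+X_2)=\mathrm{HL}(X_1)+\mathrm{HL}(X_2)$ and compatibility with scalars. Conformal naturality then follows from equivariance of the whole construction: an isometry $A$ of $\mathbb{H}^2$ lifts canonically to a transformation of $\HP$ preserving the class of mean surfaces and the identification \eqref{introduction_duality}, so $A\cdot S_X=S_{A_*X}$ and $V_{A\cdot S}=A_*V_S$, yielding $\mathrm{HL}(A_*X)=A_*\mathrm{HL}(X)$.

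For continuity in the uniform topology on $\Gamma(\mathbb{S}^1)$ and the compact-open topology on $\Gamma(\mathbb{H}^2)$, uniform convergence $\phi_{X_n}\to\phi_X$ should propagate, via the Dirichlet-at-infinity theory of \cite{barbotfillastre} together with interior elliptic estimates for the linear mean surface operator, to $C^\infty_{\mathrm{loc}}$ convergence $u_n\to u$; the smooth dependence of $V_S$ on $(u,du)$ then gives $\mathrm{HL}(X_n)\to\mathrm{HL}(X)$ locally uniformly. At this point the uniqueness theorem forces $\mathrm{HL}=c\,L_0$ for some $c\in\mathbb{R}$. To fix $c=1$, I would test on the boundary trace $X\vert_{\mathbb{S}^1}$ of a Killing vector field $X$ on $\mathbb{H}^2$: on the $L_0$ side, $L_0(X\vert_{\mathbb{S}^1})=X$ because $X$ is itself a conformally natural extension of its own boundary values; on the $\mathrm{HL}$ side, the spacelike plane in $\HP$ dual to $X$ via \eqref{introduction_duality} is totally geodesic, hence in particular mean, with boundary data $\phi_X$, and the Gauss map \eqref{2} recovers $X$ itself, so $\mathrm{HL}(X\vert_{\mathbb{S}^1})=X$ as well. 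Comparing forces $c=1$.

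The main obstacle will be the quantitative continuous dependence of the mean surface on its asymptotic data, since \cite{barbotfillastre} supplies existence and uniqueness but not directly the stability required for continuity of $\mathrm{HL}$. The natural way to overcome this is to bracket $u_n$ between translates of $u$ by barriers constructed from totally geodesic planes whose boundary data uniformly approximate $\phi_X$, apply the maximum principle for the linear mean surface operator to turn uniform boundary closeness into $C^0$ closeness on compacts, and then upgrade to $C^\infty_{\mathrm{loc}}$ via Schauder estimates.
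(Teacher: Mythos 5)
Your proposal follows essentially the same route as the paper's proof of Proposition \ref{HL_DE}: verify that $\mathrm{HL}$ is a continuous, conformally natural linear operator (linearity from the linear mean-surface PDE, continuity from the stability of the Dirichlet problem at infinity plus interior elliptic estimates), invoke the uniqueness theorem of \cite{Uniqueness_of_operator_L} to get $\mathrm{HL}=\lambda L_0$, and fix $\lambda=1$ by testing on the boundary trace of a Killing field, exactly as in Remark \ref{douadyearly_killing} and Remark \ref{Killing_are_harmonic}. The one point you should not gloss over is that Theorem \ref{Uniqueness} requires equivariance under the \emph{full} isometry group $\mathrm{Isom}(\mathbb{H}^2)$, including orientation-reversing elements — equivariance under $\mathrm{O}_0(1,2)$ alone is genuinely insufficient, since $X\mapsto \J L_0(X)$ is another continuous linear operator equivariant for orientation-preserving isometries only — and for a reflection $\gamma$ the check involves two cancelling sign flips ($\phi_{\gamma_*X}(z)=-\phi_X(\overline z)$, hence $u_{\gamma_*X}=-u_X\circ\gamma^{-1}$, combined with $\J\gamma_*=-\gamma_*\J$), which the paper carries out explicitly and your appeal to ``equivariance of the whole construction'' silently assumes.
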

The precise statement of Proposition \ref{intro_pro_DE} is given in Proposition \ref{HL_DE} after recalling in Section \ref{subsec_inf_DEE} the explicit construction of the infinitesimal Douady-Earle extension.
\subsection{Zygmund and little Zygmund vector fields}

The correspondence between mean surfaces in \( \HP \) and the infinitesimal Douady-Earle extension (Proposition \ref{intro_pro_DE}) establishes a connection with the work of Fan and Hu \cite{Fan_Hu_Little}, where infinitesimal Douady-Earle extensions are studied in detail and used to characterize Zygmund and little Zygmund vector fields using the \(\overline{\partial}\)-operator. These classes of vector fields are related to the tangent space of the \textit{universal Teichmüller space} and the \textit{little Teichmüller space}, respectively. In this section, we state our main results concerning the characterization of these classes of vector fields using the width of the convex core in \( \HP \). Note that the characterization involving the width is only visible from the Half-Pipe perspective.

Roughly speaking, the \textit{width} of a vector field \( X \) measures the thickness of the convex core of \( \mathrm{gr}(\phi_X) \) in \( \HP \). Concretely, it is defined as follows: Denote by \( \partial_{-}\mathcal{C}(X) \) and \( \partial_+\mathcal{C}(X) \) the lower and upper boundary components of the convex hull of the graph of \( \phi_X \) in the cylinder \( \HP \). It turns out that each of these boundary components is the graph of a function defined on \( \mathbb{D}^2 \). We denote these functions by \( \phi_X^- \) and \( \phi_X^+ \): \( \mathbb{D}^2 \to \mathbb{R} \) such that
$$ \partial_{\pm}\mathcal{C}(X) = \mathrm{gr}(\phi_X^{\pm}), $$
see Figure \ref{width_picture}. Next, we define the function \( \omega_X:\mathbb{D}^2\to\mathbb{R} \), which measures the length along the degenerate fiber of the convex core of \( \phi_X \), as follows: for each \( \eta \in \mathbb{D}^2 \cong \mathbb{H}^2 \), the points \( (\eta,\phi_X^+(\eta)) \) and \( (\eta,\phi_X^-(\eta)) \) in \( \HP \) correspond to two parallel spacelike planes in Minkowski space. We then define \( \omega_X(\eta) \) as the timelike distance between these two parallel spacelike planes. The width of \( X \) is then defined by:
$$ \omega(X) := \sup_{\eta\in\mathbb{D}^2} \omega_X(\eta)\in[0,+\infty] $$
This is a meaningful quantity to work with since it is invariant under isometries of Half-Pipe space.

The next result concerns the uniqueness of the harmonic Lagrangian extension for vector fields on the circle having \textit{Zygmund} regularity and estimates relating to the width.
\begin{theorem}\label{intro_uniqu_HL}
Let $X$ be a continuous vector field on $\mathbb{S}^1$. Consider $\omega(X)$ the width of $X$ and $\mathrm{HL}(X)$ the harmonic Lagrangian vector field defined in \eqref{intro_HL}. The following are equivalent:
\begin{enumerate}
    \item \label{1_intro_uniqu} $X$ is a Zygmund vector field.
    \item \label{2_intro_uniqu} There exists a harmonic Lagrangian vector field $V$ on $\mathbb{H}^2$ which extends continuously to $X$ and such that $\lVert\overline{\partial} V \rVert_{\infty}$ is finite.
    \item \label{3_intro_uniqu} $\omega(X)<+\infty$.
    \item \label{4_intro_uniqu} $\lVert \overline{\partial}\mathrm{HL}(X)\rVert_{\infty}<+\infty$.
\end{enumerate}
Moreover,
\begin{enumerate}
    \item[(i)]\label{Moreover_i} The harmonic Lagrangian extension $V$ as in \eqref{2_intro_uniqu} is unique.
    \item[(ii)]\label{Moreover_ii} The following estimates hold:
\begin{equation}\label{Intro_left_estimate}
\frac{1}{6}\lVert \overline{\partial} \mathrm{HL}(X)\rVert_{\infty} \leq \omega(X) \leq 2\lVert \overline{\partial} \mathrm{HL}(X) \rVert_{\infty}.
\end{equation}
\end{enumerate}
\end{theorem}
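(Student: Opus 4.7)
The plan is to combine the identification $\mathrm{HL}(X) = L_0(X)$ from Proposition \ref{intro_pro_DE} with the Fan--Hu characterization of Zygmund vector fields via the $\overline{\partial}$-operator in \cite{Fan_Hu_Little}, which yields the equivalence (\ref{1_intro_uniqu}) $\Leftrightarrow$ (\ref{4_intro_uniqu}) with no further work. The substantive content, visible only from the Half-Pipe viewpoint, is the two-sided comparison \eqref{Intro_left_estimate} between $\omega(X)$ and $\lVert \overline{\partial} \mathrm{HL}(X)\rVert_{\infty}$, together with the uniqueness statement (i).

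I would first set up a pointwise dictionary: for $V = V_S$ the vector field associated with a smooth spacelike surface $S = \mathrm{gr}(u) \subset \HP$ via \eqref{2}, the tensor $\bb$ of Theorem \ref{intro_chara} should be identified, under the duality \eqref{introduction_duality}, with the shape operator of $S$; in a conformal chart on $\mathbb{H}^2$ the quantity $|\overline{\partial} V|$ equals the norm of the traceless symmetric part of $\nabla V$, that is the operator norm of $\bb$ whenever $V$ is divergence-free (in particular whenever $V$ is harmonic Lagrangian). Hence $\lVert \overline{\partial} V\rVert_{\infty}$ is a Half-Pipe-invariant measurement of the $L^{\infty}$-norm of the principal curvatures of the associated mean surface.

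Using this dictionary, I would attack the two bounds in \eqref{Intro_left_estimate}. The mean surface $S_X$ associated with $\mathrm{HL}(X)$ lies in the slab bounded by $\partial_{\pm}\mathcal{C}(X)$, and its tangent planes over $\eta \in \mathbb{D}^2$ are supporting planes of the convex hull at the corresponding hyperbolic base point. The upper bound $\omega(X) \leq 2\lVert \overline{\partial} \mathrm{HL}(X)\rVert_{\infty}$ would follow from integrating the ODE satisfied by the height of $S_X$ along a principal-curvature line of the slab: since $\omega_X(\eta)$ is by definition the timelike distance between the extremal supporting planes above $\eta$, it is controlled by an integral of $|\bb|$ along a hyperbolic segment, and a direct calculation in conformal coordinates produces the factor $2$. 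Conversely, the lower bound $\tfrac{1}{6}\lVert \overline{\partial} \mathrm{HL}(X)\rVert_{\infty} \leq \omega(X)$ is an interior a priori estimate for the zero-mean-curvature equation in a slab: any mean surface trapped in a slab of width $\omega$ in $\HP$ has principal curvatures pointwise bounded by a universal multiple of $\omega$, and tracking the constants of the comparison argument yields the factor $6$.

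With the estimates in hand, (\ref{4_intro_uniqu}) $\Rightarrow$ (\ref{2_intro_uniqu}) is immediate by taking $V = \mathrm{HL}(X)$, and (\ref{2_intro_uniqu}) $\Rightarrow$ (\ref{3_intro_uniqu}) follows by applying the convex-hull argument to the mean surface arising from any such candidate $V$. The step I expect to be the main obstacle is the uniqueness assertion (i). Given two harmonic Lagrangian extensions $V_1, V_2$ of $X$ with $\lVert \overline{\partial} V_j\rVert_{\infty} < \infty$, the associated mean surfaces $S_1, S_2 \subset \HP$ have bounded principal curvature and share the boundary at infinity $\mathrm{gr}(\phi_X)$. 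The plan is to prove uniqueness of such mean surfaces via a maximum principle applied to the difference $u_1 - u_2$ of their graphing functions: the mean-surface equation is a quasilinear elliptic PDE whose linearisation at a common tangent plane has a strong maximum principle; the bounded-curvature hypothesis provides the uniform ellipticity and the regularity needed to carry this principle to the ideal boundary, forcing $u_1 \equiv u_2$ and hence $V_1 = V_2$.
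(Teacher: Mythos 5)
Your proposal reproduces the correct dictionary ($\overline{\partial}V = \J\B$, so $\lVert\overline{\partial}\mathrm{HL}(X)\rVert_\infty$ is the sup of the principal curvatures of the mean surface), but it has a genuine gap at the step you yourself flag as the main obstacle, the uniqueness assertion (i). You write that two harmonic Lagrangian extensions $V_1,V_2$ of $X$ with bounded $\overline{\partial}$ correspond to mean surfaces that ``share the boundary at infinity $\mathrm{gr}(\phi_X)$'' and then propose a maximum principle for $u_1-u_2$. But the hypothesis is only that the \emph{vector fields} extend continuously to $X$ on $\mathbb{S}^1$; it is not automatic that the graphing functions $\overline{u_j}$ extend continuously to $\overline{\mathbb{D}^2}$, let alone with boundary value $\phi_X$. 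Establishing this is the actual content of the paper's uniqueness proof: first one shows (Proposition \ref{zygumund_imply_extension}) that bounded shape operator forces $\overline{u}$ to extend continuously to some $\phi$ on $\mathbb{S}^1$ --- the argument pushes the mean surface by normal evolution $u\mapsto u\pm t$ with $t\geq\sup|\lambda|$, observes that $\overline{u_{-t}}$ and $\overline{u_{+t}}$ are respectively convex and concave with coinciding boundary values, and invokes Proposition \ref{boundaryvalueextension} --- and only then does one use the extension result (Proposition \ref{second_proof}) to conclude $\phi=\phi_X$ and appeal to the Barbot--Fillastre uniqueness of the Dirichlet problem \eqref{platau_HP}. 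Without this, your maximum-principle step has nothing to compare at the ideal boundary (and note the equation is linear, not quasilinear, so once the boundary values agree the conclusion is immediate from Proposition \ref{BF_platau_HP}; the elliptic machinery you invoke is not where the difficulty lies).

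The two-sided estimate is also not established as sketched. For the upper bound $\omega(X)\leq 2\lVert\overline{\partial}\mathrm{HL}(X)\rVert_\infty$, ``integrating an ODE along a principal-curvature line'' does not control the width, which is defined through affine support functions of the convex hull, a global constraint; the paper again uses the parallel surfaces $\overline{u}\pm t\sqrt{1-|\eta|^2}$, whose convexity/concavity together with property (P2) on page \pageref{page 8} sandwiches $\phi_X^\pm$ and yields the factor $2$ as $t+t$. For the lower bound, a generic interior a priori estimate would give \emph{some} constant but not $6$, and more importantly it cannot be launched without a normalization controlling $\phi_X$ itself: the convex core is not a slab between parallel planes, and one needs Lemma \ref{simple_resultt} (after moving the base point to the origin and subtracting a support plane, $0\leq\phi_X\leq 2(\phi_X^+(0)-\phi_X^-(0))$ on all of $\mathbb{S}^1$) before any pointwise bound on $\overline{u}$ is available. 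The paper then gets the constant $6=3\cdot 2$ by differentiating the explicit Douady--Earle integral \eqref{inf_DE} under the integral sign rather than by Schauder theory. Your use of the Fan--Hu/Reich--Chen characterization for $(\ref{1_intro_uniqu})\Leftrightarrow(\ref{4_intro_uniqu})$ is a legitimate shortcut for that single equivalence, but it does not substitute for the quantitative width comparison that the theorem asserts.
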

The \(\overline{\partial}\)-operator is defined in Section \ref{Sec_dbar_operator} via the complex structure \(\mathbb{J}\) of \(\mathrm{T}\mathbb{H}^2\). In simpler terms, if we write \(V\) in the Poincaré disk model of \(\mathbb{H}^2\), then the norm \(\lVert\overline{\partial} V \rVert_{\infty}\) coincides with the sup-norm of the complex derivative with respect to \(\overline{z}\), see Corollary \ref{d_bar_isometry}. In Proposition \ref{prop_d_bar_less_width}, we actually obtain a stronger statement for the left estimate \eqref{Intro_left_estimate}. Indeed, we derive a pointwise estimate, which should be of independent interest. See Remark \ref{remark_weil_petersson}. The next theorem concerns little Zygmund vector fields.

\begin{theorem}\label{Intro_chara_little}
Let $X$ be a continuous vector field on $\mathbb{S}^1$. Consider $\omega(X)$ the width of $X$ and $\mathrm{HL}(X)$ the harmonic Lagrangian vector field defined in \eqref{intro_HL}. The following are equivalent:
\begin{enumerate}
    \item $X$ is little Zygmund.
    \item $\lVert \overline{\partial} \mathrm{HL}(X)_p\rVert$ tends to zero as $p \in \mathbb{H}^2$ tends to the boundary of $\mathbb{H}^2$.
    \item $\omega_X(z)$ tends to zero as $\lvert z \rvert \to 1$.
\end{enumerate}
\end{theorem}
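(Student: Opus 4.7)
The approach is to prove the three equivalences via the chain $(3) \Rightarrow (2) \Rightarrow (1) \Rightarrow (3)$. The first two implications will be essentially direct transfers: $(3) \Rightarrow (2)$ uses the pointwise refinement of \eqref{Intro_left_estimate} from Proposition \ref{prop_d_bar_less_width}, and $(2) \Rightarrow (1)$ uses Proposition \ref{intro_pro_DE} together with the Fan--Hu characterization \cite{Fan_Hu_Little} of little Zygmund vector fields. The hard implication is $(1) \Rightarrow (3)$, since the sup-norm right estimate of Theorem \ref{intro_uniqu_HL} does not localize pointwise and must be replaced by a conformal-naturality/compactness argument.

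For $(3) \Rightarrow (2)$, the plan is to invoke Proposition \ref{prop_d_bar_less_width}, which provides a pointwise inequality of the form $\lVert \overline{\partial}\, \mathrm{HL}(X)_p\rVert \leq 6\,\omega_X(p)$, so that the pointwise decay of $\omega_X$ at the boundary transfers directly to the decay of $\lVert \overline{\partial}\, \mathrm{HL}(X)_p \rVert$. For $(2) \Rightarrow (1)$, I apply Proposition \ref{intro_pro_DE} to identify $\mathrm{HL}(X)$ with the infinitesimal Douady--Earle extension $L_0(X)$, translate our intrinsic $\overline{\partial}$-norm into the classical complex-analytic one via Corollary \ref{d_bar_isometry}, and quote the Fan--Hu theorem characterizing little Zygmund vector fields as exactly those whose $L_0$-extension has $\overline{\partial}$ vanishing at the boundary.

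The closing implication $(1) \Rightarrow (3)$ will be proved by contradiction using conformal naturality. Assume a sequence $z_n \to \partial \mathbb{D}$ with $\omega_X(z_n) \geq c > 0$. Choose isometries $A_n$ of $\mathbb{H}^2$ sending $z_n$ to the origin and set $X_n := (A_n)_* X$; since both $\mathrm{HL}$ and $\omega$ are conformally natural, $\omega_{X_n}(0) = \omega_X(z_n) \geq c$. Normalizing each support function $\phi_{X_n}$ by subtracting an affine (Killing) summand, which leaves the width unchanged, and extracting an Arzel\`a--Ascoli subsequential limit $X_\infty$, the plan is to show that $\phi_{X_\infty}$ must be affine near the boundary arc onto which the $A_n$ concentrate shrinking neighborhoods of the accumulation point of $z_n/|z_n|$. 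Indeed, the little Zygmund condition on $\phi_X$ implies that, in this limit, the second-order differences of $\phi_{X_n}$ on any fixed arc vanish, so $\phi_{X_\infty}$ is affine on that arc and therefore $X_\infty$ is Killing on a neighborhood of the origin in $\mathbb{H}^2$, forcing its mean surface in $\HP$ to be a spacelike plane and contradicting $\omega_{X_\infty}(0) \geq c$.

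The principal technical hurdle will lie in this last step: one must verify that the M\"obius-pushforward interacts with the Zygmund modulus in a sufficiently controlled way to guarantee both the Arzel\`a--Ascoli extraction (boundedness and equicontinuity of the renormalized $\phi_{X_n}$ on compact arcs) and the asymptotic affinity of $\phi_{X_\infty}$ on the limit arc. The latter is where the \emph{little} Zygmund hypothesis (as opposed to merely Zygmund) enters in an essential way, since it forces the relevant modulus to tend to zero at the shrinking scales.
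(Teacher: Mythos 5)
Your proposal follows essentially the same route as the paper: the cycle $(3)\Rightarrow(2)$ via the pointwise estimate of Proposition \ref{prop_d_bar_less_width}, $(2)\Rightarrow(1)$ via Proposition \ref{HL_DE} together with the Fan--Hu characterization, and $(1)\Rightarrow(3)$ via conformal renormalization along a sequence $z_n\to\partial\mathbb{D}^2$, Arzel\`a--Ascoli, and the fact that the little Zygmund condition kills the cross-ratio distortion of the limit. The only difference is that the paper outsources your final \emph{hard step} to a cited lemma of Fan--Hu (Lemma \ref{teck_little}), which already furnishes the uniform convergence of the renormalized vector fields to a Killing field (normalized to zero), so the width at the origin tends to zero directly rather than by contradiction; your sketch is a rephrasing of the same argument.
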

\subsection{Outline of the proof}\label{outline_of_the_proof}

We will sketch some elements of the proof of Theorem \ref{intro_ex_HL}. In fact, we will give two independent proofs of this result. One proof is in the spirit of Half-Pipe geometry, which is essential for proving the uniqueness result stated in Theorem \ref{intro_uniqu_HL}. The other proof uses tools from Teichmüller Theory. In the end, we will relate the two approaches.

Let's start by explaining the proof from a Half-Pipe perspective. The first thing to observe is that if \( V \) is a harmonic Lagrangian vector field, then by Definition \ref{intro_def1}, \( V(\mathbb{H}^2) \) is a Lagrangian surface in \( \mathbb{H}^2 \). According to Theorem \ref{intro_divergence}, \( V \) corresponds to a surface \( S \subset \HP \), which is the graph of some smooth function \( u: \mathbb{H}^2 \to \mathbb{R} \). The remaining part is to translate the harmonicity condition in terms of the surface \( S \). Theorem \ref{intro_chara} implies that \( S \) should be a mean surface in \(\HP\). At the price of analytical technicality, the choice of the mean surface \( S \) with boundary at infinity given by the graph of \( \phi_X \) will force the harmonic vector field \(\mathrm{HL}(X)\) defined in \eqref{intro_HL} to extend to \( X \) on \(\mathbb{S}^1\) (see Proposition \ref{second_proof}). We will provide a far more detailed discussion of the main ideas used in this step of the proof in Section \ref{section_5.2_extension_mean_surface}.

The proof of the uniqueness result in Theorem \ref{intro_uniqu_HL} (i.e., (i) in the "moreover" part) follows from the following result, which establishes a relationship between the boundedness of \( \overline{\partial} \) and the asymptotic behavior of mean surfaces.

\begin{prop}[Propositions \ref{zygumund_imply_extension} and  \ref{second_proof}]\label{intro_pro_bounded_imple_extension}
    Let \( u: \mathbb{H}^2 \to \mathbb{R} \) be a smooth function and \( S \subset \HP \) be its graph so that \( V_S \) is a harmonic Lagrangian vector field of \(\mathbb{H}^2\). Assume that \(\lVert \overline{\partial} V_S \rVert_{\infty}\) is finite, then the boundary at infinity of \( S \) is the graph of a continuous function \(\phi: \mathbb{S}^1 \to \mathbb{R}\). Moreover $V_S=\mathrm{HL}(X)$ extends continuously to $X(z)=iz\phi(z)$ on $\mathbb{S}^1$.
\end{prop}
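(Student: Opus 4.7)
The plan is to prove the two conclusions of the proposition separately, mirroring the split into Propositions \ref{zygumund_imply_extension} and \ref{second_proof}: first, that the boundary at infinity of $S$ is the graph of a continuous function $\phi:\mathbb{S}^1\to\mathbb{R}$; second, that $V_S$ extends continuously to $X(z)=iz\phi(z)$ on $\mathbb{S}^1$ and coincides with $\mathrm{HL}(X)$.

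For the first conclusion, the strategy is to convert the analytic hypothesis $\lVert \overline{\partial} V_S \rVert_\infty < \infty$ into a geometric bound on the width $\omega(X)$. By Theorem \ref{intro_chara}, the harmonic Lagrangian property of $V_S$ yields a self-adjoint traceless $(1,1)$-tensor $\bb$ with $d^\nabla \bb = 0$. Unpacking the definition of $\overline{\partial}$ via the almost complex structure $\mathbb{J}$ on $\mathrm{T}\mathbb{H}^2$, combined with the Lagrangian condition, I expect that $\lVert \overline{\partial} V_S \rVert$ is pointwise proportional to $\lVert \bb \rVert$. Since the eigenvalues of $\bb$ encode the principal curvatures of $S$ as a spacelike surface in $\HP$, I would then establish the pointwise inequality
\[
\omega_X(\eta) \;\leq\; C\, \lVert \overline{\partial} V_S(p) \rVert ,
\]
with $\eta\in\mathbb{D}^2$ identified with $p\in\mathbb{H}^2$; this is essentially the content of the left estimate in \eqref{Intro_left_estimate}. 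The argument is to integrate $\lVert \bb \rVert$ along the degenerate fiber over $\eta$, using the Codazzi condition $d^\nabla \bb=0$ to realize the vertical separation as an integral of a one-form built from $\bb$. Once the width is finite, the upper and lower boundary components of the convex hull of $S$ admit a common continuous trace $\phi$ on $\mathbb{S}^1$, and since $S$ lies between them, the boundary at infinity of $S$ is exactly $\mathrm{gr}(\phi)$.

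For the second conclusion, I would use the Gauss-map formula $V_S(p) = \mathcal{K}(\mathrm{P}_p)(p)$, together with the explicit description of $\mathcal{K}$ in \eqref{introduction_duality} that expresses $V_S(p)$ in terms of $u(p)$ and $\nabla u(p)$. The bound on $\bb$ gives uniform control on the first derivatives of $u$, and the graph structure of the boundary implies that, as $p\to z\in\mathbb{S}^1$, the tangent plane $\mathrm{P}_p$ converges to a unique spacelike plane in $\HP$ whose image under $\mathcal{K}$ is a Killing field taking boundary value $iz\phi(z)$ at $z$. Passing to the limit gives the desired continuous extension $V_S(z) = X(z)$, and by the very definition of $\mathrm{HL}$ via mean surfaces one obtains $V_S = \mathrm{HL}(X)$.

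The main obstacle is the pointwise estimate relating $\omega_X$ and $\lVert \overline{\partial} V_S \rVert$. Producing it requires identifying $\phi_X^+ - \phi_X^-$ quantitatively in terms of $\bb$, which in turn relies crucially on the Codazzi equation $d^\nabla \bb = 0$ (so that the height functions $\phi_X^{\pm}$ arise as antiderivatives of a one-form built from $\bb$) and on the mean-surface equation $\mathrm{tr}(\bb)=0$ (which yields the correct sign in the resulting integral). Once this identity is in hand, the continuous extension of $V_S$ via the Gauss map is a technical but standard limiting procedure controlled by the uniform bound on $\bb$.
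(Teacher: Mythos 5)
There are two genuine gaps here. First, your plan for the boundary statement is circular: the width $\omega_X$ (and the functions $\phi_X^{\pm}$) are defined in terms of the boundary function $\phi$, whose existence and continuity are exactly what Proposition \ref{zygumund_imply_extension} is supposed to establish; a priori the closure of $S$ in $\overline{\mathbb{D}^2}\times\mathbb{R}$ need not even be a graph over $\mathbb{S}^1$. Moreover, even granting a finite width, finiteness alone does not yield continuity of the boundary trace, and the mechanism you propose (integrating a one-form built from $\bb$ along the degenerate fiber) does not correspond to any estimate in the paper. What the paper actually does is a normal-evolution argument: since $\lVert\overline{\partial}V_S\rVert_\infty=\lVert\B\rVert_\infty$ by Lemma \ref{d_bar_and_shape_operator}, for $t\geq\lVert\B\rVert_\infty$ the shifted functions $u\mp t$ have positive/negative definite shape operators $\B\pm t\mathbbm{1}$, so $\overline{u_{-t}}=\overline{u}-t\sqrt{1-|\eta|^2}$ is convex and $\overline{u_{+t}}$ is concave; their boundary values coincide (the correction vanishes on $\mathbb{S}^1$) and are simultaneously lower and upper semicontinuous, hence continuous, and Proposition \ref{boundaryvalueextension} then gives the continuous extension of $\overline{u}$. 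The comparison with $\phi^{\pm}$ via property (P2) is used only afterwards, in the proof of the right-hand estimate of \eqref{Intro_left_estimate}, once $\phi$ is known to exist.

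Second, for the extension statement your argument hides the real analytic difficulty. A bound on $\B=\hess(u)-u\mathbbm{1}$ does \emph{not} give uniform control of $\grad u$ in the hyperbolic metric ($u$ and its gradient typically blow up like $(1-|\eta|^2)^{-1/2}$ near $\mathbb{S}^1$), so the tangent planes $\mathrm{P}_p$ need not converge, let alone to a spacelike plane realizing the value $iz\phi(z)$. The paper's Proposition \ref{second_proof} handles this by subtracting a support plane of $\mathrm{gr}(\phi_X^-)$, using the strong maximum principle (Lemma \ref{mean_up_convex_core}) to make the difference $h_n$ a \emph{positive} solution of $\Delta^{\mathbb{H}^2}h-2h=0$, applying the Li--Yau gradient estimate (Corollary \ref{Li_Yau_Cor}) to get $\sqrt{1-r_n^2}\,\lVert\grad h_n\rVert_{\mathbb{H}^2}\to 0$, and then reducing the limit to that of the infinitesimal earthquake $\mathcal{E}_X^-$, whose continuous extension to $X$ is Proposition \ref{infinitesimal_earthquake_thm}. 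Without some substitute for these steps, your "technical but standard limiting procedure" does not go through.
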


Since \( S \) has zero mean curvature, the principal curvatures of \( S \) are \(\lambda\) and \(-\lambda\). The idea of the proof of Proposition \ref{intro_pro_bounded_imple_extension} is to relate \(\lVert \overline{\partial} V_S \rVert\) to the sup norm of \(\lvert \lambda \rvert\). Indeed, we show in Corollary \ref{d_bar_and_shape_operator} (see also Remark \ref{principal_curvature}):
$$\lVert \overline{\partial} V_S \rVert_{\infty} = \sup_{p \in \mathbb{H}^2} \lvert \lambda(p) \rvert.$$
Since the construction that associates to a spacelike surface in \(\HP\), a vector field of \(\mathbb{H}^2\) is invariant under normal evolution, one can push the surface \( S \) by normal evolution to obtain a convex or concave surface in \(\HP\) (depending on the direction of the evolution); this can be done when the principal curvatures are bounded. Then we use classical convexity results to show that the new surface has a boundary in \(\HP\) which is the graph of a continuous function, thus concluding the proof since the surface \( S \) has the same boundary at infinity as the pushed surfaces. Then one may use the uniqueness of the mean surface with prescribed data at infinity to conclude the uniqueness of the harmonic Lagrangian vector field with bounded \(\overline{\partial}\). This kind of argument would also be useful to prove the right estimate \eqref{Intro_left_estimate} in Theorem \ref{intro_uniqu_HL}.

The second way to prove the existence of harmonic Lagrangian extension follows by showing that if \( X \) is a continuous vector field on the circle, then the infinitesimal Douady-Earle extension \( L_0(X) \) is a harmonic Lagrangian vector field (see Proposition \ref{L_0_harmonic_lagrangian}). This can be done using the next result on an appropriate conformally natural continuous linear map from \(\Gamma(\mathbb{S}^1)\) to \(\Gamma(\mathbb{H}^2)\).

\begin{theorem}\label{intro_earle_unique} \cite{Uniqueness_of_operator_L}
    Up to multiplication by a constant, there is exactly one conformally natural continuous linear map from \(\Gamma(\mathbb{S}^1)\) to \(\Gamma(\mathbb{H}^2)\).
\end{theorem}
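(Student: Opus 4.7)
The plan is to reduce the statement to a representation-theoretic question via the transitive action of $\mathrm{Isom}(\mathbb{H}^2)$ on $\mathbb{H}^2$, and then conclude via Fourier decomposition and Schur's lemma.

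\textbf{Step 1 (reduction to a single base point).} Fix $p_0 \in \mathbb{H}^2$, say the centre of the Poincar\'e disk, and let $H := \mathrm{Stab}(p_0) \cong O(2)$. Since $\mathrm{Isom}(\mathbb{H}^2)$ acts transitively on $\mathbb{H}^2$, conformal naturality forces
\[
L(X)(A\cdot p_0) \;=\; dA_{p_0}\bigl(\ell(A^{-1}_{*}X)\bigr), \qquad \ell := \mathrm{ev}_{p_0}\circ L : \Gamma(\mathbb{S}^1) \to T_{p_0}\mathbb{H}^2,
\]
for every $A \in \mathrm{Isom}(\mathbb{H}^2)$. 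Hence $L$ is completely determined by $\ell$, and consistency under the choice of $A$ (unique only up to the coset $H$) holds precisely when $\ell$ is continuous, linear, and $H$-equivariant, with $H$ acting on $T_{p_0}\mathbb{H}^2$ by its standard representation and on $\Gamma(\mathbb{S}^1)$ through the induced boundary action. The theorem is thus equivalent to showing that the space of such continuous $H$-equivariant linear maps is $1$-dimensional.

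\textbf{Step 2 (Fourier decomposition and Schur's lemma).} Identifying $\Gamma(\mathbb{S}^1) \cong C(\mathbb{S}^1,\mathbb{R})$ via the support function $X(z) = iz\phi_X(z)$, a short chain-rule calculation shows that a rotation $R_\alpha \in \mathrm{SO}(2)$ acts on $\phi$ by $\phi \mapsto \phi \circ R_{-\alpha}$, while the reflection $r(z) = \bar z$ acts by $\phi \mapsto -\phi \circ r$; the sign arises because the differential of $r$ reverses orientation. Under rotations alone, $C(\mathbb{S}^1,\mathbb{R})$ decomposes (as a dense direct sum) into the line of constants and the $2$-dimensional weight-$n$ pieces $V_n := \mathrm{span}(\cos n\theta,\sin n\theta)$ for $n\ge 1$. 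Comparing $R_\alpha$-eigenvalues, any continuous $\mathrm{SO}(2)$-equivariant $\ell$ must vanish on the constants and on $V_n$ for $n \ne 1$; by density of trigonometric polynomials and continuity of $\ell$, it is determined by its restriction to $V_1$. On $V_1$, Schur's lemma produces a $2$-dimensional real space of $\mathrm{SO}(2)$-equivariant maps to $T_{p_0}\mathbb{H}^2$, spanned by an identification $V_1 \cong T_{p_0}\mathbb{H}^2$ and its composition with the $\tfrac{\pi}{2}$-rotation $J$. Imposing equivariance under $r$ as well, encoded by the matrices $\mathrm{diag}(-1,+1)$ on $V_1$ and $\mathrm{diag}(+1,-1)$ on $T_{p_0}\mathbb{H}^2$ in the natural bases, forces the diagonal of the matrix of $\ell|_{V_1}$ to vanish and cuts this family down to the $1$-dimensional line $\mathbb{R}\cdot J$. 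The resulting functional is manifestly bounded in sup norm, so it extends continuously to all of $C(\mathbb{S}^1,\mathbb{R})$; this gives the claimed one-parameter family of conformally natural operators, of which $L_0$ is easily seen to be a nonzero member.

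\textbf{Main obstacle.} The crux is the role of reflection equivariance: $\mathrm{SO}(2)$-equivariance on its own would leave a $2$-parameter family (a complex scalar's worth of choices), and it is precisely the orientation-reversing isometries that cut this down to the one-real-parameter family containing $L_0$. A secondary bookkeeping issue is verifying that the candidate functional on trigonometric polynomials extends boundedly to all of $C(\mathbb{S}^1,\mathbb{R})$, which is automatic since it is realised as integration against a smooth kernel.
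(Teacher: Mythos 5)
The paper does not prove this statement: it is imported from Earle's work \cite{Uniqueness_of_operator_L} and used as a black box (e.g.\ in the proofs of Propositions \ref{L_0_harmonic_lagrangian} and \ref{HL_DE}), so there is no in-paper argument to compare yours against. Judged on its own, your proof is correct and reconstructs what is essentially the standard argument: transitivity of $\mathrm{Isom}(\mathbb{H}^2)$ reduces a conformally natural $L$ to the single $\mathrm{Stab}(p_0)$-equivariant functional $\ell=\mathrm{ev}_{p_0}\circ L$ (and $L\mapsto\ell$ is injective), while Fourier decomposition of the support function plus Schur's lemma forces $\ell$ to be a real multiple of $\mathrm{J}\circ P_1$, with $P_1$ the projection onto the first Fourier mode; since $L_0$ is a nonzero conformally natural operator, the space is exactly one-dimensional. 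Your computations of the stabilizer action on support functions ($\phi\mapsto\phi\circ R_{-\alpha}$ for rotations, $\phi\mapsto-\phi\circ r$ for the reflection, the sign coming from $\mathrm{d}r$ reversing the factor $iz$) check out, and evaluating \eqref{inf_DE} at $z=0$ confirms that $\ell_{L_0}$ is indeed a nonzero multiple of $\mathrm{J}\circ P_1$, so $L_0$ lies on the surviving line. Your emphasis on the orientation-reversing isometries is exactly right and is consistent with the paper's definition of conformal naturality in Section \ref{subsec_inf_DEE}, which uses the full isometry group: with only $\mathrm{Isom}_0(\mathbb{H}^2)$-equivariance the statement would be false, since post-composing $L_0$ with the almost complex structure of $\mathbb{H}^2$ yields a second, independent natural operator. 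Two details worth making explicit in a write-up, neither of which is a genuine gap: (i) the passage from \emph{$\ell$ vanishes on the constants and on $V_n$ for $n\neq1$} to \emph{$\ell=\ell\circ P_1$} needs a quantitative density argument (the subspace spanned by the other modes is not itself dense), e.g.\ apply $\ell$ to the Fej\'er means of $\phi$ and track their $V_1$-components; (ii) you never need the converse direction of your Step 1 bijection (globalizing an arbitrary $H$-equivariant $\ell$ to a conformally natural $L$), because the existence half of the theorem is supplied by $L_0$ itself, whose conformal naturality and continuity you are implicitly taking as known, as the paper does.
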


Based on a theorem of Reich and Chen \cite{Extension_with_bounded_derivative}, who proved that \( L_0(X) \) is an extension of \( X \) to the hyperbolic plane, we obtain another proof of Theorem \ref{intro_ex_HL}. Let us briefly explain why the infinitesimal Douady-Earle extension coincides with the vector field corresponding to a mean surface in \(\HP\) with prescribed data at infinity (i.e., $\mathrm{HL}(X)$). Given a continuous vector field \( X \) on the circle and consider \( u_X: \mathbb{H}^2 \to \mathbb{R} \) the function for which the graph is a mean surface with boundary at infinity given by \(\phi_X\), then \( u_X \) satisfies a particular linear elliptic equation (see Proposition \ref{BF_platau_HP}). This implies the following observation: For any vector fields \( X \) and \( Y \) on the circle and for any \(\lambda \in \mathbb{R}\),
$$\mathrm{HL}(X + \lambda Y) = \mathrm{HL}(X) + \lambda \mathrm{HL}(Y).$$
Namely, \(\mathrm{HL}: \Gamma(\mathbb{S}^1) \to \Gamma(\mathbb{H}^2)\) is a linear operator. Using classical results of elliptic equations, we show that \(\mathrm{HL}\) defines a continuous linear operator. By showing the conformal naturality of the operator \(\mathrm{HL}\) and applying Earle's Theorem \ref{intro_earle_unique}, we establish Proposition \ref{intro_pro_DE}.\\

It is worth noting that the uniqueness result in Theorem \ref{intro_earle_unique} differs from the uniqueness results for the harmonic Lagrangian extension of a given continuous vector field on the circle. Therefore, our uniqueness result in Theorem \ref{intro_uniqu_HL} is not a direct consequence of Earle's Theorem \ref{intro_earle_unique}. Furthermore, we expect that the uniqueness result in Theorem \ref{intro_uniqu_HL} may not hold if we remove the condition \(\lVert \overline{\partial} V \rVert_{\infty} < +\infty\). This condition can be viewed as a quasiconformal condition in the non-infinitesimal setting. In the theory of quasiconformal maps, it is known that for any given quasisymmetric homeomorphism on \(\mathbb{S}^1\), there is a unique quasiconformal harmonic extension to \(\mathbb{H}^2\) (see \cite{LI_Tam1, Schoen_conjecture}). Such a uniqueness result does not hold for non-quasiconformal harmonic extensions. In \cite{LI_Tam1, Li_Tam2}, Li and Tam constructed families of harmonic maps on \(\mathbb{H}^2\) with the same boundary values. By analogy with this result, one might expect that a family of harmonic Lagrangians vector fields with the same boundary values can be found.

\subsection{Organization of the paper}
In Section \ref{section3}, we collect some preliminary results to work towards the proofs of our main results introduced above. Section \ref{sec4_correpondance} explains the correspondence between spacelike surfaces in \( \HP \) and vector fields on \( \mathbb{H}^2 \), where we prove Theorem \ref{intro_divergence}. In Section \ref{sec5_harmo}, we introduce harmonic Lagrangian vector fields and prove Theorem \ref{charac_HL}. Additionally, we provide details on the \(\overline{\partial}\)-operator. Section \ref{sec6_extension} is devoted to proving Theorem \ref{intro_ex_HL}. We begin by proving it through the infinitesimal Douady-Earle extension and then provide the proof from the Half-Pipe perspective. Finally, we prove Proposition \ref{intro_pro_DE}, which connects the infinitesimal Douady-Earle extension to the mean surface in \( \HP \). In Section \ref{sec_uniquness}, we focus on proving the uniqueness part of Theorem \ref{intro_uniqu_HL}. In the last Section \ref{sec8_reg}, we complete the proof of Theorem \ref{intro_uniqu_HL}. Finally, we conclude by proving Theorem \ref{Intro_chara_little} concerning little Zygmund vector fields.

\subsection{Acknowledgments}

I would like to thank my advisor, Andrea Seppi, for helpful discussions, for his support, patience, and careful readings of this article. I would also like to thank Francesco Bonsante and François Fillastre for related discussions.

The author is funded by the European Union (ERC, GENERATE, 101124349). Views and opinions expressed are however those of the author(s) only and do not necessarily reflect those of the European Union or the European Research Council Executive Agency. Neither the European Union nor the granting authority can be held responsible for them.

\section{Preliminaries}\label{section3}
\subsection{Minkowski geometry} 
The Minkowski space is the vector space $\mathbb{R}^3$ endowed with a non degenerate bilinear form of signature $(-,+,+)$, it can be defined as
$$\minko:=\left( \mathbb{R}^3,  \inner{(x_0,x_1,x_2),(y_0,y_1,y_2)}_{1,2}=-x_0y_0+x_1y_1+x_2y_2\right).$$

The group of isometries of $\minko$ that preserve both the orientation and time orientation is identified as:
$$\mathrm{Isom}_0(\minko)=\mathrm{O}_0(1,2)\ltimes\minko,$$
where $\mathrm{O}(1,2)$ is the group the linear transformations that preserve the Lorentzian form $\inners_{1,2}$, $\mathrm{O}_0(1,2)$ denotes the identity component of $\mathrm{O}(1,2)$ and $\minko$ acts by translation on itself. In Minkowski space, there are three types of planes $\mathrm{P}$: \textit{spacelike} when the restriction of the Lorentzian metric to $\mathrm{P}$ is positive definite, \textit{timelike} when the restriction is still Lorentzian or \textit{lightlike} when the restriction is a degenerate bilinear form.

We define the hyperbolic plane as the upper connected component of the two-sheeted hyperboloid,
namely:
\begin{equation}\label{hyperboloid}\mathbb{H}^2:=\{(x_0,x_1,x_2)\in \mathbb{R}^{1,2}\mid -x_0^2+x_1^2+x_2^2=-1, \ x_0>0\}.\end{equation} The restriction of the Lorentzian bilinear form of $\minko$ on $\mathbb{H}^2$ induces a complete Riemannian metric $g^{\mathbb{H}^2}$ of sectional curvature $-1$. The group $\isom(\mathbb{H}^2)$ of orientation preserving isometries of $\mathbb{H}^2$ is thus identified with $\mathrm{O}_0(1,2)$. Consider the \textit{radial projection} $\Pi$ defined on $\{(x_0,x_1,x_2)\in\mathbb{R}^3\mid x_0\neq 0\}$ by:
\begin{equation}\label{radial}
\Pi(x_0,x_1,x_2)=\left( \frac{x_1}{x_0},\frac{x_2}{x_0}   \right),\end{equation} 
The projection $\Pi$ identifies the hyperboloid $\mathbb{H}^2$ with the unit disk $\mathbb{D}^2$ which is the \textit{Klein projective model} of the hyperbolic plane. The boundary at infinity $\partial\mathbb{H}^2$ of the hyperbolic plane is then identified with the unit circle $\mathbb{S}^1$. We now recall the definition of the Minkowski cross product.
\begin{defi}
    Let $x$, $y\in \minko$. The \textit{Minkowski cross product} of $x$ and $y$ is the unique vector $x\boxtimes y$ in $\minko$ such that:
    $$\inner{x\boxtimes y,v}_{1,2}=\det(x,y,v),$$
    for all $v\in \minko.$
\end{defi}
Next, we highlight two important features of the Minkowski cross product. The first one is that we can write the almost complex structure on $\mathbb{H}^2$ as follows. For $v\in\mathrm{T}_p\mathbb{H}^2$ :
\begin{equation}\label{complex_structure_H2}
    \J_p(v)=p\boxtimes v
\end{equation}
Notice that $\J$ is compatible with $g^{\mathbb{H}^2}$. Namely, 
$$g^{\mathbb{H}^2}(\J\cdot,\J\cdot)=g^{\mathbb{H}^2}(\cdot,\cdot).$$
This implies that $\omega^{\mathbb{H}^2}=g^{\mathbb{H}^2}(\J\cdot,\cdot)$ is a differential 2-form which is moreover closed. As a result, the triple $(g^{\mathbb{H}^2}, \J, \omega^{\mathbb{H}^2})$ is a Kähler structure on $\mathbb{H}^2$. We will use this structure in Section \ref{The_geometry_of_the_tangent bundle} to describe the geometry of the tangent bundle of $\mathbb{H}^2$. 

The second property of the Minkowski cross product is that it gives rise to an isomorphism $\Lambda: \mathbb{R}^{1,2}\to \mathfrak{isom}(\mathbb{H}^2)$ between the Minkowski space $\minko$ and the Lie algebra of $\mathrm{Isom}(\mathbb{H}^2)$. Since $\mathrm{O}_0(1,2)$ is the isometry group of $\mathbb{H}^2$, $\mathfrak{isom}(\mathbb{H}^2)$ is the algebra of skew-symmetric matrices with respect to $\inner{\cdot,\cdot}_{1,2}$. More precisely, we have:
\begin{equation}\label{lievskilling}
    \Lambda(x)(y):=y\boxtimes x.\end{equation}
The isomorphism $\Lambda$ is equivariant with respect to the linear action $\mathrm{O}_0(1,2)$ on $\minko$ and the adjoint action of $\mathrm{O}_0(1,2)$ on $\mathfrak{isom}(\mathbb{H}^2)$, namely for all $x\in \mathbb{R}^{2,1}$, $\mathrm{A}\in\mathrm{O}_0(1,2)$, we have:
\begin{equation}\label{adequivariance}
\Lambda(\mathrm{A}\cdot x)=\mathrm{A}\Lambda(x)\mathrm{A}^{-1}.\end{equation}

Recall that the Lie algebra $\mathfrak{isom}(\mathbb{H}^2)$ can be seen as the space of all \textit{Killing vector fields} on $\mathbb{H}^2$, where a Killing field $X$ is by definition a vector field whose flow is a one-parameter group of isometries of $\mathbb{H}^2$. Indeed each $X\in \mathfrak{isom}(\mathbb{H}^2)$ defines a Killing field on $\mathbb{H}^2$ given by:
$$\overline{X}(p)=\frac{d}{dt}\bigg\lvert_{t=0} (e^{tX}\cdot p)$$ and any Killing field on $\mathbb{H}^2$ is of this form for a unique $X \in \mathfrak{isom}(\mathbb{H}^2)$. 
\subsection{Half-pipe geometry as dual of Minkowski geometry}
In this section, we will present the so called \textit{Half-pipe} space. Following 
\cite{danciger_thesis}, it is defined as
$$\HP :=\{[x_0,x_1,x_2,x_3]\in\mathbb{RP}^3, \ -x_0^2+x_1^2+x_2^2<0\}.$$
The boundary at infinity $\partial\HP$ of $\HP$ is given by: 
$$\partial\HP=\{[x]\in \mathbb{RP}^3, \ -x_0^2+x_1^2+x_2^2=0\}.$$ The Half-pipe space has a natural identification with the dual of Minkowski space, namely the space of spacelike planes of Minkowski space. More precisely, we have the homeomorphism
\begin{equation}\label{hpminko}
    \mathcal{D}:\ \HP\cong \{\mathrm{Spacelike}\ \mathrm{planes}\ \mathrm{in}\ \minko\}
\end{equation}
which associates to each point $[x,t]$ in $\HP$, the spacelike plane of $\minko$ defined as:
 \begin{equation}\label{4}
    \mathrm{P}_{[x,t]}=\{  y \in \minko : \langle x,y \rangle_{1,2} = t         \}
\end{equation}
The homeomorphism $\mathcal{D}$ extends to a homeomorphism between $\partial\HP\setminus[0,0,0,1]$ and the space of \textit{lightlike} planes in Minkowski space $\minko$ using the same formula \eqref{4}. Another interesting model of $\HP$ derived from this duality is given by the diffeomorphism: $\HP\to\mathbb{H}^2\times\mathbb{R}$ defined by: 
\begin{equation}\label{L}
[x,t]\to\left(\frac{x}{\sqrt{-\langle x,x\rangle}_{1,2}},\mathrm{L}([x,t])\right),\end{equation}
where $\mathrm{L}([x,t])$ is the \textit{height function}, which is defined as the signed distance of the spacelike plane $\mathrm{P}_{[x,t]}$ to the origin along the future normal direction. It can be checked by elementary computation that: \begin{equation}\label{formule_L}
    \mathrm{L}([x,t])=\frac{t}{\sqrt{-\langle x,x\rangle}}_{1,2}.\end{equation}
We will call \textit{geodesics} (resp. \textit{planes}) of $\HP$ the intersection of lines (resp. planes) of $\mathbb{RP}^3$ with $\HP.$ We will also use the following terminology:
\begin{itemize}
\item[\textbullet] A geodesic in $\HP$ of the form $\{*\}\times \mathbb{R}$ is called a \textit{fiber}.
\item[\textbullet] A geodesic in $\HP$ which is not a fiber is called a \textit{spacelike} geodesic.
\item[\textbullet] A plane in $\HP$ is \textit{spacelike} if it does not contain a fiber.
\end{itemize}
From this, we can define a dual correspondence to the identification \eqref{hpminko} as follows:
\begin{equation}\label{D_star}
  \mathcal{D}^*: \ \minko\cong\{ \mathrm{Spacelike}\ \mathrm{planes}\ \mathrm{in}\ \HP  \}
\end{equation}
which associates to each vector $v$ in $\minko$, the spacelike plane in $\HP$ given by: 
$$\mathrm{P}_v:=\{[x,t]\in\HP \mid \inner{x,v}_{1,2}=t   \}.$$
Now, let us denote by $\isom(\HP)$ the group of transformations given by: 
$$\begin{bmatrix}
\mathrm{A} & 0  \\
v & 1 
\end{bmatrix},$$
where $\mathrm{A}\in \mathrm{O}_{0}(1,2)$ and $v\in \mathbb{R}^2$. Observe that the group $\isom(\HP)$ preserves the orientation of $\HP$ and sends oriented fibers to oriented fibers. The map $\mathcal{D}$ in \eqref{hpminko} induces an isomorphism between 
$\isom(\mathbb{R}^{1,2})$ and $\isom(\HP)$ given by (See \cite[Section 2.8]{riolo_seppi})

\begin{equation}\label{groupeduality}  
\begin{array}{ccccc}
\mathrm{Is}: &  & \isom(\mathbb{R}^{1,2}) & \to & \isom(\HP) \\
 & & (\mathrm{A},v) & \mapsto & \begin{bmatrix}
\mathrm{A} & 0  \\
^T v\mathcal{J}\mathrm{A} & 1 
\end{bmatrix}, \\
\end{array}
\end{equation}
where $\mathcal{J}=\mathrm{diag}(-1,1,1).$ 
Now, we move on to describing the $\textit{Klein model}$ of the Half-pipe space which will be useful in this paper. It is defined as the cylinder $\mathbb{D}^2\times\mathbb{R}$ obtained by projecting $\HP$ in the affine chart  $\{x_0\neq 0\}$: \begin{equation}\label{7}
    \begin{array}{ccccc}
&  & \HP & \to & \mathbb{D}^2\times\mathbb{R} \\
 & & [x_0,x_1,x_2,x_3] & \mapsto & (\frac{x_1}{x_0},\frac{x_2}{x_0},\frac{x_3}{x_0}). \\
\end{array}\end{equation}
By the above discussion, it is clear that in this model the correspondence $\eqref{D_star}$ between Minkowski space and spacelike planes of $\HP$ associates to each vector $v\in\minko$, the non-vertical plane
\begin{equation}\label{formule_dual_plan}
    \mathrm{P}_v:=\{(\eta,t)\in\mathbb{D}^2\times\mathbb{R} \mid \langle (1,\eta),v  \rangle_{1,2}=t  \}\end{equation} and this plane corresponds to the graph of an affine function over $\mathbb{D}^2$.
The height function $\mathrm{L}$ is expressed in the Klein model as follows 
    \begin{equation}\label{formule_L_Klein}
    \mathrm{L}(\eta,t)=\frac{t}{\sqrt{1-\lvert\eta\rvert^{2}}},\end{equation} where $\lvert \eta\vert$ is the standard euclidean norm of $\eta$. 
The next Lemma describes the action of the isometries of $\HP$ in the Klein model. 
\begin{lemma}\cite[Lemma 2.26]{barbotfillastre}\label{rotationHP}
    Let $(\eta, t)\in \mathbb{D}^2\times\mathbb{R}$, $\mathrm{A}\in \mathrm{O}_{0}(1,2)$ and $v\in \mathbb{R}^{1,2}$. Then the isometry of
Half-pipe space defined by $\mathrm{Is}(\mathrm{A},v)$ acts on the Klein model $\mathbb{D}^2\times\mathbb{R}$ as follows:
$$\mathrm{Is}(\mathrm{A},v)\cdot(\eta,t)=\left( \mathrm{A}\cdot \eta, \frac{t}{-\langle \mathrm{A}\begin{pmatrix}
1 \\
\eta
\end{pmatrix} ,(1,0,0)\rangle_{1,2}}+\langle (1,\mathrm{A}\cdot \eta), v\rangle_{1,2} \right),$$
where $\mathrm{A}\cdot \eta$ is the image of $\eta$ by the isometry
of $\mathbb{D}^2$ induced by $\mathrm{A}.$
\end{lemma}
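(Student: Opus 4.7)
The statement is a direct matrix computation: lift a point of the Klein model to projective coordinates on $\HP$, apply the $4\times 4$ matrix $\mathrm{Is}(\mathrm{A},v)$ from \eqref{groupeduality}, and project back to the affine chart $\{x_0\neq 0\}$ via \eqref{7}.

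First I lift $(\eta,t)\in\mathbb{D}^2\times\mathbb{R}$ to the homogeneous coordinates $[1,\eta_1,\eta_2,t]$ and multiply by the block matrix $\mathrm{Is}(\mathrm{A},v)$. By the block form, the first three components of the image equal $\mathrm{A}\begin{pmatrix}1\\ \eta\end{pmatrix}$, while the fourth component equals ${}^{T}v\,\mathcal{J}\mathrm{A}\begin{pmatrix}1\\ \eta\end{pmatrix}+t$. Since $\mathcal{J}=\mathrm{diag}(-1,1,1)$ encodes the Lorentzian form in the sense that ${}^{T}v\,\mathcal{J}w=\langle v,w\rangle_{1,2}$ for every $w\in\minko$, the fourth component simplifies to $\langle v,\mathrm{A}(1,\eta)\rangle_{1,2}+t$.

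The second step is to return to the Klein model by dividing all four coordinates by the first coordinate $w_0$ of $w:=\mathrm{A}(1,\eta)$. The first two coordinates $(w_1/w_0,w_2/w_0)$ are, by definition, the action $\mathrm{A}\cdot\eta$ of $\mathrm{A}$ on $\mathbb{D}^2$ induced from its action on $\mathbb{H}^2$ through the radial projection \eqref{radial}. Writing $w/w_0=(1,\mathrm{A}\cdot\eta)$, the third Klein coordinate splits as
$$\langle v,(1,\mathrm{A}\cdot\eta)\rangle_{1,2}+\frac{t}{w_0}.$$
Finally I use the trivial identity $\langle w,(1,0,0)\rangle_{1,2}=-w_0$ in $\minko$ to rewrite $w_0=-\langle \mathrm{A}(1,\eta),(1,0,0)\rangle_{1,2}$, which produces exactly the formula claimed.

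The only issue to verify is that $w_0>0$, so that the projection to the affine chart $\{x_0\neq 0\}$ is well defined. This is automatic: because $\mathrm{A}\in\mathrm{O}_0(1,2)$ preserves the upper sheet of the hyperboloid \eqref{hyperboloid}, and $(1,\eta)$ is a positive multiple of a point of $\mathbb{H}^2$ whenever $\eta\in\mathbb{D}^2$, the vector $\mathrm{A}(1,\eta)$ still has positive timelike coordinate. No substantive obstacle arises; the content of the lemma is essentially careful bookkeeping of Lorentzian signs, but writing it out cleanly is useful because $\mathrm{Is}(\mathrm{A},v)$ entangles the linear isometry of $\mathbb{D}^2$ induced by $\mathrm{A}$ with a genuinely Half-Pipe-specific affine contribution in the fiber direction.
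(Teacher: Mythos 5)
Your computation is correct, and there is nothing to compare it against: the paper does not prove this lemma but simply cites it from Barbot--Fillastre, so any verification must be self-contained, and yours is. The three steps — lifting $(\eta,t)$ to $[1,\eta_1,\eta_2,t]$, applying the block matrix $\mathrm{Is}(\mathrm{A},v)$ so that the last coordinate becomes ${}^{T}v\,\mathcal{J}\mathrm{A}(1,\eta)+t=\langle v,\mathrm{A}(1,\eta)\rangle_{1,2}+t$, and dividing by $w_0$ with $w=\mathrm{A}(1,\eta)$ — are exactly the intended bookkeeping, and your use of bilinearity to pull $1/w_0$ inside the pairing together with the identity $\langle w,(1,0,0)\rangle_{1,2}=-w_0$ reproduces the stated formula. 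The remark that $w_0>0$ because $\mathrm{O}_0(1,2)$ preserves the future cone is the right justification that the affine chart is preserved. The only cosmetic point worth noting is that the paper's displayed generic element of $\isom(\HP)$ lists $v\in\mathbb{R}^2$, which is evidently a typo for a row vector in $\mathbb{R}^3$; your reading of the block structure (a $1\times 3$ bottom-left block acting on column vectors) is the consistent one and is what the formula in \eqref{groupeduality} requires.
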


We finish these preliminaries on Half-pipe geometry by recalling the notion of width. To do this, we need to recall some notions in convex analysis. For more detailed discussions, readers can refer to \cite{Rockconvex}.
Given a convex (resp. concave) function $u:\mathbb{D}^2\to \mathbb{R}$, the \textit{boundary value} of $u$ is the function on $\mathbb{S}^1$ whose value at $z\in\mathbb{S}^1$ is given by: \begin{equation}\label{boundaryvalue}
    u(z) = \lim_{s \to 0^+}u((1-s)z+sx),\ \mathrm{for} \ \mathrm{any}\ x\in\mathbb{D}^2.\end{equation}
The limit defined in \eqref{boundaryvalue} exists and does not depend on the choice of $x\in \mathbb{D}^2$ (see \cite[Section 4]{ConvexAnal}). Moreover, if $u:\mathbb{D}^2\to\mathbb{R}$ is convex (resp. concave), then the extension of $u$ to $\overline{\mathbb{D}^2}$ through the boundary value \eqref{boundaryvalue} is a lower semicontinuous (resp. upper semicontinuous) function. We also need the following result.
\begin{prop}\cite[Proposition 4.2]
{ConvexAnal}\label{boundaryvalueextension}
    Let $u: \mathbb{D}^2\to\mathbb{R}$ be a convex function (or concave). Then the boundary value of $u$ is a continuous function on $\mathbb{S}^1$ if and only if $u$ has a continuous extension to $\overline{\mathbb{D}^2}$.   
\end{prop}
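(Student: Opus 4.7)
The plan is to treat the two implications separately. The ``if'' direction is direct: if $\bar u$ is a continuous extension of $u$ to $\overline{\mathbb{D}^2}$, then $\bar u|_{\mathbb{S}^1}$ is continuous, and for any $z\in\mathbb{S}^1$ the limit in \eqref{boundaryvalue} along the chord $(1-s)z+sx$ simply computes $\bar u(z)$ by continuity, so it agrees with the boundary value of $u$. For the ``only if'' direction, by the symmetry $u\mapsto -u$ (which swaps convex/concave and lower/upper semicontinuity) it suffices to treat the convex case. Let $\tilde u:\mathbb{S}^1\to\mathbb{R}$ denote the boundary value and define $\bar u:\overline{\mathbb{D}^2}\to\mathbb{R}$ by $\bar u|_{\mathbb{D}^2}=u$ and $\bar u|_{\mathbb{S}^1}=\tilde u$. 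Since a convex function on an open convex set is continuous, $\bar u$ is automatically continuous on $\mathbb{D}^2$, and the real task is to establish continuity at each $z_0\in\mathbb{S}^1$.

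Fix $z_0\in\mathbb{S}^1$ and a sequence $x_n\to z_0$ with $x_n\in\mathbb{D}^2$. The lower semicontinuity of $\bar u$ stated in the excerpt already yields $\liminf_n u(x_n)\geq\tilde u(z_0)$, so the only thing to prove is $\limsup_n u(x_n)\leq\tilde u(z_0)$. Fix an interior reference point $p_0\in\mathbb{D}^2$, for instance the origin. For $n$ large, the ray from $p_0$ through $x_n$ meets $\mathbb{S}^1$ at a unique point $z_n$ on the same side of $p_0$ as $x_n$, with $z_n\to z_0$, and we write $x_n=(1-t_n)z_n+t_np_0$ for some $t_n\in(0,1]$ with $t_n\to 0$. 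The one-variable function $f_n(s):=u((1-s)z_n+sp_0)$ is convex on $(0,1]$, and by the very definition \eqref{boundaryvalue} of boundary value its one-sided limit at $0$ equals $\tilde u(z_n)$. Extending $f_n$ by $f_n(0):=\tilde u(z_n)$ gives a convex function on $[0,1]$, and the endpoint convexity inequality applied at $s=t_n$ yields
\[
u(x_n)\;=\;f_n(t_n)\;\leq\;(1-t_n)\,\tilde u(z_n)+t_n\,u(p_0).
\]
Letting $n\to\infty$, continuity of $\tilde u$ at $z_0$ gives $\tilde u(z_n)\to\tilde u(z_0)$, while $t_n\to 0$; the right-hand side converges to $\tilde u(z_0)$, proving the required upper bound and hence continuity of $\bar u$ at $z_0$.

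The step where I expect the subtlety to concentrate is the identification of $\lim_{s\to 0^+}f_n(s)$ with $\tilde u(z_n)$. This relies crucially on the fact, recalled in the excerpt, that the limit defining $\tilde u(z_n)$ in \eqref{boundaryvalue} is independent of the interior reference point: we are free to compute it along the specific chord from $z_n$ through $p_0$ rather than a generic interior direction. A secondary small verification is that the convex extension of $f_n$ to $[0,1]$ really is convex, which follows by passing to the limit $a\to 0^+$ in the inequality $f_n(\lambda a+(1-\lambda)b)\leq\lambda f_n(a)+(1-\lambda)f_n(b)$ for $b\in(0,1]$. Once these two points are in place, the rest is a clean combination of one-dimensional convexity along chords and the uniform control supplied by continuity of $\tilde u$ at $z_0$.
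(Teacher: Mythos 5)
The paper does not prove this statement: it is quoted verbatim from \cite[Proposition 4.2]{ConvexAnal}, so there is no internal argument to compare yours against. Judged on its own, your proof is correct. The ``if'' direction is immediate as you say, and the reduction of the concave case to the convex one via $u\mapsto -u$ is fine. For the substantive direction, the decomposition into a $\liminf$ bound (from the lower semicontinuity of the boundary-value extension, which is indeed stated in the paper just before the proposition, so you are entitled to it) and a $\limsup$ bound via one-dimensional convexity along the radial chord through a fixed interior point is the standard argument, and the details check out: writing $x_n=(1-t_n)z_n$ with $t_n=1-\lvert x_n\rvert\to 0$ and $z_n=x_n/\lvert x_n\rvert\to z_0$, the inequality $f_n(t_n)\le(1-t_n)\tilde u(z_n)+t_n u(p_0)$ is legitimate once $f_n$ is extended convexly to $s=0$, and your limiting argument for that extension is sound (one only needs continuity of $f_n$ at interior points of $(0,1)$, which convexity supplies). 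The two subtleties you flag --- independence of the boundary value from the interior reference point, and convexity of the extended $f_n$ --- are exactly the right ones, and both are handled. The only caveat worth recording is that your proof is not fully self-contained, since the lower semicontinuity input is itself imported from the same external reference; but that is consistent with what the paper's preliminaries make available.
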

For any function $\phi: \mathbb{S}^1 \to \mathbb{R}$, define two functions $\phi^-, \phi^+: \overline{\mathbb{D}^2} \to \mathbb{R}$ as follows:
\begin{equation}\label{phi-}
\phi^-(z)=\sup\{ a(z)\mid \ a:\mathbb{R}^2\to\mathbb{R} \ \mathrm{is}\ \mathrm{an} \ \mathrm{affine}\ \mathrm{function}\ \mathrm{with}  \ a_{\mid\mathbb{S}^1}\leq\phi   \},\end{equation}
\begin{equation}\label{phi+}
\phi^+(z)=\inf\{ a(z)\mid \ a:\mathbb{R}^2\to\mathbb{R} \ \mathrm{is}\ \mathrm{an} \ \mathrm{affine}\ \mathrm{function}\ \mathrm{with}  \ \phi\leq a_{\mid\mathbb{S}^1}   \}.\end{equation}
We need the following properties of the maps $\phi^-$ and $\phi^+$:\label{page 8}
\begin{enumerate}
    \item[(P1)]\label{P1} 
    We have $\phi^-\leq\phi^+$, moreover, if $\phi$ is continuous on $\mathbb{S}^1$ then $\phi^{\pm}$ is continuous in $\mathbb{D}^2$ with $\phi^{\pm}|_{\mathbb{S}^1}$ (see for instance \cite[Lemma 4.6]{ConvexAnal}).
\item[(P2)]\label{P2} If $u:\overline{\mathbb{D}^2}\to\mathbb{R}$ is a convex (resp. concave) function such that $u |_{\mathbb{S}^1}\leq \phi$ (resp. $\phi\leq u |_{\mathbb{S}^1} $), then $u\leq\phi^-$ (resp. $\phi^+\leq u$) in $\mathbb{D}^2$, see \cite[Corollary 4.5]{ConvexAnal}.
\end{enumerate}
Let $\phi:\mathbb{S}^1\to\mathbb{R}$ be a continuous map and let $\mathrm{gr}(\phi)$ be its graph. Denote by $\mathcal{C}(\phi)$ the convex hull of $\mathrm{gr}(\phi)$. That is, the smallest convex set of $\mathbb{R}^3$ containing $\mathrm{gr}(\phi)$. Note that since $\overline{\mathbb{D}}^2\times\mathbb{R}$ is a convex set containing $\mathrm{gr}(\phi)$, we have $\mathcal{C}(\phi)\subset\overline{\mathbb{D}}^2\times\mathbb{R}.$

We denote by $\partial\mathcal{C}(\phi)$ the boundary of $\mathcal{C}(\phi)$ in $\mathbb{D}^2 \times \mathbb{R}$. If $\phi$ is continuous and is not the restriction of an affine map of $\mathbb{R}^2$, then by the Jordan-Brouwer separation theorem, $\partial\mathcal{C}(\phi) \setminus \mathrm{gr}(\phi)$ has two connected components, which we denote as $\partial_+\mathcal{C}(\phi)$ and $\partial_-\mathcal{C}(\phi)$, respectively. It turns out that the connected components $\partial_{\pm}\mathcal{C}(\phi)$ are exactly the graphs of the maps $\phi^{\pm}$; see \cite[Lemma 2.41]{barbotfillastre}.
Following the work of \cite{barbotfillastre}, we have
\begin{defi}\label{width}
 Let $\phi:\mathbb{S}^1\to\mathbb{R}$ be a continuous function and $\mathrm{L}:\HP\to\mathbb{R}$ be the height function defined in \eqref{formule_L}. Then the \textit{width} of $\phi$ is defined as:
$$w(\phi):=\underset{(\eta,\phi^{+}(\eta)) \in \partial_- \mathcal{C}(\phi), \ (\eta,\phi^-(\eta))\in \partial_+\mathcal{C}(\phi)}{\sup} \vert \mathrm{L}(\eta,\phi^{-}(\eta))-\mathrm{L}(\eta,\phi^{+}(\eta))\vert  \in [0,+\infty].$$ In other words, by \eqref{formule_L_Klein}
$$w(\phi):=\underset{\eta\in\mathbb{D}^2}{\sup} \frac{\phi^+(\eta)-\phi^-(\eta)}{\sqrt{1-\lvert\eta\rvert^2}}.$$ 
\end{defi}
The quantity $\mathrm{L}(\eta,\phi^{-}(\eta))-\mathrm{L}(\eta,\phi^{+}(\eta))$ can be interpreted as the timelike distance between the parallel spacelike planes $\mathrm{P}_{(\eta,\phi^{-}(\eta))}$ and $\mathrm{P}_{(\eta,\phi^{+}(\eta))}$ of Minkowski space, dual to $(\eta,\phi^{-}(\eta))$ and $(\eta,\phi^{+}(\eta))$ respectively (see Equation \eqref{4}). As consequence, the function $$\mathrm{L}(\eta,\phi^{-}(\eta))-\mathrm{L}(\eta,\phi^{+}(\eta)): \eta\mapsto\frac{\phi^+(\eta)-\phi^-(\eta)}{\sqrt{1-\lvert\eta\rvert^2}}$$ is invariant by isometries of $\HP$ in the sense that for any $A\in\mathrm{O}_0(1,2)$ and $v\in\minko$.
\begin{equation}\label{invariance_hight_function}
    \mathrm{L}\big( \mathrm{Is}(A,v)\cdot (\eta,\phi^+(\eta))  \big)-\mathrm{L}\big( \mathrm{Is}(A,v)\cdot (\eta,\phi^-(\eta))  \big)=\mathrm{L}\big( (\eta,\phi^+(\eta))  \big)-\mathrm{L}\big((\eta,\phi^-(\eta))  \big).
\end{equation}
 \begin{figure}[h]
\centering
\includegraphics[width=.5\textwidth]{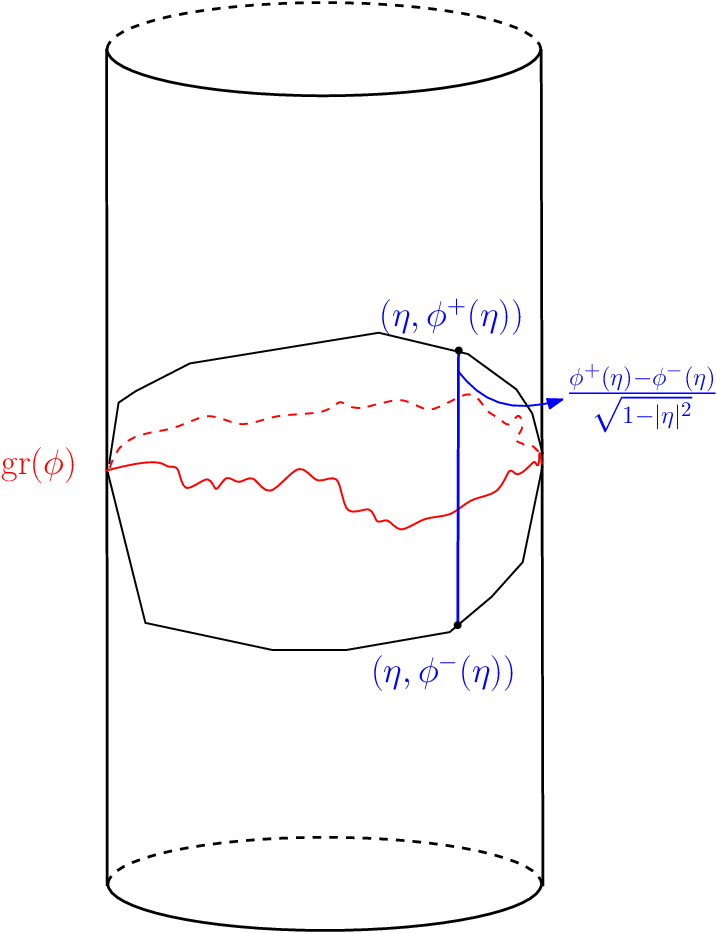}
\caption{An illustrative picture on the width of a continuous map $\phi.$}\label{width_picture}
\end{figure}  
\subsection{Vector fields on the circle and Half-pipe geometry}
The goal of this section is to explain how one can interpret a vector field on the circle as data at the boundary at infinity in $\HP.$ Consider 
$$N=\{(x_0,x_1,x_2)\in\minko, \ -x_0^2+x_1^2+x_2^2=0\}\setminus\{0\},$$ so that $\mathbb{P}(N)\cong\mathbb{S}^1$. The following lemma, as shown in \cite{BS17}, establishes a natural identification between vector fields on $\mathbb{S}^1$ and $1$-homogeneous functions on $N$.

\begin{lemma}\label{homogene}\cite[Lemma $2.23$]{BS17}
    There is a $1-$to$-1$ correspondence between vector fields $X$ on $\mathbb{S}^1$ and $1$-homogeneous functions $\Phi:N\to\mathbb{R}$ satisfying the following property: For any $\mathcal{C}^1$ spacelike section $s:\mathbb{S}^1\to N$ of the radial projection $\Pi:N\to\mathbb{S}^1$ and for $v$ the unit tangent vector field to
$s$ which is positively oriented, we have 
\begin{equation}\label{secctionhomogene}
s_*(X(z))=\Phi(s(z))v(s(z))
\end{equation}
\end{lemma}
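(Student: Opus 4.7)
The plan is to make the correspondence explicit in coordinates and then verify that the construction is intrinsic. Parameterize $\mathbb{S}^1$ by $\theta\in\mathbb{R}/2\pi\mathbb{Z}$. Any $\mathcal{C}^1$ section $s:\mathbb{S}^1\to N$ of $\Pi$ must be of the form $s(\theta)=\alpha(\theta)(1,\cos\theta,\sin\theta)$ for some nowhere-vanishing $\mathcal{C}^1$ function $\alpha$; by connectedness of $\mathbb{S}^1$, $\alpha$ has constant sign, so the image of $s$ lies in a single sheet of the null cone. Differentiating gives $s'(\theta)=\alpha'(\theta)(1,\cos\theta,\sin\theta)+\alpha(\theta)(0,-\sin\theta,\cos\theta)$, and since $(1,\cos\theta,\sin\theta)$ is null and $\minko$-orthogonal to $(0,-\sin\theta,\cos\theta)$, one finds $\inner{s'(\theta), s'(\theta)}_{1,2}=\alpha(\theta)^2$. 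Hence every section of $\Pi$ is automatically spacelike, and the positively oriented unit tangent to $s$ is $v(s(\theta))=s'(\theta)/\lvert\alpha(\theta)\rvert$.

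Writing a vector field on $\mathbb{S}^1$ as $X(\theta)=f(\theta)\partial_\theta$, so that $s_*(X(\theta))=f(\theta)s'(\theta)$, the defining equation \eqref{secctionhomogene} forces
\begin{equation*}
\Phi(s(\theta))=f(\theta)\lvert\alpha(\theta)\rvert.
\end{equation*}
This determines $\Phi$ on the image of $s$, and I would extend it to the full sheet of $N$ containing this image by requiring $\Phi(\mu y)=\mu\Phi(y)$ for every $\mu>0$, then repeat on the opposite sheet using a section supported there. The essential step is to verify that the resulting $\Phi$ satisfies \eqref{secctionhomogene} for every other $\mathcal{C}^1$ spacelike section. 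If $\widetilde{s}=\lambda s$ is another section in the same sheet (so $\lambda$ is nowhere zero and of the same sign as the tautological scale), differentiation yields $\widetilde{s}'=\lambda' s+\lambda s'$; using that $s$ is null and that differentiating $\inner{s,s}_{1,2}=0$ gives $\inner{s,s'}_{1,2}=0$, one obtains $\inner{\widetilde{s}',\widetilde{s}'}_{1,2}=\lambda^2\alpha^2$, so $v(\widetilde{s})=\widetilde{s}'/(\lvert\lambda\rvert\lvert\alpha\rvert)$. Plugging into \eqref{secctionhomogene} for $\widetilde{s}$ and cancelling the common tangent direction, the identity reduces to $\Phi(\lambda s)=\lvert\lambda\rvert\Phi(s)/\mathrm{sign}(\lambda)$ on the same sheet, which is precisely positive $1$-homogeneity of $\Phi$. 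Hence the construction is independent of the chosen section.

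Conversely, given a $1$-homogeneous $\Phi$ on $N$, the same identity $\Phi(s(\theta))=f(\theta)\lvert\alpha(\theta)\rvert$ recovers $f$, hence the vector field $X$, from the values of $\Phi$ along any spacelike section; the same computation shows the result is independent of the section. The two assignments are manifestly mutual inverses, yielding the claimed bijection. The main technical point I expect to be delicate is the bookkeeping across the two sheets of $N$ together with the correct homogeneity convention compatible with the positively oriented tangent; once the sign conventions are pinned down, everything else reduces to elementary computations with the bilinear form of $\minko$.
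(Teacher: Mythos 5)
The paper does not prove this lemma; it imports it verbatim from \cite[Lemma 2.23]{BS17}, so there is no in-paper argument to compare against and your proposal has to stand on its own. Its core is correct and is the natural verification: writing $s(\theta)=\alpha(\theta)(1,\cos\theta,\sin\theta)$, the identities $\inner{s,s}_{1,2}=0$, $\inner{s,s'}_{1,2}=0$, $\inner{s',s'}_{1,2}=\alpha^2$ do show that every $\mathcal{C}^1$ section is automatically spacelike and that \eqref{secctionhomogene} forces $\Phi(s(\theta))=f(\theta)\lvert\alpha(\theta)\rvert$; and since both sides of \eqref{secctionhomogene} for a second section $\widetilde{s}=\lambda s$ are multiples of the single vector $\widetilde{s}'$, the cancellation you perform is legitimate, so that within one sheet ($\lambda>0$) the compatibility condition is exactly positive $1$-homogeneity. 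You should add the one-line check that $s'/\lvert\alpha\rvert$ really is the \emph{positively oriented} unit tangent in the paper's sense: since $\Pi\circ s=\mathrm{id}_{\mathbb{S}^1}$, one has $\mathrm{d}\Pi(s')=(-\sin\theta,\cos\theta)$ and $\det\bigl((\cos\theta,\sin\theta),(-\sin\theta,\cos\theta)\bigr)=1>0$, on either sheet. This much covers everything the present paper actually uses, since only the values $\Phi(1,z)$ enter Definition \ref{supportfunction} and all later constructions.

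The genuine soft spot is exactly the cross-sheet bookkeeping you defer, and it resolves in the opposite direction from what your displayed relation suggests. Running your own computation for $\lambda<0$ with the paper's determinant convention (which, by the chain-rule check above, gives $v(\widetilde{s})=\widetilde{s}'/\lvert\widetilde{s}'\rvert$ on \emph{both} sheets) yields $\Phi(\lambda s)=\lvert\lambda\rvert\,\Phi(s)$ for all nonzero $\lambda$; your formula $\Phi(\lambda s)=\lvert\lambda\rvert\Phi(s)/\mathrm{sign}(\lambda)=\lambda\Phi(s)$ is correct only for $\lambda>0$. Concretely, for the rotation field $X(z)=iz$ the property forces $\Phi\equiv 1$ on the unit circles of both sheets, so the induced $\Phi$ is \emph{even} under $y\mapsto-y$ and cannot be $1$-homogeneous in the signed sense (a signed-homogeneous $\Phi$ satisfying \eqref{secctionhomogene} on both sheets would have to vanish identically). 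Hence ``$1$-homogeneous'' must be read as positive $1$-homogeneity — equivalently one may as well take $N$ to be the future cone, which is how the lemma is used throughout the paper — and with that reading your two assignments are indeed mutually inverse. Your converse direction has the same caveat: for a general positively $1$-homogeneous $\Phi$ the recovered $f$ would depend on which sheet the section lies in unless $\Phi$ is even, so the bijection is with the class of (positively $1$-homogeneous, even) functions singled out by \eqref{secctionhomogene}, not with all $1$-homogeneous functions.
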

To clarify the terminology in the Lemma: Given a $\mathcal{C}^1$ spacelike section $s:\mathbb{S}^1\to N$ of the radial projection $\Pi:N\to\mathbb{S}^1$ defined in \eqref{radial}, a unit vector field $v$ to $s$ is a map from the image of the section $s(\mathbb{S}^1)$ to the tangent bundle $\mathrm{T}(s(\mathbb{S}^1))$ such that the norm of $v$ is $1$ with respect to the Minkowski norm (this is possible because the section is $\mathcal{C}^1$). Note that the section $s$ has two unit tangent vectors $v$; we choose the one that is \textit{positively oriented}, meaning for each $z\in \mathbb{S}^1$ $$\det\biggl(\Pi(s(z)),\Pi_*\bigl(v(s(z))\bigr)\biggr)>0,$$ where $\Pi_*\bigl(v(s(z))\bigr)=\mathrm{d}_{s(z)}\Pi\bigl(v(s(z))\bigr)\in\mathbb{R}^2$. For example, if $s(z)=(1,z)$ is the horizontal section, then for $z=(x,y)$ we have $v(1,x,y)=(0,-y,x)$.

In this paper, we will primarily work within the Klein model $\mathbb{D}^2\times\mathbb{R}$ of the Half-pipe space, so the following definition is essential.

\begin{defi}\label{supportfunction}
Given a vector field $X$ on $\mathbb{S}^1$ and $\Phi:N\to \mathbb{R}$ a one-homogeneous function as in \eqref{secctionhomogene}, the \textit{support function} of $X$ is the function $\phi_X:\mathbb{S}^1\to \mathbb{R}$ defined by:
$$\phi_X(z)=\Phi(1,z).$$
Therefore, the vector field $X$ can be written as: \begin{equation}\label{supportfunction_vs_vectorfield}
    X(z)=iz\phi_{X}(z)
\end{equation}
\end{defi}
For instance, one can show that the support function of the restriction on $\mathbb{S}^1$ of a Killing vector field is the restriction of an affine map on $\mathbb{S}^1.$
\begin{cor}\cite[Corollary 2.10]{diaf2023Infearth}\label{support_function_of_killing} 
Let $\sigma\in \mathbb{R}^{1,2}$. The support function $\phi_{\Lambda(\sigma)}:\mathbb{S}^1\to \mathbb{R}$ of the Killing vector field $\Lambda(\sigma)$ is given by: 
$$\phi_{\Lambda(\sigma)}(z)=\inner{(1,z),\sigma}_{1,2}.$$
\end{cor}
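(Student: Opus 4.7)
The plan is to apply Lemma \ref{homogene} with the horizontal section $s:\mathbb{S}^1\to N$ defined by $s(z)=(1,z)$. First I would check that this section is $\mathcal{C}^1$ and spacelike: writing $z=(x,y)\in\mathbb{S}^1$, the tangent vector $v(s(z))=(0,-y,x)$ satisfies $\inner{v(s(z)),v(s(z))}_{1,2}=x^2+y^2=1$, so it is unit spacelike; and $\det\bigl((x,y),(-y,x)\bigr)=1>0$, so $v$ is positively oriented. By the characterization in Lemma \ref{homogene}, the support function is then $\phi_X(z)=\phi_{\Lambda(\sigma)}(z)$ where $s_*(X(z))=\phi_X(z)\,v(s(z))$.

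Next I would identify the boundary vector field $X$ on $\mathbb{S}^1$ induced by $\Lambda(\sigma)$ with the projection of a vector field on $N$. Since $\mathrm{O}_0(1,2)$ acts linearly on $\minko$, preserves the null cone $N$, and descends via the radial projection $\Pi$ to the boundary action on $\mathbb{S}^1\cong\mathbb{P}(N)$, the Killing field $\Lambda(\sigma)$ on $\mathbb{H}^2$ is the restriction of the linear vector field $Y(p):=p\boxtimes\sigma$ defined by \eqref{lievskilling} on all of $\minko$. This $Y$ is tangent to $N$ because $\inner{Y(p),p}_{1,2}=\det(p,\sigma,p)=0$, and one has $\Pi_*Y=X$ on $\mathbb{S}^1$.

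The remaining step is an extraction argument. Since $\mathrm{T}_{s(z)}N$ is two-dimensional and spanned by the spacelike direction $v(s(z))$ and the null direction $s(z)$, I would decompose
\begin{equation*}
Y(s(z))=(1,z)\boxtimes\sigma=\phi_X(z)\,v(s(z))+\beta(z)\,s(z),
\end{equation*}
noting that the coefficient of $v(s(z))$ is exactly the support function by Lemma \ref{homogene} (the component along $s(z)$ is the kernel of $\Pi_*$). Taking the Minkowski inner product with $v(s(z))$ and using $\inner{v,v}_{1,2}=1$ and $\inner{s(z),v(s(z))}_{1,2}=0$ (because $v(s(z))\in\mathrm{T}_{s(z)}N$) isolates
\begin{equation*}
\phi_X(z)=\inner{(1,z)\boxtimes\sigma,\,v(s(z))}_{1,2}=\det\bigl((1,z),\sigma,v(s(z))\bigr).
\end{equation*}

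The main (and genuinely elementary) obstacle is the final $3\times 3$ determinant computation: with $\sigma=(\sigma_0,\sigma_1,\sigma_2)$ and $v(s(z))=(0,-y,x)$, expanding along the last column and using $x^2+y^2=1$ yields $-\sigma_0+\sigma_1 x+\sigma_2 y=\inner{(1,z),\sigma}_{1,2}$, which is exactly the claimed formula. The only real conceptual point to justify carefully is the identification of the boundary vector field $X$ with $\Pi_* Y$, which follows from equivariance of $\Pi$ under $\mathrm{O}_0(1,2)$ and differentiation at the identity of the flow $e^{t\Lambda(\sigma)}$.
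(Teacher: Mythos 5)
Your proof is correct and complete: the paper only cites this result from \cite{diaf2023Infearth} without reproducing a proof, and your argument uses exactly the machinery the paper sets up (Lemma \ref{homogene} with the horizontal section, the definition \eqref{lievskilling} of $\Lambda$ via the Minkowski cross product, and the fact that $\mathrm{T}_{s(z)}N=s(z)^{\perp}$ is spanned by $v(s(z))$ and the radial direction, the latter being $\ker\Pi_*$). The key extraction step --- pairing $(1,z)\boxtimes\sigma$ with $v(s(z))$ to isolate the coefficient $\phi_{\Lambda(\sigma)}(z)=\det\bigl((1,z),\sigma,v(s(z))\bigr)=\inner{(1,z),\sigma}_{1,2}$ --- is exactly the natural route, and the determinant computation checks out.
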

The next lemma describes the behavior of a support function under the action of $\mathrm{O}_0(1,2)\ltimes\minko$ on vector fields of the circle. Recall that the linear part acts on the space of vector fields by pushforward, and the translation part acts by adding a Killing vector field.\begin{lemma}\cite[Lemma 2.12]{diaf2023Infearth}\label{equii}
    Let $X$ be a vector field of $\mathbb{S}^1$ and $\phi_X$ be its support function. Let $\mathrm{A}\in\mathrm{O}_0(1,2)$ and $\sigma\in\minko$. The support function $\phi_{\mathrm{A}_*X+\Lambda(\sigma)}$ of the vector field $\mathrm{A}_*X+\Lambda(\sigma)$ satisfies:\begin{equation}
          \mathrm{gr}(\phi_{\mathrm{A}_*X+\Lambda(\sigma)})=\mathrm{Is}(\mathrm{A},\sigma)\mathrm{gr}(\phi_X).
    \end{equation}
    
\end{lemma}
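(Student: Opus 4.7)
The plan is to split the proof into two elementary cases and then combine them using the semidirect structure of $\mathrm{Isom}_0(\minko)$. Namely, from the formula \eqref{groupeduality} one directly verifies the factorization $\mathrm{Is}(\mathrm{A},\sigma) = \mathrm{Is}(\mathrm{Id},\sigma)\cdot \mathrm{Is}(\mathrm{A},0)$ inside $\isom(\HP)$, so it is enough to prove the equivariance separately in the linear case ($\sigma = 0$) and in the pure translation case ($\mathrm{A} = \mathrm{Id}$).

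For the linear case, I would first show that if $\Phi:N\to\mathbb{R}$ is the 1-homogeneous function associated to $X$ via Lemma \ref{homogene}, then the 1-homogeneous function associated to $\mathrm{A}_*X$ is $\Phi\circ \mathrm{A}^{-1}$. To see this, substitute in \eqref{secctionhomogene} the new spacelike section $s' := \mathrm{A}\circ s\circ \tilde{\mathrm{A}}^{-1}$, where $\tilde{\mathrm{A}}$ denotes the induced action of $\mathrm{A}$ on $\mathbb{S}^1$; since $\mathrm{A}$ preserves the Minkowski norm and the orientation, $\mathrm{A}_*v$ is the positively oriented unit tangent to $s'$, and a straightforward chain rule computation gives $\Phi_{\mathrm{A}_*X}(\mathrm{A}\cdot p) = \Phi(p)$ for every $p\in N$. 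Next, using the Klein model and the identification $\partial\HP\setminus\{[0,0,0,1]\}\cong \mathbb{S}^1\times\mathbb{R}$ via $(z,t)\mapsto [1,z,t]$, one checks that
$$\mathrm{Is}(\mathrm{A},0)\cdot [1,z,t] = [1,\tilde{\mathrm{A}}(z),\, t/a_0(z)],$$
where $a_0(z)>0$ is the first entry of $\mathrm{A}(1,z)$ (positive because $\mathrm{A}$ preserves the time orientation). Setting $t=\phi_X(z)$ and invoking 1-homogeneity of $\Phi$ to write $\phi_{\mathrm{A}_*X}(\tilde{\mathrm{A}}(z)) = \Phi(\mathrm{A}^{-1}(1,\tilde{\mathrm{A}}(z))) = \phi_X(z)/a_0(z)$ then matches the two sides and concludes this case.

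For the translation case, I would combine two ingredients: the linearity $\phi_{X+Y}=\phi_X+\phi_Y$, which is immediate from the linearity of \eqref{secctionhomogene} in $X$, and Corollary \ref{support_function_of_killing}, which gives $\phi_{\Lambda(\sigma)}(z) = \langle (1,z),\sigma\rangle_{1,2}$. Using $\mathcal{J}=\mathrm{diag}(-1,1,1)$ one has ${}^T\sigma\mathcal{J}(x_0,x_1,x_2)^T = \langle \sigma,(x_0,x_1,x_2)\rangle_{1,2}$, so the action of $\mathrm{Is}(\mathrm{Id},\sigma)$ on $[1,z,t]$ is $[1,z,\, t+\langle (1,z),\sigma\rangle_{1,2}]$. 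Applied to $t=\phi_X(z)$ this sends $\mathrm{gr}(\phi_X)$ onto $\mathrm{gr}(\phi_X+\phi_{\Lambda(\sigma)}) = \mathrm{gr}(\phi_{X+\Lambda(\sigma)})$.

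I expect the main obstacle to be the bookkeeping in the linear case: one must remember that $\mathrm{Is}(\mathrm{A},0)$ acts as a projective transformation on $\mathbb{RP}^3$, so the last homogeneous coordinate gets rescaled by $1/a_0(z)$, and this rescaling must be absorbed precisely by the 1-homogeneity of $\Phi$. Once this scaling is correctly identified, the result follows by direct substitution, and the factorization $\mathrm{Is}(\mathrm{A},\sigma)=\mathrm{Is}(\mathrm{Id},\sigma)\cdot\mathrm{Is}(\mathrm{A},0)$ patches the two cases together.
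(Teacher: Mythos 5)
The paper does not actually prove this lemma — it is imported verbatim from \cite[Lemma 2.12]{diaf2023Infearth} — so there is no in-paper proof to compare against; judging your argument on its own, it is correct and complete. The factorization $\mathrm{Is}(\mathrm{A},\sigma)=\mathrm{Is}(\mathrm{Id},\sigma)\cdot\mathrm{Is}(\mathrm{A},0)$ does follow from \eqref{groupeduality}, the identity $\Phi_{\mathrm{A}_*X}=\Phi_X\circ\mathrm{A}^{-1}$ is precisely the equivariance the paper itself invokes later (in the proof of Proposition \ref{HL_DE}), your rescaling factor $a_0(z)$ agrees with the denominator $-\langle \mathrm{A}(1,\eta),(1,0,0)\rangle_{1,2}$ in Lemma \ref{rotationHP} and is indeed absorbed by the $1$-homogeneity of $\Phi$ via $\mathrm{A}(1,z)=a_0(z)(1,\tilde{\mathrm{A}}(z))$, and the translation case is immediate from Corollary \ref{support_function_of_killing} together with the linearity of $X\mapsto\phi_X$. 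This is the natural route and almost certainly the one taken in the cited reference.
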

We now define the \textit{width} of a vector field.

\begin{defi}\label{widthX}
 Let $X$ be a continuous vector field of $\mathbb{S}^1$. Then the \textit{width} of $X$, denoted by $\omega(X)$, is defined as the width of $\phi_X$ (see Definition \ref{width}).
\end{defi}
It follows from Lemma \ref{equii} and the invariance of the height function in \eqref{invariance_hight_function} that the width is invariant under isometries of $\HP.$ Namely for any $A\in \mathrm{O}_{0}(1,2)$ and $\sigma\in\minko$, we have
$$\omega(A_*X+\Lambda(\sigma))=\omega(X).$$

\subsection{Mean surfaces in $\HP$}
The goal of this section is to introduce mean surfaces in $\HP.$ For a more detailed exposition, we refer the reader to \cite[Section 5.2-6.3]{andreafrancois} and \cite[Section 2.3.2]{barbotfillastre}.

Since there does not exist any pseudo-Riemannian metric on $\HP$ invariant under the isometry group of $\HP$ \cite[Fact 3.11]{fillastreseppi}, there is no notion of a unit normal vector of a surface in $\HP$. Nevertheless, one may use the existence of a vertical fiber to mimic classical affine differential geometry \cite{affine_diff_geometry}. Indeed, let $\mathcal{HP}^3$ be the lift of $\HP$ in $\mathbb{R}^4$:

$$\mathcal{HP}^3=\{(x_0,x_1,x_2,x_3)\in\mathbb{R}^4\mid -x_0^2+x_1^2+x_2^2=-1, \ x_0>0  \}.$$
Consider the transverse vector field $N_x=x$ in $\mathbb{R}^4$. This vector field $N$ is invariant under the isometries of $\HP$, which are naturally identified with $\mathrm{O}_{0}(2,1)\ltimes\minko$, as seen from the structure inside the brackets of \eqref{groupeduality}. This implies that for any isometry $\gamma\in\mathrm{O}_{0}(1,2)\ltimes\minko$
$$\gamma_{*}( N_x) = N_{\gamma(x)}.$$ Thus, one can use the vector field $N$ to decompose the ambient canonical connection $D$ of $\mathbb{R}^4$ into tangential and normal parts. Specifically, denote by $\inner{\cdot,\cdot}_{2,1,0}$ the bilinear form on $\mathbb{R}^4$ induced by the quadratic form:
$$q_{1,2,0}(x_0,x_1,x_1,x_2)=-x_0^2+x_1^2+x_2^2,$$ then we have the following definition.

\begin{defi} Let $V$ and $W$ be two vector fields on $\mathcal{HP}^3$. The connection
$\nabla^{\mathcal{HP}^3}$ is defined by:
$$D_VW=\nabla^{\mathcal{HP}^3}_VW+\inner{V,W}_{1,2,0}N$$
The \textit{Half-pipe} connection is the connection $\nabla^{\HP}$
induced on $\HP$ by $\nabla^{\mathcal{HP}^3}.$ 
\end{defi}
We will denote by $T$ the vector field on $\HP$ defined by $(0, 0, 0, 1)$ in $\mathcal{HP}^3$. It is a degenerate vector field invariant under $\isom(\HP).$

We can now define the second fundamental form of any spacelike surface in $\HP\cong\mathbb{H}^2\times\mathbb{R}$. Recall that a surface $S$ is said to be \textit{spacelike} if it is locally the graph of a function $u : \Omega \to\mathbb{R}$, for a domain $\Omega\subset \mathbb{H}^2$. Denote by $I$ the \textit{first fundamental} form of $S$ which is just the hyperbolic metric on the base $\mathbb{H}^2$. For a spacelike surface $S\subset\HP$ we have the splitting. 
\begin{equation}\label{splittingconnection}
    \mathrm{T}_{(x,t)}\HP=\mathrm{T}_{(x,t)}S\oplus T_{(x,t)}
\end{equation}
Following \cite{andreafrancois}, we define:
\begin{defi}\cite{andreafrancois}
Given a spacelike $S$ in $\HP$ and $v$, $w\in\mathrm{T}_{(x,t)}S$. Then we may define:
\begin{enumerate}
    \item The \textit{second fundamental form} of $S$:
$$\nabla_V^{\HP}W=\nabla_V^{I}W+\II(v,w) T$$
 where $V$ and $W$ are vector fields extending $v$ and $w$ and $\nabla^{I}$ is the Levi-Civita connection of the first fundamental form $I.$
 \item The \textit{shape operator} of $S$ is the self-adjoint $(1, 1)$-tensor given by
$$\II(v,w)=\I(\B(v),w).$$
\end{enumerate}
\end{defi}
\begin{defi}\label{def_mean_surface}
    A spacelike surface $S$ in $\HP$ is called \textit{mean} if $\mathrm{tr}_{\I}(\B) = 0$.
\end{defi}
In this paper, we focus on mean surfaces, which globally are graphs of functions on $\mathbb{H}^2$. Given a function $u:\mathbb{H}^2\to \mathbb{R}$, We define the \textit{hyperbolic Hessian} of $u$ as the $(1, 1)$-tensor
$$\hess u(v)=\nabla^{\mathbb{H}^2}_v\mathrm{grad}^{\mathbb{H}^2} u$$
where $\nabla^{\mathbb{H}^2}$ is the Levi-Civita connection of $\mathbb{H}^2.$ Finally, denote by $\Delta^{\mathbb{H}^2}$ the Laplace-Beltrami operator of $\mathbb{H}^2$, which can be defined as: $$\Delta^{\mathbb{H}^2}u=\mathrm{tr}_{g^{\mathbb{H}^2}}(\hess(u)).$$
The next Lemma provides a formula for the shape operator of the graph of $u$.

\begin{lemma}\cite[Lemma 9.1.7]{Seppi_thesis}\label{Shapeoperator_of_graph}
Let $u:\mathbb{H}^2\to\mathbb{R}$ be a smooth function, and let $S\subset\HP$ be the graph of $u$. Then, the shape operator of $S$ is given by 
  $$\B=\hess(u)-u\mathbbm{1},$$ where $\mathbbm{1}$ is the identity operator. Thus $S$ is a mean surface if and only if $\Delta^{\mathbb{H}^2} u-2u=0.$
\end{lemma}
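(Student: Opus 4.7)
The plan is to compute the shape operator of the graph by parametrising $S$ through the embedding $\sigma:\mathbb{H}^2\to\mathcal{HP}^3\subset\mathbb{R}^4$, $\sigma(p)=(p,u(p))$, and lifting a vector field $V$ on $\mathbb{H}^2$ to $\hat V(p):=(V(p),V(u)(p))$, regarded as a tangent field to $S$. Everything then reduces to two ``Gauss-type'' decompositions running in parallel: one for the hyperboloid $\mathbb{H}^2\hookrightarrow\mathbb{R}^{1,2}$ and one for $\mathcal{HP}^3\hookrightarrow\mathbb{R}^4$.

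First, I would compute the flat ambient derivative $D^{\mathbb{R}^4}_{\hat V}\hat W$ componentwise. In the $\mathbb{R}^{1,2}$-part, the standard second fundamental form of the hyperboloid (whose unit timelike normal is the position vector $p$, with $\inner{p,p}_{1,2}=-1$) yields $D^{\mathbb{R}^{1,2}}_V W=\nabla^{\mathbb{H}^2}_V W+\I(V,W)\,p$. In the fourth entry I obtain the scalar $V(W(u))$, which by the very definition of the hyperbolic Hessian as a second covariant derivative equals $\I(\hess u\,V,W)+(\nabla^{\mathbb{H}^2}_V W)(u)$.

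Next, I invoke the defining formula $D^{\mathbb{R}^4}_{\hat V}\hat W=\nabla^{\mathcal{HP}^3}_{\hat V}\hat W+\inner{\hat V,\hat W}_{1,2,0}\,N$, with $\inner{\hat V,\hat W}_{1,2,0}=\I(V,W)$ and $N_{\sigma(p)}=\sigma(p)=(p,u(p))$. The $\I(V,W)\,p$ term is exactly cancelled in the $\mathbb{R}^{1,2}$-direction, while the fourth entry acquires a correction $-\I(V,W)\,u(p)$. I then decompose the resulting vector along the splitting $\mathrm{T}_{\sigma(p)}S\oplus\langle T\rangle$ using the identity $(A,b)=(A,A(u))+(0,b-A(u))\,T$. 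The tangential part is the lift of $\nabla^{\mathbb{H}^2}_V W$, so $\nabla^{\I}$ agrees with $\nabla^{\mathbb{H}^2}$ (consistent with $\I=g^{\mathbb{H}^2}$), and the $T$-coefficient is precisely $\II(V,W)=\I(\hess u\,V,W)-u\,\I(V,W)$. The relation $\II(V,W)=\I(\B V,W)$ then gives $\B=\hess u-u\,\mathbbm{1}$, and tracing with $\I$ produces $\mathrm{tr}_\I\B=\Delta^{\mathbb{H}^2}u-2u$ (using $\mathrm{tr}_\I\mathbbm{1}=\dim\mathbb{H}^2=2$), so the mean condition is $\Delta^{\mathbb{H}^2}u-2u=0$.

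The main obstacle is keeping the two ``normal'' corrections straight: one from $\mathbb{H}^2\hookrightarrow\mathbb{R}^{1,2}$ (with normal $p$) and one from $\mathcal{HP}^3\hookrightarrow\mathbb{R}^4$ (with transverse field $N_x=x$). Since $N$ and $p$ share the same first three coordinates, the $\mathbb{R}^{1,2}$-parts of the two corrections cancel against each other, and only the ``$-u(p)$'' in the $T$-direction survives; it is this cancellation that produces the clean affine shift $-u\,\mathbbm{1}$ in the formula for $\B$, and handling it transparently is the only delicate point in the calculation.
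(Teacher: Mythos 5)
Your computation is correct and complete: the two Gauss-type decompositions are applied in the right order, the identity $\inner{\hat V,\hat W}_{1,2,0}=\I(V,W)$ (the fourth coordinate being invisible to the degenerate form) and the cancellation of the two $\I(V,W)\,p$ terms are exactly the points that make the formula $\B=\hess(u)-u\mathbbm{1}$ come out, and the trace computation giving $\Delta^{\mathbb{H}^2}u-2u$ is immediate. The paper itself offers no proof of this lemma --- it is quoted from Seppi's thesis --- so your argument serves as a self-contained verification consistent with the definitions of $\nabla^{\mathcal{HP}^3}$, the splitting $\mathrm{T}\HP=\mathrm{T}S\oplus\langle T\rangle$, and the second fundamental form given in the preliminaries.
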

The problem of existence for mean surfaces with a prescribed curve at infinity is established by Barbot and Fillastre \cite{barbotfillastre}; see also Seppi's thesis \cite[Chapter 9]{Seppi_thesis}. To explain this, it is convenient to work in the Klein model $\mathbb{D}^2\times\mathbb{R}$. Hereafter, for a function $u:\mathbb{H}^2\to\mathbb{R}$, we denote by $\overline{u}:\mathbb{D}^2\to\mathbb{R}$ the function given by
\begin{equation}\label{u_et_u_bar}
u(\Pi^{-1}(\eta))=\frac{\overline{u}(\eta)}{\sqrt{1-\lvert\eta\rvert^2}},\end{equation} where $\Pi$ is the radial projection defined in \eqref{radial}. The relationship between $u$ and $\overline{u}$ should be understood as follows: Let $\I^+(0)$ be the \textit{future cone} over $0\in\minko$, namely
$$\I^+(0)=\{(x_0,x_1,x_2)\in\minko\mid -x_0^2+x_1^2+x_2^2<0    \}.$$
Consider $U:\I^{+}(0)\to\mathbb{R}$ to be the unique $1$-homogeneous function such that $U|_{\mathbb{H}^2}=u$; then it is immediate to check that $\overline{u}(\eta)=U(1,\eta).$ We refer the reader to \cite{françois_vernolli} for more properties about the $1$-homogeneous function $U$.

\begin{remark}
Let $U$ be a $1$-homogeneous function on $\I^{+}(0)$, and consider $U|_{\mathbb{H}^2}=u$ and $\overline{u}(\eta)=U(1,\eta)$. Given an isometry $(A,v)\in\mathrm{O}_0(1,2)\ltimes\minko$ and the $1$-homogeneous function $H$ defined by:
$$H(x)=U(A^{-1}x)+\inner{x,v}_{1,2}.$$ Let $H|_{\mathbb{H}^2}=h$, then using Lemma \ref{rotationHP}, it is not difficult to verify that the graph of $\overline{h}$ and $\overline{u}$ are related by the following relation:
\begin{equation}\label{h_et_h_bar}
\mathrm{gr}(\overline{h})=\mathrm{Is}(A,v)(\mathrm{gr}(\overline{u})).
\end{equation}
\end{remark}

\begin{prop}\cite[Page 668, Proposition 16.2.37, Lemma 16.2.42]{barbotfillastre}\label{BF_platau_HP}
    Let $\phi:\mathbb{S}^1\to\mathbb{R}$ be a continuous map. Then there exists a unique smooth mean surface $S$ in $\HP\cong\mathbb{H}^2\times\mathbb{R}$ with boundary at infinity $\mathrm{gr}(\phi).$ More precisely, there exists a unique function $u:\mathbb{H}^2\to\mathbb{R}$ which satisfies 
    \begin{equation}\label{platau_HP}
    \begin{cases}
\Delta^{\mathbb{H}^2} u-2u=0  \\
\overline{u}|_{\mathbb{S}^1}=\phi.
\end{cases}\end{equation}
Equivalently if we denote by $\Delta^{\mathbb{E}^2}$ and $\mathrm{Hess}^{\mathbb{E}^2}$ the Laplacian and Hessian on $\mathbb{R}^2$ respectively, then $\overline{u}$ is the unique solution of \begin{equation}\label{platau_HPE}
    \begin{cases}
\Delta^{\mathbb{E}^2}\overline{u}(\eta)-\mathrm{Hess}_{\eta}^{\mathbb{E}^2}(\eta,\eta)=0  \\
\overline{u}|_{\mathbb{S}^1}=\phi.
\end{cases}\end{equation}
In addition, we have $\phi^-\leq\overline{u}\leq \phi^+$, namely the graph of $\overline{u}$ is contained in the convex core of $\mathrm{gr}(\phi).$ 
\end{prop}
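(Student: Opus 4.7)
By Lemma \ref{Shapeoperator_of_graph} the surface $\mathrm{gr}(u)$ is mean if and only if $u$ solves $\Delta^{\mathbb{H}^2}u-2u=0$, so the statement reduces to a Dirichlet problem for a second-order linear elliptic operator on $\mathbb{H}^2$ with prescribed asymptotic data encoded by $\phi$. The plan is to translate the equation to the Klein model $\mathbb{D}^2$, solve it there by classical elliptic theory on an exhaustion by disks, and close the argument by sandwiching the solution between $\phi^-$ and $\phi^+$ via affine barriers.

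First I would verify the equivalence of \eqref{platau_HP} and \eqref{platau_HPE}. Let $U$ be the unique $1$-homogeneous extension of $u$ to $\I^+(0)$, so that $\overline u(\eta)=U(1,\eta)$. In Minkowski polar coordinates on $\I^+(0)$ the ambient metric reads $-dr^2+r^2 g^{\mathbb{H}^2}$, and a direct computation gives $\Box U=r^{-1}(\Delta^{\mathbb{H}^2}u-2u)$ for a $1$-homogeneous $U$. On the other hand, writing $U(x_0,x')=x_0\overline u(x'/x_0)$ and using the Euler relation $\sum_i x_i\partial_i\partial_j U=0$ (which holds because $\partial_j U$ is $0$-homogeneous) to express $\partial_0^2 U(1,\eta)$ in terms of the Euclidean Hessian of $\overline u$, one obtains
\[
\Box U(1,\eta)=\Delta^{\mathbb{E}^2}\overline u(\eta)-\mathrm{Hess}^{\mathbb{E}^2}_\eta(\eta,\eta)=:L\overline u(\eta).
\]
The operator $L$ has smooth coefficients and is uniformly elliptic on every compact subset of $\mathbb{D}^2$, but degenerates in the radial direction as $|\eta|\to 1$. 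The crucial observation is that any affine function on $\mathbb{R}^2$ has vanishing Euclidean Hessian and is therefore annihilated by $L$. Hence for any affine $a$ with $a|_{\mathbb{S}^1}\leq\phi$ and any $C^0(\overline{\mathbb{D}^2})\cap C^2(\mathbb{D}^2)$ solution $\overline u$ of $L\overline u=0$ with $\overline u|_{\mathbb{S}^1}=\phi$, the classical maximum principle applied to $\overline u-a$ gives $a\leq\overline u$ in $\mathbb{D}^2$. Taking the supremum over such $a$ yields $\phi^-\leq\overline u$, and symmetrically $\overline u\leq\phi^+$. Uniqueness is the same argument applied to the difference of two solutions, which vanishes on $\mathbb{S}^1$.

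For existence, I would exhaust $\mathbb{D}^2$ by disks $D_{r_n}$ with $r_n\uparrow 1$, extend $\phi$ to a continuous function on $\overline{\mathbb{D}^2}$ (for instance using $\phi^+$, which is continuous up to the boundary by (P1)), and solve the Dirichlet problem $L\overline u_n=0$ on $D_{r_n}$ with boundary values given by restricting this extension to $\partial D_{r_n}$. Standard Schauder theory applies on each $D_{r_n}$ since $L$ is uniformly elliptic there with smooth coefficients. The affine-barrier sandwich applied inside $D_{r_n}$ (with a vanishing error controlled by uniform continuity of $\phi^\pm$) yields $L^\infty$ bounds on the $\overline u_n$ that asymptotically squeeze them between $\phi^-$ and $\phi^+$. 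Interior Schauder estimates then give local $C^{2,\alpha}$ compactness, so a subsequence of the $\overline u_n$ converges locally smoothly to a classical solution $\overline u$ of $L\overline u=0$ on $\mathbb{D}^2$ satisfying $\phi^-\leq\overline u\leq\phi^+$. By (P1), $\phi^\pm$ are continuous on $\overline{\mathbb{D}^2}$ and agree with $\phi$ on $\mathbb{S}^1$, so the squeeze forces $\overline u$ to extend continuously to $\overline{\mathbb{D}^2}$ with $\overline u|_{\mathbb{S}^1}=\phi$. The main obstacle throughout is the degeneration of $L$ at $\mathbb{S}^1$, which prevents a direct application of standard boundary elliptic theory; what rescues the argument is the special structure of $L$, namely that it annihilates affine functions, providing $\phi^\pm$ as natural barriers continuous all the way up to the characteristic boundary circle.
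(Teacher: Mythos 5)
The paper does not actually prove this proposition: it is imported as a black box from Barbot--Fillastre \cite{barbotfillastre} (and Seppi's thesis), so there is no internal proof to compare against. Judged on its own, your reconstruction is correct and runs on exactly the machinery the paper sets up and uses elsewhere: the identification of $\Delta^{\mathbb{H}^2}u-2u$ with the degenerate operator on $\overline{u}$ via the $1$-homogeneous extension, the fact that this operator annihilates affine functions, properties (P1)--(P2) of $\phi^{\pm}$, and the barrier/maximum-principle scheme that reappears in Lemma \ref{mean_up_convex_core} and Proposition \ref{zygumund_imply_extension}. Two points deserve slightly more care than you give them. First, when you invoke ``the classical maximum principle'' on all of $\mathbb{D}^2$, the operator degenerates radially at $\mathbb{S}^1$, so the textbook statement does not apply verbatim; it is rescued either by the auxiliary function $w(\eta)=\lvert\eta\rvert^2$, which satisfies $Lw=2(2-\lvert\eta\rvert^2)\geq 2>0$ and is bounded on $\overline{\mathbb{D}^2}$ (so $L(v+\epsilon w)>0$ forbids interior maxima and one lets $\epsilon\to 0$), or more simply by applying the maximum principle on the subdisks $D_r$, where $L$ is uniformly elliptic, and letting $r\to 1$ using continuity up to the boundary. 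Second, in the exhaustion step no ``vanishing error'' is needed: taking $\phi^+\vert_{\partial D_{r_n}}$ as boundary data, every affine $a$ with $a\vert_{\mathbb{S}^1}\leq\phi$ satisfies $a\leq\phi^-\leq\phi^+=\overline{u}_n$ on $\partial D_{r_n}$, and every affine $b$ with $\phi\leq b\vert_{\mathbb{S}^1}$ dominates $\phi^+$ there, so the sandwich $\phi^-\leq\overline{u}_n\leq\phi^+$ holds exactly on each $D_{r_n}$ and passes to the limit; combined with (P1) this both forces the boundary value $\phi$ and gives the asserted containment in the convex core. With those clarifications (plus interior elliptic regularity for smoothness), your argument is a complete and legitimate substitute for the citation.
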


\section{Surfaces in $\HP$ and vector fields of $\mathbb{H}^2$}\label{sec4_correpondance}
The goal of this section is to establish a correspondence between spacelike surfaces in $\HP$ and vector fields in $\mathrm{T}\mathbb{H}^2$. Subsequently, we will characterize the vector fields constructed from embedded spacelike surfaces in $\HP$ in terms of the geometry of the tangent bundle of $\mathbb{H}^2$, see Theorem \ref{Lagrange_vs_divergence_free}.

\subsection{The geometry of the tangent bundle of $\mathbb{H}^2$}\label{The_geometry_of_the_tangent bundle}
We briefly recall how the tangent bundle of $\mathbb{H}^2$ is endowed with a natural pseudo-Kähler structure and refer the reader to \cite{canonical_pseudo} for a more general construction on the tangent bundle of a general Kähler manifold. Let $\mathrm{T}\mathbb{H}^2$ be the tangent bundle of $\mathbb{H}^2$, and we denote by $p:\mathrm{T}\mathbb{H}^2\to\mathbb{H}^2$ the canonical projection. The sub-bundle $\mathrm{ker}(\mathrm{d}p) := \mathcal{V}\mathbb{H}^2$ of $\mathrm{T}\mathrm{T}\mathbb{H}^2$ will be called the \textit{vertical bundle}. For $(x,v)\in \mathrm{T}\mathbb{H}^2$ and $w\in\mathrm{T}_x\mathbb{H}^2$, we define:
$$\begin{array}{ccccl}
F^{\mathcal{V}}_{(x,v)} & : & \mathrm{T}_x\mathbb{H}^2 & \to & \mathrm{T}_{(x,v)}\mathrm{T}\mathbb{H}^2 \\
 & & w &\mapsto &  w^{\mathcal{V}}:=\frac{d}{dt}\big\lvert_{t=0} (x,v+tw) \\
\end{array}.$$
We call $w^{\mathcal{V}}$ the vertical lift of $w$. The map $F^{\mathcal{V}}_{(x,v)}$ is injective with image $\mathcal{V}\mathbb{H}_{(x,v)}$.
The Levi-Civita connection $\nabla^{\mathbb{H}^2}$ of $\mathbb{H}^2$ allows us to define a horizontal bundle $\mathcal{H}\mathbb{H}^2$ in such a way that we have the splitting:
$$\mathrm{T}\mathrm{T}\mathbb{H}^2=\mathcal{H}\mathbb{H}^2\oplus \mathcal{V}\mathbb{H}^2.$$
For $(x,v)\in \mathrm{T}\mathbb{H}^2$ and $w\in\mathrm{T}_x\mathbb{H}^2$, we set

$$\begin{array}{ccccl}
F_{(x,v)}^{\mathcal{H}} & : &\mathrm{T}_x\mathbb{H}^2 & \to & \mathrm{T}_{(x,v)}\mathrm{T}\mathbb{H}^2  \\
 & & w &\mapsto &  w^{\mathcal{H}}:=\frac{d}{dt}\big\lvert_{t=0} (c(t),V(t)) \\
\end{array}$$
where $c:\mathbb{R}\to\mathbb{H}^2$ is the parameterized geodesic with $c(0)=x$ and $c'(0)=w$, and $V$ is the parallel vector field along $c$ such that $V(0)=v$. The map $F_{(x,v)}$ is injective, and we call $w^{\mathcal{H}}$ the \textit{horizontal lift} of $w.$ The fiber on $(x,v)$ of the horizontal bundle $\mathcal{H}\mathbb{H}^2$ is then just the image of $F_{(x,v)}$. In conclusion, we have the identification

\begin{equation}\label{splittangent}
 \begin{array}{ccccl}
 &  & \mathrm{T}_x\mathbb{H}^2\times \mathrm{T}_x\mathbb{H}^2 & \cong & \mathrm{T}_{(x,v)}\mathrm{T}\mathbb{H}^2=\mathcal{H}\mathbb{H}^2_{(x,v)}\oplus \mathcal{V}\mathbb{H}^2_{(x,v)} \\
 & & (w_1,w_2) & \mapsto & w_1^{\mathcal{H}}+w_2^{\mathcal{V}} \\
\end{array}.
\end{equation}

\begin{theorem}\cite{canonical_pseudo}\label{canonical_pseudo}
Under the identification \eqref{splittangent}, we define:
\begin{itemize}
\item A pseudo-Riemannian metric $\mathbb{G}$ on $\mathrm{T}\mathbb{H}^2$ of signature $(2,2)$ given by:
$$\mathbb{G}\big((X_1,X_2),(Y_1,Y_2)\big)=g_{\mathbb{H}^2}(X_1,Y_2)+g_{\mathbb{H}^2}(X_2,Y_1).$$
\item An almost complex structure $\mathbb{J}$ on $\mathrm{T}\mathbb{H}^2$ (i.e., $\mathbb{J}^2=-\mathrm{Id}$):
$$\mathbb{J}(X_1,X_2)=(\mathrm{J}X_1,\mathrm{J}X_2).$$
\item A differential $2$-form $\Omega$ on $\mathrm{T}\mathbb{H}^2$ given by:
$$\Omega((X_1,X_2),(Y_1,Y_2))=\mathrm{g}^{\mathbb{H}^2}(\mathrm{J}X_1,Y_2)-g^{\mathbb{H}^{2}}(X_2,\mathrm{J}Y_1).$$
\end{itemize}
Then the triple $(\mathbb{G},\mathbb{J},\Omega)$ defines a pseudo-Kähler structure on $\mathrm{T}\mathbb{H}^2$. That is, $\mathbb{J}$ is an integrable almost complex structure such that $\Omega=\mathbb{G}(\mathbb{J}\cdot,\cdot)$ is a symplectic form (a non-degenerate closed 2-form).
\end{theorem}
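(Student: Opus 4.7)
The proof of Theorem~\ref{canonical_pseudo} splits into a pointwise algebraic check and two geometric computations on $\mathrm{T}\mathbb{H}^2$.

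First I would dispose of the pointwise statements. Fix $(x,v) \in \mathrm{T}\mathbb{H}^2$ and a $g^{\mathbb{H}^2}$-orthonormal basis $\{e_1, \mathrm{J}e_1\}$ of $\mathrm{T}_x \mathbb{H}^2$. Under the identification \eqref{splittangent}, the matrix of $\mathbb{G}$ in the associated frame of $\mathrm{T}_{(x,v)}\mathrm{T}\mathbb{H}^2$ has the block form $\bigl(\begin{smallmatrix}0 & I_2 \\ I_2 & 0\end{smallmatrix}\bigr)$, whose eigenvalues $\pm 1$ each have multiplicity $2$, so $\mathbb{G}$ is non-degenerate of signature $(2,2)$. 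The equality $\mathbb{J}^2 = -\mathrm{Id}$ is immediate from $\mathrm{J}^2 = -\mathrm{Id}$, and the compatibility $\Omega = \mathbb{G}(\mathbb{J}\cdot, \cdot)$ with the explicit formula in the statement follows from the skewness $g^{\mathbb{H}^2}(\mathrm{J}X, Y) = -g^{\mathbb{H}^2}(X, \mathrm{J}Y)$. Non-degeneracy of $\Omega$ is then inherited from that of $\mathbb{G}$.

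Next, for the closedness $d\Omega=0$, the strategy is to exhibit an explicit primitive. Set
$$\lambda_{(x,v)}(W) := \omega^{\mathbb{H}^2}_x\bigl(v,\,\mathrm{d}p(W)\bigr),$$
so that $\lambda(Z^{\mathcal{H}}) = \omega^{\mathbb{H}^2}(v, Z)$ and $\lambda(Z^{\mathcal{V}}) = 0$. The standard Lie-bracket formulas on the tangent bundle of a Riemannian manifold,
$$[Z_1^{\mathcal{H}}, Z_2^{\mathcal{H}}] = [Z_1, Z_2]^{\mathcal{H}} - \bigl(R(Z_1, Z_2)v\bigr)^{\mathcal{V}}, \quad [Z_1^{\mathcal{H}}, Z_2^{\mathcal{V}}] = \bigl(\nabla^{\mathbb{H}^2}_{Z_1} Z_2\bigr)^{\mathcal{V}}, \quad [Z_1^{\mathcal{V}}, Z_2^{\mathcal{V}}] = 0,$$
together with the differentiation identity $Z^{\mathcal{H}}\bigl(\omega^{\mathbb{H}^2}(v, Y)\bigr)_{(x,v)} = \omega^{\mathbb{H}^2}\bigl(v, \nabla^{\mathbb{H}^2}_Z Y\bigr)$ (which uses parallel transport along the geodesic underlying the horizontal lift and $\nabla^{\mathbb{H}^2}\omega^{\mathbb{H}^2} = 0$), plugged into $d\lambda(A,B) = A(\lambda(B)) - B(\lambda(A)) - \lambda([A,B])$, reproduce the formula of $\Omega$ on all three types of pairs: zero on $(\mathcal{H}, \mathcal{H})$ (using torsion-freeness), $\omega^{\mathbb{H}^2}(Z_1, Z_2)$ on $(\mathcal{H}, \mathcal{V})$, and zero on $(\mathcal{V}, \mathcal{V})$.

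Finally, for the integrability of $\mathbb{J}$, I would show that the Nijenhuis tensor
$$N_{\mathbb{J}}(A, B) = [\mathbb{J}A, \mathbb{J}B] - \mathbb{J}[\mathbb{J}A, B] - \mathbb{J}[A, \mathbb{J}B] - [A, B]$$
vanishes on horizontal and vertical lifts. Since $\mathbb{J}$ commutes with both lifting operators, the $(\mathcal{V},\mathcal{V})$ case is immediate, and the $(\mathcal{H}, \mathcal{V})$ case cancels after invoking $\nabla^{\mathbb{H}^2}\mathrm{J} = 0$. For $(Z_1^{\mathcal{H}}, Z_2^{\mathcal{H}})$, the horizontal component of $N_{\mathbb{J}}$ reproduces the Nijenhuis tensor of $\mathrm{J}$ on $\mathbb{H}^2$ and hence vanishes, while the vertical component assembles into
$$\Bigl(\bigl(R(Z_1, Z_2) - R(\mathrm{J}Z_1, \mathrm{J}Z_2)\bigr)v + \mathrm{J}\bigl(R(\mathrm{J}Z_1, Z_2) + R(Z_1, \mathrm{J}Z_2)\bigr)v\Bigr)^{\mathcal{V}},$$
and both parenthesized summands vanish by the Kähler identity $R(\mathrm{J}X, \mathrm{J}Y) = R(X, Y)$, itself a consequence of $\nabla^{\mathbb{H}^2}\mathrm{J} = 0$. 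The main obstacle is exactly this last vertical piece: pinning down the correct cross-cancellations via the Kähler symmetries of the curvature tensor is where the bulk of the work lies, whereas the closedness argument is essentially forced once $\lambda$ is recognized as the natural modification of the Liouville 1-form on $\mathrm{T}\mathbb{H}^2$ obtained by composing with the complex structure $\mathrm{J}$.
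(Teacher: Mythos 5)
Your proof is correct, but there is nothing in the paper to measure it against: Theorem \ref{canonical_pseudo} is quoted from \cite{canonical_pseudo} and is not proved in the text — the only in-paper commentary is the remark that the cited source states the result for the pullback of $(\mathbb{G},\mathbb{J},\Omega)$ under the fibrewise map $X\mapsto \mathrm{J}X$, which changes none of the three properties you verify. Judged on its own, your argument is complete and follows the standard route. The pointwise part is fine: in the frame $(e_1^{\mathcal{H}},e_2^{\mathcal{H}},e_1^{\mathcal{V}},e_2^{\mathcal{V}})$ the Gram matrix of $\mathbb{G}$ is $\bigl(\begin{smallmatrix}0&I_2\\ I_2&0\end{smallmatrix}\bigr)$, and $\Omega=\mathbb{G}(\mathbb{J}\cdot,\cdot)$ together with its non-degeneracy follows from $g^{\mathbb{H}^2}(\mathrm{J}X,Y)=-g^{\mathbb{H}^2}(X,\mathrm{J}Y)$. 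Your closedness argument via the twisted Liouville primitive $\lambda_{(x,v)}=\omega^{\mathbb{H}^2}(v,\mathrm{d}p(\cdot))$ checks out on all three types of pairs: the curvature term in $[Z_1^{\mathcal{H}},Z_2^{\mathcal{H}}]$ is vertical and hence killed by $\lambda$, torsion-freeness kills the $(\mathcal{H},\mathcal{H})$ contribution, and the vertical derivative of $\lambda(Z_1^{\mathcal{H}})$ produces exactly $\omega^{\mathbb{H}^2}(Z_1,Z_2)=\Omega(Z_1^{\mathcal{H}},Z_2^{\mathcal{V}})$. The Nijenhuis computation is also right; for the vertical piece, the identity $R(\mathrm{J}X,\mathrm{J}Y)=R(X,Y)$ is immediate here since on a surface $R(X,Y)$ is $\omega^{\mathbb{H}^2}(X,Y)$ times a fixed $\mathrm{J}$-commuting endomorphism, and it also forces $R(\mathrm{J}Z_1,Z_2)=-R(Z_1,\mathrm{J}Z_2)$, so both bracketed sums vanish as you claim. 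If this were to be written out in full, the only step deserving an explicit display is the horizontal differentiation identity $Z^{\mathcal{H}}\bigl(\omega^{\mathbb{H}^2}(v,Y)\bigr)=\omega^{\mathbb{H}^2}(v,\nabla^{\mathbb{H}^2}_ZY)$, since that is precisely where the definition of $F^{\mathcal{H}}$ via parallel transport and the parallelism of $\omega^{\mathbb{H}^2}$ enter.
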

\begin{remark}
The original theorem in \cite{canonical_pseudo} is stated differently. Instead of taking $(\mathbb{G},\mathbb{J},\Omega)$, the authors consider the pullback of $(\mathbb{G},\mathbb{J},\Omega)$ by the diffeomorphism $$
\begin{array}{ccccl}
& & \mathrm{T}\mathbb{H}^2 & \to & \mathrm{T}\mathbb{H}^2 \\
& & X & \mapsto & \mathrm{J}X \\
\end{array}.$$
\end{remark}

\subsection{Divergence free vector fields of $\mathbb{H}^2$ and surfaces in $\HP$}
The purpose of this section is to explain the construction which associates to a smooth spacelike surface in $\HP$ a vector field of $\mathbb{H}^2.$ As discussed in Section \ref{section3}, each element of $\mathfrak{isom}(\mathbb{H}^2)$ corresponds to a Killing vector field of $\mathbb{H}^2$. Consequently, we establish a correspondence:

\begin{equation}\label{duality_map}
    \begin{array}{ccccc}
 &  & \{\mathrm{Spacelike}\ \mathrm{planes}\ \mathrm{in}\ \mathbb{D}^2\times\mathbb{R}\} & \cong & \{ \mathrm{Killing}\ \mathrm{vector} \ \mathrm{fields}\ \mathrm{of}\ \mathbb{H}^2\} \\
 & & \mathrm{P}_v& \mapsto & \Lambda(v) \\
\end{array}
\end{equation}
where we recall that for $v\in\minko$,  $$\mathrm{P}_v:=\{(\eta,t)\in\mathbb{D}^2\times\mathbb{R} \mid \langle (1,\eta),v  \rangle_{1,2}=t  \},$$
and $\Lambda(v)$ is the Killing vector field of $\mathbb{H}^2$ given by $\Lambda(v)(p)=p\boxtimes v$. 
Let $u:\mathbb{H}^2\to \mathbb{R}$ be a smooth function so that the graph $S:=\mathrm{gr}(u)$ of $u$ is an embedded spacelike surface in $\HP.$ We now describe how to construct a vector field $V_S$ using the identification \eqref{duality_map}. Let $p_0\in\mathbb{H}^2$ and $\eta_0=\Pi(p_0)\in \mathbb{D}^2$ and consider the affine function $a:\mathbb{D}^2\to \mathbb{R}$ given by:
$$a(\eta)=\overline{u}(\eta_0)+\mathrm{d}_{\eta_0}\overline{u}(\eta-\eta_0),$$ where $\overline{u}: \mathbb{D}^2 \to \mathbb{R}$ is the function defined in \eqref{u_et_u_bar}.
The graph of $a$ is the tangent plane of $\mathrm{gr}(\overline{u})$ at $(\eta_0,\overline{u}(\eta_0))$. Since this plane is spacelike, there is $\sigma\in\minko$ such that $\mathrm{gr}(a)=\mathrm{P}_{\sigma}$. We define the vector field associated to $S$ as follows:
\begin{equation}\label{XS}
    V_S(p_0)=\Lambda(\sigma)= p_0\boxtimes \sigma.
\end{equation}
The next lemma provides an explicit description of $\sigma$ in terms of the gradient of $u$.

\begin{lemma}\label{sigma_lemma}
  Let $u:\mathbb{H}^2\to\mathbb{R}$ be a smooth function. Fix $p_0\in\mathbb{H}^2$ and $\eta_0=\Pi(p_0)$. Let $a:\mathbb{D}^2\to \mathbb{R}$ be the affine function given by:
  $$a(\eta)=\overline{u}(\eta_0)+\mathrm{d}_{\eta_0}\overline{u}(\eta-\eta_0).$$
   Let $\sigma=\grad_{p_0}u-u(p_0)p_0$. Then $$a(\eta)=\inner{(1,\eta),\sigma}_{1,2}.$$ In other words, $\mathrm{P}_{\sigma}$ is the graph of the affine function $a$.
\end{lemma}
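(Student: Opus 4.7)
The plan is to exploit the $1$-homogeneous extension $U:\I^+(0)\to\mathbb{R}$ of $u$, which by definition of $\overline{u}$ satisfies $\overline{u}(\eta)=U(1,\eta)$. The crux is a pointwise formula for the Euclidean differential $\mathrm{d}U_{p_0}$, regarded as a linear form on $\mathbb{R}^3$: I would show
$$\mathrm{d}U_{p_0}(v)=\inner{v,\sigma}_{1,2}\qquad\text{for every }v\in\mathbb{R}^3,$$
where $\sigma=\grad_{p_0}u-u(p_0)\,p_0$.

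To establish this identity, I decompose $\mathbb{R}^3=\mathbb{R}\,p_0\oplus T_{p_0}\mathbb{H}^2$, a direct sum which is $\inners_{1,2}$-orthogonal because $T_{p_0}\mathbb{H}^2=p_0^{\perp}$ in the Minkowski metric. On the tangent factor, the restriction $U|_{\mathbb{H}^2}=u$ yields $\mathrm{d}U_{p_0}(w)=\mathrm{d}u_{p_0}(w)=\inner{\grad_{p_0}u,w}_{1,2}$ for $w\in T_{p_0}\mathbb{H}^2$. On the radial factor, Euler's identity for a $1$-homogeneous function gives $\mathrm{d}U_{p_0}(p_0)=U(p_0)=u(p_0)$. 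Combining these with $\inner{\grad_{p_0}u,p_0}_{1,2}=0$ and $\inner{p_0,p_0}_{1,2}=-1$ produces the displayed formula.

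With this in hand, I note that $p_0=p_0^0(1,\eta_0)$, so $0$-homogeneity of $\mathrm{d}U$ (obtained by differentiating $U(\lambda x)=\lambda U(x)$ in $x$) gives $\mathrm{d}U_{(1,\eta_0)}=\mathrm{d}U_{p_0}$, while $1$-homogeneity gives $U(1,\eta_0)=u(p_0)/p_0^0$. Substituting into the definition of $a$,
$$a(\eta)=\overline{u}(\eta_0)+\mathrm{d}_{\eta_0}\overline{u}(\eta-\eta_0)=U(1,\eta_0)+\mathrm{d}U_{(1,\eta_0)}(0,\eta-\eta_0)=\frac{u(p_0)}{p_0^0}+\inner{(0,\eta-\eta_0),\sigma}_{1,2}.$$
The target equality $a(\eta)=\inner{(1,\eta),\sigma}_{1,2}$ is then equivalent to $\inner{(1,\eta_0),\sigma}_{1,2}=u(p_0)/p_0^0$, which, after clearing $p_0^0$, reduces to $\inner{p_0,\sigma}_{1,2}=u(p_0)$. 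This last identity is immediate from the definition of $\sigma$ together with $\inner{p_0,p_0}_{1,2}=-1$ and $\grad_{p_0}u\in p_0^{\perp}$.

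There is no substantial obstacle: the whole statement is essentially a bookkeeping identity. The only point requiring care is consistently tracking the homogeneity of $U$ together with the passage between $u$, $\overline{u}$, and the affine slice $\{x_0=1\}$ of the future cone; once the Minkowski expression for $\mathrm{d}U_{p_0}$ is in place, the rest is a short direct comparison.
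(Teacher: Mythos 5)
Your argument is correct, and it takes a genuinely different (and cleaner) route than the paper's. The paper proves the lemma by brute-force computation with the radial projection: it writes \(u\circ\Pi^{-1}=L^{-1}\overline{u}\) with \(L(\eta)=\sqrt{1-\lvert\eta\rvert^2}\), computes \(\mathrm{d}_{p_0}\Pi\) and \(\mathrm{d}_{\eta_0}\Pi^{-1}\) explicitly, expands \(\mathrm{d}_{\eta_0}L^{-1}(h)=L^{-3}(\eta_0)\langle\eta_0,h\rangle_2\), and then cancels the resulting Euclidean inner-product terms against \(\langle(1,\eta),u(p_0)p_0\rangle_{1,2}\). You instead pass to the \(1\)-homogeneous extension \(U\) of \(u\) (which the paper itself introduces just before Proposition \ref{BF_platau_HP}) and identify \(\mathrm{d}U_{p_0}\) with \(\langle\cdot,\sigma\rangle_{1,2}\) via the orthogonal splitting \(\mathbb{R}^3=\mathbb{R}p_0\oplus T_{p_0}\mathbb{H}^2\) together with Euler's identity; the \(0\)-homogeneity of \(\mathrm{d}U\) then transports everything to the slice \(\{x_0=1\}\) with no explicit Jacobians. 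Indeed your computation can be compressed one step further: since \(U(1,\eta_0)=\mathrm{d}U_{(1,\eta_0)}(1,\eta_0)\) by Euler, one gets \(a(\eta)=\mathrm{d}U_{(1,\eta_0)}(1,\eta)=\langle(1,\eta),\sigma\rangle_{1,2}\) directly. What the paper's approach buys is that the intermediate formulas (e.g.\ \(\mathrm{d}_{\eta_0}\Pi^{-1}(\eta-\eta_0)=v_1/L(\eta_0)\)) are reused elsewhere, e.g.\ in the proof of Proposition \ref{second_proof}; what yours buys is conceptual transparency and the absence of any coordinate cancellation. Both are complete.
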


\begin{proof}
Let $L:\mathbb{D}^2\to\mathbb{R}$ be the function defined by $$L(\eta)=\sqrt{1-\lvert\eta\rvert^2}.$$ Recall that the radial projection defined in \eqref{radial} induces an isometry between $\mathbb{D}^2$ and $\mathbb{H}^2$, so consider $\eta_0=(x_0,y_0)\in \mathbb{D}^2$ and $p_0=\Pi^{-1}(\eta_0)$. By \eqref{u_et_u_bar}, \(u \circ \Pi^{-1} = L^{-1}\overline{u}\), and thus taking the differential at \(\eta_0\), we obtain that for all \(h \in \mathbb{R}^2\):
\begin{equation}\label{eq_sigma_27}
\langle \mathrm{d}_{\eta_0}\Pi^{-1}(h), \grad_{p_0} u \rangle_{1,2} = \mathrm{d}_{\eta_0}L^{-1}(h) \overline{u}(\eta_0) + L^{-1}(\eta_0) \mathrm{d}_{\eta_0}\overline{u}(h).
\end{equation} By an elementary computation, the differential of $\Pi$ is given by:
$$\mathrm{d}_{p_0}\Pi=\sqrt{1-\lvert \eta_0\rvert^2}\begin{pmatrix}
-x_0 & 1  & 0  \\
-y_0 & 0  & 1
\end{pmatrix},$$
In particular, 
$$\mathrm{d}_{p_0}\Pi((1,\eta))=\sqrt{1-\lvert\eta_0\rvert^2}(\eta-\eta_0).$$
Let us write \((1, \eta) = v_1 + v_2\), where \(v_1 \in \mathrm{T}_{p_0}\mathbb{H}^2\) and \(v_2 \in \mathbb{R} \cdot p_0\). Since \(\mathrm{d}_{p_0}\Pi(p_0) = 0\), we get
$$\mathrm{d}_{p_0}\Pi(v_1) = \sqrt{1 - \lvert \eta_0 \rvert^2} (\eta - \eta_0).$$
Since \(\mathrm{d}_{\eta_0}\Pi\) is an isomorphism from \(\mathrm{T}_{p_0}\mathbb{H}^2\) to \(\mathrm{T}_{\eta_0}\mathbb{D}^2 \cong \mathbb{R}^2\), we deduce that
\begin{equation}\label{eq_sigma_277}
    \mathrm{d}_{\eta_0}\Pi^{-1}(\eta - \eta_0) = \frac{v_1}{L(\eta_0)}.
\end{equation}
For \(h = \eta - \eta_0\), we get from \eqref{eq_sigma_277} and \eqref{eq_sigma_27}:
$$\langle \frac{v_1}{L(\eta_0)}, \grad_{p_0} u \rangle_{1,2} = \mathrm{d}_{\eta_0}L^{-1}(\eta - \eta_0) \overline{u}(\eta_0) + L^{-1}(\eta_0) \mathrm{d}_{\eta_0}\overline{u}(\eta - \eta_0).$$
Equivalently,
\begin{equation}\label{eq_sigma_28}
\langle v_1, \grad_{p_0} u \rangle_{1,2} = L(\eta_0) \mathrm{d}_{\eta_0}L^{-1}(\eta - \eta_0) \overline{u}(\eta_0) + \mathrm{d}_{\eta_0}\overline{u}(\eta - \eta_0).
\end{equation}
Using the fact that \((1, \eta_0) = v_1 + v_2\) and \(\langle \grad_{p_0} u, v_2 \rangle_{1,2} = 0\), we can rewrite \eqref{eq_sigma_28} as:
\begin{equation}\label{eq_sigma_29}
\langle (1, \eta), \grad_{p_0} u \rangle_{1,2} = L(\eta_0) \mathrm{d}_{\eta_0}L^{-1}(\eta - \eta_0) \overline{u}(\eta_0) + \mathrm{d}_{\eta_0}\overline{u}(\eta - \eta_0).
\end{equation}
Note that for all \(h \in \mathbb{R}^2\),
$$\mathrm{d}_{\eta_0}L^{-1}(h) = L^{-3}(\eta_0) \langle \eta_0, h \rangle_2,$$ where \(\langle \cdot, \cdot \rangle_2\) is the standard Euclidean metric on \(\mathbb{R}^2\). It follows from \eqref{eq_sigma_29}:
\begin{align}\label{eq_sigma_30}
\begin{split}
\langle (1, \eta), \grad_{p_0} u \rangle_{1,2} &= L^{-2}(\eta_0) \langle \eta - \eta_0, \eta_0 \rangle_2 \overline{u}(\eta_0) + \mathrm{d}_{\eta_0}\overline{u}(\eta - \eta_0) \\
&= L^{-2}(\eta_0) (\langle \eta, \eta_0 \rangle_2 - \lvert \eta_0 \rvert^2) \overline{u}(\eta_0) + \mathrm{d}_{\eta_0}\overline{u}(\eta - \eta_0).
\end{split}
\end{align}
On the other hand, we have:
\begin{equation}\label{eq_sigma_31}
\langle (1, \eta), u(p_0) p_0 \rangle_{1,2} = L^{-2}(\eta_0) (-1 + \langle \eta, \eta_0 \rangle_{2}) \overline{u}(\eta_0).
\end{equation}
Thus, we deduce from \eqref{eq_sigma_30} and \eqref{eq_sigma_31}:
\begin{align*}
\langle (1, \eta), \grad_{p_0} u - u(p_0) p_0 \rangle_{1,2} &= L^{-2}(\eta_0) \big(\langle \eta, \eta_0 \rangle_2 - \lvert \eta_0 \rvert^2 + 1 - \langle \eta, \eta_0 \rangle_{2}\big) \overline{u}(\eta_0) + \mathrm{d}_{\eta_0}\overline{u}(\eta - \eta_0) \\
&= \overline{u}(\eta_0) + \mathrm{d}_{\eta_0}\overline{u}(\eta - \eta_0) \\
&= a(\eta).
\end{align*}
This finishes the proof.
\end{proof}
As consequence, we get the following corollary.
\begin{cor}\label{cor_formule_XS}
    Let $u:\mathbb{H}^2\to\mathbb{R}$ be a smooth function and $S$ be the graph of $u$. Then the vector field associated to $S$ is given by:
    $$V_S=\J\grad(u).$$
\end{cor}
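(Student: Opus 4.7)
The proof is an immediate consequence of Lemma \ref{sigma_lemma} combined with the formula \eqref{complex_structure_H2} for the almost complex structure on $\mathbb{H}^2$, so the plan is essentially a two-line calculation.

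First I would unwind the definitions. By \eqref{XS}, the vector field associated with $S = \mathrm{gr}(u)$ is
\[ V_S(p_0) = \Lambda(\sigma)(p_0) = p_0 \boxtimes \sigma, \]
where $\sigma \in \minko$ is the unique vector such that $\mathrm{P}_\sigma$ is the tangent plane to $\mathrm{gr}(\overline{u})$ at $(\eta_0, \overline{u}(\eta_0))$ with $\eta_0 = \Pi(p_0)$. Lemma \ref{sigma_lemma} identifies $\sigma$ explicitly: $\sigma = \grad_{p_0} u - u(p_0)\, p_0$.

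Then I would exploit bilinearity of the Minkowski cross product to decompose
\[ V_S(p_0) = p_0 \boxtimes \big(\grad_{p_0} u - u(p_0)\, p_0\big) = p_0 \boxtimes \grad_{p_0} u - u(p_0)\, (p_0 \boxtimes p_0). \]
The term $p_0 \boxtimes p_0$ vanishes because $\inner{p_0 \boxtimes p_0, v}_{1,2} = \det(p_0, p_0, v) = 0$ for all $v$. Since $\grad_{p_0} u \in \mathrm{T}_{p_0}\mathbb{H}^2$, formula \eqref{complex_structure_H2} gives $p_0 \boxtimes \grad_{p_0} u = \J_{p_0} \grad_{p_0} u$, and we conclude
\[ V_S(p_0) = \J_{p_0} \grad_{p_0} u. \]

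There is no serious obstacle here: the corollary is simply a compact reformulation of Lemma \ref{sigma_lemma} via the intrinsic description \eqref{complex_structure_H2} of $\J$ through the cross product, with the $u(p_0)p_0$ correction term disappearing because $p_0$ is parallel to itself.
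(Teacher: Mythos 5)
Your proposal is correct and follows exactly the same route as the paper's own proof: invoke Lemma \ref{sigma_lemma} to identify $\sigma=\grad_{p_0}u-u(p_0)p_0$, use $p_0\boxtimes p_0=0$ to drop the correction term, and recognize $p_0\boxtimes\grad_{p_0}u$ as $\J_{p_0}\grad_{p_0}u$ via \eqref{complex_structure_H2}. The only difference is that you spell out why $p_0\boxtimes p_0$ vanishes, which the paper takes for granted.
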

\begin{proof}
 Let $p\in\mathbb{H}^2$. It follows from Lemma \ref{sigma_lemma} and from the definition of $V_S$ (see \eqref{XS}) that $$V_S(p)=p\boxtimes\sigma,$$ where
 $\sigma=\grad_{p}u-u(p)p$. Since $p\boxtimes p=0$, we have 
 $$V_S(p)=p\boxtimes\grad_{p}u.$$ The conclusion follows from the interpretation of $\J$ in terms of the Minkowski cross product (see \eqref{complex_structure_H2}).
\end{proof}

\begin{remark}
It is worth noting the following:
 \begin{enumerate}
     \item The above construction of the vector field is invariant under the normal evolution of a surface along the vertical fiber on $\HP$. More precisely, let $u:\mathbb{H}^2\to\mathbb{R}$ be a smooth function and $S=\mathrm{gr}(u)$. For $t\in\mathbb{R}$, define the parallel surface $S_t$ as the graph of $u+t$. Then it follows for Corollary \ref{cor_formule_XS} that $V_{S+t}=V_S.$
     \item Similarly, one may define a vector field from a convex function $u:\mathbb{H}^2\to\mathbb{R}$ (i.e. $\overline{u}$ convex) which is possibly discontinuous. One needs to replace the tangent plane by a choice \textit{support plane}. For instance, this construction is used in \cite{diaf2023Infearth} to study \textit{infinitesimal earthquakes}; see Section \ref{appendix6.1}.
 \end{enumerate}
\end{remark}
The next goal is to understand which vector fields on $\mathbb{H}^2$ can be obtained from an embedded spacelike surface in $\HP$ as in \eqref{XS}. Let $\mathrm{div}$ denote the divergence operator on $\mathbb{H}^2$ defined by

$$\mathrm{div}(V)=\mathrm{tr}(\nabla V).$$ In explicit terms, this is 
\begin{equation}\label{formula_divergence}
    \mathrm{div}(V)=g^{\mathbb{H}^2}(\nabla^{\mathbb{H}^2}_{e_1}V,e_1)+g^{\mathbb{H}^2}(\nabla^{\mathbb{H}^2}_{e_2}V,e_2),
\end{equation} where $e_i$ is any orthonormal basis. A vector field $V$ on $\mathbb{H}^2$ is said to be \textit{divergence-free} if $\mathrm{div}(V)=0.$ For the sake of clarity, we will henceforth denote the connection of Levi-Civita of $\mathbb{H}^2$ by $\nabla$ instead of $\nabla^{\mathbb{H}^2}$.

We can now state the principal result of this section which is the content of Theorem \ref{intro_divergence}.
\begin{theorem}\label{Lagrange_vs_divergence_free}
Let $V:\mathbb{H}^2\to\mathrm{T}\mathbb{H}^2$ be a smooth vector field on $\mathbb{H}^2.$ The following are equivalent:
\begin{enumerate}
\item There exists a smooth function $u:\mathbb{H}^2\to\mathbb{R}$ such that $V=\J\grad(u).$ In other words $V=V_S$, which is the vector field associated to the surface $S=\mathrm{gr}(u)\subset\HP$ (see Corollary \ref{cor_formule_XS}).
\item $V(\mathbb{H}^2)$ is a \textit{Lagrangian} surface in $\mathrm{T}\mathbb{H}^2$ with respect to the symplectic form $\Omega$.
\item $V$ is divergence-free vector field of $\mathbb{H}^2$.
\end{enumerate}
\end{theorem}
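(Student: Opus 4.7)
The plan is to establish the two equivalences (2) $\Leftrightarrow$ (3) and (1) $\Leftrightarrow$ (3) separately. The first will follow from a direct computation of the pullback $V^{*}\Omega$ in the horizontal--vertical splitting of Theorem \ref{canonical_pseudo}, while the second rests on the fact that on the simply connected Kähler surface $\mathbb{H}^{2}$, divergence-free vector fields are precisely the Hamiltonian ones. Both steps are two-dimensional manifestations of general facts about Kähler geometry, so the main technical work amounts to bookkeeping of sign conventions.

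For (2) $\Leftrightarrow$ (3), the key input is the standard horizontal--vertical decomposition of $\mathrm{d}V$ along a section: for $p \in \mathbb{H}^{2}$ and $X \in \mathrm{T}_{p}\mathbb{H}^{2}$,
$$
\mathrm{d}_{p}V(X) \;=\; X^{\mathcal{H}} + (\nabla_{X}V)^{\mathcal{V}},
$$
so that under the identification \eqref{splittangent} the vector $\mathrm{d}V(X)$ corresponds to the pair $(X, \nabla_{X}V)$. Substituting into the formula for $\Omega$ in Theorem \ref{canonical_pseudo} yields
$$
(V^{*}\Omega)_{p}(X,Y) \;=\; g^{\mathbb{H}^{2}}(\J X,\nabla_{Y}V) - g^{\mathbb{H}^{2}}(\nabla_{X}V,\J Y).
$$
I would then evaluate on an oriented orthonormal frame $(e_{1},e_{2})$ with $e_{2} = \J e_{1}$, so that $\J e_{2} = -e_{1}$, and obtain after simplification
$$
(V^{*}\Omega)_{p}(e_{1},e_{2}) \;=\; g^{\mathbb{H}^{2}}(\nabla_{e_{1}}V,e_{1}) + g^{\mathbb{H}^{2}}(\nabla_{e_{2}}V,e_{2}) \;=\; \mathrm{div}(V)(p).
$$
Since $\omega^{\mathbb{H}^{2}}$ is the area form of $\mathbb{H}^{2}$ (as $\omega^{\mathbb{H}^2}(e_1,e_2) = 1$ in such a frame), this identifies $V^{*}\Omega = \mathrm{div}(V)\,\omega^{\mathbb{H}^{2}}$. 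Hence $V(\mathbb{H}^{2})$ is Lagrangian if and only if $V$ is divergence-free.

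For (1) $\Leftrightarrow$ (3), I would use Cartan's formula together with the closedness of $\omega^{\mathbb{H}^{2}}$ and the identity $\mathcal{L}_{V}\omega^{\mathbb{H}^{2}} = \mathrm{div}(V)\,\omega^{\mathbb{H}^{2}}$, valid for the area form on any oriented Riemannian surface. Since $\iota_{V}\omega^{\mathbb{H}^{2}}(\cdot) = g^{\mathbb{H}^{2}}(\J V,\cdot) = (\J V)^{\flat}$, Cartan gives
$$
\mathrm{d}\bigl((\J V)^{\flat}\bigr) \;=\; \mathcal{L}_{V}\omega^{\mathbb{H}^{2}} \;=\; \mathrm{div}(V)\,\omega^{\mathbb{H}^{2}},
$$
so $V$ is divergence-free precisely when $(\J V)^{\flat}$ is closed. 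Because $\mathbb{H}^{2}$ is simply connected, the Poincaré lemma then produces a smooth $\widetilde u:\mathbb{H}^{2}\to\mathbb{R}$ with $\J V = \grad \widetilde u$, i.e.\ $V = -\J\grad \widetilde u = \J\grad(-\widetilde u)$; setting $u = -\widetilde u$ and invoking Corollary \ref{cor_formule_XS} yields (1). The converse direction is an immediate check: if $V = \J\grad u$ then $\nabla_{X}V = \J\,\hess(u)(X)$ because $\J$ is parallel, and the resulting trace of $\J \circ \hess(u)$ vanishes by symmetry of the Hessian.

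The only substantive ingredient is the horizontal--vertical decomposition of $\mathrm{d}V$, which is a standard consequence of the definition of the Levi-Civita connection map on the tangent bundle; everything else is a short calculation in the Kähler formalism. Beyond the final appeal to the Poincaré lemma, the argument is purely local, which is why the theorem remains valid on any simply connected open subset of $\mathbb{H}^{2}$ as remarked after Theorem \ref{intro_divergence}.
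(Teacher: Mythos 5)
Your proof is correct and follows essentially the same route as the paper's: both rest on the decomposition $\mathrm{d}V(X)=(X,\nabla_X V)$, the identity $(V^{*}\Omega)(e_1,e_2)=\mathrm{div}(V)$, and the observation that $\mathrm{div}(V)=0$ is equivalent to closedness of the dual one-form $(\J V)^{\flat}=\iota_V\omega^{\mathbb{H}^2}$, followed by the Poincaré lemma on the simply connected $\mathbb{H}^2$. The only cosmetic difference is that you obtain $\mathrm{d}\bigl((\J V)^{\flat}\bigr)=\mathrm{div}(V)\,\omega^{\mathbb{H}^2}$ via Cartan's formula and $\mathcal{L}_V\omega^{\mathbb{H}^2}=\mathrm{div}(V)\,\omega^{\mathbb{H}^2}$, whereas the paper computes $\mathrm{d}\alpha$ directly and identifies it with $V^{*}\Omega$ using that $\J$ is parallel.
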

To prove this result, we need the following proposition which expresses the differential of a vector field under the decomposition \eqref{splittangent}:
\begin{prop}\cite[Proposition 1.6]{on_harmonic_vector_field}\label{dV(X)}
Let $V:\mathbb{H}^2\to\mathrm{T}\mathbb{H}^2$ be a vector field on $\mathbb{H}^2.$ Then for each vector field $X$ we have:
      $$\mathrm{d}V(X)=(X,\nabla_XV).$$
\end{prop}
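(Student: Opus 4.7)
The plan is to pick a curve $\gamma$ through $p$ with velocity $X_p$ and to use parallel transport along $\gamma$ to construct an explicit local trivialization of $\mathrm{T}\mathbb{H}^2$ in which both the horizontal and vertical lifts take their most transparent form. Let $\gamma:(-\epsilon,\epsilon)\to\mathbb{H}^2$ be a smooth curve with $\gamma(0)=p$ and $\gamma'(0)=X_p$, and denote by $P_t^{-1}:\mathrm{T}_p\mathbb{H}^2\to\mathrm{T}_{\gamma(t)}\mathbb{H}^2$ the parallel transport along $\gamma$. I would define the local diffeomorphism
$$\Psi:(-\epsilon,\epsilon)\times\mathrm{T}_p\mathbb{H}^2\to\mathrm{T}\mathbb{H}^2,\qquad \Psi(t,u)=(\gamma(t),P_t^{-1}u),$$
which sends $(0,V(p))$ to $(p,V(p))$.

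The first step is to compute the two partial derivatives of $\Psi$ at $(0,V(p))$. The $t$-derivative is the velocity of the curve $t\mapsto(\gamma(t),V_{\|}(t))$, where $V_{\|}$ is the parallel vector field along $\gamma$ with $V_{\|}(0)=V(p)$. Since the 1-jet at $t=0$ of any parallel vector field $W$ along a curve $c$ with $W(0)=v$ and $c'(0)=w$ is uniquely determined by $(w,v)$ through the first-order ODE $\nabla_{c'}W=0$, this velocity coincides with the one obtained by replacing $\gamma$ with the geodesic tangent to $X_p$, which by definition is $F^{\mathcal{H}}_{(p,V(p))}(X_p)=X^{\mathcal{H}}$. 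The $u$-derivative applied to $w\in\mathrm{T}_p\mathbb{H}^2$ is the velocity of $s\mapsto(p,V(p)+sw)$, which is exactly $F^{\mathcal{V}}_{(p,V(p))}(w)=w^{\mathcal{V}}$.

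The second step is to write $V\circ\gamma$ through this trivialization. Setting $\alpha(t):=P_t V(\gamma(t))\in\mathrm{T}_p\mathbb{H}^2$, one has $\alpha(0)=V(p)$ and the tautological identity $V(\gamma(t))=\Psi(t,\alpha(t))$. Applying the chain rule at $t=0$ gives
$$\mathrm{d}V(X_p)=\tfrac{d}{dt}\big|_{t=0}V(\gamma(t))=X^{\mathcal{H}}+\bigl(\alpha'(0)\bigr)^{\mathcal{V}}.$$
By the standard characterization of the Levi-Civita connection through parallel transport, $\alpha'(0)=\nabla_{X_p}V$, so $\mathrm{d}V(X_p)=X^{\mathcal{H}}+(\nabla_{X_p}V)^{\mathcal{V}}$, which under the identification \eqref{splittangent} is exactly the pair $(X_p,\nabla_{X_p}V)$.

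The only technical point worth emphasizing is the invariance of the horizontal lift under the choice of curve tangent to $X_p$: the excerpt defines $F^{\mathcal{H}}$ via a geodesic, but the argument above uses a possibly non-geodesic $\gamma$. This invariance follows because the horizontal subspace is characterized intrinsically (for instance as the $\mathrm{d}p$-isomorphic lift of $\mathrm{T}_p\mathbb{H}^2$ complementary to $\mathcal{V}\mathbb{H}^2$), or equivalently from the first-order ODE argument above. Once this is in place, the statement is a clean application of the chain rule to the factorization $V\circ\gamma=\Psi\circ(\mathrm{id},\alpha)$.
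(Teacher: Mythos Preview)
Your argument is correct and is essentially the standard proof of this well-known fact. Note, however, that the paper does not actually prove this proposition: it is quoted directly from \cite[Proposition 1.6]{on_harmonic_vector_field} without proof, so there is no ``paper's own proof'' to compare against. Your trivialization via parallel transport along an arbitrary curve tangent to $X_p$, together with the chain rule and the characterization $\nabla_{X_p}V=\tfrac{d}{dt}\big|_{t=0}P_tV(\gamma(t))$, is exactly the argument one finds in the cited reference (and in most treatments of the Sasaki-type splitting of $\mathrm{T}\mathrm{T}M$). Your remark that the horizontal lift is independent of the choice of curve with given initial velocity is the one point worth making explicit, and you handle it correctly.
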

\begin{proof}[Proof of Theorem \ref{Lagrange_vs_divergence_free}]
Let $X$ and $Y$ be two vector fields on $\mathbb{H}^2$, then
\begin{align*}
(\Omega^{*}V)(X,Y)&=\Omega(\mathrm{d}V(X),\mathrm{d}V(Y))\\
&=\Omega((X,\nabla_XV),(Y,\nabla_YV))\\
&= g^{\mathbb{H}^2}(\mathrm{J}X,\nabla_YV)-g^{\mathbb{H}^{2}}(\nabla_XV,\J Y). 
\end{align*}
On the other hand, if we consider the one-form given by
\begin{equation}\label{dualform}
\alpha := g^{\mathbb{H}^2}(\J V,\cdot ),
\end{equation}
then by simple computation, $\alpha$ satisfies
\begin{align*}
\mathrm{d}\alpha(X,Y)&=\mathrm{d}_X \alpha(Y)-\mathrm{d}_Y\alpha(X)-\alpha([X,Y])\\
&=g^{\mathbb{H}^2}(\nabla_X \J V,Y)-g^{\mathbb{H}^2}(\nabla_Y \J V,X).
\end{align*}
Since $\J$ is parallel with respect to $\nabla$ (i.e., $\nabla_X \J Y = \J \nabla_X Y$ for all vector fields $X$ and $Y$ on $\mathbb{H}^2$), then 
\begin{align*}
\mathrm{d}\alpha(X,Y)&=g^{\mathbb{H}^2}(\J\nabla_X V,Y)-g^{\mathbb{H}^2}(\J\nabla_Y V,X)\\
&=-g^{\mathbb{H}^2}(\nabla_X V,\J Y)+g^{\mathbb{H}^2}(\nabla_Y V,\J X).\\
&=(\Omega^{*}V)(X,Y).
\end{align*}
Now, let $e_1, e_2$ be an oriented orthonormal local frame of $g^{\mathbb{H}^2}$ so that $\J e_1=e_2$ and $\J e_2=-e_1$. Then we get
\begin{equation}\label{eq_div_free_closed}
\mathrm{div}(V)=\mathrm{d}\alpha(e_1,e_2)=(\Omega^{*}V)(e_1,e_2).
\end{equation}
The last equation \eqref{eq_div_free_closed} shows the equivalence between $(2)$ and $(3)$. To get the equivalence between $(1)$ and $(3)$, observe that by \eqref{eq_div_free_closed} $V$ is divergence-free if and only if $\alpha$ is a closed $1$-form, but $\mathbb{H}^2$ is simply connected, so we conclude that there exists a function $f: \mathbb{H}^2 \rightarrow \mathbb{R}$ such that $\alpha = \mathrm{d}f$. It follows from \eqref{dualform} that $\J V = \grad(f)$, and hence $V = \J\grad(u)$ with $u=-f$.
\end{proof}
                          
\subsubsection{Normal congruence in Minkowski space}
The goal of this part is to briefly explain how the vector field constructed in \ref{cor_formule_XS} can be obtained from the \textit{normal congruence} of a spacelike surface in Minkowski space. The general idea is that the space of geodesics of certain pseudo-Riemannian manifolds $M$ has a rich geometric structure. The study of such structures goes back to the seminal work of Hitchin \cite{Monopole_Hitchin}, who described a natural complex structure of the space of oriented straight lines in Euclidean $3$-space. An important aspect of this is the study of the lift of a submanifold in $M$ into the space of geodesics of $M$ (this lift is generally called the normal congruence). The reader can also consult \cite{ML_in_tangent_bundle, Anciaux_space_of_geodesic, Emam_Seppi1} for related works on normal congruences.

In this paper, we are interested in $\mathrm{T}\mathbb{H}^2$, which has a natural identification with the space of oriented timelike geodesics in $\minko$, denoted by $\mathbb{L}^{-1}(\minko)$. Following \cite[Section 6.6]{DGK_complete_lorentz}, the identification is given by
\begin{equation}
    \begin{array}{ccccc}\label{Lpv}
 &  & \mathrm{T}\mathbb{H}^2 & \to & \mathbb{L}^{-1}(\minko) \\
 & & (p,w) & \mapsto & \mathrm{L}_{p,w}:=\{\sigma\in\minko \mid p\boxtimes\sigma=w\}. \\
\end{array}\end{equation}
Moreover, we have the following equivariance properties; for $A\in \mathrm{O}_0(1,2)$ and $v\in\minko$
\begin{equation}
    \mathrm{L}_{Ap,Aw}=A(\mathrm{L}_{p,w}),\ \ \ \mathrm{L}_{p,p\boxtimes v+w}=\tau_v(\mathrm{L}_{p,w}),
\end{equation} where $\tau_v$ is the translation by $v$ in $\minko.$
If $c:\mathbb{R}\to\minko$ is a future timelike geodesic which is parameterized by arc length, then it is not difficult to check that
\begin{equation}\label{c(R)}
    c(\mathbb{R})=\mathrm{L}_{c'(0),c'(0)\boxtimes c(0)}
\end{equation}
Given a spacelike surface $\Sigma$ in $\minko$, i.e. for every $x\in\Sigma$, the tangent plane $\mathrm{T}_x\Sigma$ is a spacelike plane of $\minko$. There is a notion of \textit{Gauss map}:
  $$G_{\Sigma}:\Sigma\to\mathbb{H}^2$$
  which maps $x\in\Sigma$ to the normal of $S$ at $x$, i.e. the future unit timelike vector orthogonal to $T_x\Sigma$. For a spacelike surface $\Sigma$ of $\minko$ for which the Gauss map $G_{\Sigma}$ is invertible, one may lift $\Sigma$ in $\mathbb{L}^{-1}(\minko)$ to define a \textit{normal congruence} $\overline{\Sigma}$ as follows:
\begin{equation}
    \overline{\Sigma}=\{\mathrm{L}_{p,p\boxtimes G^{-1}_{\Sigma}(p)}\mid p\in\mathbb{H}^2\}.
\end{equation}
By \eqref{c(R)}, $\mathrm{L}_{p,p\boxtimes G^{-1}_{\Sigma}(p)}$ is the timelike geodesic in $\minko$ going through $G_{\Sigma}^{-1}(p)$ with speed $p.$
Under the identification \eqref{Lpv}, the surface $\overline{\Sigma}$ gives rise to the following vector field on $\mathbb{H}^2$ $$X(p)=p\boxtimes G_{\Sigma}^{-1}(p).$$
Consider $u_{\Sigma}:\mathbb{H}^2\to\mathbb{R}$ the \textit{support function} of $\Sigma$:
$$u_{\Sigma}(p)=\sup_{x\in\Sigma}\inner{x,p}_{1,2}.$$ It turns out that the $G_{\Sigma}^{-1}$ is related to $u_{\Sigma}$ by the formula:
$$G_{\Sigma}^{-1}(p)=\grad_p(u)-u(p)p,$$ and hence $$V(p)=p\boxtimes \grad_p(u)=\J_p\grad_p(u).$$ Then, one may notice that in the notation of Corollary \ref{cor_formule_XS}, we have $V=V_S$ where $S$ is the graph of the support function $u_{\Sigma}$.
We refer the reader to \cite{BS17} and the references therein for a more detailed exposition on the support function of surfaces in $\minko$.

\section{Harmonic Lagrangian Vector Fields}\label{sec5_harmo}

In this section, we introduce harmonic Lagrangian vector fields and prove several characterizations of them, as detailed in Theorem \ref{charac_HL}.

\subsection{Definition and Characterizations}
We begin this section by defining harmonic Lagrangian vector fields using the pseudo-Riemannian metric and the symplectic form of $\mathrm{T}\mathbb{H}^2$ described in the previous section. To do this, we need to recall the notion of a \textit{harmonic map} between pseudo-Riemannian manifolds.
Let $F:(M,g^M)\to(N,g^N)$ be a smooth map between two pseudo-Riemannian manifolds. Denote by $\nabla^{M}$ and $\nabla^{N}$ the Levi-Civita connections of $g^M$ and $g^N$ respectively. If $F^*(\mathrm{T}N)$
denotes the pullback bundle, we consider $\nabla^2F$ to be the section of $\mathrm{T}^*M\otimes \mathrm{T}^*M\otimes F^*(\mathrm{T}N)$ defined by 
$$\nabla^2F(X,Y)=\nabla_{\mathrm{d}F(X)}^{N}\mathrm{d}F(Y)-\mathrm{d}F(\nabla^{M}_XY),$$ for all vector fields $X$ and $Y$ on $M$.
The \textit{tension field} of $F$ is the section of $F^*(\mathrm{T}N)$ given by: 
 $$\tau(F):=\mathrm{tr}_{g^{M}}\big(\nabla^2F   \big).$$
Namely, if we fix $(e_i)$, a local orthonormal local frame for $g_M$, then 
 $$\tau(F)=\sum_{i}g^M(e_i,e_i)\nabla^2F(e_i,e_i).$$
Note that if $M$ is a Riemannian manifold, then $g^{M}(e_i,e_i)$ is always equal to $1$.
\begin{defi}\label{harmonic_def}
Let $F:(M,g^M)\to(N,g^N)$ be a smooth map between two pseudo-Riemannian manifolds. Then $F$ is called \textit{harmonic} if $\tau(F)=0.$
\end{defi}
We can now state the Definition of harmonic Lagrangian vector field.
\begin{defi}\label{def1}
We say that a vector field $V:\mathbb{H}^2\to\mathrm{T}\mathbb{H}^2$ is Harmonic Lagrangian if it satisfies the following conditions:
\begin{itemize}
\item $V:(\mathbb{H}^2,g_{\mathbb{H}^2})\to(\mathrm{T}\mathbb{H}^2,\mathbb{G})$ is an harmonic map.
\item $V(\mathbb{H}^2)$ is a Lagrangian surface in $\mathrm{T}\mathbb{H}^2$ with respect to the symplectic structure $\Omega$. (See Theorem \ref{canonical_pseudo}).
\end{itemize}
\end{defi}
Konderak \cite{on_harmonic_vector_field} established necessary and sufficient conditions under which a vector field $V:(\mathbb{H}^2,g_{\mathbb{H}^2})\to(\mathrm{T}\mathbb{H}^2,\mathbb{G})$ is an harmonic map, which we will explain now. For clarity, we will use the notation "$\mathrm{tr}$" to denote the trace of a $2-$tensor on $\mathbb{H}^2$, without explicitly mentioning the hyperbolic metric $g^{\mathbb{H}^2}$.

\begin{defi}
The \textit{rough Laplacian} of a vector field $V$ on $\mathbb{H}^2$ is defined by
$$\overline{\Delta}V=\mathrm{tr}\big((X,Y)\mapsto \nabla_X\nabla_Y V-\nabla_{\nabla_X Y}V    \big).$$
\end{defi}
We denote by $\mathrm{R}$ the Riemann tensor of $(\mathbb{H}^2,g^{\mathbb{H}^2})$ of type $(3,1)$.
$$\mathrm{R}(X,Y)Z=\nabla_X\nabla_YZ-\nabla_Y\nabla_XZ-\nabla_{[X,Y]}Z.$$
Then we have 
\begin{theorem}\cite[Proposition 4.1]{on_harmonic_vector_field}\label{on_harmonic_vector_field}
A vector field $V:(\mathbb{H}^2,g^{\mathbb{H}^2})\to(\mathrm{T}\mathbb{H}^2,\mathbb{G})$ is an harmonic map if and only if $$\overline{\Delta}V+\mathrm{tr}\big((X,Y)\to\mathrm{R}(V,X)Y    \big)=0$$
\end{theorem}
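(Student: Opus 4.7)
The statement is an identity between two invariantly-defined sections of $V^{*}(\mathrm{TT}\mathbb{H}^{2})$, so the proof amounts to a direct computation of the tension field $\tau(V)$ of the map $V : (\mathbb{H}^{2}, g^{\mathbb{H}^{2}}) \to (\mathrm{T}\mathbb{H}^{2}, \mathbb{G})$, followed by identifying its vertical part with the expression on the left-hand side. The strategy is standard in the theory of harmonic vector fields; the novelty here is only that $\mathbb{G}$ is the pseudo-Kähler neutral metric rather than the Sasaki metric.

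\textbf{Step 1.} Use Proposition \ref{dV(X)} together with the splitting \eqref{splittangent} to write $\mathrm{d}V(X) = X^{\mathcal{H}} + (\nabla_{X} V)^{\mathcal{V}}$ for every vector field $X$ on $\mathbb{H}^{2}$. In particular, the differential $\mathrm{d}V$ is already decomposed into its horizontal and vertical parts, which will be essential for reading off the two components of $\tau(V)$.

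\textbf{Step 2.} Compute the Levi-Civita connection $\widetilde{\nabla}$ of $\mathbb{G}$ when evaluated on horizontal and vertical lifts. The input is Koszul's formula together with the standard bracket identities at a point $(x,v)\in\mathrm{T}\mathbb{H}^{2}$:
\begin{equation*}
[X^{\mathcal{H}}, Y^{\mathcal{H}}] = [X,Y]^{\mathcal{H}} - (\mathrm{R}(X,Y)v)^{\mathcal{V}}, \qquad [X^{\mathcal{H}}, Y^{\mathcal{V}}] = (\nabla_{X} Y)^{\mathcal{V}}, \qquad [X^{\mathcal{V}}, Y^{\mathcal{V}}] = 0.
\end{equation*}
Because $\mathbb{G}$ pairs only horizontal with vertical vectors, this produces explicit formulas for the four blocks $\widetilde{\nabla}_{X^{\mathcal{H}}} Y^{\mathcal{H}}$, $\widetilde{\nabla}_{X^{\mathcal{H}}} Y^{\mathcal{V}}$, $\widetilde{\nabla}_{X^{\mathcal{V}}} Y^{\mathcal{H}}$, $\widetilde{\nabla}_{X^{\mathcal{V}}} Y^{\mathcal{V}}$, each of them a linear combination of horizontal/vertical lifts with the correction terms given by contractions of the Riemann tensor $\mathrm{R}$ of $g^{\mathbb{H}^{2}}$ with the tautological vector $v$.

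\textbf{Step 3.} Insert the decomposition from Step 1 into the definition $\nabla^{2} V(X,Y) = \widetilde{\nabla}_{\mathrm{d}V(X)} \mathrm{d}V(Y) - \mathrm{d}V(\nabla_{X}Y)$, using the connection formulas from Step 2 on each of the four pieces. The terms coming from the ``$(X, \cdot)^{\mathcal{H}}$-part'' of $\mathrm{d}V$ acting on itself recombine into $(\nabla_{X} \nabla_{Y} V - \nabla_{\nabla_{X} Y} V)^{\mathcal{V}}$, plus a curvature correction of the form $(\mathrm{R}(V, X) Y)^{\mathcal{V}}$ together with horizontal correction terms that are manifestly symmetric in $X,Y$.

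\textbf{Step 4.} Take the trace over an orthonormal frame $(e_{1}, e_{2})$ of $\mathbb{H}^{2}$. The vertical component collapses, by definition of $\overline{\Delta}V$, to
\begin{equation*}
\Big(\overline{\Delta} V + \mathrm{tr}\big((X,Y) \mapsto \mathrm{R}(V, X) Y\big)\Big)^{\mathcal{V}},
\end{equation*}
while the horizontal component vanishes: the only surviving pieces are antisymmetric contractions of $\mathrm{R}$, which are killed by tracing an antisymmetric tensor against a symmetric one, or alternatively by invoking the first Bianchi identity. Hence $\tau(V)=0$ if and only if the vertical component vanishes, which is exactly the stated equation.

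\textbf{Main obstacle.} The heart of the argument, and the most delicate bookkeeping, lies in Step 2–Step 3: one must keep track of which curvature corrections land in the horizontal and vertical subspaces under $\widetilde{\nabla}$, and verify that the neutral signature of $\mathbb{G}$ does not break the cancellations that make the horizontal part of the trace vanish. Once the connection formulas are in place, Step 4 is essentially a symmetry argument using the first Bianchi identity and the definition of the rough Laplacian.
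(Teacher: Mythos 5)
The paper does not prove this statement at all: it is quoted verbatim from Konderak \cite[Proposition 4.1]{on_harmonic_vector_field} and used as a black box, so there is no internal proof to compare yours against. Judged on its own, your plan is the standard direct computation and it does go through. I checked the key formulas: with the bracket identities you list, Koszul's formula for the neutral metric $\mathbb{G}$ gives $\widetilde{\nabla}_{X^{\mathcal{H}}}Y^{\mathcal{H}}=(\nabla_XY)^{\mathcal{H}}+(\mathrm{R}(v,X)Y)^{\mathcal{V}}$, $\widetilde{\nabla}_{X^{\mathcal{H}}}Y^{\mathcal{V}}=(\nabla_XY)^{\mathcal{V}}$, and $\widetilde{\nabla}_{X^{\mathcal{V}}}Y^{\mathcal{H}}=\widetilde{\nabla}_{X^{\mathcal{V}}}Y^{\mathcal{V}}=0$; feeding $\mathrm{d}V(X)=X^{\mathcal{H}}+(\nabla_XV)^{\mathcal{V}}$ into $\nabla^2V(X,Y)$ then yields $\nabla^2V(X,Y)=\big(\nabla_X\nabla_YV-\nabla_{\nabla_XY}V+\mathrm{R}(V,X)Y\big)^{\mathcal{V}}$, and tracing gives exactly the stated equation, with injectivity of the vertical lift finishing the argument.

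One correction to your Steps 3--4, though it does not affect the conclusion. For this particular metric the horizontal part of $\nabla^2V(X,Y)$ is identically zero before any trace is taken: the only horizontal output of the four connection blocks is the $(\nabla_XY)^{\mathcal{H}}$ inside $\widetilde{\nabla}_{X^{\mathcal{H}}}Y^{\mathcal{H}}$, and it cancels exactly against $-\mathrm{d}V(\nabla_XY)$. So there are no "horizontal correction terms" to dispose of, and no Bianchi or antisymmetry argument is needed at the tracing stage. (Your text is also internally inconsistent on this point: Step 3 calls these terms "manifestly symmetric in $X,Y$" while Step 4 kills them because they are "antisymmetric" --- note that symmetric terms would \emph{not} die under the trace.) The first Bianchi identity does enter, but earlier: it is what converts the three curvature terms produced by Koszul's formula in the $Z^{\mathcal{H}}$-pairing into the single coefficient $2g(\mathrm{R}(v,X)Y,Z)$, i.e.\ it fixes the vertical correction in $\widetilde{\nabla}_{X^{\mathcal{H}}}Y^{\mathcal{H}}$, and it is also what makes that formula consistent with torsion-freeness.
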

In fact, Theorem \ref{on_harmonic_vector_field} deals with vector fields defined on a general pseudo-Riemannian manifold $M$. Observe that since $(\mathbb{H}^2,g^{\mathbb{H}^2})$ has sectional curvature $-1$, the Riemann tensor $\mathrm{R}$ takes the following simple form:
$$\mathrm{R}(X,Y)Z=g^{\mathbb{H}^2}(X,Z)Y-g^{\mathbb{H}^2}(Y,Z)X.$$ Thus if $e_1$ and $e_2$ is an oriented orthonormal local frame of $g^{\mathbb{H}^2}$, then:

\begin{align*}
\mathrm{tr}\big((X,Y)\to\mathrm{R}(V,X)Y    \big)&=\mathrm{R}(V,e_1)e_1+\mathrm{R}(V,e_2)e_2\\
&=g^{\mathbb{H}^2}(V,e_1)e_1-g^{\mathbb{H}^2}(e_1,e_1)V+g^{\mathbb{H}^2}(V,e_2)e_2-g^{\mathbb{H}^2}(e_2,e_2)V\\
&= (g^{\mathbb{H}^2}(V,e_1)e_1+g^{\mathbb{H}^2}(V,e_2)e_2)-2V\\
&=V-2V\\
&=-V.
\end{align*}
Combining the last computation with Theorem \ref{on_harmonic_vector_field}, we get:

\begin{cor}\label{cor_on_harmonic_vector_field}
A vector field $V:(\mathbb{H}^2,g^{\mathbb{H}^2})\to(\mathrm{T}\mathbb{H}^2,\mathbb{G})$ is an harmonic map if and only if $$\overline{\Delta}V=V.$$
\end{cor}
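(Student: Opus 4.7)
The plan is to directly specialize Theorem \ref{on_harmonic_vector_field} to the case where the target source manifold is $\mathbb{H}^2$, using the explicit form of the Riemann curvature tensor in constant curvature $-1$. The theorem asserts that $V$ is harmonic if and only if
$$\overline{\Delta}V + \mathrm{tr}\big((X,Y)\mapsto \mathrm{R}(V,X)Y\big) = 0,$$
so the whole corollary reduces to computing this trace term on the hyperbolic plane.

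First, I would recall that since $(\mathbb{H}^2,g^{\mathbb{H}^2})$ has constant sectional curvature $-1$, the Riemann $(3,1)$-tensor has the simple expression
$$\mathrm{R}(X,Y)Z = g^{\mathbb{H}^2}(X,Z)Y - g^{\mathbb{H}^2}(Y,Z)X.$$
Then, fixing a local oriented orthonormal frame $(e_1,e_2)$ for $g^{\mathbb{H}^2}$, the trace $\mathrm{tr}\big((X,Y)\mapsto \mathrm{R}(V,X)Y\big)$ unfolds as $\mathrm{R}(V,e_1)e_1 + \mathrm{R}(V,e_2)e_2$, and using the curvature formula and $g^{\mathbb{H}^2}(e_i,e_i)=1$ this equals
$$\big(g^{\mathbb{H}^2}(V,e_1)e_1 + g^{\mathbb{H}^2}(V,e_2)e_2\big) - 2V = V - 2V = -V,$$
where the last step uses the orthonormal decomposition $V = g^{\mathbb{H}^2}(V,e_1)e_1 + g^{\mathbb{H}^2}(V,e_2)e_2$. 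Substituting this into the condition of Theorem \ref{on_harmonic_vector_field} gives $\overline{\Delta}V - V = 0$, i.e. $\overline{\Delta}V = V$, which is exactly the stated criterion.

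There is genuinely no hard step here: the result is a direct substitution into the general Konderak criterion, and the calculation has already been rehearsed in the discussion preceding the statement. The only points requiring care are verifying the sign conventions for the Riemann tensor and checking that the sum $\sum_i g^{\mathbb{H}^2}(e_i,e_i)\,\mathrm{R}(V,e_i)e_i$ simplifies in the intended way when $g^{\mathbb{H}^2}(e_i,e_i) = 1$ (i.e., the Riemannian case of the general pseudo-Riemannian statement). Once these bookkeeping points are settled, the corollary follows immediately.
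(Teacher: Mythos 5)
Your proposal is correct and follows exactly the paper's argument: the paper likewise substitutes the constant-curvature form of the Riemann tensor into Konderak's criterion (Theorem \ref{on_harmonic_vector_field}) and computes $\mathrm{tr}\big((X,Y)\mapsto\mathrm{R}(V,X)Y\big)=-V$ in an orthonormal frame. Nothing is missing.
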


We now proceed to describe several characterizations of harmonic Lagrangian vector fields. Before that, we need some preparation. Given a vector field $V$ on $\mathbb{H}^2$, we denote by $\alpha^V$ the smooth 1-form on $\mathbb{H}^2$ dual to $V$
$$\alpha^V=g^{\mathbb{H}^2}(V,\cdot).$$
Recall that the \textit{Lie derivative} of $g^{\mathbb{H}^2}$ with respect to $V$ is given by:
$$\mathcal{L}_Vg^{\mathbb{H}^2}(X,Y)=g^{\mathbb{H}^2}(X,\nabla_YV)+g^{\mathbb{H}^2}(Y,\nabla_X V)$$
for any vector fields $X$ and $Y$. 
The next Theorem is the principal result of this section.
\begin{theorem}\label{charac_HL}
    Let $V$ be a vector field on $\mathbb{H}^2$. The following are equivalent:
    \begin{enumerate}
        \item \label{charac_HL1} There exists a smooth function $u:\mathbb{H}^2\to\mathbb{R}$ satisfying  $\Delta^{\mathbb{H}^2} u-2u=0$ such that $$V=\J\grad(u).$$ In other words, $V$ is the vector field associated to the mean surface $S=\mathrm{gr}(u)\subset\HP$ (see Lemma \ref{Shapeoperator_of_graph} and Corollary \ref{cor_formule_XS}).
              \item \label{charac_HL2} $V$ is harmonic Lagrangian.
        \item \label{charac_HL3} The unique self-adjoint $(1, 1)$-tensor $\bb:\mathrm{T}\mathbb{H}^2\to \mathrm{T}\mathbb{H}^2$ such that $\mathcal{L}_Vg^{\mathbb{H}^2}=g^{\mathbb{H}^2}(\bb\cdot,\cdot)$ satisfies the conditions:\begin{equation}\label{condition_harmonic_lagrangian}
    \mathrm{tr}(\bb)=0\ \ \ \ \ \ \mathrm{d}^{\nabla}\bb=0.
\end{equation}
    \end{enumerate}
\end{theorem}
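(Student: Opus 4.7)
The plan is to pass each of the three conditions through the potential function. Observe first that all three hypotheses force $V$ to be divergence-free: for (2) this is the Lagrangian part via Theorem~\ref{Lagrange_vs_divergence_free}, and for (3) note the identity $\mathrm{tr}(\bb)=2\,\mathrm{tr}(\nabla V)=2\,\mathrm{div}(V)$. In each case Theorem~\ref{Lagrange_vs_divergence_free} therefore yields a smooth $u\colon\mathbb{H}^2\to\mathbb{R}$ with $V=\J\grad(u)$, and the remaining content of (2) or (3) will be translated into a linear PDE on $u$ and matched with Lemma~\ref{Shapeoperator_of_graph}.

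For $(1)\Leftrightarrow(3)$, I would use $\nabla\J=0$ to compute $\nabla V=\J\circ\hess(u)$, whence
\begin{equation*}
\bb=\nabla V+(\nabla V)^{*}=\J\,\hess(u)-\hess(u)\,\J=[\J,\hess(u)].
\end{equation*}
In dimension two, every symmetric traceless $(1,1)$-tensor anti-commutes with $\J$, so writing $\hess(u)=\tfrac12(\Delta u)\mathbbm{1}+H_0$ with $H_0$ traceless gives $\bb=2\J H_0$; in particular $\mathrm{tr}(\bb)=0$ automatically. Crucially, $H_0$ coincides with the traceless part of the shape operator $\B=\hess(u)-u\mathbbm{1}$, since they differ only by a multiple of $\mathbbm{1}$. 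The Ricci identity $d^{\nabla}\hess(u)(X,Y)=\R(X,Y)\grad(u)$ combined with $\R(X,Y)Z=g^{\mathbb{H}^2}(X,Z)Y-g^{\mathbb{H}^2}(Y,Z)X$ on $\mathbb{H}^2$ gives $d^{\nabla}\B=0$, so
\begin{equation*}
d^{\nabla}\bb=2\J\,d^{\nabla}\B_0=-\J\,d^{\nabla}\!\bigl(\mathrm{tr}(\B)\mathbbm{1}\bigr),
\end{equation*}
and using $d^{\nabla}(f\mathbbm{1})(X,Y)=(Xf)Y-(Yf)X$ this vanishes for all $X,Y$ if and only if $\mathrm{tr}(\B)=\Delta u-2u$ is a constant $c$. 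Since $V$ is insensitive to replacing $u$ by $u+c/2$, this normalizes to the mean-surface equation $\Delta u-2u=0$ of Lemma~\ref{Shapeoperator_of_graph}, giving $(1)\Leftrightarrow(3)$.

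For $(1)\Leftrightarrow(2)$, Corollary~\ref{cor_on_harmonic_vector_field} reduces harmonicity of the map $V\colon\mathbb{H}^2\to\mathrm{T}\mathbb{H}^2$ to $\overline{\Delta}V=V$. Using $\nabla V=\J\hess(u)$ and $\nabla\J=0$ to commute the two derivatives through $\J$, one obtains $\overline{\Delta}V=\J\cdot\mathrm{tr}(\nabla\hess(u))$. The Bochner--Weitzenböck identity $\mathrm{tr}(\nabla\hess(u))=\grad(\Delta u)+\mathrm{Ric}(\grad u)$, combined with $\mathrm{Ric}=-g^{\mathbb{H}^2}$ on $\mathbb{H}^2$, yields $\overline{\Delta}V=\J\grad(\Delta u-u)$. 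Setting this equal to $V=\J\grad(u)$ reads $\grad(\Delta u-2u)=0$, i.e.\ $\Delta u-2u$ is a constant, and the same shift $u\mapsto u+c/2$ again recovers (1).

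The main obstacle I foresee is the clean identification $\bb=2\J\B_0$: it relies crucially on the two-dimensional anti-commutation of symmetric traceless operators with $\J$, which collapses the commutator $[\J,\hess(u)]$ to a multiple of its traceless part and automatically kills the trace of $\bb$, leaving only the mean-surface content of $\B$ visible. A secondary but necessary point is that both (2) and (3) a priori only determine $u$ up to the ambiguity $\Delta u-2u=\mathrm{const}$; one must remark that adding a constant to $u$ leaves $V$ unchanged, and exploit this freedom to recover the normalization $\Delta u-2u=0$ appearing in (1).
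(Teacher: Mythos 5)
Your proposal is correct and follows essentially the same route as the paper: both reduce each condition to divergence-freeness (hence $V=\J\grad(u)$ via Theorem \ref{Lagrange_vs_divergence_free}), both identify $\bb=2\J\B_0$ and use that $\B=\hess(u)-u\mathbbm{1}$ is Codazzi to show $\mathrm{d}^{\nabla}\bb=0$ is equivalent to $\mathrm{tr}(\B)=\Delta^{\mathbb{H}^2}u-2u$ being constant, and both normalize by the shift $u\mapsto u+c/2$. The only (harmless) variations are that you obtain $\overline{\Delta}V-V=\J\grad(\Delta^{\mathbb{H}^2}u-2u)$ directly from the Bochner commutation identity $\mathrm{tr}(\nabla\hess(u))=\grad(\Delta^{\mathbb{H}^2}u)+\mathrm{Ric}(\grad u)$ rather than through the paper's decomposition $\nabla_XV=\mathrm{A}X+\phi\J X$ of Lemmas \ref{Kozul_formula}--\ref{prepa1_CH_HL}, and that you rederive the Codazzi property of $\B$ from the Ricci identity instead of citing Remark \ref{remark_on_codazzi}.
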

 We recall that, for a $(1,1)-$tensor $\bb$, the \textit{exterior derivative} $\mathrm{d}^{\nabla}\bb$ is defined as
$$\mathrm{d}^{\nabla}\bb(X,Y) = \nabla_X(\bb(Y)) -\nabla_Y(\bb(X))-\bb([X, Y]).$$ 
A tensor satisfying $\mathrm{d}^{\nabla}\bb=0$ is called \textit{Codazzi} tensor with respect to the metric $g^{\mathbb{H}^2}.$
\begin{remark}\label{remark_on_codazzi}
It is worth noting the following points, which we will use freely in the rest of this paper:
\begin{itemize}
    \item For any $u:\mathbb{H}^2\to\mathbb{R}$, $\B=\hess(u)-u\mathbbm{1}$ is a self-adjoint Codazzi tensor with respect to $g^{\mathbb{H}^2}$, (see for instance \cite[Lemma 2.1]{BS_flat_conical}).
    \item Let $f:\mathbb{H}^2\to\mathbb{R}$ be a smooth function. Consider $e_1$ and $e_2$ to be an oriented orthonormal local frame
for $g^{\mathbb{H}^2}$. Then it is not difficult to check that:
$$\mathrm{d}^{\nabla}\left( f\mathbbm{1} \right)(e_1,e_2)=\J\grad(f).$$ In particular, $f$ is constant if and if $f\mathbbm{1}$ is a Codazzi tensor.

\item  A Codazzi tensor $\bb$ is self-adjoint and traceless if and only if $\J\bb$ is self-adjoint, traceless, and Codazzi. 
\end{itemize}
\end{remark}

To prove Theorem \ref{charac_HL}, we need some preliminary results. Given a vector field $V$ on $\mathbb{H}^2$, define a self-adjoint $(1,1)$-tensor $\mathrm{A}$ and a skew-symmetric $(1,1)$-tensor $\Phi$ by:
\begin{equation}\label{Kozul_formulaa}
  \mathcal{L}_Vg^{\mathbb{H}^2}(X,Y)=g^{\mathbb{H}^2}(2\mathrm{A} X,Y) \ \ \ \mathrm{d}\alpha^V(X,Y)=g^{\mathbb{H}^2}(2\Phi X,Y)
\end{equation}
Note that since for each point $p$ in $\mathbb{H}^2$, $\mathrm{dim}(\mathrm{T}\mathbb{H}^2_p)=2$, there exists a smooth function $\phi:\mathbb{H}^2\to\mathbb{R}$ such that \begin{equation}\label{46.phi}
    \Phi X=\phi\mathrm{J}X\end{equation} for any vector field $X.$
\begin{lemma}\label{Kozul_formula}
Let $V$ be a vector field on $\mathbb{H}^2$ and consider the self-adjoint tensor field $\mathrm{A}$ of type $(1,1)$ and the function $\phi:\mathbb{H}^2\to \mathbb{R}$ as in \eqref{Kozul_formulaa} and \eqref{46.phi}. Then
\begin{itemize}
    \item $\nabla_XV=\mathrm{A} X+\phi\J X$.
    \item $\phi=-\frac{1}{2}\mathrm{div}(\J V).$
\end{itemize}
\end{lemma}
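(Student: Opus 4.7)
The plan is to decompose the $(1,1)$-tensor $T:X\mapsto\nabla_XV$ into its symmetric and antisymmetric parts with respect to $g^{\mathbb{H}^2}$, and to identify these parts with $\mathrm{A}$ and $\Phi$ respectively. The key inputs are the definition of the Lie derivative in terms of $\nabla$ and the identity for $\mathrm{d}\alpha^V$ that comes from the fact that $\nabla$ is torsion-free.

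For the first bullet, I would start by writing out the Lie derivative formula
\[
\mathcal{L}_Vg^{\mathbb{H}^2}(X,Y)=g^{\mathbb{H}^2}(\nabla_XV,Y)+g^{\mathbb{H}^2}(X,\nabla_YV),
\]
which exhibits $2\mathrm{A}$ as the symmetric part $T+T^{*}$ of $T$. For the antisymmetric part, I would expand
\[
\mathrm{d}\alpha^V(X,Y)=X\!\cdot\!g^{\mathbb{H}^2}(V,Y)-Y\!\cdot\!g^{\mathbb{H}^2}(V,X)-g^{\mathbb{H}^2}(V,[X,Y])
\]
and use both the compatibility of $\nabla$ with $g^{\mathbb{H}^2}$ and the torsion-free identity $[X,Y]=\nabla_XY-\nabla_YX$ to collapse this to
\[
\mathrm{d}\alpha^V(X,Y)=g^{\mathbb{H}^2}(\nabla_XV,Y)-g^{\mathbb{H}^2}(X,\nabla_YV),
\]
so that $2\Phi=T-T^{*}$. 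Adding the two relations gives $T=\mathrm{A}+\Phi$, i.e.\ $\nabla_XV=\mathrm{A}X+\phi\mathrm{J}X$ using \eqref{46.phi}.

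For the second bullet, I would use the first bullet together with the fact that $\mathrm{J}$ is parallel (being the Kähler structure of $\mathbb{H}^2$), so that
\[
\nabla_X(\mathrm{J}V)=\mathrm{J}\nabla_XV=\mathrm{J}\mathrm{A}X+\phi\,\mathrm{J}^2X=\mathrm{J}\mathrm{A}X-\phi X.
\]
Taking the trace and applying \eqref{formula_divergence} then yields $\mathrm{div}(\mathrm{J}V)=\mathrm{tr}(\mathrm{J}\mathrm{A})-2\phi$. The remaining point is to verify that $\mathrm{tr}(\mathrm{J}\mathrm{A})=0$ whenever $\mathrm{A}$ is $g^{\mathbb{H}^2}$-self-adjoint in real dimension two; this is immediate in an oriented orthonormal frame $(e_1,e_2)$ with $\mathrm{J}e_1=e_2$, since writing $\mathrm{A}=\begin{pmatrix}a&b\\ b&c\end{pmatrix}$ gives $\mathrm{J}\mathrm{A}=\begin{pmatrix}-b&-c\\ a&b\end{pmatrix}$, whose trace is zero. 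Substituting back yields $\phi=-\tfrac{1}{2}\mathrm{div}(\mathrm{J}V)$.

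There is no real obstacle here; the only points requiring care are the sign conventions in the definitions \eqref{Kozul_formulaa} of $\mathrm{A}$ and $\Phi$ (the factors of $2$ correspond exactly to the fact that $\mathrm{A}$ and $\Phi$ are the \emph{halves} of the symmetric and antisymmetric parts of $T$), and the two-dimensional identity $\mathrm{tr}(\mathrm{J}\mathrm{A})=0$, which is the feature that makes the trace formula so clean in the second bullet.
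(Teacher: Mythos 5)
Your proposal is correct and follows essentially the same route as the paper: the first bullet is obtained exactly as in the text by adding the symmetric part (from $\mathcal{L}_Vg^{\mathbb{H}^2}$) to the antisymmetric part (from $\mathrm{d}\alpha^V$, computed via metric compatibility and torsion-freeness). For the second bullet the paper evaluates $\mathrm{d}\alpha^V(e_1,e_2)$ directly in an oriented orthonormal frame and identifies it with $-\mathrm{div}(\J V)$ on one side and $2\phi$ on the other, whereas you pass through $\nabla_X(\J V)=\J\mathrm{A}X-\phi X$ and the identity $\mathrm{tr}(\J\mathrm{A})=0$ for self-adjoint $\mathrm{A}$ in dimension two; this is only a cosmetic repackaging of the same trace computation, and both steps are valid.
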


\begin{proof}
 By definition we have \begin{equation}\label{liederivatiove}
     \mathcal{L}_Vg^{\mathbb{H}^2}(X,Y)=g^{\mathbb{H}^2}(\nabla_XV,Y)+g^{\mathbb{H}^2}(\nabla_YV,X).
 \end{equation} 
 To prove the first assertion in is enough to remark that by an elementary computation we have:
\begin{equation}\label{diff_1form}
\mathrm{d}\alpha^V(X,Y)=g^{\mathbb{H}^2}(\nabla_XV,Y)-g^{\mathbb{H}^2}(\nabla_YV,X)\end{equation}
 Adding the equations \eqref{liederivatiove} and \eqref{diff_1form} we get the desired formula.
Let us now prove the second assertion. Consider $e_1, e_2$ an oriented orthonormal local frame of $g^{\mathbb{H}^2}$. Then by \eqref{diff_1form}:
\begin{align*}
\mathrm{d}\alpha^V(e_1,e_2)&=g^{\mathbb{H}^2}(\nabla_{e_1}V,e_2)-g^{\mathbb{H}^2}(\nabla_{e_2}V,e_1)\\
&=g^{\mathbb{H}^2}(\nabla_{e_1}V,\J e_1)-g^{\mathbb{H}^2}(\nabla_{e_2}V,-\J e_2)\\
&=g^{\mathbb{H}^2}(-\J\nabla_{e_1}V, e_1)+g^{\mathbb{H}^2}(-\J\nabla_{e_2}V, e_2)\\
&= -\mathrm{div}(\J V).
\end{align*}
\end{proof}
The following Lemma expresses the rough Laplacian of $V$ using the tensors $\mathrm{A}$ and $\Phi$ of \eqref{Kozul_formulaa}. 
\begin{lemma}\label{prepa1_CH_HL}
Let $V$ be a vector field on $\mathbb{H}^2$ and consider the self-adjoint tensor field $\mathrm{A}$ of type $(1,1)$ and a function $\phi:\mathbb{H}^2\to \mathbb{R}$ as in \eqref{Kozul_formulaa} and \eqref{46.phi}:
$$  \mathcal{L}_Vg_{\mathbb{H}^2}(X,Y)=g^{\mathbb{H}^2}(2\mathrm{A} X,Y) \ \ \ \mathrm{d}\alpha^V(X,Y)=g^{\mathbb{H}^2}(2\phi \J X,Y).$$ Let $e_1$, $e_2$ be an oriented orthonormal local frame for $g^{\mathbb{H}^2}$. Then 
     $$\overline{\Delta}V=-\mathrm{d}^{\nabla}(\mathrm{A}\J)(e_1,e_2)+\J\grad(\phi).$$
\end{lemma}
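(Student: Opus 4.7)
The plan is a direct computation organized around the observation from Lemma \ref{Kozul_formula} that $\nabla_X V = B(X)$ where $B := \mathrm{A} + \phi\J$ is a $(1,1)$-tensor. This rewriting turns $\overline{\Delta}V$ into the divergence of $B$, and then the statement reduces to matching this divergence with the right-hand side using the parallelism of $\J$.

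First I would observe that $\nabla_X \nabla_Y V - \nabla_{\nabla_X Y}V = (\nabla_X B)(Y)$, since $\nabla_X(B(Y)) = (\nabla_X B)(Y) + B(\nabla_X Y)$ and $\nabla_Y V = B(Y)$. Tracing over the orthonormal frame $e_1, e_2$ then gives
\begin{equation*}
\overline{\Delta}V = \sum_{i=1}^{2}(\nabla_{e_i} B)(e_i) = \sum_{i=1}^{2}(\nabla_{e_i}\mathrm{A})(e_i) + \sum_{i=1}^{2}\bigl(\nabla_{e_i}(\phi\J)\bigr)(e_i).
\end{equation*}

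Next I would compute the two pieces separately. For the $\phi\J$-piece, since $\J$ is $\nabla$-parallel, $(\nabla_{e_i}(\phi\J))(e_i) = e_i(\phi)\J e_i$, so summing gives $\J\bigl(\sum_i e_i(\phi)e_i\bigr) = \J\grad(\phi)$. For the $\mathrm{A}$-piece I would expand $\mathrm{d}^{\nabla}(\mathrm{A}\J)(e_1,e_2)$: using again that $\J$ is parallel (so $\nabla_X(\mathrm{A}\J) = (\nabla_X\mathrm{A})\J$) and using $\J e_1 = e_2$, $\J e_2 = -e_1$, I would obtain
\begin{equation*}
\mathrm{d}^{\nabla}(\mathrm{A}\J)(e_1,e_2) = (\nabla_{e_1}\mathrm{A})(\J e_2) - (\nabla_{e_2}\mathrm{A})(\J e_1) = -\sum_{i=1}^{2}(\nabla_{e_i}\mathrm{A})(e_i).
\end{equation*}
Combining the two computations yields precisely $\overline{\Delta}V = -\mathrm{d}^{\nabla}(\mathrm{A}\J)(e_1,e_2) + \J\grad(\phi)$.

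There is no real obstacle here: the statement is essentially an unpacking of definitions once one recognizes that Lemma \ref{Kozul_formula} lets $\nabla V$ be read as a single $(1,1)$-tensor. The only subtlety is being careful with signs when passing between $\mathrm{A}\J$ and $\mathrm{A}$ via the identities $\J e_1 = e_2$, $\J e_2 = -e_1$, which is why the result naturally pairs the divergence of $\mathrm{A}$ with the antisymmetric expression $\mathrm{d}^{\nabla}(\mathrm{A}\J)(e_1,e_2)$. All terms involving $\nabla_{e_i}e_j$ and Lie brackets cancel automatically in the tensorial formulation, so no further bookkeeping is required.
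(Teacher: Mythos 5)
Your proof is correct and follows essentially the same route as the paper's: both start from $\nabla_X V=\mathrm{A}X+\phi\J X$ (Lemma \ref{Kozul_formula}), use the parallelism of $\J$, and convert the divergence of $\mathrm{A}$ into $-\mathrm{d}^{\nabla}(\mathrm{A}\J)(e_1,e_2)$ via $\J e_1=e_2$, $\J e_2=-e_1$. Your packaging of $\nabla V$ as the single tensor $B=\mathrm{A}+\phi\J$ merely streamlines the bookkeeping of the $\nabla_{e_i}e_j$ terms that the paper carries out explicitly.
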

\begin{proof}
    By Lemma \ref{Kozul_formula}, for all tangent vectors $X$,
    $$\nabla_X V = \mathrm{A} X + \phi \J X.$$
    Hence,
    $$\nabla_X \nabla_X V = \nabla_X (\mathrm{A} X) + \mathrm{d}\phi(X) \J X + \phi \nabla_X \J X.$$
    Now, since $\J$ is parallel with respect to $\nabla$ (i.e., $\nabla_X \J Y = \J \nabla_X Y$), we obtain
    \begin{align*}
        \overline{\Delta} V &= \nabla_{e_1} \nabla_{e_1} V - \nabla_{\nabla_{e_1} e_1} V + \nabla_{e_2} \nabla_{e_2} V - \nabla_{\nabla_{e_2} e_2} V \\
        &= \nabla_{e_1} (\mathrm{A} e_1) + \mathrm{d}\phi(e_1) \J e_1 + \phi \nabla_{e_1} \J e_1 - \mathrm{A} \nabla_{e_1} e_1 - \phi \J \nabla_{e_1} e_1 + \\
        & \quad \nabla_{e_2} (\mathrm{A} e_2) + \mathrm{d}\phi(e_2) \J e_2 + \phi \nabla_{e_2} \J e_2 - \mathrm{A} \nabla_{e_2} e_2 - \phi \J \nabla_{e_2} e_2 \\
        &= \nabla_{e_1} (\mathrm{A} e_1) + \nabla_{e_2} (\mathrm{A} e_2) - \mathrm{A} \nabla_{e_1} e_1 - \mathrm{A} \nabla_{e_2} e_2 + \J \grad(\phi).
    \end{align*}
    Finally, using the fact that $e_2 = \J e_1$ and $e_1 = -\J e_2$, we get:
    \begin{align*}
        \overline{\Delta} V &= -\nabla_{e_1} (\mathrm{A} \J e_2) + \nabla_{e_2} (\mathrm{A} \J e_1) + \mathrm{A} \J (\nabla_{e_1} e_2 - \nabla_{e_2} e_1) + \J \grad(\phi) \\
        &= -\nabla_{e_1} (\mathrm{A} \J e_2) + \nabla_{e_2} (\mathrm{A} \J e_1) + \mathrm{A} \J [e_1, e_2] + \J \grad(\phi) \\
        &= -\mathrm{d}^{\nabla}(\mathrm{A} \J)(e_1, e_2) + \J \grad(\phi).
    \end{align*}
    This finishes the proof.
\end{proof}
The next Lemma provides some essential computations in the case where \( V = \J \grad(u) \) for a smooth function \( u : \mathbb{H}^2 \to \mathbb{R} \).
\begin{lemma}\label{Lie_derivative_of_Ju}
Let $u:\mathbb{H}^2\to\mathbb{R}$ be a smooth function and $V=\J\grad(u)$. Consider $\B=\hess(u)-u\mathbbm{1}$ the shape operator of $\mathrm{gr}(u)\subset\HP$, and $\B_0=\B-\frac{\mathrm{tr}(\B)}{2}\mathbbm{1}$ the traceless part of $\B$. Then we have:
\begin{enumerate} 
    \item \label{1Lie_derivative_of_Ju} $\mathcal{L}_Vg^{\mathbb{H}^2}=g^{\mathbb{H}^2}(2\J\B_0\cdot,\cdot)$.
    \item \label{2Lie_derivative_of_Ju} $\mathrm{d}\alpha^V=(\Delta^{\mathbb{H}^2} u) g^{\mathbb{H}^2}(\J\cdot,\cdot)$.
    \item \label{3Lie_derivative_of_Ju} $
    \overline{\Delta}V-V=\J\grad\left( \frac{\mathrm{tr}(\B)}{2} \right).$
\end{enumerate}
\end{lemma}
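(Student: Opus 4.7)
The plan is to treat parts \eqref{1Lie_derivative_of_Ju} and \eqref{2Lie_derivative_of_Ju} by direct computation using the key fact that $V=\J\grad(u)$ combined with $\nabla\J=0$, and then to derive part \eqref{3Lie_derivative_of_Ju} by feeding the outputs of the first two parts into Lemma \ref{prepa1_CH_HL}, using the Codazzi property of $\B$ recalled in Remark \ref{remark_on_codazzi}.

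For parts \eqref{1Lie_derivative_of_Ju} and \eqref{2Lie_derivative_of_Ju}, I would start from the observation that since $\J$ is parallel,
\[
\nabla_X V \;=\; \J\,\hess(u)(X).
\]
Then the Lie derivative and exterior derivative both reduce to algebraic expressions in the commutator and anticommutator of $\J$ with the self-adjoint endomorphism $\hess(u)$:
\begin{align*}
\mathcal{L}_V g^{\mathbb{H}^2}(X,Y) &= g^{\mathbb{H}^2}\bigl([\J,\hess(u)]\,X,Y\bigr),\\
\mathrm{d}\alpha^V(X,Y) &= g^{\mathbb{H}^2}\bigl(\{\J,\hess(u)\}\,X,Y\bigr),
\end{align*}
where the first line uses that $\hess(u)$ is self-adjoint and the second line is just \eqref{diff_1form}. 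Now in dimension two, any self-adjoint endomorphism $H$ splits as $H=H_0+\tfrac{\mathrm{tr}(H)}{2}\mathbbm{1}$, and its traceless part $H_0$ \emph{anti-commutes} with $\J$; this is the key linear-algebra input. It yields $[\J,H]=2\J H_0$ and $\{\J,H\}=\mathrm{tr}(H)\,\J$. Since $\hess(u)$ and $\B=\hess(u)-u\mathbbm{1}$ have the same traceless part (namely $\B_0$), and $\mathrm{tr}(\hess(u))=\Delta^{\mathbb{H}^2}u$, parts \eqref{1Lie_derivative_of_Ju} and \eqref{2Lie_derivative_of_Ju} follow immediately.

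For part \eqref{3Lie_derivative_of_Ju}, I would combine \eqref{1Lie_derivative_of_Ju} and \eqref{2Lie_derivative_of_Ju} with Lemma \ref{Kozul_formula} to identify the tensors $\mathrm{A}=\J\B_0$ and the function $\phi=\tfrac{\Delta^{\mathbb{H}^2}u}{2}$ appearing in \eqref{Kozul_formulaa}. Then Lemma \ref{prepa1_CH_HL} gives
\[
\overline{\Delta}V \;=\; -\mathrm{d}^{\nabla}(\mathrm{A}\J)(e_1,e_2) + \J\grad\!\left(\tfrac{\Delta^{\mathbb{H}^2}u}{2}\right).
\]
The anti-commutation of $\J$ with $\B_0$ simplifies $\mathrm{A}\J=\J\B_0\J=\B_0$, so the first term becomes $-\mathrm{d}^{\nabla}(\B_0)(e_1,e_2)$. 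Since $\B$ is Codazzi by Remark \ref{remark_on_codazzi} and $\mathrm{d}^{\nabla}(f\mathbbm{1})(e_1,e_2)=\J\grad(f)$, one obtains $\mathrm{d}^{\nabla}(\B_0)(e_1,e_2)=-\J\grad\!\left(\tfrac{\mathrm{tr}(\B)}{2}\right)$. Finally the identity $\Delta^{\mathbb{H}^2}u=\mathrm{tr}(\B)+2u$ lets one rewrite $\J\grad\!\left(\tfrac{\Delta^{\mathbb{H}^2}u}{2}\right)=\J\grad\!\left(\tfrac{\mathrm{tr}(\B)}{2}\right)+V$, and the identity in \eqref{3Lie_derivative_of_Ju} follows by collecting terms.

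The only part that is not entirely mechanical is the two-dimensional algebraic identity $[\J,H]=2\J H_0$, $\{\J,H\}=\mathrm{tr}(H)\J$; once this is established (by writing $H$ in a $g^{\mathbb{H}^2}$-orthonormal frame and using $\J e_1=e_2$), the rest of the proof is a book-keeping exercise, with the Codazzi property of $\B$ playing a decisive role in identifying the extra gradient term in \eqref{3Lie_derivative_of_Ju}.
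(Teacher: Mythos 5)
Your proposal is correct and follows essentially the same route as the paper: compute $\nabla_X V=\J\hess(u)(X)$, split off the trace part of the Hessian to get \eqref{1Lie_derivative_of_Ju} and \eqref{2Lie_derivative_of_Ju}, and feed the result into Lemma \ref{prepa1_CH_HL} together with the Codazzi property of $\B$ for \eqref{3Lie_derivative_of_Ju}; your commutator/anticommutator packaging of the first two parts is only a cosmetic reorganization of the paper's frame computation (the paper gets $\phi=\tfrac{1}{2}\Delta^{\mathbb{H}^2}u$ via $\phi=-\tfrac{1}{2}\mathrm{div}(\J V)$ from Lemma \ref{Kozul_formula} instead). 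One small remark: if you actually collect the terms in your step for \eqref{3Lie_derivative_of_Ju} you obtain $\overline{\Delta}V-V=\J\grad\left(\mathrm{tr}(\B)\right)$ rather than $\J\grad\left(\tfrac{\mathrm{tr}(\B)}{2}\right)$, since the two contributions $-\mathrm{d}^{\nabla}\B_0(e_1,e_2)$ and $\J\grad\left(\tfrac{\Delta^{\mathbb{H}^2}u-2u}{2}\right)$ each equal $\J\grad\left(\tfrac{\mathrm{tr}(\B)}{2}\right)$ and add up; the paper's own display \eqref{roughrough} reaches the same expression in its last line before halving it in the final statement, and because only the vanishing of this term is ever used downstream, the discrepancy is harmless in both your write-up and the paper's.
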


\begin{proof}
Let $X$ and $Y$ be vector fields on $\mathbb{H}^2$. Then
$$\nabla_XV=\J\hess u(X)=\J \B X+u\J X.$$
Note that $g^{\mathbb{H}^2}(\J X,Y)+g^{\mathbb{H}^2}(\J Y,X)=0$, thus
\begin{align*}
\mathcal{L}_Vg_{\mathbb{H}^2}(X,Y)&=g^{\mathbb{H}^2}(\J\B X,Y)+g^{\mathbb{H}^2}(\J\B Y,X)\\
&=\big(g^{\mathbb{H}^2}(\J\B_0 X,Y)+g^{\mathbb{H}^2}(\J\B_0 Y,X)\big)+ \frac{\mathrm{tr}(\B)}{2}\big(g^{\mathbb{H}^2}(\J X,Y)+g^{\mathbb{H}^2}(\J Y,X)\big)\\
&=g^{\mathbb{H}^2}(\J\B_0 X,Y)+g^{\mathbb{H}^2}(\J\B_0 Y,X). 
\end{align*}
Since $\B_0$ is self-adjoint and traceless, $\J\B_0 $ is also self-adjoint and traceless, thus
\begin{equation}\label{eq_50_rough_for_divergence_free}
    \mathcal{L}_Vg^{\mathbb{H}^2}(X,Y)=g^{\mathbb{H}^2}(2\J\B_0 X,Y).
\end{equation} This completes the proof of \eqref{1Lie_derivative_of_Ju}. \\

Let us prove the second formula (\ref{2Lie_derivative_of_Ju}). By \eqref{Kozul_formulaa} and \eqref{46.phi}, there exists \(\phi : \mathbb{H}^2 \to \mathbb{R}\) such that
\begin{equation}\label{eq.phi_laplace}
     \mathrm{d}\alpha^V = g^{\mathbb{H}^2}(2\phi \J \cdot, \cdot).
\end{equation}
Based on Lemma \ref{Kozul_formula}, we have
\[ \phi = -\frac{1}{2} \mathrm{div}(\J V) = \frac{1}{2} \mathrm{div}(\grad(u)) = \frac{1}{2} \Delta^{\mathbb{H}^2} u. \]
Thus, by \eqref{eq.phi_laplace}, \(\mathrm{d}\alpha^V = (\Delta^{\mathbb{H}^2} u) g^{\mathbb{H}^2}(\J \cdot, \cdot)\). This completes the proof of (\ref{2Lie_derivative_of_Ju}).\\

Finally, we will prove the third formula \eqref{3Lie_derivative_of_Ju}. Consider $e_1$, $e_2$ be an oriented orthonormal local frame for $g^{\mathbb{H}^2}$. Then by Lemma \ref{prepa1_CH_HL} and the second formula \eqref{2Lie_derivative_of_Ju}, we have:
\begin{equation}\label{roughrough}
    \begin{split}
\overline{\Delta}V&=-\mathrm{d}^{\nabla}(\J\B_0\J)(e_1,e_2)+\J\grad\left(\frac{\Delta^{\mathbb{H}^2} u}{2}\right)\\
&=-\mathrm{d}^{\nabla}\B_0(e_1,e_2)+\J\grad\left(\frac{\Delta^{\mathbb{H}^2} u}{2}\right)\\
&= -\mathrm{d}^{\nabla}\B(e_1,e_2)+\mathrm{d}^{\nabla}\left( \frac{\mathrm{tr}(\B)}{2} \right)(e_1,e_2)+  \J\grad\left(\frac{\Delta^{\mathbb{H}^2} u}{2}\right)\\
&=-\mathrm{d}^{\nabla}\B(e_1,e_2)+\mathrm{d}^{\nabla}\left( \frac{\mathrm{tr}(\B)}{2} \right)(e_1,e_2)+  \J\grad\left(\frac{\Delta^{\mathbb{H}^2} u-2u}{2}\right)+  \J\grad u \\
&=-\mathrm{d}^{\nabla}\B(e_1,e_2)+\mathrm{d}^{\nabla}\left( \mathrm{tr}(\B)\right)(e_1,e_2)+  \J\grad u,
\end{split}
\end{equation}
where we have used in the second equality the fact that \(\J\B_0 = -\B_0\J\), and in the last equality, we have used the fact that \(\mathrm{tr}(\B) = \Delta u - 2u\) together with Remark \ref{remark_on_codazzi}. Now, since \(\B\) is a Codazzi tensor and \(\mathrm{d}^{\nabla}\left( \frac{\mathrm{tr}(\B)}{2}\mathbbm{1} \right)(e_1,e_2) = \J\grad\left( \frac{\mathrm{tr}(\B)}{2} \right)\) (see Remark \ref{remark_on_codazzi}), formula \eqref{roughrough} becomes:

\begin{equation}\label{Tension_Ju}
    \overline{\Delta}V-V=\J\grad\left( \frac{\mathrm{tr}(\B)}{2} \right).
\end{equation}
This concludes the proof of \eqref{3Lie_derivative_of_Ju}.

\end{proof}

\begin{proof}[Proof of Theorem \ref{charac_HL}]
The proof is based on the formulas established in Lemma \ref{Lie_derivative_of_Ju}. \\

\textbullet \ (\ref{charac_HL1}) $\implies$ (\ref{charac_HL2}): Assume that \( V = \J \grad(u) \), where \( u \) satisfies \(\Delta u - 2u = 0\). Thus by Theorem \ref{Lagrange_vs_divergence_free}, \( V(\mathbb{H}^2) \) is a Lagrangian surface. It remains to show that \( V \) is harmonic. Lemma \ref{Lie_derivative_of_Ju} implies that
\[\overline{\Delta} V - V = \J \grad\left( \frac{\mathrm{tr}(\B)}{2} \right).\]
But \(\mathrm{tr}(\B) = \Delta u - 2u = 0\) and thus
\[\overline{\Delta} V - V = 0.\]
Hence \( V \) is harmonic by Corollary \ref{cor_on_harmonic_vector_field}. \\

\textbullet \ (\ref{charac_HL2}) $\implies$ (\ref{charac_HL3}): Let $V$ be an harmonic Lagrangian vector field of $\mathbb{H}^2$. Since $V(\mathbb{H}^2)$ is a Lagrangian surface, by Theorem \ref{Lagrange_vs_divergence_free}, there exists a smooth function $u:\mathbb{H}^2 \to \mathbb{R}$ such that $V = \J\grad(u).$ By Lemma \ref{Lie_derivative_of_Ju}, we have
\[ \mathcal{L}_Vg^{\mathbb{H}^2} = g^{\mathbb{H}^2}(2\J\B_0 \cdot, \cdot), \]
where $\B_0$ is, as usual, the traceless part of $\B = \hess(u) - u\mathbbm{1}.$ To prove (\ref{charac_HL3}), we need to show that $\J\B_0$ satisfies the conditions
\[ \mathrm{tr}(\J\B_0) = 0 \ \mathrm{and} \ \mathrm{d}^{\nabla}(\J\B_0) = 0. \]
Since $\B_0$ is a traceless self-adjoint operator, $\mathrm{tr}(\J\B_0) = 0$ follows immediately. To prove that $\J\B_0$ is a Codazzi tensor, we need to use the harmonicity of $V$. Indeed, since $\overline{\Delta}V - V = 0$ (see Corollary \ref{cor_on_harmonic_vector_field}), then by the third formula of Lemma \ref{Lie_derivative_of_Ju}, we get
\[ \J\grad\left( \frac{\mathrm{tr}(\B)}{2} \right) = 0. \]
Hence the trace of $\B$ is constant. In particular, by Remark \ref{remark_on_codazzi}, $\mathrm{tr}(\B)\mathbbm{1}$ is a Codazzi tensor. Therefore,
\begin{align*}
\mathrm{d}^{\nabla}\B_0 &= \mathrm{d}^{\nabla}\B - \mathrm{d}^{\nabla}\left( \frac{\mathrm{tr}(\B)\mathbbm{1}}{2} \right) \\
&= \mathrm{d}^{\nabla}\B.
\end{align*}
Again, by Remark \ref{remark_on_codazzi}, the shape operator $\B$ is a Codazzi tensor and hence $\B_0$ is a Codazzi tensor, and the same holds for $\J\B_0$ (because $\B_0$ is self-adjoint and traceless).\\

\textbullet \ (\ref{charac_HL3}) $\implies$ (\ref{charac_HL1}): Let \( V \) be a vector field on \(\mathbb{H}^2\) for which the unique self-adjoint \((1,1)\)-tensor \(\bb: \mathrm{T}\mathbb{H}^2 \to \mathrm{T}\mathbb{H}^2\) such that \(\mathcal{L}_V g^{\mathbb{H}^2} = g^{\mathbb{H}^2}(\bb \cdot, \cdot)\) satisfies the conditions:
\[
\mathrm{tr}(\bb) = 0 \quad \text{and} \quad \mathrm{d}^{\nabla} \bb = 0.
\]
Let \( e_1 \) and \( e_2 \) be an oriented orthonormal local frame of \( g^{\mathbb{H}^2} \). Then by the definition of divergence operator (see \eqref{formula_divergence}), we have 
\begin{align*}
2 \mathrm{div}(V) &= \mathcal{L}_Vg^{\mathbb{H}^2}(e_1,e_1) + \mathcal{L}_Vg^{\mathbb{H}^2}(e_2,e_2) \\
&= g^{\mathbb{H}^2}(\bb e_1, e_1) + g^{\mathbb{H}^2}(\bb e_2, e_2) \\
&= \mathrm{tr}(\bb) \\
&= 0.
\end{align*}
Hence \( V \) is divergence-free, and so by Theorem \ref{Lagrange_vs_divergence_free}, there is a smooth function \( u:\mathbb{H}^2 \to \mathbb{R} \) such that 
\[ V = \J \grad(u). \]
By Lemma \ref{Lie_derivative_of_Ju}, \(\bb = 2 \J \B_0\). Based on Remark \ref{remark_on_codazzi}, \(\B = \hess(u) - u \mathbbm{1}\) is a Codazzi tensor and so \(\mathrm{tr}(\B)\mathbbm{1}\) is a Codazzi tensor since \(\B_0 = \B - \frac{\mathrm{tr}(\B)}{2} \mathbbm{1}\). Therefore, \(\mathrm{tr}(\B)\) is constant, say \( c \). Since \(\Delta^{\mathbb{H}^2} u - 2u = c\), it follows that \(\Delta^{\mathbb{H}^2}(u + \frac{c}{2}) - 2(u + \frac{c}{2}) = 0\), and so \( V \) is equal to \(\J \grad(u + \frac{c}{2})\). This finishes the proof of \eqref{charac_HL1}. 
\end{proof}

\begin{remark}\label{Killing_are_harmonic}
It follows from Theorem \ref{charac_HL} that a Killing vector field on $\mathbb{H}^2$ is harmonic Lagrangian. Indeed, if $V$ is such a Killing vector field, then there exists $\sigma\in\minko$ such that $V=\Lambda(\sigma)$. In other words, $$V(p)=p\boxtimes\sigma,$$ for all $p\in\mathbb{H}^2$. Now, we consider the map $u:\mathbb{H}^2\to\mathbb{R}$ defined by $u(p)=\inner{p,\sigma}_{1,2}$. This implies that $\grad_p u=\sigma+u(p)p$, hence $V=\J\grad u$. On the other hand, $u$ satisfies $\Delta u-2u=0$, and thus $V$ is harmonic Lagrangian.  
\end{remark}

\subsection{$\overline{\partial}-$operator}\label{Sec_dbar_operator}
In this section, we establish a relationship between the shape operator of a mean surface and the \textit{$\overline{\partial}$-derivative} of the harmonic Lagrangian vector field associated to it. Let $V$ be a vector field on $\mathbb{H}^2$, then for each vector field $Y$ on $\mathbb{H}^2$, we can use the complex structures $\J$ and $\mathbb{J}$ of $\mathbb{H}^2$ and $\mathrm{T}\mathbb{H}^2$ respectively (see Theorem \ref{canonical_pseudo}) to decompose $\mathrm{d}V(Y)$ as
$$\mathrm{d}V(Y)=L_1(Y)+L_2(Y),$$
where \begin{itemize}
    \item $L_1(Y):=\frac{1}{2}(\mathrm{d}V(Y)-\mathbb{J}\mathrm{d}V(\J Y))$ satisfying $$L_1(\J Y)=\mathbb{J}L_1(Y),$$
    \item $L_2(Y):=\frac{1}{2}(\mathrm{d}V(Y)+\mathbb{J}\mathrm{d}V(\J Y))$ satisfying $$L_2(\J Y)=-\mathbb{J}L_2(Y).$$
\end{itemize}
According to Proposition \ref{dV(X)}, we have $\mathrm{d}V(Y)=(Y,\nabla_YV)$, hence $$L_2(Y)=(0,\frac{1}{2}(\nabla_YV+\J\nabla_{\J Y}V)).$$
Following this, we introduce:
\begin{defi}
    Let $V$ be a smooth vector field on $\mathbb{H}^2$. Then we define the self-adjoint $(1,1)$-tensor $\overline{\partial}V$ by:
   \begin{equation}\label{dbar_H2}
   \overline{\partial}V(Y):=\frac{1}{2}(\nabla_YV+\J\nabla_{\J Y}V).\end{equation}
\end{defi}
The next proposition gives a relation between the shape operator of a mean surface and the $\overline{\partial}-$operator.
\begin{lemma}\label{d_bar_and_shape_operator}
  Let $u:\mathbb{H}^2\to\mathbb{R}$ be a smooth function satisfying $\Delta^{\mathbb{H}^2} u-2u=0$ such that $V=\J\grad(u)$ is an harmonic Lagrangian vector field (see Theorem
  \ref{charac_HL}). Then 
$$\overline{\partial}V=\J\B,$$ where $\B=\hess(u)-u\mathbbm{1}$ is the shape operator of $\mathrm{gr}(u)\subset\HP.$ 
\end{lemma}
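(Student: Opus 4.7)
The plan is to unfold the definition of $\overline{\partial}V$ on the vector field $V = \J\grad(u)$, push the complex structure through the Hessian using the fact that $\J$ is parallel with respect to $\nabla$, and then exploit the mean-surface condition $\Delta^{\mathbb{H}^2}u = 2u$ to simplify.

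First I would compute, for any vector field $Y$,
\[
\nabla_Y V \;=\; \nabla_Y (\J\grad u) \;=\; \J \nabla_Y \grad u \;=\; \J\,\hess(u)(Y),
\]
since $\nabla \J = 0$ and by definition of the Hessian. Substituting into \eqref{dbar_H2} gives
\[
\overline{\partial}V(Y) \;=\; \tfrac{1}{2}\bigl(\J\,\hess(u)(Y) + \J\,\J\,\hess(u)(\J Y)\bigr) \;=\; \tfrac{1}{2}\bigl(\J\,\hess(u)(Y) - \hess(u)(\J Y)\bigr).
\]

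The key algebraic fact I would then invoke is that on any two-dimensional oriented inner-product space, every self-adjoint $(1,1)$-tensor $T$ satisfies $T\J + \J T = \mathrm{tr}(T)\,\J$. (This follows immediately from the decomposition $T = T_0 + \tfrac{1}{2}\mathrm{tr}(T)\mathbbm{1}$, where the traceless self-adjoint part $T_0$ anti-commutes with $\J$.) Applying this to $T = \hess(u)$, whose trace equals $\Delta^{\mathbb{H}^2}u = 2u$ by the mean-surface hypothesis, yields
\[
\hess(u)(\J Y) \;=\; -\,\J\,\hess(u)(Y) + 2u\,\J Y.
\]

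Plugging this identity back into the expression for $\overline{\partial}V(Y)$ gives
\[
\overline{\partial}V(Y) \;=\; \tfrac{1}{2}\bigl(\J\,\hess(u)(Y) + \J\,\hess(u)(Y) - 2u\,\J Y\bigr) \;=\; \J\,\hess(u)(Y) - u\,\J Y \;=\; \J\B(Y),
\]
which is the desired identity. I do not expect any real obstacle here: the only non-routine ingredient is the commutation identity $T\J + \J T = \mathrm{tr}(T)\J$, which is a two-line linear algebra check, and everything else is a direct unfolding of definitions combined with the hypothesis $\Delta^{\mathbb{H}^2}u = 2u$.
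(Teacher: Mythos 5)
Your proposal is correct and follows essentially the same route as the paper: both unfold the definition of $\overline{\partial}V$ using $\nabla_Y V = \J\hess(u)(Y)$ and then exploit the anti-commutation of $\J$ with the traceless self-adjoint part of $\hess(u)$, with the hypothesis $\Delta^{\mathbb{H}^2}u = 2u$ entering only to identify that traceless part with $\B$. The paper splits $\hess(u) = \B + u\mathbbm{1}$ first and uses $\J\B = -\B\J$ directly, whereas you package the same fact as $T\J + \J T = \mathrm{tr}(T)\J$; this is a cosmetic difference only.
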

\begin{proof}
Let $Y$ be a vector field on $\mathbb{H}^2$ then 
\begin{equation}\label{eq1_Prop.3.19}
    \nabla_YV=\J\hess u(X)=\J\B Y+u\J Y.
\end{equation}
Using the fact that $\J \B=-\B\J$, we get:
\begin{align}\label{eq2_Prop.3.19}
\begin{split}
\J\nabla_{\J Y}V&=\J(\J \B \J Y-uY)\\
&=-\B \J Y-u\J Y\\
&=\J \B Y-u\J Y.
\end{split}
\end{align}
Adding \eqref{eq1_Prop.3.19} and \eqref{eq2_Prop.3.19}, we get $ \overline{\partial}V(Y)=\J \B Y$, as required. \end{proof}
A quantity that will be discussed in the remainder of this paper is the norm of the $\overline{\partial}$-operator. Recall that the norm of a $(1,1)$-tensor $\bb: \mathrm{T}\mathbb{H}^2 \to \mathrm{T}\mathbb{H}^2$ is defined as follows. For each $p \in \mathbb{H}^2$, we denote by $\bb_p$ the induced linear map on $\mathrm{T}_p\mathbb{H}^2$. Then the \textit{norm} of $\bb_p$ is given by:
\begin{equation}\label{norme}
\lVert \bb_p \rVert := \sup_{\underset{Y\neq 0}{Y\in\mathrm{T}_p\mathbb{H}^2}} \frac{\lVert \bb_p(Y) \rVert_{\mathbb{H}^2}}{\lVert Y \rVert_{\mathbb{H}^2}},
\end{equation}
where $\lVert \cdot \rVert_{\mathbb{H}^2}$ denotes the norm induced from the hyperbolic metric $g^{\mathbb{H}^2}$.

From this, we define the \textit{norm} of $\bb$ by:
\[
\lVert \bb \rVert_{\infty} := \sup_{p \in \mathbb{H}^2} \lVert \bb_p \rVert \in [0, +\infty].
\]

\begin{remark}\label{principal_curvature}
It is worth noticing that if $V=\J\grad(u)$ is harmonic Lagrangian vector field. then 
$$\lVert \overline{\partial}V\rVert_{\infty}=\sup_{p\in\mathbb{H}^2}\lvert\lambda(p)\rvert,$$
where $\lambda$ and $-\lambda$ are the principal curvatures of the mean surface $\mathrm{gr}(u)\subset\HP$, namely, the eigenvalues of the shape operator $\B=\hess-u\mathbbm{1}$. This immediately follows from Lemma \ref{d_bar_and_shape_operator}.
\end{remark}

Consider now the Poincaré disk $\mathbb{B}^2$. This is a conformal model of the hyperbolic plane. It is the unit disk endowed with the conformal metric $g^{\mathbb{B}^2}$ given by:
$$g^{\mathbb{B}^2}_{z}:=\frac{4}{(1-\lvert z\rvert^2)^2}\cdot g^{E}$$
where $g^E$  is the standard Euclidean metric tensor on $\mathbb{R}^2$. Notice $\mathbb{B}^2$ and $\mathbb{D}^2$ are the same space which is the unit disk, however we used different notations to distinguishe between the Klein model and the Poincaré model.

A vector field on $\mathbb{B}^2$ can be seen as a map from $\mathbb{B}^2$ to $\mathbb{R}^2\cong\mathbb{C}$, and the complex structure on $\mathbb{B}^2$ is just the multiplication multiplication by $i=\sqrt{-1}$. The goal at the end of this section is to explain the relation between the $\overline{\partial}$-derivative of a vector field given in \eqref{dbar_H2} and the usual complex derivative with respect to $\overline{z}$, for which this quantity appears in \cite{Extension_with_bounded_derivative}; see also \cite{FanJun}.

Let $\nabla^{\mathbb{B}^2}$ be the Levi-Civita connection of $g^{\mathbb{B}^2}$ and denote by $D$ the flat connection of $\mathbb{R}^2$. Let $f:\mathbb{B}^2\to\mathbb{R}$ be such that $g^{\mathbb{B}^2}=e^{2f}g^E$. The following formula relates the Levi-Civita  $\nabla^{\mathbb{B}^2}$ and $D$:
\begin{equation}\label{Levi_Civita_conformal}
    \nabla^{\mathbb{B}^2}_XY=D_XY+\mathrm{d}f(X)Y+\mathrm{d}f(Y)X-g^E(X,Y)\mathrm{grad}^E(f)
\end{equation}
where $\mathrm{grad}^E(f)$ is the euclidean gradient of $f$. Let $V:\mathbb{B}^2\to\mathbb{C}$ be a smooth vector field on $\mathbb{B}^2$, then we may write the $\overline{\partial}$-operator in the Poincaré $\mathbb{B}^2$ as :
\begin{equation}\label{dbar_B2}
    \overline{\partial}_{\mathbb{B}^2}V(Y)=\frac{1}{2}(\nabla^{\mathbb{B}^2}_YV+i\nabla^{\mathbb{B}^2}_{iY}V).
\end{equation}
Using \eqref{Levi_Civita_conformal} we obtain:
\begin{equation}\label{eq_53}
    \overline{\partial}_{\mathbb{B}^2}V(Y)=\frac{1}{2}(D_YV+iD_{iY}V)+\frac{1}{2}(\mathrm{d}f(X)+i\mathrm{d}f(iX))V-\frac{1}{2}(g^{E}(X,V)+ig^E(iX,V))\mathrm{grad}^E(f).
\end{equation}
Denote by $(\frac{\partial}{\partial x}, \frac{\partial}{\partial y})$ the standard basis in $\mathbb{R}^2$ and $(\mathrm{d}x, \mathrm{dy})$ the dual basis. We now make the following standard notation:
$$\frac{\partial}{\partial\overline{z}}:=\frac{1}{2}\left(\frac{\partial}{\partial x}+i\frac{\partial}{\partial y}\right), \ \ \mathrm{d}\overline{z}:=\mathrm{d}x-i\mathrm{d}y.$$
By an elementary computation, we may check the following \begin{enumerate}
    \item $\frac{1}{2}(D_{\frac{\partial}{\partial x}}V+iD_{i\frac{\partial}{\partial x}}V)=\frac{\partial V}{\partial \overline{z}}$ and $\frac{1}{2}(D_{\frac{\partial}{\partial y}}V+iD_{i\frac{\partial}{\partial y}}V)=-i\frac{\partial V}{\partial \overline{z}}$.
    \item $\mathrm{d}f(\frac{\partial}{\partial x})+i\mathrm{d}f(i\frac{\partial}{\partial x})=\mathrm{grad}^E(f)$ and $\mathrm{d}f(\frac{\partial}{\partial y})+i\mathrm{d}f(i\frac{\partial}{\partial y}))=-i\mathrm{grad}^E(f)$.
    \item $g^{E}(\frac{\partial}{\partial x},V)+ig^E(i\frac{\partial}{\partial x},V)=V$ and $g^{E}(\frac{\partial}{\partial y},V)+ig^E(i\frac{\partial}{\partial y},V)=-iV.$
\end{enumerate}
Applying the last formulas into \eqref{eq_53} yields:
\begin{equation}\label{dbar_relation0}\overline{\partial}_{\mathbb{B}^2} V(Y)=\frac{\partial V}{\partial \overline{z}}\mathrm{d}\overline{z}(Y).\end{equation} In conclusion, we have the following Lemma.

\begin{lemma}\label{same_d_bar}
    Let $V$ be a smooth vector field on $\mathbb{B}^2$. Then:
    \begin{equation}\label{dbar_relation}
        \overline{\partial}_{\mathbb{B}^2} V = \frac{\partial V}{\partial \overline{z}} \, \mathrm{d}\overline{z}.
    \end{equation}
    Moreover, for each $z \in \mathbb{B}^2$, the norm of $\overline{\partial}V_z$ in the sense of \eqref{norme} is given by:
    \begin{equation}\label{norme2}
        \lVert \overline{\partial}_{\mathbb{B}^2} V_z \rVert = \left| \frac{\partial V}{\partial \overline{z}}(z) \right|.
    \end{equation}
\end{lemma}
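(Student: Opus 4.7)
The proof is essentially finished by the computation leading up to \eqref{dbar_relation0} in the excerpt. The plan is as follows.

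The first identity \eqref{dbar_relation} is just a restatement of \eqref{dbar_relation0}, which was obtained by unfolding the conformal change of connection formula \eqref{Levi_Civita_conformal} and checking the three elementary identities for the standard basis $(\partial/\partial x,\partial/\partial y)$ that are listed just before \eqref{dbar_relation0}. So for this part, I would simply quote \eqref{dbar_relation0} and remark that, being a $(1,1)$-tensor equation, it is enough to have checked it on a basis.

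For the norm formula \eqref{norme2}, the plan is to evaluate \eqref{dbar_relation} on an arbitrary tangent vector and take the ratio of hyperbolic norms. Write $Y = a\tfrac{\partial}{\partial x} + b\tfrac{\partial}{\partial y}$, so that $\mathrm{d}\overline{z}(Y) = a - ib$. Then
\begin{equation*}
\overline{\partial}_{\mathbb{B}^2}V(Y) \;=\; \frac{\partial V}{\partial\overline{z}}(z)\cdot(a-ib),
\end{equation*}
viewed as a tangent vector at $z$ under the identification $\mathbb{R}^2 \cong \mathbb{C}$. Since $g^{\mathbb{B}^2}_z = \tfrac{4}{(1-|z|^2)^2}g^E$, the conformal factor $\tfrac{2}{1-|z|^2}$ appears in both numerator and denominator of \eqref{norme} and cancels, leaving
\begin{equation*}
\frac{\lVert \overline{\partial}_{\mathbb{B}^2}V(Y)\rVert_{\mathbb{B}^2}}{\lVert Y\rVert_{\mathbb{B}^2}} \;=\; \frac{\bigl|\tfrac{\partial V}{\partial\overline{z}}(z)\bigr|\cdot|a-ib|}{\sqrt{a^2+b^2}} \;=\; \Bigl|\frac{\partial V}{\partial\overline{z}}(z)\Bigr|,
\end{equation*}
and this bound is attained (indeed it is independent of $Y$), so taking the supremum gives \eqref{norme2}.

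There is no real obstacle here: everything is a direct calculation, and the only mild subtlety is to note that the conformal factor of the Poincaré metric cancels between the source and target norms, which is why the final answer is the naive complex $\overline{\partial}$-derivative with no extra factor of $(1-|z|^2)$. This cancellation is the conceptual reason why passing from the Klein-type intrinsic definition \eqref{dbar_H2} to the conformal Poincaré model \eqref{dbar_B2} yields exactly the classical $\partial/\partial\overline{z}$ used in \cite{Extension_with_bounded_derivative}.
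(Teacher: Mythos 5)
Your proposal is correct and follows essentially the same route as the paper: the first identity is quoted from the computation culminating in \eqref{dbar_relation0}, and the norm formula is obtained by observing that the conformal factor of $g^{\mathbb{B}^2}$ cancels between the numerator and denominator of \eqref{norme}, reducing everything to the Euclidean ratio $\lvert a-ib\rvert/\sqrt{a^2+b^2}=1$. The only cosmetic difference is that you make the final evaluation on $Y=a\tfrac{\partial}{\partial x}+b\tfrac{\partial}{\partial y}$ explicit, whereas the paper simply writes the supremum of $\lvert \overline{\partial}_{\mathbb{B}^2}V_z(Y)\rvert/\lvert Y\rvert$ and invokes \eqref{dbar_relation0}.
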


\begin{proof}
The first formula \eqref{dbar_relation} follows immediately from \eqref{dbar_relation0}. To prove \eqref{norme2}, it is enough to observe that the norm in the sense of \eqref{norme} is invariant under conformal changes of the metric. Indeed, since $g^{\mathbb{B}^2} = e^{2f} g^{E}$, then for $z \in \mathbb{B}^2$ and $Y \in \mathrm{T}_z \mathbb{B}^2 \cong \mathbb{C}$,
\begin{align*}
\lVert \overline{\partial}_{\mathbb{B}^2}V_z \rVert &= \sup_{\underset{Y\neq 0}{Y\in\mathrm{T}_p\mathbb{B}^2}} \frac{\lVert \overline{\partial}V_z(Y) \rVert_{\mathbb{B}^2}}{\lVert Y \rVert_{\mathbb{B}^2}} \\
&= \sup_{\underset{Y\neq 0}{Y\in\mathrm{T}_p\mathbb{B}^2}}\frac{e^{f(z)} \lvert \overline{\partial}_{\mathbb{B}^2}V_z(Y) \rvert}{e^{f(z)} \lvert Y \rvert} \\
&= \sup_{\underset{Y\neq 0}{Y\in\mathrm{T}_p\mathbb{B}^2}} \frac{\lvert \overline{\partial}_{\mathbb{B}^2}V_z(Y) \rvert}{\lvert Y \rvert}.
\end{align*}
The proof is completed by applying \eqref{dbar_relation0}.
\end{proof}
Using Lemma \ref{same_d_bar}, we may deduce the following Corollary.
\begin{cor}\label{d_bar_isometry}
    Let \( V \) be a smooth vector field on \(\mathbb{H}^2\). Consider \( G:\mathbb{H}^2 \to \mathbb{B}^2 \) an isometry between the hyperboloid model and the Poincaré disk model of the hyperbolic space. Then 
    \[
    \lVert \overline{\partial} V_p \rVert = \Bigg| \frac{\partial (G_*V)}{\partial \overline{z}}(G(p)) \Bigg|.
    \] 
\end{cor}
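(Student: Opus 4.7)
The plan is to exploit the intrinsic nature of the $\overline{\partial}$-operator and reduce to Lemma \ref{same_d_bar}. The key observation is that $\overline{\partial}V$ was defined in \eqref{dbar_H2} purely in terms of the Levi-Civita connection $\nabla$ and the compatible almost complex structure $\J$ of the hyperbolic metric; both of these data are preserved by any isometry, so the $\overline{\partial}$-operator is natural under isometries.

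First, I would establish the naturality statement: if $G:\mathbb{H}^2\to\mathbb{B}^2$ is an isometry, then for every smooth vector field $V$ on $\mathbb{H}^2$,
\begin{equation*}
\mathrm{d}G\cdot \overline{\partial}V(Y) \;=\; \overline{\partial}_{\mathbb{B}^2}(G_*V)\bigl(\mathrm{d}G(Y)\bigr),
\end{equation*}
for every $Y\in \mathrm{T}_p\mathbb{H}^2$. This follows because $G_*\nabla=\nabla^{\mathbb{B}^2}$ (isometries preserve Levi-Civita connections) and $G_*\J=i$ (the complex structure $i$ of $\mathbb{B}^2$ agrees, up to orientation, with the push-forward of $\J$, as both are given by the $\pi/2$ rotation in the respective metric), so that
\begin{equation*}
\mathrm{d}G\bigl(\tfrac{1}{2}(\nabla_YV+\J\nabla_{\J Y}V)\bigr)=\tfrac{1}{2}\bigl(\nabla^{\mathbb{B}^2}_{\mathrm{d}G(Y)}(G_*V)+i\,\nabla^{\mathbb{B}^2}_{i\,\mathrm{d}G(Y)}(G_*V)\bigr).
\end{equation*}

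Second, since $G$ is an isometry, $\mathrm{d}G_p:(\mathrm{T}_p\mathbb{H}^2,g^{\mathbb{H}^2})\to (\mathrm{T}_{G(p)}\mathbb{B}^2,g^{\mathbb{B}^2})$ is a linear isometry, hence the operator norm defined in \eqref{norme} is preserved:
\begin{equation*}
\lVert \overline{\partial}V_p\rVert \;=\; \lVert \overline{\partial}_{\mathbb{B}^2}(G_*V)_{G(p)}\rVert.
\end{equation*}
Applying Lemma \ref{same_d_bar} to the vector field $G_*V$ on the Poincaré disk $\mathbb{B}^2$ gives
\begin{equation*}
\lVert \overline{\partial}_{\mathbb{B}^2}(G_*V)_{G(p)}\rVert \;=\; \Bigl|\tfrac{\partial (G_*V)}{\partial \overline{z}}(G(p))\Bigr|,
\end{equation*}
and combining the two identities yields the claim.

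The only genuinely non-trivial step is the naturality of $\overline{\partial}$ under isometries, but this is essentially tautological given that $\nabla$ and $\J$ are intrinsically defined. No analytic work is required beyond the algebraic verification; the substance has already been done in Lemma \ref{same_d_bar}.
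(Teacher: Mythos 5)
Your proof is correct and follows exactly the route the paper intends: the paper derives Corollary \ref{d_bar_isometry} from Lemma \ref{same_d_bar} precisely via the naturality of $\overline{\partial}$ under isometries (since it is built from $\nabla$ and $\J$) together with the conformal/isometric invariance of the norm \eqref{norme}. The only point worth being explicit about is that the identity $G_*\J = i$ requires $G$ to be orientation-preserving (otherwise $\overline{\partial}$ is exchanged with $\partial$), which holds for the specific isometry \eqref{H2_to_B2} used throughout the paper.
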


\section{Extension by harmonic Lagrangian vector field}\label{sec6_extension}
The goal of this section is to prove Theorem \ref{intro_ex_HL}, which we will restate here.
\begin{theorem}\label{THHL_Existence}
Let $X$ be a continuous vector field on $\mathbb{S}^1$. Then there exists an harmonic Lagrangian vector field on $\mathbb{H}^2$ that extends continuously to $X$ on $\mathbb{S}^1$.
\end{theorem}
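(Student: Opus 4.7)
The plan is to construct the extension directly via Half-Pipe geometry. Given $X \in \Gamma(\mathbb{S}^1)$, write $X(z) = iz\phi_X(z)$ in terms of its support function (Definition \ref{supportfunction}). By Proposition \ref{BF_platau_HP}, there is a unique smooth function $u:\mathbb{H}^2 \to \mathbb{R}$ solving $\Delta^{\mathbb{H}^2} u - 2u = 0$ with boundary data $\overline{u}|_{\mathbb{S}^1} = \phi_X$, whose graph $S \subset \HP$ is a mean surface. Set $\mathrm{HL}(X) := V_S = \J\grad(u)$ via Corollary \ref{cor_formule_XS}. That this vector field is harmonic Lagrangian is immediate from Theorem \ref{charac_HL}: the implication $(1)\Rightarrow(2)$ applies to $u$ and $V_S$ directly by construction, since $u$ satisfies exactly the mean-surface equation appearing in that characterization.

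The main obstacle is to show that $\mathrm{HL}(X)$ extends continuously to $X$ on $\mathbb{S}^1$: one must prove that $V_S(p_n) \to X(z_0)$ as $p_n \to z_0 \in \mathbb{S}^1$, with tangent vectors converging in the appropriate identification of $\mathrm{T}\overline{\mathbb{D}^2}$ at the boundary. By Lemma \ref{sigma_lemma}, at each $p$ with $\eta = \Pi(p)$ one has $V_S(p) = p\boxtimes \sigma_\eta$ with $\sigma_\eta = \grad_p u - u(p)\,p$, so the claim reduces to controlling the asymptotics of the affine tangent planes to $\mathrm{gr}(\overline{u})$ as $\eta \to \mathbb{S}^1$. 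The ingredients for this boundary analysis are the elliptic equation \eqref{platau_HPE}, the sandwich $\phi_X^- \leq \overline{u} \leq \phi_X^+$ from Proposition \ref{BF_platau_HP}, and the continuity of $\phi_X^\pm$ on $\overline{\mathbb{D}^2}$ (property (P1) on page \pageref{page 8}). Comparison-principle arguments based on these data, combined with the fact that tangent planes to $\mathrm{gr}(\overline{u})$ must asymptotically approach lightlike support planes of the boundary graph $\mathrm{gr}(\phi_X)$, should force the Minkowski duals $\sigma_\eta$ to converge under \eqref{duality_map} to the data determining $X(z_0)$ on the boundary circle. The delicacy is that only continuity of the boundary data is assumed, so one has no a priori $C^1$ control up to $\mathbb{S}^1$, and the convergence must be extracted from the PDE together with the convex-geometric pinching of $\overline{u}$.

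An alternative proof route is available, and it will additionally identify $\mathrm{HL}$ with the infinitesimal Douady--Earle extension later on. The map $\mathrm{HL}:\Gamma(\mathbb{S}^1)\to\Gamma(\mathbb{H}^2)$ is linear because \eqref{platau_HP} is a linear equation, continuous by standard elliptic estimates on \eqref{platau_HPE}, and conformally natural thanks to Lemma \ref{equii}, the relation \eqref{h_et_h_bar}, and the equivariance \eqref{adequivariance} of $\Lambda$. Theorem \ref{intro_earle_unique} then forces $\mathrm{HL} = c\cdot L_0$ for some constant $c$, and the Reich--Chen theorem \cite{Extension_with_bounded_derivative} guarantees that $L_0(X)$ extends continuously to $X$ on $\mathbb{S}^1$; this transfers the extension property to $\mathrm{HL}(X)$ without confronting the boundary PDE analysis directly, at the cost of invoking the uniqueness-of-conformally-natural-operators machinery.
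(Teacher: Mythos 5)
Your second route is essentially the paper's first proof (Proposition \ref{L_0_harmonic_lagrangian} combined with Theorem \ref{extension_Reich}), but as written it has a small leak: Earle's uniqueness theorem only gives $\mathrm{HL}=c\,L_0$ for some constant $c$, and the extension property of $L_0(X)$ transfers to $\mathrm{HL}(X)$ only if $c=1$; otherwise $\mathrm{HL}(X)=c\,L_0(X)$ extends to $cX$, not to $X$. You must either compute $c$ (the paper does this in Proposition \ref{HL_DE} by evaluating both operators on the rotation field $X(z)=iz$, using that $L_0$ fixes restrictions of Killing fields), or at least argue that $c\neq 0$ --- e.g.\ because $\mathrm{HL}$ sends the restriction of a Killing field back to that Killing field (Remark \ref{Killing_are_harmonic} and Corollary \ref{support_function_of_killing}) --- and then observe that $L_0(X)=\tfrac{1}{c}\mathrm{HL}(X)=\mathrm{HL}(\tfrac{1}{c}X)$ is itself harmonic Lagrangian and extends continuously to $X$ by Reich--Chen. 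With that repair this route is complete. (The paper in fact proves that $L_0(X)$ is harmonic Lagrangian directly, by applying Earle's theorem to the conformally natural operators $X\mapsto\overline{\Delta}L_0(X)$ and $X\mapsto\mathrm{div}(L_0(X))L_0(X)$, which sidesteps $\mathrm{HL}$ entirely for the existence statement.)

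Your first route, which is the paper's Proposition \ref{second_proof}, is where the real content lies, and there the proposal has a genuine gap: the sentence ``comparison-principle arguments \ldots should force the Minkowski duals $\sigma_\eta$ to converge'' is exactly the step that requires work, and no mechanism is supplied. The sandwich $\phi_X^-\leq\overline{u}\leq\phi_X^+$ controls the \emph{value} of $\overline{u}$ near the boundary but says nothing about $\grad u$, which is what $\sigma_\eta$ depends on; with merely continuous boundary data one has no a priori gradient control up to $\mathbb{S}^1$, as you yourself note, and the convexity of $\phi_X^\pm$ does not transfer to $\overline{u}$. The paper closes this gap with two specific inputs: (i) the Li--Yau gradient estimate (Corollary \ref{Li_Yau_Cor}), applied not to $u$ but to the positive function $h_n=u-\langle\cdot,s_n\rangle_{1,2}$ obtained by subtracting a spacelike support plane of $\mathrm{gr}(\phi_X^-)$ (positivity being guaranteed by the strong maximum principle, Lemma \ref{mean_up_convex_core}), which yields $\sqrt{1-r_n^2}\,\lVert\grad h_n\rVert_{\mathbb{H}^2}\to 0$ along any sequence approaching the boundary; and (ii) the previously established fact that the infinitesimal earthquake $\mathcal{E}_X^-$ built from those same support planes already extends continuously to $X$ (Proposition \ref{infinitesimal_earthquake_thm}). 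The vector field $\mathrm{HL}(X)$ is then decomposed as $\mathcal{E}_X^-$ plus an error term killed by (i). Without an argument of this kind --- or some substitute giving quantitative decay of the gradient of $u$ relative to a support plane --- the boundary convergence does not follow from the convex-geometric pinching alone.
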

We will give two completely different proofs of Theorem \ref{THHL_Existence}. The first proof is based on the study of the so-called \textit{infinitesimal Douady-Earle extension}. The second proof is more in the spirit of Half-Pipe geometry.

\subsection{Infinitesimal Douady-Earle extension}\label{subsec_inf_DEE}
In this section, we recall the construction of \textit{infinitesimal Douady-Earle extension} and then state the main results needed to prove Theorem \ref{THHL_Existence}. We denote by $\Gamma(\mathbb{S}^1)$ and $\Gamma(\mathbb{H}^2)$ the vector spaces of continuous vector fields on $\mathbb{S}^1$ and $\mathbb{H}^2$, respectively. These two spaces are equipped with the compact-open topology, meaning that a sequence $V_n$ in $\Gamma(\mathbb{S}^1)$ (resp. $\Gamma(\mathbb{H}^2)$) converges to $V$ in $\Gamma(\mathbb{S}^1)$ (resp. $\Gamma(\mathbb{H}^2)$) if $V_n$ converges uniformly to $V$ on $\mathbb{S}^1$ (resp. on compact subsets of $\mathbb{H}^2$).

The full isometry group of $\mathbb{H}^2$, denoted by $\mathrm{Isom}(\mathbb{H}^2)$, acts on $\Gamma(\mathbb{S}^1)$ and $\Gamma(\mathbb{H}^2)$ by pushforward. That is,
$$A_* X(p)=\mathrm{d}_{A^{-1}\cdot p}A(X(A^{-1}\cdot p)),$$ for $A \in \mathrm{Isom}(\mathbb{H}^2)$ and $V \in \Gamma(\mathbb{S}^1)$ or $\Gamma(\mathbb{H}^2)$. We say that a linear map $L:\Gamma(\mathbb{S}^1) \to \Gamma(\mathbb{H}^2)$ is \textit{conformally natural} if for all $V \in \Gamma(\mathbb{S}^1)$ and $A \in \mathrm{Isom}(\mathbb{H}^2)$, we have:
$$L(A_*V)=A_*L(V).$$ 

One such map, $L_0:\Gamma(\mathbb{S}^1) \to \Gamma(\mathbb{H}^2)$, is the \textit{infinitesimal Douady-Earle} extension, which can be defined on the Poincaré model $\mathbb{B}^2$ as follows: Consider a vector field $X$ on $\mathbb{S}^1$ and define
\begin{equation}\label{inf_DE}
    L_0(X)(z)=\frac{(1-\vert z\vert^2)^3}{2i\pi}\int_{\mathbb{S}^1}\frac{X(x)}{(1-\overline{z}x)(x-z)}dx,
\end{equation}
for all $z\in\mathbb{B}^2$. It is explained in \cite{Uniqueness_of_operator_L} that the operator $L_0$ can be seen as an infinitesimal version of the Douady-Earle extension operator. More precisely, let $f_t: \mathbb{S}^1 \to \mathbb{S}^1$ be a smooth family of diffeomorphisms of $\mathbb{S}^1$ and let $X = \frac{d}{dt}\big|_{t=0} f_t$  be a tangent vector field on $\mathbb{S}^1$. Set $\mathrm{DE}(f_t): \mathbb{H}^2 \to \mathbb{H}^2$ to be the Douady-Earle extension of $f_t$ (see \cite{Douady_Earle}). Then we have
\begin{equation}\label{derivative_douady_earle}
    L_0(X)=\frac{d}{dt}\bigg\lvert_{ t=0} \mathrm{DE}(f_t). 
\end{equation}

\begin{remark}\label{douadyearly_killing}
Let $X \in \Gamma(\mathbb{S}^1)$ be the restriction on $\mathbb{S}^1$ of a Killing vector field of $\mathbb{H}^2$, which we also denote by $X$. Then $L_0(X)=X$. Indeed, let $f_t:\mathbb{S}^1\to\mathbb{S}^1$ be the flow of $X$ so that $X=\frac{d}{dt}\big\lvert_{t=0}f_t$. Consider $F_t: \mathbb{H}^2 \to \mathbb{H}^2$ to be the one-parameter family of isometries of $\mathbb{H}^2$ that extends to $f_t$ in $\mathbb{S}^1$. Then it is known that $\mathrm{DE}(f_t) = F_t$ and hence by \eqref{derivative_douady_earle}, $L_0(X)=X.$
\end{remark}
Now we state the principal result of \cite{Uniqueness_of_operator_L}:

\begin{theorem}\cite{Uniqueness_of_operator_L}\label{Uniqueness}
Let $L:\Gamma(\mathbb{S}^1) \to \Gamma(\mathbb{B}^2)$ be a continuous linear map that is conformally natural. Then $L$ is a multiple of the infinitesimal Douady-Earle extension $L_0$ given in \eqref{inf_DE}.
\end{theorem}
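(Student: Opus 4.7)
The plan is to reduce $L$ to a single linear functional on $\Gamma(\mathbb{S}^1)$ by conformal naturality, then to pin this functional down up to one real scalar by Fourier analysis over the stabilizer of the origin $0\in\mathbb{B}^2$. First I would define $T\colon\Gamma(\mathbb{S}^1)\to T_0\mathbb{B}^2\cong\mathbb{C}$ by $T(X):=L(X)(0)$. Since $\mathrm{Isom}(\mathbb{H}^2)$ acts transitively on $\mathbb{B}^2$, conformal naturality yields $L(X)(p)=\mathrm{d}\psi_0\bigl(T((\psi^{-1})_*X)\bigr)$ for any isometry $\psi$ with $\psi(0)=p$, so $L$ is completely determined by $T$; conversely any continuous linear $T$ that is equivariant under $\mathrm{Stab}(0)$ reconstructs a unique conformally natural $L$ in this way.

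Next I would pass to the support function, turning $T$ into a continuous $\mathbb{R}$-linear map $\tilde T\colon C(\mathbb{S}^1,\mathbb{R})\to\mathbb{C}$, extend by complex linearity to $C(\mathbb{S}^1,\mathbb{C})$, and set $t_n:=\tilde T(e_n)$ with $e_n(e^{i\psi})=e^{in\psi}$. A direct computation shows that, for the rotation $R_\theta(z)=e^{i\theta}z\in\mathrm{Stab}(0)$, the support function transforms as $\phi_{(R_\theta)_*X}(z)=\phi_X(e^{-i\theta}z)$; combined with the fact that $\mathrm{d}(R_\theta)_0$ is multiplication by $e^{i\theta}$, conformal naturality becomes $\tilde T(\phi\circ R_{-\theta})=e^{i\theta}\tilde T(\phi)$. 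Evaluating on $e_n$ gives $e^{-in\theta}t_n=e^{i\theta}t_n$ for all $\theta$, which forces $t_n=0$ unless $n=-1$. The same reasoning applied to the reflection $\sigma\colon z\mapsto\bar z\in\mathrm{Stab}(0)$ (for which $\mathrm{d}\sigma_0$ is complex conjugation and one checks $\phi_{\sigma_*X}(z)=-\phi_X(\bar z)$, so Fourier modes transform by $c_n\mapsto -c_{-n}$) yields $\tilde T(\sigma_*\phi)=\overline{\tilde T(\phi)}$, hence $t_{-1}=-\overline{t_{-1}}$, i.e.\ $t_{-1}\in i\mathbb{R}$. Thus the space of admissible functionals $T$ is one real dimensional.

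Finally, I would apply the same procedure to $L_0$: a direct contour computation using \eqref{inf_DE} at $z=0$ yields $L_0(X)(0)=i\,c_{-1}(\phi_X)$, so $t_{-1}^{(0)}=i\neq 0$ (also consistent with Remark \ref{douadyearly_killing}, since $L_0$ preserves Killing fields). Any other candidate $L$ therefore has $t_{-1}=c\,t_{-1}^{(0)}$ for a unique $c\in\mathbb{R}$, and the reconstruction formula of the first step forces $L=cL_0$. The main obstacle is purely bookkeeping: one must carefully match the real-linear pushforward actions of $R_\theta$ and $\sigma$ on vector fields, on support functions, on Fourier modes, and on $T_0\mathbb{B}^2\cong\mathbb{C}$; check that the complex-linear extension of $\tilde T$ is still constrained by the rotation/reflection equivariance; and verify that $L(X)(p)=\mathrm{d}\psi_0(T((\psi^{-1})_*X))$ is independent of the choice of $\psi$, which is precisely where the $\mathrm{Stab}(0)$-equivariance of $T$ is used.
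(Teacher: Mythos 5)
The paper does not prove this statement: it is imported verbatim from Earle's work \cite{Uniqueness_of_operator_L}, so there is no internal proof to compare against. Your argument is correct, and it is essentially the standard proof of Earle's theorem: conformal naturality plus transitivity of $\mathrm{Isom}(\mathbb{H}^2)$ on $\mathbb{B}^2$ reduce $L$ to the single $\mathrm{Stab}(0)$-equivariant continuous functional $T(X)=L(X)(0)$; rotation equivariance kills every Fourier mode of the support function except $n=-1$; the reflection cuts the remaining complex parameter down to $i\mathbb{R}$; and the normalization $L_0(X)(0)=i\,c_{-1}(\phi_X)$ (equivalently, the fact that $L_0$ fixes Killing fields, Remark \ref{douadyearly_killing}) identifies the surviving one-real-parameter family with $\{c L_0 : c\in\mathbb{R}\}$. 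Two points deserve care when writing this out. First, the reflection constraint $\tilde T(\sigma_*\phi)=\overline{\tilde T(\phi)}$ holds only for real-valued $\phi$ and conjugation is anti-linear, so it cannot be evaluated directly on the complex exponential $e_{-1}$ (doing so naively would give $t_{-1}=0$); evaluating on a general real $\phi$ with $c_{1}=\overline{c_{-1}}$, using that only $t_{-1}$ survives, does yield $t_{-1}=-\overline{t_{-1}}$ as you claim. Second, you use the continuity hypothesis twice — once to make $\tilde T$ a bounded functional on $\big(C(\mathbb{S}^1,\mathbb{R}),\lVert\cdot\rVert_\infty\big)$ via evaluation at $0$ in the compact-open topology, and once to pass from trigonometric polynomials to all continuous support functions — and this is exactly where that hypothesis is needed, since without it equivariant functionals vanishing on all trigonometric polynomials could exist.
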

From Theorem \ref{Uniqueness}, we can prove that $L_0(X)$ is an harmonic Lagrangian vector field on $\mathbb{H}^2.$
\begin{prop}\label{L_0_harmonic_lagrangian}
    Let $X$ be a continuous vector field on $\mathbb{S}^1$. Then $L_0(X)$ is an harmonic Lagrangian vector field on $\mathbb{B}^2$.
\end{prop}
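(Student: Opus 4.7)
The strategy is to construct an independent operator $\mathrm{HL}\colon \Gamma(\mathbb{S}^1)\to \Gamma(\mathbb{H}^2)$, produced from mean surfaces in $\HP$, whose image consists of harmonic Lagrangian vector fields by construction, then verify that $\mathrm{HL}$ is a continuous, linear, conformally natural extension operator so that Earle's Theorem \ref{Uniqueness} forces $\mathrm{HL}=c\,L_0$ for some $c\in\mathbb{R}$. Evaluating both sides on a Killing vector field will give $c=1$, and the proposition follows since $\mathrm{HL}(X)$ is harmonic Lagrangian by construction.

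Concretely, given $X\in\Gamma(\mathbb{S}^1)$ with support function $\phi_X$, Proposition \ref{BF_platau_HP} produces a unique smooth $u_X\colon\mathbb{H}^2\to\mathbb{R}$ with $\Delta^{\mathbb{H}^2} u_X-2u_X=0$ and $\overline{u}_X|_{\mathbb{S}^1}=\phi_X$. Define $\mathrm{HL}(X):=\J\grad(u_X)$; by Theorem \ref{charac_HL}, this is a harmonic Lagrangian vector field on $\mathbb{H}^2$. Linearity of $\mathrm{HL}$ is immediate: $\phi_{X+\lambda Y}=\phi_X+\lambda\phi_Y$ and the PDE in \eqref{platau_HP} is linear with linear Dirichlet boundary data, so uniqueness gives $u_{X+\lambda Y}=u_X+\lambda u_Y$. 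Conformal naturality follows by combining Lemma \ref{equii}, which says that for $A\in\isom(\mathbb{H}^2)$ the graph of $\phi_{A_*X}$ equals $\mathrm{Is}(A,0)\cdot\mathrm{gr}(\phi_X)$, with the uniqueness of the mean surface in Proposition \ref{BF_platau_HP}: the image $\mathrm{Is}(A,0)\cdot S_X$ is again a mean surface with the correct boundary at infinity, so $S_{A_*X}=\mathrm{Is}(A,0)\cdot S_X$. Since the Gauss map construction \eqref{XS} and Corollary \ref{cor_formule_XS} are equivariant under the isomorphism $\mathrm{Is}$ of \eqref{groupeduality}, one obtains $\mathrm{HL}(A_*X)=A_*\mathrm{HL}(X)$.

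The main technical step is continuity of $\mathrm{HL}$ for the compact-open topologies on $\Gamma(\mathbb{S}^1)$ and $\Gamma(\mathbb{H}^2)$. The idea is to work in the Klein model via the reformulation \eqref{platau_HPE}: the function $\overline{u}_X$ on $\mathbb{D}^2$ solves a uniformly elliptic linear equation with continuous boundary data $\phi_X$. If $X_n\to X$ uniformly on $\mathbb{S}^1$, then $\phi_{X_n}\to \phi_X$ uniformly, and by standard interior Schauder estimates applied to the associated linear elliptic operator, the solutions $\overline{u}_{X_n}$ converge to $\overline{u}_X$ in $\mathcal{C}^2$ on compact subsets of $\mathbb{D}^2$ (the maximum principle upgrades uniform boundary closeness to uniform closeness on $\overline{\mathbb{D}^2}$, and interior Schauder estimates then control two derivatives on compacta). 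Transferring back via \eqref{u_et_u_bar} and taking $\J\grad$ yields $\mathrm{HL}(X_n)\to \mathrm{HL}(X)$ in $\Gamma(\mathbb{H}^2)$. This elliptic regularity argument is the only non-formal step of the proof.

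Having shown $\mathrm{HL}$ is a continuous, linear, conformally natural map from $\Gamma(\mathbb{S}^1)$ to $\Gamma(\mathbb{H}^2)$, Theorem \ref{Uniqueness} yields $\mathrm{HL}=c\, L_0$ for a unique $c\in\mathbb{R}$. To pin down $c$, take $X=\Lambda(\sigma)|_{\mathbb{S}^1}$ for $\sigma\in\minko$. By Remark \ref{douadyearly_killing}, $L_0(X)=\Lambda(\sigma)$. On the other hand, Corollary \ref{support_function_of_killing} gives $\phi_X(z)=\langle(1,z),\sigma\rangle_{1,2}$, and the function $u(p)=\langle p,\sigma\rangle_{1,2}$ satisfies $\Delta^{\mathbb{H}^2} u-2u=0$ with $\overline{u}|_{\mathbb{S}^1}=\phi_X$, so by uniqueness $u_X=u$; a direct computation (or Remark \ref{Killing_are_harmonic}) shows $\J\grad(u)=\Lambda(\sigma)$, whence $\mathrm{HL}(X)=\Lambda(\sigma)$. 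Comparing gives $c=1$, so $L_0(X)=\mathrm{HL}(X)$ for every $X\in\Gamma(\mathbb{S}^1)$, and this is harmonic Lagrangian by construction.
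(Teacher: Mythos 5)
Your route is correct in outline but it is genuinely different from the paper's own proof of this proposition, so let me compare before flagging the one real gap.

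The paper proves Proposition \ref{L_0_harmonic_lagrangian} without ever invoking mean surfaces: it observes that $X\mapsto \overline{\Delta}(L_0(X))$ is a continuous, linear, conformally natural operator, hence equals $\lambda L_0$ by Theorem \ref{Uniqueness}, and evaluation on Killing fields forces $\lambda=1$, giving harmonicity; divergence-freeness is obtained by a similar (pointwise) trick with $X\mapsto \mathrm{div}(L_0(X))L_0(X)$. Your argument instead builds the operator $\mathrm{HL}(X)=\J\grad(u_X)$ from the mean-surface Plateau problem (Proposition \ref{BF_platau_HP}), verifies that $\mathrm{HL}$ is continuous, linear and conformally natural, and identifies $\mathrm{HL}$ with $L_0$ via Theorem \ref{Uniqueness}. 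This is essentially the paper's proof of the \emph{later} Proposition \ref{HL_DE}, of which Proposition \ref{L_0_harmonic_lagrangian} is then a corollary via Theorem \ref{charac_HL}. Your route proves more (it yields Proposition \ref{intro_pro_DE} at the same time) but costs more: it needs the existence and uniqueness of mean surfaces and an elliptic estimate (your Schauder/maximum-principle argument, or the Li--Yau gradient estimate used in the paper) to get continuity of $\mathrm{HL}$, whereas the paper's proof of this particular statement is softer. Your continuity argument itself is fine: the operator in \eqref{platau_HPE}, after division by $1-\lvert\eta\rvert^2$, is uniformly elliptic on $\mathbb{D}^2$, the maximum principle gives $\sup_{\mathbb{D}^2}\lvert\overline{u}_{X_n}-\overline{u}_X\rvert\leq\sup_{\mathbb{S}^1}\lvert\phi_{X_n}-\phi_X\rvert$, and interior Schauder estimates upgrade this on compacta; the paper explicitly mentions this Schauder alternative. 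The normalization $c=1$ via Killing fields is also correct (modulo the harmless identification of $\Gamma(\mathbb{H}^2)$ with $\Gamma(\mathbb{B}^2)$ through $G_*$, which the paper makes explicit).

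The gap: you verify conformal naturality of $\mathrm{HL}$ only for $A\in\isom(\mathbb{H}^2)=\mathrm{Isom}_0(\mathbb{H}^2)$, via Lemma \ref{equii}, which is stated for $\mathrm{A}\in\mathrm{O}_0(1,2)$. But Theorem \ref{Uniqueness} requires naturality under the \emph{full} isometry group $\mathrm{Isom}(\mathbb{H}^2)$, including orientation-reversing elements, and this hypothesis is not redundant: the operator $X\mapsto \J L_0(X)$ is continuous, linear and natural under $\mathrm{Isom}_0(\mathbb{H}^2)$ (since $\J$ commutes with the differentials of orientation-preserving isometries) yet is not a real multiple of $L_0$. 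So with only the orientation-preserving naturality you have established, the conclusion $\mathrm{HL}=cL_0$ does not follow. The fix is exactly the computation carried out in the paper's proof of Proposition \ref{HL_DE}: for the reflection $\gamma$ one has $\phi_{\gamma_*X}(z)=-\phi_X(\overline{z})$, hence $u_{\gamma_*X}=-u_X\circ\gamma^{-1}$, and the two sign changes cancel against $\J\gamma_*=-\gamma_*\J$, giving $\mathrm{HL}(\gamma_*X)=\gamma_*\mathrm{HL}(X)$. With that supplement your proof is complete.
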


\begin{proof}
By the definition of an harmonic Lagrangian vector field, Corollary \ref{cor_on_harmonic_vector_field}, and Theorem \ref{Lagrange_vs_divergence_free}, we need to show that \begin{equation}
    \overline{\Delta}(L_0(X))=L_0(X) \ \ \mathrm{and} \ \ \mathrm{div}(L_0(X))=0.
\end{equation}
Consider the continuous linear operator given by
$$\begin{array}{ccccl}
 L_1& : & \Gamma(\mathbb{S}^1) & \to & \Gamma(\mathbb{B}^2) \\
 & & X & \mapsto &  \overline{\Delta}(L_0(X)) \\
\end{array}$$
It is not difficult to check that $L_1$ is conformally natural, and hence by Theorem \ref{Uniqueness}, there is $\lambda\in\mathbb{R}$ such that 
\begin{equation}\label{eq_harmo_L0}
    L_1(X)=\lambda L_0(X),
\end{equation} for all vector field $X$. 
According to Remark \ref{douadyearly_killing}, if $Y$ is the restriction on $\mathbb{S}^1$ of a Killing vector field of $\mathbb{H}^2$, then $L_0(Y)=Y$. On the other hand, a Killing vector field is harmonic (see Remark \ref{Killing_are_harmonic}), thus 
$$L_1(Y)=\overline{\Delta}(L_0(Y))=L_0(Y)$$ and hence $\lambda=1$. This proves the harmonicity of $L_0(X)$.

Next, we prove that $L_0(X)$ is a divergence-free vector field on $\mathbb{B}^2$. For this, we consider the following continuous linear map:
$$\begin{array}{ccccl}
 L_2& : & \Gamma(\mathbb{S}^1) & \to & \Gamma(\mathbb{H}^2) \\
 & & X & \mapsto &  \mathrm{div}(L_0(X))L_0(X) \\
\end{array}$$
Again, $L_2$ is conformally natural and so by Theorem \ref{Uniqueness}, $L_2=\mu L_0$ for some $\mu\in\mathbb{R}$. Since Killing vector fields are divergence-free, $L_2$ sends Killing vector fields to $0$, hence $\mu=0$. We conclude that $L_2=0$, equivalently \begin{equation}\label{function_divergence}
    \mathrm{div}(L_0(X))L_0(X)=0,
\end{equation} for all $X\in\Gamma(\mathbb{S}^1)$. We claim that $\mathrm{div}(L_0(X))=0$. By contradiction, if there is $z\in\mathbb{B}^2$ such that $\mathrm{div}(L_0(X))(z)\neq0$, then by \eqref{function_divergence} we must have $L_0(X)(z)=0$. Now observe that if $V$ is a vector field on $\mathbb{B}^2$ and $\phi$ is a smooth scalar function on $\mathbb{B}^2$, then by the divergence formula \eqref{formula_divergence}, we have:
\begin{equation}\label{function_divergence_formula}
    \mathrm{div}(\phi V)=\phi\mathrm{div}(V)+\mathrm{d}\phi(V).
\end{equation}
Using \eqref{function_divergence} and applying formula \eqref{function_divergence_formula} for $\phi=\mathrm{div}(L_0(X))$ and $V=L_0(X),$ we get
\begin{equation}\label{eq_f2}
\phi^2+\mathrm{d}\phi(V)=0.\end{equation} Since $V(z)=L_0(X)(z)=0$, then \eqref{eq_f2} implies $\phi(z)=0$, this is a contradiction with our hypothesis $\phi(z)=\mathrm{div}(L_0(X))(z)\neq 0$. This completes the proof.
\end{proof}
The remaining result needed to prove Theorem \ref{THHL_Existence} is the following Theorem due to Reich and Chen.
\begin{theorem}\cite{Extension_with_bounded_derivative}\label{extension_Reich}
Let $X$ be a continuous vector field on $\mathbb{S}^1$. Then $L_0(X)$ defines a continuous extension of $X$ from $\mathbb{S}^1$ to $\mathbb{B}^2$.
\end{theorem}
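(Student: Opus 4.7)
The plan is to identify $L_0(X)$ with a Poisson-type extension of $X$ on the disk and then apply the classical Fatou theorem for continuous boundary data. First, I would verify that $L_0(X)$ is smooth on the open disk $\mathbb{B}^2$: for $z$ in $\mathbb{B}^2$, the integrand in \eqref{inf_DE} is jointly smooth in $(z,x)\in\mathbb{B}^2\times\mathbb{S}^1$ because its denominator $(1-\bar z x)(x-z)$ never vanishes on this set, and differentiation under the integral sign then gives $L_0(X)\in C^\infty(\mathbb{B}^2)$. The entire content of the theorem is thus the continuous extension across $\mathbb{S}^1$.

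The heart of the argument is a Poisson-type reduction of the kernel in \eqref{inf_DE}. The algebraic identity $(1-\bar z x)(x-z)=x\lvert x-z\rvert^{2}$, valid for $x\in\mathbb{S}^1$, together with the parametrization $x=e^{i\theta}$, $dx=ix\,d\theta$, transforms \eqref{inf_DE} into an integral against the classical Poisson kernel $P(z,\zeta)=(1-\lvert z\rvert^{2})/\lvert\zeta-z\rvert^{2}$ of the disk. After the powers of $(1-\lvert z\rvert^{2})$ recombine appropriately, one should obtain an identification of the form
\[
L_0(X)(z)\;=\;\frac{1}{2\pi}\int_0^{2\pi} X(e^{i\theta})\,P(z,e^{i\theta})\,d\theta,
\]
interpreting $X$ componentwise as a continuous $\mathbb{C}$-valued function on $\mathbb{S}^1$. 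The correct overall normalization can be pinned down by testing \eqref{inf_DE} on restrictions of Killing vector fields, for which $L_0$ must act as the identity by Remark \ref{douadyearly_killing}; this sanity check also serves to double-check the calculation.

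Once the above identification is in hand, the conclusion is immediate: the right-hand side is the Poisson extension of the continuous $\mathbb{C}$-valued function $X$, and by the classical Fatou theorem for Poisson integrals of continuous boundary data, it extends continuously to $\overline{\mathbb{B}^2}$ with boundary trace $X$. Hence $\lim_{z\to z_0}L_0(X)(z)=X(z_0)$ for every $z_0\in\mathbb{S}^1$, which is precisely the desired continuous extension. The main obstacle throughout is the computational bookkeeping in the Poisson reduction: the factors of $(1-\lvert z\rvert^{2})$ in \eqref{inf_DE} must be handled with care, so that the boundary trace genuinely emerges as $X(z_0)$ rather than as a renormalized multiple of it. Once this is settled, the rest of the argument is a textbook invocation of harmonic function theory.
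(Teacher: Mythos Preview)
The paper does not prove this result; it is quoted from Reich--Chen \cite{Extension_with_bounded_derivative} and used as a black box. Your attempt is therefore an independent proof, and its key step---identifying $L_0(X)$ with the componentwise Poisson extension of $X$---does not go through.

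Carrying out your algebraic identity $(1-\bar z x)(x-z)=x\lvert x-z\rvert^{2}$ and substituting $dx=ix\,d\theta$ in \eqref{inf_DE} yields not the Poisson integral but
\[
L_0(X)(z)\;=\;(1-\lvert z\rvert^{2})^{2}\cdot\frac{1}{2\pi}\int_0^{2\pi}X(e^{i\theta})\,P(z,e^{i\theta})\,d\theta,
\]
with a leftover factor $(1-\lvert z\rvert^{2})^{2}$ that does not ``recombine appropriately.'' Your own sanity check on the Killing field $X(x)=ix$ already exposes this: the right-hand side equals $iz(1-\lvert z\rvert^{2})^{2}$, not $iz$ as required by Remark~\ref{douadyearly_killing}. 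More fundamentally, no correct version of $L_0$ can coincide with the Poisson extension: the componentwise Poisson extension of vector fields is \emph{not} conformally natural (for a non-rotational M\"obius map $A$ one has $\widetilde{A_*X}\neq A_*\widetilde X$, since the Jacobian factor $A'$ cannot be pulled through the Poisson integral), so by Earle's uniqueness (Theorem~\ref{Uniqueness}) the two operators must differ. A salvageable route is to recognize that the correct kernel is a \emph{higher power} of the Poisson kernel and to show that, once properly normalized, it is still an approximate identity on $\mathbb{S}^1$; but this is a genuinely different computation from the one you outline, and the ``bookkeeping'' you flag is in fact the whole difficulty.
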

We now have all the tools to prove Theorem \ref{THHL_Existence}.
\begin{proof}[First Proof of Theorem \ref{THHL_Existence}]
Let $X$ be a continuous vector field on $\mathbb{S}^1$. Then by Proposition \ref{L_0_harmonic_lagrangian}, $L_0(X)$ is harmonic Lagrangian, and the extension of $L_0(X)$ to $X$ follows from Theorem \ref{extension_Reich}. This concludes the proof.
\end{proof}

\subsection{Extension in terms of mean surface in $\HP$}\label{section_5.2_extension_mean_surface}
The goal of this section, which can be read independently from Section \ref{subsec_inf_DEE}, is to give a second proof of Theorem \ref{THHL_Existence}. This proof does not use any knowledge on infinitesimal Douady-Earle extension. Furthermore, it provides some necessary tools to prove the uniqueness result in Theorem \ref{intro_uniqu_HL}.

\subsubsection{Infinitesimal earthquake}\label{appendix6.1}
In this part, we will recall the construction of particular vector fields on $\mathbb{H}^2$. These vector fields will be used in our proof of Theorem \ref{THHL_Existence}. We define the \textit{left infinitesimal earthquake}:
\begin{equation}\label{infearth_convexhull}\begin{array}{ccccc}
\mathcal{E}_{X}^- & : & \mathbb{D}^2 & \to & \mathbb{R}^2 \\
 & & \eta & \mapsto & \mathrm{d}_{(1,\eta)}\Pi\left((1,\eta)\boxtimes  \sigma\right), \\
\end{array}\end{equation}where $\sigma \in \mathbb{R}^{2,1}$ is a point for which the dual spacelike plane $\mathrm{P}_{\sigma}\subset\HP$ is a \textit{support plane} of $\mathrm{gr}(\phi_X^-)$ at $(\eta, \phi_X^-(\eta))$. That is, $(\eta, \phi_X^-(\eta)) \in \mathrm{P}_{\sigma}$ and
$$\inner{(1,x),\sigma}_{1,2}\leq\phi_X^-(x),$$ for $x\in\mathbb{D}^2.$ Notice that one may similarly define the right infinitesimal earthquake by taking support planes of $\mathrm{gr}(\phi_X^+)$ instead of $\mathrm{gr}(\phi_X^-)$, we refer the reader to \cite{diaf2023Infearth} for more details about this construction.
\begin{prop}\cite[Proposition 4.6]{diaf2023Infearth}\label{infinitesimal_earthquake_thm}
 Let $X$ be a continuous vector field on $\mathbb{S}^1$ then $\mathcal{E}_X^{-}$ extends continuously to $X.$   
\end{prop}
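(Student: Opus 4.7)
The plan is to reinterpret $\mathcal{E}_X^-$ via Killing vector fields of $\mathbb{H}^2$ and then analyze its asymptotic behavior at the ideal boundary. By the definition in \eqref{infearth_convexhull}, the vector $\mathcal{E}_X^-(\eta)$ is, in the Klein model, precisely the value at $\Pi^{-1}(\eta) \in \mathbb{H}^2$ of the Killing vector field $\Lambda(\sigma) \in \mathfrak{isom}(\mathbb{H}^2)$, where $\sigma \in \minko$ is dual to a spacelike support plane of $\mathrm{gr}(\phi_X^-)$ at $(\eta, \phi_X^-(\eta))$. The associated affine function $a_\sigma(x) = \inner{(1,x), \sigma}_{1,2}$ thus satisfies $a_\sigma(\eta) = \phi_X^-(\eta)$ together with $a_\sigma \leq \phi_X^-$ on $\mathbb{D}^2$; by taking boundary values and using property (P1) (since $X$ is continuous, $\phi_X^-$ extends continuously to $\phi_X$ on $\mathbb{S}^1$), one also gets $a_\sigma|_{\mathbb{S}^1} \leq \phi_X$.

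The crux of the argument is a boundary formula: for any $z \in \mathbb{S}^1$ and any $\sigma \in \minko$,
\begin{equation*}
d_{(1,z)}\Pi\bigl((1,z) \boxtimes \sigma\bigr) \;=\; iz \cdot \inner{(1,z), \sigma}_{1,2}.
\end{equation*}
I would verify this identity by a direct coordinate computation, writing $z = (\cos\theta,\sin\theta)$ and using the explicit formulas for $\boxtimes$ and the Jacobian of $\Pi$; alternatively, it follows by equivariance after reducing to a single test point $z_0 \in \mathbb{S}^1$. Crucially, the right-hand side depends on $\sigma$ only through the scalar pairing $\phi_{\Lambda(\sigma)}(z) = \inner{(1,z), \sigma}_{1,2}$, in perfect agreement with Corollary \ref{support_function_of_killing}. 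To conclude the extension statement, fix $\eta_n \to z \in \mathbb{S}^1$ with chosen support plane vectors $\sigma_n$. By (P1) we have $a_{\sigma_n}(\eta_n) = \phi_X^-(\eta_n) \to \phi_X(z)$. Combined with $a_{\sigma_n}|_{\mathbb{S}^1} \leq \phi_X$ and the affinity of $a_{\sigma_n}$, one deduces that $a_{\sigma_n}(z) = \inner{(1,z), \sigma_n}_{1,2} \to \phi_X(z)$. The boundary formula, together with a local uniform estimate for the map $(\eta, \sigma) \mapsto d_{(1,\eta)}\Pi((1,\eta) \boxtimes \sigma)$ as $(1,\eta_n) \to (1,z)$, then yields $\mathcal{E}_X^-(\eta_n) \to iz\, \phi_X(z) = X(z)$.

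The main obstacle is that the vectors $\sigma_n$ themselves may fail to be bounded in $\minko$ as $\eta_n$ approaches the boundary: the support planes can tilt toward lightlike directions, corresponding to an unbounded slope of $a_{\sigma_n}$, so one cannot simply extract a convergent subsequence of $\sigma_n$ and pass to the limit. The cleanest resolution is an approximation scheme: first treat the Lipschitz case, where the subdifferentials of $\phi_{X_k}^-$ are uniformly bounded on compact subsets of $\overline{\mathbb{D}^2}$ and the $\sigma$-selection is well-controlled up to the boundary; then approximate a general continuous $X$ uniformly by Lipschitz vector fields $X_k$ and pass to the limit, exploiting stability of the convex hull construction (hence of $\phi_X^-$ and its support planes) under uniform convergence of boundary data, in combination with the pointwise boundary formula above to control the limit of $\mathcal{E}_{X_k}^-$ on $\overline{\mathbb{D}^2}$.
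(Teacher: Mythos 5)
Your boundary identity $d_{(1,z)}\Pi\bigl((1,z)\boxtimes\sigma\bigr)=iz\,\langle(1,z),\sigma\rangle_{1,2}$ for $z\in\mathbb{S}^1$ is correct (it is the coordinate form of Corollary \ref{support_function_of_killing}), and you correctly isolate the real difficulty: the support-plane vectors $\sigma_n$ need not stay bounded as $\eta_n\to z$. But the two-step fix you propose does not close this gap. First, Lipschitz regularity of $X$ does \emph{not} give uniformly bounded subdifferentials of $\phi_X^-$ up to $\partial\mathbb{D}^2$: for the $1$-Lipschitz function $\phi(w)=-\lvert w-(1,0)\rvert$ one computes $\phi^-(1-\delta,0)\sim-\sqrt{2\delta}$, attained by a support plane of slope $\sim\delta^{-1/2}\to\infty$; bounded support slopes up to the boundary require a (one-sided) bounded slope condition, essentially $C^{1,1}$ data, not Lipschitz. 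Second, even on a regular class where such bounds hold, the passage to the limit is not justified: uniform convergence $\phi_{X_k}\to\phi_X$ does give $\phi_{X_k}^-\to\phi_X^-$ uniformly (the envelope is a sup-norm contraction), but subdifferentials and support-plane selections are not stable under uniform convergence, and to conclude that the limit of continuous extensions is a continuous extension you would need $\mathcal{E}^-_{X_k}\to\mathcal{E}^-_X$ uniformly up to the boundary, which you do not establish. (Also, your assertion that $a_{\sigma_n}(z)\to\phi_X(z)$ is true but needs an argument, e.g.\ via second differences of the convex function $\phi_X^-$ along the segment through $\eta_n$ and $z$; it does not follow formally from $a_{\sigma_n}\le\phi_X$ and affinity.)

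The step that is actually missing is the estimate $d_{(1,\eta_n)}\Pi\bigl((1,\eta_n)\boxtimes\sigma_n\bigr)-d_{(1,z)}\Pi\bigl((1,z)\boxtimes\sigma_n\bigr)\to0$: this error is linear in $\sigma_n$ with a coefficient of size $O(\lvert\eta_n-z\rvert)$, and the crude bound $\lvert\sigma_n\rvert\lesssim \mathrm{osc}(\phi_X^-)/(1-\lvert\eta_n\rvert)$ only controls it for non-tangential approach. Note that the present paper does not reprove this proposition; it cites \cite{diaf2023Infearth} and the remark following the statement indicates the proof is a convexity argument. The way to close your gap without any regularization is the componentwise analysis carried out, in the analogous situation, in the proof of Proposition \ref{second_proof}: writing $\sigma_n=a_np+b_n(0,w_n)+c_n(0,v_n)$ in a frame adapted to $\eta_n$, the quantity $\mathcal{E}_X^-(\eta_n)$ involves only the combinations $(1-r_n^2)c_n$ and $b_n-r_na_n$, and the convexity inequalities (the support plane lies below $\phi_X^-$, touches it at $\eta_n$, and $\phi_X^-$ has boundary value $\phi_X$) control exactly these combinations even when $\sigma_n$ itself blows up. Your argument would become a proof if the regularization scheme were replaced by that direct estimate.
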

\begin{remark}
The proof of the extension of the left infinitesimal earthquake $\mathcal{E}_X^-$ is based on convexity arguments. A similar approach can be used to show the following fact: let $u:\mathbb{H}^2\to\mathbb{R}$ be a convex function, i.e., the shape operator of the surface $S=\mathrm{gr}(u)$ is positive definite. Then the vector field $V_S=\J\grad(u)$ associated to $S$ extends continuously to $X$.
\end{remark}
\subsubsection{Analysis of the mean surface equation}
The key step to proving Theorem \ref{THHL_Existence} is the following gradient estimate proved by Li and Yau.

\begin{theorem}\cite[Theorem 1.3]{Li_Yau}\label{Li_Yau}
Let $M$ be a complete Riemannian manifold of dimension $n$ without boundary. Suppose $f(x, t)$ is a positive solution on $M \times (0, +\infty)$ of the equation:
$$\Delta f-qf-\partial_tf=0,$$
where $q$ is a smooth function defined on $M \times (0, +\infty)$, $\Delta$ is the Laplacian operator on $M \times (0, +\infty)$, and the subscript $t$ denotes the partial differentiation with respect to the $t$ variable. Assume that the Ricci curvature of $M$ is bounded from below by $-K$ for some constant $K > 0$. We also assume that there exist constants $A$ and $B$ such that:
$$\lVert \mathrm{grad}\ q\rVert_{M\times(0,+\infty)}\leq A\ \ \Delta q\leq B.$$ Then there is a constant $C$ such that for any given $\alpha \in (1, 2)$, $f$ satisfies the estimate:
$$\frac{\lVert \mathrm{grad} f\rVert_{M\times(0,+\infty)}^2}{f^2}-\frac{\partial_tf}{f}\leq \alpha q+\frac{n\alpha^2}{2t}+C(B+\frac{K}{\alpha-1}+\sqrt{A}),$$ where $\lVert\cdot\rVert_{M \times (0, +\infty)}$ denotes the norm induced from the Riemannian product metric on $M \times (0, +\infty).$
\end{theorem}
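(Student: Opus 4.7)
The plan is to reduce the estimate to a Bochner/maximum-principle argument for the logarithm $u := \log f$, following the scheme of Li–Yau.

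First, since $f>0$ the function $u=\log f$ is smooth, and a direct computation converts the equation $\Delta f - qf - \partial_t f = 0$ into the semilinear heat-type equation
\begin{equation*}
\partial_t u \;=\; \Delta u + |\nabla u|^2 - q .
\end{equation*}
The quantity the theorem bounds is precisely $|\nabla u|^2 - \partial_t u$, so it is natural to introduce, for fixed $\alpha\in(1,2)$, the auxiliary function
\begin{equation*}
H \;:=\; |\nabla u|^2 - \alpha\,\partial_t u - \alpha q,
\end{equation*}
and aim to show $H \le \frac{n\alpha^2}{2t} + C\bigl(B + \tfrac{K}{\alpha-1} + \sqrt{A}\bigr)$. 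Note the unusual weight $\alpha$: the standard trick is that using $\alpha>1$ produces a strictly positive quadratic term in $H$ after applying Bochner, at the modest cost of a factor $\alpha^2$ in the leading $n/(2t)$ term and a factor $1/(\alpha-1)$ on the Ricci contribution.

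Second, I would compute $(\partial_t - \Delta)H$ using Bochner's formula $\Delta |\nabla u|^2 = 2|\nabla^2 u|^2 + 2\langle \nabla u,\nabla\Delta u\rangle + 2\mathrm{Ric}(\nabla u,\nabla u)$, commuting $\partial_t$ with $\Delta$, substituting the PDE for $u$ wherever $\Delta u$ appears (via $\Delta u = \partial_t u - |\nabla u|^2 + q$), and using $|\nabla^2 u|^2 \ge (\Delta u)^2/n$. After grouping, this yields a differential inequality of the schematic form
\begin{equation*}
(\partial_t - \Delta) H \;\le\; -\,\frac{2}{n}\bigl(\partial_t u - |\nabla u|^2 + q\bigr)^2 + 2K|\nabla u|^2 - 2\langle\nabla u,\nabla q\rangle - \alpha \Delta q + 2\langle\nabla u,\nabla H\rangle .
\end{equation*}
The terms on the right are now controlled by the hypotheses: $2K|\nabla u|^2$ by the Ricci lower bound, $-\alpha\Delta q\le \alpha B$ by the bound on $\Delta q$, and the cross term $2\langle\nabla u,\nabla q\rangle$ by Cauchy–Schwarz together with a small fraction of the $\frac{2}{n}(\cdot)^2$ term; the latter optimisation is exactly what produces the $\sqrt{A}$ rather than $A$. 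The elementary identity $\partial_t u - |\nabla u|^2 + q = -\tfrac{1}{\alpha}H + \tfrac{\alpha-1}{\alpha}|\nabla u|^2$ allows one to rewrite the squared term as a non-trivial positive quadratic in $H$ and $|\nabla u|^2$, which will absorb the $K|\nabla u|^2$ term at the cost of the factor $(\alpha-1)^{-1}$.

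Third, I would apply the parabolic maximum principle to $tH$. For compact $M$ one simply evaluates at a maximum point $(x_0,t_0)\in M\times(0,T]$: there $\nabla(tH)=0$, $\Delta(tH)\le 0$, $\partial_t(tH)\ge 0$, and the inequality above becomes an algebraic inequality, quadratic in $H(x_0,t_0)$. Solving for $H(x_0,t_0)$ yields the asserted bound with an explicit constant $C$ depending only on $n$. For general complete $M$, one has to localize: multiply $tH$ by a cutoff $\phi$ that equals $1$ on a geodesic ball $B_R(p)$, vanishes off $B_{2R}(p)$, and whose derivatives are controlled. The Laplacian comparison theorem, valid under $\mathrm{Ric}\ge -K$, provides $|\nabla\phi|^2/\phi$ and $\Delta\phi$ bounds of order $R^{-1}$ and $R^{-2}(1+\sqrt{K}R)$ respectively. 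The same maximum-principle argument applied to $\phi tH$ on $B_{2R}(p)\times(0,T]$ then yields a bound uniform in $R$; sending $R\to\infty$ gives the pointwise estimate at every point of $M$.

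The main obstacle is the careful bookkeeping in the Bochner computation when $q\not\equiv 0$: the gradient term $\langle\nabla u,\nabla q\rangle$ and the Laplacian term $\Delta q$ must be absorbed into the positive quadratic coming from $|\nabla^2 u|^2\ge(\Delta u)^2/n$ without destroying the leading $n\alpha^2/(2t)$ behaviour, which is exactly where the hypotheses $|\nabla q|\le A$ and $\Delta q\le B$ enter and explain the asymmetric dependence ($\sqrt{A}$ versus $B$) in the final constant. The cutoff estimate in the non-compact case is standard but delicate; one must choose $R$ large compared to $\sqrt{K}$ and to the location of the point where the bound is sought, which forces the careful use of Laplacian comparison rather than a naive cutoff.
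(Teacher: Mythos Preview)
The paper does not prove this statement: it is quoted verbatim from Li--Yau \cite[Theorem 1.3]{Li_Yau} and used as a black box to deduce Corollary~\ref{Li_Yau_Cor}. Your sketch is a faithful outline of the original Li--Yau argument (Bochner formula for $u=\log f$, the $\alpha$-weighted Harnack quantity, absorption of the $q$-terms, and localisation by cutoff with Laplacian comparison), so there is nothing to compare against in the present paper.
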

\begin{cor}\label{Li_Yau_Cor}
Let $u: \mathbb{H}^2 \to \mathbb{R}$ be a positive solution of the equation $\Delta^{\mathbb{H}^2} u - 2u = 0$. Then there exists a constant $C_0$ such that 
$$\frac{\lVert \grad u\rVert_{\mathbb{H}^2}}{u} \leq C_0.$$
\end{cor}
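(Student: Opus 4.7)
The plan is to deduce Corollary \ref{Li_Yau_Cor} as a direct application of the Li--Yau parabolic gradient estimate (Theorem \ref{Li_Yau}), by viewing the elliptic equation $\Delta^{\mathbb{H}^2} u - 2u = 0$ as the time-independent case of a parabolic equation of the form $\Delta f - qf - \partial_t f = 0$ with constant potential $q \equiv 2$.

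First I would set $M = \mathbb{H}^2$, which is a complete Riemannian manifold of dimension $n = 2$, without boundary. Since $\mathbb{H}^2$ has constant sectional curvature $-1$, its Ricci curvature is bounded below by $-K$ with $K = 1$. Given a positive $u : \mathbb{H}^2 \to \mathbb{R}$ with $\Delta^{\mathbb{H}^2} u - 2u = 0$, I define $f : \mathbb{H}^2 \times (0,+\infty) \to \mathbb{R}$ by $f(x,t) := u(x)$. Then $f$ is positive and satisfies $\Delta f - 2 f - \partial_t f = \Delta^{\mathbb{H}^2} u - 2u = 0$, so it is a positive solution with potential $q \equiv 2$. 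Because $q$ is constant, we may take $A = 0$ and $B = 0$ in Theorem \ref{Li_Yau}.

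Next I would apply Theorem \ref{Li_Yau} to $f$ with, say, $\alpha = 3/2$. Since $f$ is independent of $t$, we have $\partial_t f = 0$ and $\lVert \mathrm{grad}\, f \rVert_{M \times (0,+\infty)} = \lVert \grad u \rVert_{\mathbb{H}^2}$. The Li--Yau estimate therefore reduces to
\begin{equation*}
\frac{\lVert \grad u \rVert_{\mathbb{H}^2}^2}{u^2} \leq \alpha q + \frac{n \alpha^2}{2t} + C\Bigl(B + \frac{K}{\alpha - 1} + \sqrt{A}\Bigr) = 3 + \frac{9}{4t} + 2C
\end{equation*}
for every $t > 0$ and every point of $\mathbb{H}^2$.

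Finally, since the left-hand side does not depend on $t$, I would let $t \to +\infty$, which eliminates the term $9/(4t)$ and yields
\begin{equation*}
\frac{\lVert \grad u \rVert_{\mathbb{H}^2}^2}{u^2} \leq 3 + 2C =: C_0^2,
\end{equation*}
so that $\lVert \grad u \rVert_{\mathbb{H}^2}/u \leq C_0$ uniformly on $\mathbb{H}^2$. There is no serious obstacle here: the entire content of the corollary is contained in Theorem \ref{Li_Yau}, and the only mildly delicate point is the legitimacy of lifting the elliptic solution $u$ to a time-independent parabolic solution $f(x,t) = u(x)$ in order to exploit the $t \to \infty$ limit that kills the otherwise-unavoidable $1/t$ term.
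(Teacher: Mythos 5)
Your proposal is correct and is essentially identical to the paper's own proof: both apply the Li--Yau estimate of Theorem \ref{Li_Yau} to the time-independent positive solution with constant potential $q\equiv 2$ (so $A=B=0$, $K=1$, $n=2$), fix $\alpha=3/2$, and let $t\to+\infty$ to kill the $1/t$ term, obtaining $C_0=\sqrt{3+2C}$. Nothing is missing.
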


\begin{proof}
Let $u$ be a positive solution of $\Delta^{\mathbb{H}^2} u - 2u = 0$. Applying Theorem \ref{Li_Yau} to this time-independent solution, we arrive at the estimate: 
$$\frac{\lVert \grad u\rVert_{\mathbb{H}^2}^2}{u^2} \leq 2\alpha + \frac{\alpha^2}{t} + C\frac{1}{\alpha-1}$$ 
for all $\alpha \in (1,2)$ and $t \in (0, +\infty)$. Letting $t \to +\infty$ and taking $\alpha = \frac{3}{2}$, we get the desired estimate with $C_0 = \sqrt{3 + 2C}$. This completes the proof.
\end{proof}

Now we turn our attention to the PDE satisfied by the function $\overline{u}:\mathbb{D}^2 \to \mathbb{R}$ for which, according to Proposition \ref{BF_platau_HP}, we have:
\begin{equation}
    \begin{cases}\label{platau_HPE2}
        \Delta^{\mathbb{E}^2}\overline{u}(\eta) - \mathrm{Hess}_{\eta}^{\mathbb{E}^2}(\eta,\eta) = 0, \\
        \overline{u}|_{\mathbb{S}^1} = \phi.
    \end{cases}
\end{equation}
In other words, if $\eta = (x,y) \in \mathbb{D}^2$, then \eqref{platau_HPE2} is equivalent to:
\begin{equation}
    \begin{cases}
        (1-x^2)\partial_{xx}\overline{u} + (1-y^2)\partial_{yy}\overline{u} - 2xy\partial_{xy}\overline{u} = 0, \\
        \overline{u}|_{\mathbb{S}^1} = \phi.
    \end{cases}
\end{equation}
Therefore, dividing the equation $\Delta^{\mathbb{E}^2}\overline{u}(\eta) - \mathrm{Hess}_{\eta}^{\mathbb{E}^2}(\eta,\eta) = 0$ by $(1-\lvert\eta\rvert^2)$, we obtain a strictly elliptic equation for which we have the following \textit{strong maximum principle}.

\begin{theorem}\cite{Protter1967MaximumPI}\label{Strong_Max_P}
    Let $\Omega$ be an open connected set in $\mathbb{R}^n$ and let $\mathcal{L}$ be the second-order differential operator: 
    $$
    \mathcal{D} = \sum_{i,j=1}^{n}a_{ij}(x)\partial_{x_ix_j} + \sum_{i=1}^nb_i(x)\partial_{x_i},
    $$
    with $a_{ij} = a_{ji}.$ Assume the following:
    \begin{itemize}
        \item The coefficients $a_{ij}$ and $b_i$ are locally bounded on $\Omega$.
        \item The operator $\mathcal{D}$ is \textit{strictly elliptic} on $\Omega$. Namely, there is $\lambda > 0$ such that
        $$
        \sum_{i,j=1}^{n} x_i x_j a_{ij}(x) \geq \lambda \lVert x \rVert^2, \quad \text{for all } x \in \mathbb{R}^n, \ x \in \Omega.
        $$
    \end{itemize}
    Let $f$ be a smooth function satisfying the differential inequality $\mathcal{D}(f) \geq 0$. If $f$ attains a maximum $M$ at a point in $\Omega$, then $f \equiv M$ in $\Omega$.
\end{theorem}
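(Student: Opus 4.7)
The plan is to prove the strong maximum principle by the classical two-step route: first establish Hopf's boundary point lemma via a carefully chosen barrier, then deduce the global statement by a connectedness argument. Since $\mathcal{D}$ has no zero-th order term, $\mathcal{D}(c) = 0$ for every constant $c$, and a weak maximum principle is immediate: if $\mathcal{D}(g) > 0$ strictly on a bounded subdomain, then at any interior maximum of $g$ the Hessian of $g$ is negative semidefinite, so $\sum a_{ij}\partial_{ij}g \le 0$ by positive definiteness of $(a_{ij})$ while $\sum b_i\partial_i g = 0$, contradicting $\mathcal{D}g > 0$. A perturbation $g \mapsto g + \varepsilon e^{\gamma x_1}$ extends this to $\mathcal{D}g \geq 0$ and will be used freely below.

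For Hopf's lemma, fix a ball $B = B_R(x_0) \subset \Omega$ and suppose $f < M$ on $B$ with $f(y) = M$ for some $y \in \partial B$. On the annulus $A = B \setminus \overline{B_{R/2}(x_0)}$, I introduce the barrier
\[
w(x) = e^{-\alpha|x-x_0|^2} - e^{-\alpha R^2},
\]
whose derivatives yield
\[
\mathcal{D}(w)(x) = e^{-\alpha|x-x_0|^2}\Bigl(4\alpha^{2}\!\!\sum_{i,j} a_{ij}(x)(x_i - x_{0,i})(x_j - x_{0,j}) \;-\; 2\alpha\sum_i a_{ii}(x) \;-\; 2\alpha\sum_i b_i(x)(x_i - x_{0,i})\Bigr).
\]
On $A$ we have $|x-x_0| \geq R/2$, so strict ellipticity bounds the first term below by $\alpha^2 \lambda R^2 e^{-\alpha|x-x_0|^2}$, while local boundedness of $a_{ii}$ and $b_i$ on $\overline{B}$ bounds the remaining terms linearly in $\alpha$. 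Thus $\mathcal{D}(w) > 0$ on $A$ for $\alpha$ large. Since $f < M$ on the compact set $\partial B_{R/2}(x_0)$, one can choose $\varepsilon > 0$ so small that $f + \varepsilon w < M$ there; on $\partial B$ we have $w \equiv 0$ and hence $f + \varepsilon w \leq M$. The weak principle applied to $g := f + \varepsilon w - M$, which satisfies $\mathcal{D}g > 0$ on $A$, forces $g \leq 0$ throughout $A$. Since equality holds at $y$, the inward difference quotient yields, in the limit, $\partial_\nu f(y) \geq -\varepsilon\,\partial_\nu w(y) = 2\alpha R\varepsilon\, e^{-\alpha R^2} > 0$, where $\nu$ is the outward unit normal to $B$ at $y$.

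For the global statement, suppose $f \not\equiv M$ and set $\Omega_- := \{x \in \Omega : f(x) < M\}$, a nonempty proper open subset of $\Omega$. Connectedness of $\Omega$ implies that $\partial\Omega_- \cap \Omega$ is nonempty. Choose a point $x_1 \in \Omega_-$ whose distance to $\partial\Omega_-$ is strictly less than its distance to $\partial\Omega$, let $R_1 = \operatorname{dist}(x_1, \partial\Omega_-)$, and pick $y \in \partial B_{R_1}(x_1) \cap \partial\Omega_-$. By construction $B_{R_1}(x_1) \subset \Omega_-$, the closed ball is contained in $\Omega$, and $f(y) = M$, so Hopf's lemma applies and gives $\partial_\nu f(y) > 0$. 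But $y$ is an interior maximum of $f$ in $\Omega$, hence $\nabla f(y) = 0$ — a contradiction, so $f \equiv M$. The main obstacle is the barrier step: the $\alpha^2$-positive term must dominate the $\alpha$-linear negative terms uniformly on $A$, which is precisely where strict ellipticity and local boundedness of $a_{ii}, b_i$ are both essential. The absence of a zero-th order term is the additional simplifying feature that makes the weak maximum principle directly applicable without a sign restriction.
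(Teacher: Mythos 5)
The paper does not prove this statement --- it is quoted as a classical result from \cite{Protter1967MaximumPI} --- so there is no internal proof to compare against. Your argument is correct and is precisely the standard proof found in that reference: the weak maximum principle for operators with no zero-th order term, the exponential barrier $w(x)=e^{-\alpha|x-x_0|^2}-e^{-\alpha R^2}$ on an annulus yielding Hopf's boundary point lemma (with strict ellipticity making the $\alpha^2$-term dominate and local boundedness controlling the $\alpha$-linear terms), and the interior-ball/connectedness argument that turns the boundary derivative inequality into a contradiction with $\nabla f=0$ at an interior maximum.
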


As a consequence, we get the following.

\begin{lemma}\label{mean_up_convex_core}
Let $\phi: \mathbb{S}^1 \to \mathbb{R}$ be a continuous function and $\overline{u}: \mathbb{D}^2 \to \mathbb{R}$ be a solution of \eqref{platau_HPE2}. We furthermore assume that $\overline{u}$ is not the restriction of an affine function. Let $\sigma \in \minko$ such that $\mathrm{P}_{\sigma}$ is a support plane of $\mathrm{gr}^+(\phi^-)$. Then for all $\eta \in \mathbb{D}^2$, we have the strict inequality:
$$\langle (1,\eta), \sigma \rangle_{1,2} < \overline{u}(\eta).$$
\end{lemma}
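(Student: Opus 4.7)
The plan is to reduce the claim to the strong maximum principle (Theorem \ref{Strong_Max_P}) applied to a well-chosen difference function. Let $a:\mathbb{R}^2\to\mathbb{R}$ denote the affine function $a(\eta)=\langle (1,\eta),\sigma\rangle_{1,2}$ whose graph is the spacelike plane $\mathrm{P}_\sigma$. The support plane hypothesis, together with property (P2) of $\phi^-$ on page \pageref{page 8}, gives $a\leq \phi^-$ on $\mathbb{D}^2$; combining this with the inclusion $\phi^-\leq \overline{u}$ from Proposition \ref{BF_platau_HP} shows that $w:=\overline{u}-a\geq 0$ on $\mathbb{D}^2$, with strict positivity being exactly what we want to prove.

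Since $a$ is affine, both $\Delta^{\mathbb{E}^2}a=0$ and $\mathrm{Hess}^{\mathbb{E}^2}a=0$, so $w$ satisfies the same linear equation as $\overline{u}$ from Proposition \ref{BF_platau_HP}, namely
$$(1-x^2)\,\partial_{xx}w \;-\; 2xy\,\partial_{xy}w \;+\; (1-y^2)\,\partial_{yy}w \;=\; 0,$$
for $\eta=(x,y)\in\mathbb{D}^2$. The matrix of second-order coefficients has determinant $1-|\eta|^2$ and trace $2-|\eta|^2$, hence is strictly positive definite at every interior point but degenerates on $\mathbb{S}^1$. Consequently Theorem \ref{Strong_Max_P} does not apply directly on the whole disk, but it does apply on any open subdomain whose closure is compactly contained in $\mathbb{D}^2$, where one obtains uniform ellipticity.

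Now suppose for contradiction that $w(\eta_0)=0$ at some $\eta_0\in\mathbb{D}^2$. Choose an open ball $B$ with $\eta_0\in B$ and $\overline{B}\subset\mathbb{D}^2$. On $B$ the equation is uniformly elliptic, so applying Theorem \ref{Strong_Max_P} to $-w$ (which attains its maximum $0$ at the interior point $\eta_0$) forces $w\equiv 0$ on $B$. Therefore the zero set $E:=\{\eta\in\mathbb{D}^2 : w(\eta)=0\}$ is open; it is also closed by continuity of $w$ and non-empty by assumption. Since $\mathbb{D}^2$ is connected, $E=\mathbb{D}^2$, so $\overline{u}\equiv a$ is the restriction of an affine function, contradicting the hypothesis that $\overline{u}$ is not affine.

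The only delicate point in this strategy is the degeneration of ellipticity as one approaches the circle $\mathbb{S}^1$; this is precisely why Theorem \ref{Strong_Max_P} must be invoked only on pre-compact subdomains and the conclusion then propagated across $\mathbb{D}^2$ via the open-closed-connected argument, rather than by a single direct application.
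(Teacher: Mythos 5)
Your proof is correct and follows the same core strategy as the paper: reduce to the strong maximum principle applied to the nonnegative difference between $\overline{u}$ and the affine function $a$ dual to $\sigma$, using $a\leq\phi^-\leq\overline{u}$. The only genuine difference is how you handle the degeneration of ellipticity near $\mathbb{S}^1$. You keep the raw operator $(1-x^2)\partial_{xx}-2xy\partial_{xy}+(1-y^2)\partial_{yy}$, whose smaller eigenvalue $1-\lvert\eta\rvert^2$ tends to zero at the boundary, and therefore you apply Theorem \ref{Strong_Max_P} only on precompact balls and propagate vanishing by the open--closed--connected argument. The paper instead divides the equation by $1-\lvert\eta\rvert^2$, producing the operator $\mathcal{D}$ of \eqref{strictelliptic} whose coefficient matrix has eigenvalues $1$ and $(1-\lvert\eta\rvert^2)^{-1}$, hence is strictly elliptic with the uniform constant $\lambda=1$ on all of $\mathbb{D}^2$ while its coefficients remain locally bounded there --- exactly the hypotheses of Theorem \ref{Strong_Max_P} --- so a single global application of the theorem suffices. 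Both resolutions are valid; yours is marginally longer but makes the issue you flagged explicit, whereas the paper's normalization trick disposes of it in one step.
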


\begin{proof}
Let $a: \mathbb{D}^2 \to \mathbb{R}$ be the affine function given by 
$$a(\eta) = \langle (1,\eta), \sigma \rangle_{1,2},$$
and assume by contradiction the existence of $\eta_0$ such that $\overline{u}(\eta_0) = a(\eta_0)$. Let $\overline{h} = a - \overline{u}$. Then clearly we have $\Delta^{\mathbb{E}^2}\overline{h}(\eta) - \mathrm{Hess}_{\eta}^{\mathbb{E}^2}\overline{h}(\eta,\eta) = 0$.\\
Consider now $\mathcal{D}$ to be the second-order differential operator:
\begin{equation}\label{strictelliptic}
\mathcal{D} = \frac{1-x^2}{1-x^2-y^2}\partial_{xx} + \frac{1-y^2}{1-x^2-y^2}\partial_{yy} - \frac{xy}{1-x^2-y^2}\partial_{xy} - \frac{xy}{1-x^2-y^2}\partial_{yx}.
\end{equation}
The differential operator $\mathcal{D}$ clearly satisfies the hypothesis of Theorem \ref{Strong_Max_P}. Observe that $\mathcal{D}(\overline{h}) = 0$; indeed, if $\eta = (x,y)$, then we get $\mathcal{D}(\overline{h})$ by dividing the equation $\Delta^{\mathbb{E}^2}\overline{h}(\eta) - \mathrm{Hess}_{\eta}^{\mathbb{E}^2}\overline{h}(\eta,\eta)$ by $(1-x^2-y^2)$.

To arrive at a contradiction, observe that $\overline{h} \leq 0$ because $\mathrm{P}_{\sigma}$ is a support plane of $\mathrm{gr}(\phi^-)$ and $\phi^- \leq \overline{u}$ (see Proposition \ref{BF_platau_HP}). On the other hand, $\overline{h}(\eta_0) = 0$ and so $\overline{h}$ achieves its maximum at $\eta_0 \in \mathbb{D}^2$. Thus, according to Theorem \ref{Strong_Max_P}, $\overline{h}$ is constantly equal to $0$, and hence $\overline{u}$ is the restriction of an affine function, which is a contradiction.
\end{proof}
The second proof of Theorem \ref{THHL_Existence} follows from this proposition.
\begin{prop}\label{second_proof}
    Let $X$ be a continuous vector field on $\mathbb{S}^1$ and let $\phi_X:\mathbb{S}^1\to\mathbb{R}$ be its support function. Let $u_X:\mathbb{H}^2\to\mathbb{R}$ be the unique function such that \begin{equation}\label{platau_HP_proof}
\Delta^{\mathbb{H}^2} u_X-2u_X=0 ,\ \ \  \overline{u}|_{\mathbb{S}^1}=\phi_X. 
\end{equation}
Then, $\mathrm{HL}(X):=\J\grad(u_X)$ is an harmonic Lagrangian vector field which extends continuously to $X.$
\end{prop}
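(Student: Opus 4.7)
The proposition has two assertions. The harmonic Lagrangian property of $\mathrm{HL}(X) = \J\grad(u_X)$ is immediate from characterization (\ref{charac_HL1}) of Theorem \ref{charac_HL}: since $u_X$ solves $\Delta^{\mathbb{H}^2} u_X - 2u_X = 0$ by \eqref{platau_HP_proof}, the vector field $\J\grad(u_X)$ is harmonic Lagrangian. The substantive content is the continuous extension to $X$ on $\mathbb{S}^1$, which I address below.

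My plan is to compare $\mathrm{HL}(X)$ with the left infinitesimal earthquake $\mathcal{E}_X^-$ defined in \eqref{infearth_convexhull}, which extends continuously to $X$ by Proposition \ref{infinitesimal_earthquake_thm}, and to show that the difference vanishes at the boundary. I may assume that $\overline{u}_X$ is not the restriction of an affine function, otherwise $X$ is itself a Killing vector field (by Corollary \ref{support_function_of_killing}) and $\mathrm{HL}(X) = X$ extends trivially (by Remark \ref{Killing_are_harmonic}). For each $\eta_0 \in \mathbb{D}^2$, choose $\sigma^-(\eta_0) \in \minko$ so that $\mathrm{P}_{\sigma^-(\eta_0)}$ is a support plane of $\mathrm{gr}(\phi_X^-)$ at $(\eta_0, \phi_X^-(\eta_0))$, and define
$$v(p) := u_X(p) - \langle p, \sigma^-(\eta_0)\rangle_{1,2}.$$
By Lemma \ref{mean_up_convex_core}, $v > 0$ on $\mathbb{H}^2$. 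The linear function $p \mapsto \langle p, \sigma^-(\eta_0)\rangle_{1,2}$ solves $\Delta u - 2u = 0$ (see Remark \ref{Killing_are_harmonic}), so $v$ does as well, and Corollary \ref{Li_Yau_Cor} gives $|\grad v|_{\mathbb{H}^2} \leq C_0 v$.

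A short computation using Lemma \ref{sigma_lemma} and the identity $(1,\eta_0) \boxtimes \Pi^{-1}(\eta_0) = 0$ shows that the difference $\mathrm{HL}(X) - \mathcal{E}_X^-$ corresponds, as a vector field on $\mathbb{H}^2$, to $p \mapsto p \boxtimes \grad v$; since cross product with $p \in \mathbb{H}^2$ is an isometry on $\mathrm{T}_p\mathbb{H}^2$ (being the complex structure $\J$), this vector has hyperbolic norm $|\grad v|_{\mathbb{H}^2}$. The radial projection $\mathrm{d}\Pi$ is an isometry from $(\mathbb{H}^2, g^{\mathbb{H}^2})$ to the Klein model $(\mathbb{D}^2, g^K)$, and the pointwise comparison $|w|_E \leq \sqrt{1-|\eta_0|^2}\, |w|_K$ holds for $w \in \mathrm{T}_{\eta_0}\mathbb{D}^2$. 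Combined with the identity $v(\Pi^{-1}(\eta_0)) = \bigl(\overline{u}_X(\eta_0) - \phi_X^-(\eta_0)\bigr)/\sqrt{1-|\eta_0|^2}$ (from \eqref{u_et_u_bar} and the fact that the support plane touches $\mathrm{gr}(\phi_X^-)$ at $\eta_0$), these bounds collapse to
$$|\mathrm{HL}(X)(\eta_0) - \mathcal{E}_X^-(\eta_0)|_E \leq C_0 \bigl(\overline{u}_X(\eta_0) - \phi_X^-(\eta_0)\bigr).$$
As $\eta_0 \to z_0 \in \mathbb{S}^1$, both $\overline{u}_X(\eta_0)$ and $\phi_X^-(\eta_0)$ converge to $\phi_X(z_0)$: the former by Proposition \ref{BF_platau_HP} combined with the squeeze $\phi_X^- \leq \overline{u}_X \leq \phi_X^+$ and the continuity of $\phi_X^\pm$ on $\overline{\mathbb{D}}^2$ (property (P1)), and the latter by property (P1) directly. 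Hence the right-hand side tends to zero, and together with the continuous extension of $\mathcal{E}_X^-$ this yields the continuous extension of $\mathrm{HL}(X)$ to $X$.

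The main obstacle is the careful bookkeeping of norms across the three models (Minkowski, hyperboloid, Klein) and the conversion factors involving $\sqrt{1-|\eta_0|^2}$. The argument succeeds precisely because of the cancellation that makes $\sqrt{1-|\eta_0|^2}\, v(\Pi^{-1}(\eta_0))$ equal to the Euclidean-scale boundary defect $\overline{u}_X(\eta_0) - \phi_X^-(\eta_0)$, which vanishes at the boundary thanks to the common continuous boundary value $\phi_X$ of $\overline{u}_X$ and $\phi_X^-$.
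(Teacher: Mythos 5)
Your proof is correct and follows the same strategy as the paper's: decompose $\mathrm{HL}(X)$ in the Klein model as the infinitesimal earthquake $\mathcal{E}_X^-$ plus an error term coming from $p\mapsto p\boxtimes\grad v$, where $v = u_X - \langle\cdot,\sigma^-\rangle_{1,2}$ is positive by Lemma \ref{mean_up_convex_core}, solves the same elliptic equation, and is controlled by the Li--Yau estimate of Corollary \ref{Li_Yau_Cor}, so that the error is bounded by $C_0(\overline{u}_X-\phi_X^-)\leq C_0(\phi_X^+-\phi_X^-)\to 0$ at the boundary. The only difference is that where the paper estimates this error term by an explicit computation in the orthonormal basis $(p,(0,w_n),(0,v_n))$ with coefficients $a_n,b_n,c_n$, you obtain the bound $\sqrt{1-\lvert\eta_0\rvert^2}\,\lVert\grad v\rVert_{\mathbb{H}^2}$ directly from the fact that $\mathrm{d}\Pi$ is an isometry onto the Klein metric together with the pointwise comparison $\lvert w\rvert_E\leq\sqrt{1-\lvert\eta_0\rvert^2}\,\lvert w\rvert_K$ --- a legitimate streamlining of the most technical step, and your separate treatment of the degenerate affine case (which the paper leaves implicit) is appropriate.
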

\begin{proof}
First, observe that by Theorem \ref{charac_HL}, $\mathrm{HL}(X)$ is an harmonic Lagrangian vector field. To prove the extension, we shall make computations in the Klein model $\mathbb{D}^2$. Consider $\overline{\mathrm{HL}(X)}=(\Pi)_*\mathrm{HL}(X)$, the vector field $\mathrm{HL}(X)$ written in $\mathbb{D}^2$, namely for $\eta\in\mathbb{D}^2$ we have:
\begin{equation}\label{pullback_of_vector_field_half_computation}
\overline{\mathrm{HL}(X)}(\eta)=\mathrm{d}_{\Pi^{-1}(\eta)}\Pi\left( \mathrm{HL}(X)(\Pi^{-1}(\eta)) \right)=\mathrm{d}_{\Pi^{-1}(\eta)}\Pi\left( \Pi^{-1}(\eta)\boxtimes\grad_{\Pi^{-1}(\eta)} u_X \right).
\end{equation}
It is immediate to check that for $\eta\in \mathbb{D}^2$ and $v$ a tangent vector at $\Pi(\eta)$, we have:
\begin{equation}\label{Homogenity_of_radial_projection}
\mathrm{d}_{\Pi^{-1}(\eta)}\Pi(v)=\sqrt{1-\lvert\eta\rvert^2}\mathrm{d}_{(1,\eta)}\Pi(v).\end{equation}
Thus, combining \eqref{pullback_of_vector_field_half_computation} and \eqref{Homogenity_of_radial_projection}, we obtain
\begin{equation}\label{pullback_of_vector_field}
\overline{\mathrm{HL}(X)}(\eta)=\mathrm{d}_{(1,\eta)}\Pi\left( (1,\eta)\boxtimes\grad_{\Pi^{-1}(\eta)} u_X \right).
\end{equation}
Let $\eta_n=(x_n,y_n)$ be a sequence converging to $\eta_{\infty}=(x_{\infty},y_{\infty})$. The goal is to show that:
\begin{equation}\label{goal}
    \overline{\mathrm{HL}(X)}(\eta_n)\to X(\eta_{\infty}).
\end{equation}
Let $\mathrm{P}_{s_n}$ be a sequence of spacelike support planes of $\mathrm{gr}(\phi_X^-)$ at $(\eta_n,\phi_X^-(\eta_n))$ (see \eqref{formule_dual_plan} for the formula of $\mathrm{P}_{s_n}$). Then consider  the function $h_n$ defined on $\mathbb{H}^2$ by $$h_n(q)=u_X(q)-\inner{q,s_n}_{1,2}.$$ Similarly, we define $\overline{h_n}:\mathbb{D}^2\to\mathbb{R}$ by \begin{equation}\label{eq_h_n_positive}
    \overline{h_n}(\eta)=\overline{u_X}(\eta)-\inner{(1,\eta),s_n}_{1,2}.
\end{equation} We claim that 
\begin{equation}\label{claiminextension}
    \mathrm{d}_{(1,\eta_n)}\Pi\left( (1,\eta_n)\boxtimes\grad_{\Pi^{-1}(\eta_n)} h_n \right)\to 0.
\end{equation}
To prove the claim, consider $$\sigma_n=\grad_{\Pi^{-1}(\eta_n)}h_n-h_n(\Pi^{-1}(\eta_n))\Pi^{-1}(\eta_n)$$
so that spacelike plane $\mathrm{P}_{\sigma_n}$ is the tangent plane of $\mathrm{gr}(\overline{h_n})$ at $(\eta_n,\overline{h_n}(\eta_n))$ (see Lemma \ref{sigma_lemma}). In particular \begin{equation}\label{u(pi2)}
   \overline{h_n}(\eta_n)= \inner{(1,\eta_n),\sigma_n}_{1,2}.
\end{equation}
Let $p = (1, 0, 0)$, $r_n = \sqrt{x_n^2 + y_n^2}$, and $v_n = \frac{1}{r_n}(-y_n, x_n)$. Consider $w_n = \frac{1}{r_n}\eta_n$ so that $(p, (0,w_n), (0,v_n))$ is an oriented orthonormal basis. We can write $\sigma_n$ in this basis as: \begin{equation}\label{sigma_n}
    \sigma_n=a_np+b_n(0,w_n)+c_n(0,v_n).
\end{equation}
 Now, by an elementary computation, one may check that the differential of the radial projection $\Pi$ satisfies the following:
$$\mathrm{d}_{(1,x,y)}\Pi(v_0,v_1,v_2)=\left( -x v_0+v_1, -y v_0+v_2       \right),$$ for all $(x,y)\in\mathbb{D}^2$ and $(v_0,v_1,v_2)\in\mathbb{R}^3$. This implies that $$\mathrm{d}_{(1,\eta_n)}\Pi(p) = -\eta_n,\  \mathrm{d}_{(1,\eta_n)}\Pi(0,w_n) = w_n, \ \mathrm{d}_{(1,\eta_n )}\Pi(0,v_n) = v_n,$$ 
therefore:
\begin{equation}\label{pullback_of_vector_field2}
  \mathrm{d}_{(1,\eta_n)}\Pi( (1,\eta_n)\boxtimes\grad_{\Pi^{-1}(\eta_n)} h_n )=\mathrm{d}_{(1,\eta_n)}\Pi( (1,\eta_n)\boxtimes\sigma_n )=-c_n(1-r_n^2)w_n+(b_n-r_na_n)v_n
\end{equation}
Hence, by \eqref{pullback_of_vector_field2} it is enough to show that:
\begin{equation}\label{new_condition}
(1 - r_n^2) c_n \to 0 \quad \text{and} \quad b_n - r_n a_n \to 0.
\end{equation}
To prove this, observe that:
\begin{equation}\label{grad_to_0}
\sqrt{1 - r_n^2} \lVert \grad_{\Pi^{-1}(\eta_n)} h_n \rVert_{\mathbb{H}^2} \to 0.
\end{equation}
Indeed, from Lemma \ref{mean_up_convex_core}, observe that \(h_n\) defined in \eqref{eq_h_n_positive} is a positive function which is moreover a solution of \eqref{platau_HP_proof}. Hence, we may apply Corollary \ref{Li_Yau_Cor} to get the gradient estimate: $$\lVert \grad_{\Pi^{-1}(\eta_n)} h_n\rVert_{\mathbb{H}^2}\leq C_0 h_n(\Pi^{-1}(\eta_n)).$$ Thus \begin{align*}
\sqrt{1-r_n^2}\lVert\grad_{\Pi^{-1}(\eta_n)} h_n\rVert_{\mathbb{H}^2}&\leq C_0\sqrt{1-r_n^2}h_n(\Pi^{-1}(\eta_n))\\
&=C_0\overline{h_n}(\eta_n)\to 0.
\end{align*}
Note that \begin{align*}
c_n&=\inner{\sigma_n,(0,v_n)}_{1,2}\\
&=\inner{\grad_{\Pi^{-1}(\eta_n)}h_n-h_n(\Pi^{-1}(\eta_n))\Pi^{-1}(\eta_n),(0,v_n)}_{1,2}\\
&= \inner{\grad_{\Pi^{-1}(\eta_n)}h_n,(0,v_n)}_{1,2}.
\end{align*} It follows from Cauchy Schwartz inequality and from the limit \eqref{grad_to_0} that:
$$(1-r_n^2)\lvert c_n\rvert\leq (1-r_n)^2\lVert\grad_{\Pi^{-1}(\eta_n)}h_n\rVert_{\mathbb{H}^2} \to 0.$$
We turn now to the proof of the other limit: $b_n-r_na_n\to 0$. First, observe that by \eqref{sigma_n} we have
$$b_n-r_na_n=\inner{\sigma_n,(0,w_n)+r_np}_{1,2}.$$
Since $\Pi^{-1}(\eta_n)\boxtimes(0,v_n)=\frac{-1}{\sqrt{1-r_n^2}}((0,w_n)+r_np)$ then $$b_n-r_na_n=-\sqrt{1-r_n^{2}}\inner{\grad_{\Pi^{-1}(\eta_n)}h_n,\Pi^{-1}(\eta_n)\boxtimes (0,v_n)}_{1,2}.$$ Since $\Pi^{-1}(\eta_n)\boxtimes (0,v_n)$ is a tangent vector at $\Pi^{-1}(\eta_n)$ of norm $1$, then again by Cauchy Schwartz inequality and \eqref{grad_to_0} we have 
\begin{align*}
\lvert b_n-r_na_n\rvert&=\sqrt{1-r_n^{2}}\ \lvert\inner{\grad_{\Pi^{-1}(\eta_n)}h_n,\Pi^{-1}(\eta_n)\boxtimes (0,v_n)}_{1,2}\vert\\
&\leq  \sqrt{1-r_n^{2}}\ \lVert \grad_{\Pi^{-1}(\eta_n)}h_n\rVert_{\mathbb{H}^2}\to 0.  \\
\end{align*} This finishes the proof of \eqref{new_condition} and thus the proof of \eqref{claiminextension}.\\

The last step of the proof is to show the limit \eqref{goal}. Since $h_n(q)=u_X(q)-\inner{q,s_n}_{1,2}$ and $\grad_q(\inner{q,s_n})=s_n+\inner{q,s_n}_{1,2}q$, then $$\grad_{\Pi^{-1}(\eta_n)}h_n=\grad_{\Pi^{-1}(\eta_n)}u_X-s_n-\inner{\Pi^{-1}(\eta_n),s_n}\Pi^{-1}(\eta_n).$$ Thus 
$$(1,\eta_n)\boxtimes\grad_{\Pi^{-1}(\eta_n)}h_n=(1,\eta_n)\boxtimes\grad_{\Pi^{-1}(\eta_n)}u_X-(1,\eta_n)\boxtimes s_n$$
This implies that
\begin{equation}\label{eenfin}
\overline{\mathrm{HL}(X)}(\eta_n)=\mathrm{d}_{(1,\eta_n)}\Pi\left( (1,\eta_n)\boxtimes\grad_{\Pi^{-1}(\eta_n)} h_n \right)+\mathrm{d}_{(1,\eta_n)}\Pi\left( (1,\eta_n)\boxtimes s_n\right).\end{equation}
It follows form Proposition \ref{infinitesimal_earthquake_thm} that $  \mathcal{E}^-_X(\eta_n)=\mathrm{d}_{(1,\eta_n)}\Pi\left( (1,\eta_n)\boxtimes s_n\right)\to X(\eta_{\infty})$ and hence using \eqref{claiminextension} in \eqref{eenfin} we obtain 
$$\overline{\mathrm{HL}(X)}(\eta_n)\to X(\eta_{\infty}),$$ this completes the proof.
\end{proof}

\subsection{Infinitesimal Douady-Earle extension and mean surface in $\HP$.}
The goal of this section is to explain that the vector field associated to the mean surface in $\HP$ coincides with the infinitesimal Douady-Earle extension. Before stating the result, recall that the hyperboloid model $\mathbb{H}^2$ and the Poincaré disk model $\mathbb{B}^2$ can be identified through the isometry (see \cite[Chapter 2]{martelli2022introduction}):
\begin{equation}\label{H2_to_B2}\begin{array}{ccccc}
G & : & \mathbb{H}^2 & \to & \mathbb{B}^2 \\
 & & (x_0,x_1,x_2) & \mapsto & (\frac{x_1}{1+x_0}, \frac{x_2}{1+x_0}) \\
\end{array}\end{equation}
By pushing forward vector fields, the map $G$ induces a linear map between $\Gamma(\mathbb{H}^2)$ and $\Gamma(\mathbb{B}^2)$ which is conformally natural, we denote such map by $G_*$. 
\begin{prop}\label{HL_DE}
   Let $\mathrm{HL}:\Gamma(\mathbb{S}^1)\to\Gamma(\mathbb{H}^2)$ be the linear operator given by
$$\begin{array}{ccccc}
\mathrm{HL} & : & \Gamma(\mathbb{S}^1) & \to & \Gamma(\mathbb{H}^2) \\
 & & X & \mapsto & \J\grad(u_X) \\
\end{array},$$ where $u_X:\mathbb{H}^2\to\mathbb{R}$ is the unique solution of
   \begin{equation}\label{platau_HP}
    \begin{cases}
\Delta^{\mathbb{H}^2} u_X-2u_X=0  \\
\overline{u_X}|_{\mathbb{S}^1}=\phi_X.
\end{cases}\end{equation}
Then, we have:
$$G_*\mathrm{HL}=L_0,$$ 
where $L_0$ is the infinitesimal Douady-Earle extension defined in \eqref{inf_DE}. 
\end{prop}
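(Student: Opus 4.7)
The plan is to apply Earle's uniqueness result (Theorem \ref{Uniqueness}) to the operator $G_*\mathrm{HL}:\Gamma(\mathbb{S}^1)\to\Gamma(\mathbb{B}^2)$. Once we verify that $G_*\mathrm{HL}$ is a continuous linear map that is conformally natural, Theorem \ref{Uniqueness} forces $G_*\mathrm{HL}=\lambda L_0$ for some $\lambda\in\mathbb{R}$, and a single test case will pin down the constant.

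First I would establish linearity. The map $X\mapsto\phi_X$ is linear by the definition of the support function, the assignment $\phi\mapsto u_\phi$ is linear because it is the inverse of the linear elliptic operator $\Delta^{\mathbb{H}^2}-2$ with prescribed boundary data at infinity (uniqueness in Proposition \ref{BF_platau_HP} upgrades linearity at the level of equations to linearity of the solution map), and finally $u\mapsto\mathbb{J}\mathrm{grad}(u)$ is linear. The composition $G_*$ is trivially linear. Next, I would prove continuity: if $X_n\to X$ uniformly on $\mathbb{S}^1$, then $\phi_{X_n}\to \phi_X$ uniformly, and by the maximum principle applied to the strictly elliptic operator $\mathcal{D}$ from \eqref{strictelliptic}, $\overline{u_{X_n}}\to\overline{u_X}$ uniformly on $\overline{\mathbb{D}^2}$. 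Interior Schauder estimates for the mean surface equation then give $C^1$-convergence of $u_{X_n}$ to $u_X$ on compact subsets of $\mathbb{H}^2$, which yields uniform convergence of $\mathbb{J}\mathrm{grad}(u_{X_n})$ on compact sets, hence continuity of $\mathrm{HL}$ (and thus of $G_*\mathrm{HL}$) in the compact-open topology.

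The geometrically substantive step is conformal naturality. Given $A\in\mathrm{Isom}(\mathbb{H}^2)\cong\mathrm{O}_0(1,2)$, I would use Lemma \ref{equii} and the isometric extension \eqref{groupeduality} to identify $\mathrm{gr}(\phi_{A_*X})=\mathrm{Is}(A,0)\cdot\mathrm{gr}(\phi_X)$. Because the mean surface equation in Proposition \ref{BF_platau_HP} is invariant under the action of $\mathrm{Isom}(\HP)$ on graphs of functions (via \eqref{h_et_h_bar}), the unique mean surface with boundary at infinity $\mathrm{gr}(\phi_{A_*X})$ is the isometric image of the mean surface for $X$. Translating this back to functions yields the pointwise relation $u_{A_*X}(p)=u_X(A^{-1}p)$. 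Since the Levi-Civita connection and the complex structure $\mathrm{J}$ on $\mathbb{H}^2$ are isometry-invariant, differentiating gives $\mathbb{J}\mathrm{grad}(u_{A_*X})=A_*(\mathbb{J}\mathrm{grad}(u_X))$, i.e. $\mathrm{HL}(A_*X)=A_*\mathrm{HL}(X)$. Composing with the conformally natural isomorphism $G_*$ gives the same property for $G_*\mathrm{HL}$. I expect this step to be the main subtlety, because one has to track the correct combination of $A$ and the translation part in \eqref{groupeduality} that leaves mean surfaces invariant.

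By Theorem \ref{Uniqueness}, there is $\lambda\in\mathbb{R}$ with $G_*\mathrm{HL}=\lambda L_0$. To find $\lambda$, I would test on the restriction to $\mathbb{S}^1$ of any non-trivial Killing field $\Lambda(\sigma)$, $\sigma\in\minko$. By Remark \ref{douadyearly_killing} we have $L_0(\Lambda(\sigma)|_{\mathbb{S}^1})=\Lambda(\sigma)$ (read in $\mathbb{B}^2$ via $G$). On the other hand, by Remark \ref{Killing_are_harmonic}, $\Lambda(\sigma)=\mathbb{J}\mathrm{grad}(u)$ for $u(p)=\langle p,\sigma\rangle_{1,2}$, and this $u$ solves $\Delta^{\mathbb{H}^2}u-2u=0$ with boundary value $\phi_{\Lambda(\sigma)}$ by Corollary \ref{support_function_of_killing}. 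Uniqueness in Proposition \ref{BF_platau_HP} identifies this $u$ with $u_{\Lambda(\sigma)}$, so $\mathrm{HL}(\Lambda(\sigma)|_{\mathbb{S}^1})=\Lambda(\sigma)$ and hence $G_*\mathrm{HL}(\Lambda(\sigma)|_{\mathbb{S}^1})=G_*\Lambda(\sigma)=L_0(\Lambda(\sigma)|_{\mathbb{S}^1})$. Therefore $\lambda=1$, which concludes the proof.
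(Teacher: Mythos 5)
Your overall strategy is the same as the paper's: verify that $\mathrm{HL}$ (hence $G_*\mathrm{HL}$) is a continuous, conformally natural linear operator, invoke Theorem \ref{Uniqueness} to get $G_*\mathrm{HL}=\lambda L_0$, and normalize on Killing fields. Your continuity argument via the maximum principle for the strictly elliptic operator \eqref{strictelliptic} together with interior Schauder estimates is a legitimate alternative to the paper's Li--Yau gradient estimate (the paper itself remarks that Schauder estimates would suffice), and your determination of $\lambda$ by testing on an arbitrary Killing field $\Lambda(\sigma)$, using Remarks \ref{douadyearly_killing} and \ref{Killing_are_harmonic} together with Corollary \ref{support_function_of_killing} and the uniqueness in Proposition \ref{BF_platau_HP}, is cleaner than the paper's explicit coordinate computation of $G_*\mathrm{HL}(iz)$ and is correct (note that $G$ and the radial projection both extend to the identity on $\mathbb{S}^1$, so the boundary identifications match).

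There is, however, one genuine gap in the conformal naturality step. You write $\mathrm{Isom}(\mathbb{H}^2)\cong\mathrm{O}_0(1,2)$, but $\mathrm{O}_0(1,2)$ is only the identity component; the paper defines conformal naturality, and states Theorem \ref{Uniqueness}, with respect to the \emph{full} isometry group, including orientation-reversing isometries. Your argument (via Lemma \ref{equii}, which is stated for $A\in\mathrm{O}_0(1,2)$, and the identity $\J A_*=A_*\J$) only covers the orientation-preserving case, so as written you have not verified the hypothesis of Theorem \ref{Uniqueness}. The orientation-reversing case is not automatic: for the reflection $\gamma$ one has $\phi_{\gamma_*X}(z)=-\phi_X(\overline{z})$, hence $u_{\gamma_*X}=-u_X\circ\gamma^{-1}$, and $\J$ \emph{anti}-commutes with $\gamma_*$; the two sign changes cancel to give $\mathrm{HL}(\gamma_*X)=\gamma_*\mathrm{HL}(X)$, but this cancellation has to be checked (the paper devotes a separate computation to it). Since $\mathrm{Isom}(\mathbb{H}^2)$ is generated by $\mathrm{O}_0(1,2)$ and $\gamma$, adding this one verification closes the gap.
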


\begin{proof}
First, we will show that $\mathrm{HL}$ is a continuous linear map that is conformally natural. We start with the conformal naturality. Let $A \in \mathrm{Isom}(\mathbb{H}^2)$ and $X \in \Gamma(\mathbb{S}^1)$. Consider $\Phi_X$ as the 1-homogeneous function associated to $X$. Then, using elementary computations from Lemma \ref{homogene}, we deduce that if $A$ preserves orientation, then $\Phi_X \circ A^{-1}$ is the 1-homogeneous function associated to $A_* X$. Thus, if $\phi_{A_* X}:\mathbb{S}^1 \to \mathbb{R}$ is the support function of $A_* X$, then according to Lemma \ref{equii}, we have $\mathrm{gr}(\phi_{A_* X}) = \mathrm{Is}(A, 0) \mathrm{gr}(\phi_X)$. Hence, it is straightforward to check that $$u_{A_*X}=u\circ A^{-1}.$$ Therefore,
\begin{align*}
\mathrm{HL}(A_*X)&=\J\grad(u_{A_*X})\\
&=\J\grad(u_X\circ A^{-1})\\
&=\J A_* \grad(u_X)\\
&=A_*\J\grad(u_X) \ \ \ \ \ \ \ \ \ \ \ \ (\text{because $\J A_*=A_*\J$})\\
&=A_*\mathrm{HL}(X).
\end{align*}
It remains to prove the invariance of $\mathrm{HL}$ with respect to isometries that reverse the orientation. To show this, one may note that the isometry group of $\mathbb{H}^2$ is generated by isometries which preserve the orientation, that is $\mathrm{O}_0(1,2)$ and the orientation reversing isometry $\gamma$ given by $$\gamma=\begin{bmatrix}
1 & 0 & 0 \\
0 & 1 & 0 \\
0 & 0 & -1
\end{bmatrix}.$$
As we already proved the invariance with respect to $\mathrm{O}_0(1,2)$, then it is enough to show the invariance of $\mathrm{HL}$ by $\gamma.$ We have for $z \in \mathbb{S}^1$,
$$\phi_{\gamma_*X}(z)=-\phi_X(\overline{z})$$ and so $$u_{\gamma_*X}=-u_X\circ\gamma^{-1}.$$ This implies that \begin{align*}
\mathrm{HL}(\gamma_*X)&=\J\grad(u_{\gamma_*X})\\
&=-\J\grad(u_X\circ \gamma^{-1})\\
&=-\J \gamma_* \grad(u_X)\\
&=\gamma_*\J\grad(u_X) \ \ \ \ \ \ \ \ \ \ \ \ (\text{because $\J \gamma_*=-\gamma_*\J$})\\
&=\gamma_*\mathrm{HL}(X).
\end{align*}
Next, we prove the continuity of the linear operator $\mathrm{HL}$. Let $X_n$ be a sequence of vector fields converging to $X$. Then $\phi_{X_n}:\mathbb{S}^1 \to \mathbb{R}$ is a sequence of continuous functions uniformly converging to $\phi_X:\mathbb{S}^1 \to \mathbb{R}$. It follows from Lemma 2.38 in \cite{barbotfillastre} that $u_{X_n}$ converges to $u_X$ uniformly on compact sets of $\mathbb{H}^2$. By Corollary \ref{Li_Yau_Cor}, there is a constant $C_0$ independent of $n$ such that
$$\lVert \grad_p(u_{X_n}-u_X)\rVert_{\mathbb{H}^2}\leq C_0\lvert u_{X_n}(p)-u_X(p)\rvert.$$
This shows that $\grad(u_{X_n})$ converges to $\grad(u_X)$ uniformly on compact sets of $\mathbb{H}^2$, which concludes the proof of the continuity of $\mathrm{HL}$. (Notice that one may apply classical Schauder estimates (see \cite{PDE_Book}) on compact sets on the elliptic equation $\Delta^{\mathbb{H}^2} u - 2u=0$ instead of the Li-Yau estimate to prove the uniform convergence of $\grad(u_{X_n})$).

Since \(G:\mathbb{H}^2 \to \mathbb{B}^2\) is an isometry, then \(G_*\mathrm{HL}(X)\) is also a continuous linear map from \(\Gamma(\mathbb{S}^1)\) to \(\Gamma(\mathbb{B}^2)\) which is conformally natural. Therefore, Theorem \ref{Uniqueness} implies that:
\begin{equation}\label{G_lambda_0}
G_*\mathrm{HL} = \lambda L_0,
\end{equation}
for some \(\lambda \in \mathbb{R}\). We claim that \(\lambda = 1\). To prove this, consider the vector field
\[ X(z) = iz. \]
Let \(\mathrm{R}_t:\mathbb{B}^2 \to \mathbb{B}^2\) be the one-parameter family of rotations given by \(\mathrm{R}_t(z) = e^{it}z\), then
\[ X = \frac{d}{dt}\bigg\lvert_{t=0}\mathrm{R}_t. \]
It follows from Remark \ref{douadyearly_killing} that
\begin{equation}\label{L_0_X}
L_0(X) = X.
\end{equation}
Now we will compute \(G_*\mathrm{HL}(X)\). First, observe that \(\phi_X = 1\), which can be written as
\[ \phi_X(z) = \langle (1,z), (-1,0,0) \rangle_{1,2}. \]
Therefore, it is immediate to check that the solution \(u_X\) of \eqref{platau_HP} is given by
\[ u_X(p) = \langle p, (-1,0,0) \rangle_{1,2}, \]
for all \(p \in \mathbb{H}^2\). This implies that \(\grad_p u_X = \sigma + u_X(p) p\) and so
\begin{equation}\label{HL(p)}
\mathrm{HL}(X)(p) = p \boxtimes (-1,0,0).
\end{equation}
For \((x_0,x_1,x_2) \in \mathbb{H}^2\), we have:
\begin{equation}\label{dG}
\mathrm{d}_{(x_0,x_1,x_2)}G = \frac{1}{1 + x_0} \begin{pmatrix}
-\frac{x_1}{1 + x_0} & 1 & 0 \\
-\frac{x_2}{1 + x_0} & 0 & 1
\end{pmatrix},
\end{equation}
The inverse of \(G\) is given by:
\begin{equation}\label{inverseG}
\begin{array}{ccccc}
G^{-1} & : & \mathbb{B}^2 & \to & \mathbb{H}^2 \\
& & z = (x,y) & \mapsto & \left( \frac{1 + \lvert z \rvert^2}{1 - \lvert z \rvert^2}, \frac{2x}{1 - \lvert z \rvert^2}, \frac{2y}{1 - \lvert z \rvert^2} \right) \\
\end{array}
\end{equation}
Combining \eqref{HL(p)}, \eqref{dG}, and \eqref{inverseG}, we obtain by tedious but elementary computation:
\begin{equation}\label{GHL_X}
G_*\mathrm{HL}(X)(z) = iz,
\end{equation}
for \(z \in \mathbb{B}^2\). In conclusion, equations \eqref{G_lambda_0}, \eqref{L_0_X}, and \eqref{GHL_X} imply that \(\lambda = 1\), that is, \(G_*\mathrm{HL} = L_0\) as desired.

\end{proof}

\section{Uniqueness of extension}\label{sec_uniquness}
The goal of this section is to prove the following uniqueness result $(i)$ in Theorem \ref{intro_uniqu_HL}. More precisely, we show the following.

\begin{theorem}\label{uniqTH1HL}
Let $X$ be a continuous vector field on $\mathbb{S}^1$. Let $V$ be an harmonic Lagrangian vector field on $\mathbb{H}^2$ that extends continuously to $X$ on $\mathbb{S}^1$. Assume that $\lVert \overline{\partial}V\rVert_{\infty}$ is finite. Then $V=\mathrm{HL}(X)$ (see Proposition \ref{HL_DE}). 
\end{theorem}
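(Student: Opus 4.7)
The plan is to reduce Theorem \ref{uniqTH1HL} to Proposition \ref{intro_pro_bounded_imple_extension} together with the uniqueness of mean surfaces with prescribed boundary data.

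First, I would unpack what it means for $V$ to be harmonic Lagrangian. By Theorem \ref{charac_HL} (specifically the implication $(\ref{charac_HL2}) \Rightarrow (\ref{charac_HL1})$), there exists a smooth function $u:\mathbb{H}^2\to\mathbb{R}$ satisfying $\Delta^{\mathbb{H}^2} u - 2u = 0$ such that $V = \J\grad(u)$. Setting $S := \mathrm{gr}(u) \subset \HP$, the surface $S$ is therefore a smooth mean surface and, by Corollary \ref{cor_formule_XS}, we have $V = V_S$.

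Next, the finiteness assumption $\lVert\overline{\partial}V\rVert_{\infty}<+\infty$ puts us in the exact hypothesis of Proposition \ref{intro_pro_bounded_imple_extension}. That proposition yields a continuous function $\phi:\mathbb{S}^1\to\mathbb{R}$ (the boundary at infinity of $S$) and the identification $V_S = \mathrm{HL}(X_\phi)$ as an extension of the vector field $X_\phi(z):=iz\phi(z)$ on $\mathbb{S}^1$. On the other hand, by hypothesis $V = V_S$ extends continuously to the given field $X$ on $\mathbb{S}^1$. Since continuous extensions to $\partial\mathbb{H}^2=\mathbb{S}^1$ are unique, $X$ and $X_\phi$ must agree; equivalently $\phi=\phi_X$. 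Hence $V = V_S = \mathrm{HL}(X_\phi) = \mathrm{HL}(X)$, as desired. Alternatively, once we know $\phi = \phi_X$, the uniqueness part of Proposition \ref{BF_platau_HP} forces $u = u_X$, giving $V = \J\grad(u_X) = \mathrm{HL}(X)$.

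The real work, and therefore the main obstacle, is the content of Proposition \ref{intro_pro_bounded_imple_extension} itself. The strategy there (as indicated in the Section \ref{outline_of_the_proof} outline) is to exploit the fact that the assignment $S \mapsto V_S$ is invariant under the vertical translation $u \mapsto u - t$. By Lemma \ref{Shapeoperator_of_graph} the shape operator of $\mathrm{gr}(u-t)$ equals $\B + t\mathbbm{1}$; using Lemma \ref{d_bar_and_shape_operator} together with Remark \ref{principal_curvature}, the bound $\lVert \overline{\partial} V\rVert_\infty = \sup_p|\lambda(p)|$ implies that for $t$ sufficiently large (resp.\ sufficiently negative) the translated surface $S_t$ is convex (resp.\ concave) in $\HP$, while $V_{S_t} = V_S$. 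Since convex graphs are controlled by their support planes, one then applies convexity arguments in the spirit of Section \ref{appendix6.1} and Proposition \ref{infinitesimal_earthquake_thm} to conclude that the boundary value of $\overline{u-t}$ (equivalently, of $\overline{u}$) is continuous on $\mathbb{S}^1$ and that $V_S$ extends continuously to $iz\phi(z)$ on the circle.
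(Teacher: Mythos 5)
Your proposal is correct and follows essentially the same route as the paper's proof: Theorem \ref{charac_HL} produces $u$ with $\Delta^{\mathbb{H}^2}u-2u=0$ and $V=\J\grad(u)$, the bound on $\lVert\overline{\partial}V\rVert_\infty$ combined with Proposition \ref{zygumund_imply_extension} (the vertical-translation/convexity argument you sketch) gives a continuous boundary value $\phi$, Proposition \ref{second_proof} identifies $V=\mathrm{HL}(X_\phi)$ as an extension of $X_\phi$, and comparing with the hypothesis that $V$ extends to $X$ forces $\phi=\phi_X$, whence uniqueness in Proposition \ref{BF_platau_HP} gives $u=u_X$. This is exactly the paper's argument, including the reduction to Propositions \ref{zygumund_imply_extension} and \ref{second_proof}.
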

It is tempting to have uniqueness without the additional hypothesis on the boundedness of the $\overline{\partial}$-operator; however, our proof relies on this hypothesis.

The rest of this section is devoted to proving Theorem \ref{uniqTH1HL}. The next Lemma is a key step of the proof.

\begin{prop}\label{zygumund_imply_extension}
Let \( u: \mathbb{H}^2 \to \mathbb{R} \) be a smooth function such that \( \Delta^{\mathbb{H}^2} u - 2u = 0 \). Consider \( \B = \hess(u) - u \mathbbm{1} \), the shape operator of the graph of \( u \), and assume that \( \B \) is bounded. Then the function \( \overline{u}: \mathbb{D}^2 \to \mathbb{R} \) defined in \eqref{u_et_u_bar} extends continuously to a function \( \phi: \mathbb{S}^1 \to \mathbb{R} \).
\end{prop}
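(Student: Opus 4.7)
The plan is to realize $S = \mathrm{gr}(u)$ as sandwiched between a convex and a concave surface in $\HP$ produced by vertical normal evolution, and then to use classical convex analysis on $\mathbb{D}^2$ to extract a common continuous boundary value.

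First I would fix $M := \lVert \B \rVert_\infty$, which is finite by hypothesis, and pick any constant $c > M$. Set $u_+ := u - c$ and $u_- := u + c$. By Lemma \ref{Shapeoperator_of_graph} the shape operators of $\mathrm{gr}(u_\pm) \subset \HP$ are $\B_\pm = \B \pm c\mathbbm{1}$, so the choice $c > M$ makes $\B_+$ uniformly positive definite and $\B_-$ uniformly negative definite on $\mathbb{H}^2$. Invoking the convention used in the paper (compare the remark after Proposition \ref{infinitesimal_earthquake_thm} with the identification ``$u$ convex $\iff \overline{u}$ convex'' made before Section \ref{appendix6.1}), this translates into the statement that $\overline{u_+}: \mathbb{D}^2 \to \mathbb{R}$ is strictly Euclidean convex and $\overline{u_-}: \mathbb{D}^2 \to \mathbb{R}$ is strictly Euclidean concave.

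Next, from \eqref{u_et_u_bar} I read off the explicit formula $\overline{u_\pm}(\eta) = \overline{u}(\eta) \mp cL(\eta)$, where $L(\eta) = \sqrt{1 - |\eta|^2}$ is continuous on $\overline{\mathbb{D}^2}$ and vanishes on $\mathbb{S}^1$. Because $\overline{u_+}$ is convex and $\overline{u_-}$ is concave, the limit formula \eqref{boundaryvalue} defines pointwise boundary values $\phi_\pm : \mathbb{S}^1 \to \mathbb{R}$, and as recalled immediately before Proposition \ref{boundaryvalueextension}, $\phi_+$ is lower semicontinuous while $\phi_-$ is upper semicontinuous. The identity $\overline{u_-} - \overline{u_+} = 2cL$, combined with $L((1-s)z + sx) \to 0$ as $s \to 0^+$, forces $\phi_+(z) = \phi_-(z)$ for every $z \in \mathbb{S}^1$. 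The common function $\phi := \phi_+ = \phi_-$ is therefore both lower and upper semicontinuous, hence continuous.

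Finally, Proposition \ref{boundaryvalueextension} applied to the convex function $\overline{u_+}$ yields a continuous extension of $\overline{u_+}$ to $\overline{\mathbb{D}^2}$ with boundary trace $\phi$, and adding back $cL$ (continuous on $\overline{\mathbb{D}^2}$ and vanishing on $\mathbb{S}^1$) produces a continuous extension of $\overline{u} = \overline{u_+} + cL$ with the same boundary value $\phi \in C(\mathbb{S}^1)$, which is exactly the desired conclusion. The hard part, and really the only non-bookkeeping step, is the initial equivalence between uniform positive (respectively negative) definiteness of the HP shape operator $\B \pm c\mathbbm{1}$ and strict Euclidean convexity (respectively concavity) of $\overline{u_\pm}$ on the Klein disc; this identification is natural in the affine Klein model, since tangent planes to spacelike graphs there are Euclidean affine planes and the transverse vector $T$ points along the $t$-axis, but a careful check of metric factors (as in \cite{diaf2023Infearth, andreafrancois, Seppi_thesis}) is needed to state it cleanly.
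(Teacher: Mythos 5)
Your argument is essentially the paper's own proof: shift $u$ by a constant exceeding $\lVert \B\rVert_\infty$ to make the shape operator $\B\pm c\mathbbm{1}$ definite, translate this into Euclidean convexity/concavity of $\overline{u}\mp cL$ on the Klein disc via the relation between the Euclidean Hessian and the hyperbolic Hessian (the paper uses \cite[Corollary 2.7]{barbotfillastre} for exactly the "metric factors" check you defer), and then conclude from the coincidence of the lower- and upper-semicontinuous boundary values together with Proposition \ref{boundaryvalueextension}. The step you flag as the only nontrivial one is indeed where the paper spends its effort, and your outline of it is correct.
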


\begin{proof}
Let \(\lambda\) and \(-\lambda\) be the principal curvatures of \(\mathrm{gr}(u) \subset \HP\). Then, by Lemma \ref{d_bar_and_shape_operator}, 
$$\lVert \B \rVert_{\infty} = \lVert \lambda \rVert_{\infty},$$ 
where \(\lVert \lambda \rVert_{\infty} := \sup_{p \in \mathbb{H}^2} \lvert \lambda(p) \rvert.\)
For each \( t \geq \lVert \lambda \rVert_{\infty} \), consider the functions \( u_{{+t}} = u + t \) and \( u_{{-t}} = u - t \) and let \( \B_{\pm t} = \hess(u_{\pm t}) - u_{\pm t} \mathbbm{1} \). Consider \( \overline{u_{{-t}}} \) and \( \overline{u_{{+t}}} \), the functions defined on \( \mathbb{D}^2 \) (see \eqref{u_et_u_bar}). We claim that \( \overline{u_{{-t}}} \) and \( \overline{u_{{+t}}} \) are convex and concave functions, respectively. To show this, it is enough to show that the Euclidean Hessian \(  \mathrm{Hess}^{\mathbb{E}^2}(\overline{u_{{+t}}}) \) (resp. \(  \mathrm{Hess}^{\mathbb{E}^2}(\overline{u_{{-t}}}) \)) is positive definite (resp. negative definite). Let us focus on proving the convexity of \( \overline{u_{{-t}}} \).

The following formula relates the Euclidean Hessian and the hyperbolic Hessian (see \cite[Corollary 2.7]{barbotfillastre}): for a function \( f: \mathbb{D}^2 \to \mathbb{R} \), we have
$$L^{-1} \mathrm{Hess}^{\mathbb{E}^2} f(\eta)(X,Y) = \mathrm{Hess}^{\mathbb{D}^2}(L^{-1} f)(\eta)(X,Y) - (L^{-1} f)(\eta) g_{\mathbb{D}^2}(\eta)(X,Y),$$ 
where \( L(\eta) = \sqrt{1 - \lvert \eta \rvert^2} \) and \( g_{\mathbb{D}^2} \) is the hyperbolic metric in the Klein model \(\mathbb{D}^2\). Hence, we need to show that 
\begin{equation}\label{klein_def_positif}
    \mathrm{Hess}^{\mathbb{D}^2}(L^{-1} \overline{u_{{-t}}})(\eta)(X,Y) - (L^{-1} \overline{u_{{-t}}})(\eta) g_{\mathbb{D}^2}(\eta)(X,Y)
\end{equation} 
is positive definite. Recall that \( u_{-t} \circ \Pi^{-1} = L^{-1} \overline{u_{-t}} \), where \( \Pi: \mathbb{H}^2 \to \mathbb{D}^2 \) is the radial projection (see \eqref{radial}). Thus \eqref{klein_def_positif} is equivalent to proving that 
\begin{equation}\label{klein_def_positif2}
    \mathrm{Hess}^{\mathbb{D}^2}(u_{-t} \circ \Pi^{-1})(\eta)(X,Y) - (u_{-t} \circ \Pi^{-1})(\eta) g_{\mathbb{D}^2}(\eta)(X,Y)
\end{equation} 
is positive definite. However, since \(\Pi: \mathbb{H}^2 \to \mathbb{D}^2\) is an isometry between the hyperboloid model and the Klein model of the hyperbolic space, \eqref{klein_def_positif2} is equivalent to the fact that \( \B_{{-t}} = \hess(u_{-t}) - u_{-t} \mathbbm{1} \) is positive definite. Since \( \B_{{-t}} = \hess(u - t) - (u - t) \mathbbm{1} = \B + t \mathbbm{1} \), then
$$\det(\B_{{-t}}) = t^2 - \lambda^2 > 0 \quad \text{and} \quad \mathrm{tr}(\B_{{-t}}) = 2t > 0.$$ 
This implies that \( \B_{{-t}} \) is positive definite. This finishes the proof of the convexity of \( \overline{u_{{-t}}} \). Similarly, one can show that \( \overline{u_{{+t}}} \) is a concave function.
Note that
$$\overline{u_{-t}}(\eta) := \overline{u} - t\sqrt{1-\lvert\eta\rvert^2}, \ \ \overline{u_{+t}}(\eta) := \overline{u} +t\sqrt{1-\lvert\eta\rvert^2},$$
on $\mathbb{D}^2$. Since $\eta\mapsto\sqrt{1-\lvert\eta\rvert^2}$ vanishes on $\mathbb{S}^1$, the boundary values of $\overline{u_{-t}}$ and $\overline{u_{+t}}$ coincide. Moreover, $\overline{u_{-t}}$ is lower semicontinuous and $\overline{u_{+t}}$ is upper semicontinuous. Hence, the common boundary value of $\overline{u_{-t}}$ and $\overline{u_{+t}}$ is a continuous function on $\mathbb{S}^1$, as it is both lower and upper semicontinuous. Therefore, $\overline{u_{-t}}$ and $\overline{u_{+t}}$ are continuous functions on $\overline{\mathbb{D}^2}$ by Proposition \ref{boundaryvalueextension}. This implies that $\overline{u}$ extends continuously to $\overline{\mathbb{D}^2}$.\end{proof}

\begin{proof}[Proof of Theorem \ref{uniqTH1HL}]
Let $u_X:\mathbb{H}^2\to\mathbb{R}$ be the unique solution of
\begin{equation}\label{u_X_uniquness}
\Delta^{\mathbb{H}^2} u_X-2u_X=0  \ \ \ 
\overline{u_X}|_{\mathbb{S}^1}=\phi_X,
\end{equation}
so that $\mathrm{HL}(X)=\J\grad(u_X).$
Let $V$  be an harmonic Lagrangian vector field on $\mathbb{H}^2$ that extends continuously to $X$ with $\lVert\overline{\partial}V\rVert_{\infty}< \infty$. The goal is to show that $V=\mathrm{HL}(X)$. By Theorem \ref{charac_HL}, we can write $V=\J\grad(u)$ where $\Delta^{\mathbb{H}^2}u-2u=0$. We claim that $\overline{u}|_{\mathbb{S}^1}=\phi_X$. Indeed, if we have such a fact, then $u$ and $u_X$ would be two solutions of the PDE \eqref{u_X_uniquness}, and by the uniqueness of the solution (see Proposition \ref{BF_platau_HP}), we have $u=u_X$, and so $V=\mathrm{HL}(X)$, which is what we wanted to prove.

We will now show that $\overline{u}_{\mathbb{S}^1}=\phi_X$. Let $\B=\hess(u)-u\mathbbm{1}$ be the shape operator of the graph of $u$. By Lemma \ref{d_bar_and_shape_operator}, $\lVert\overline{\partial} V\rVert_{\infty}=\lVert \B\rVert_{\infty}<\infty$. Thus, Proposition \ref{zygumund_imply_extension} implies that $\overline{u}$ extends to a continuous function $\phi:\mathbb{S}^1\to\mathbb{R}.$ Consider the vector field $X'$ given by:
$$X'(z)=iz\phi(z).$$ By definition, $V=\mathrm{HL}(X')$ and so by Proposition \ref{second_proof}, $V$ extends to $X'$. On the other hand, $V$ extends to $X$ by hypothesis, thus $X'=X$, and so, $\phi=\phi_X$. Therefore, $\overline{u}_{\mathbb{S}^1}=\phi_X$, which concludes the proof.
\end{proof}

\section{Regularity of vector field: width and mean surface}\label{sec8_reg}
In this section, we characterize vector fields on the circle of different regularities using the Half-Pipe width. Let $X$ be a continuous vector field, and let $\phi_X: \mathbb{S}^1 \to \mathbb{R}$ be its support function. Consider the function $w_X: \mathbb{D}^2 \to \mathbb{R}$ defined by
\begin{equation}\label{w_xz}
w_X(\eta) = \frac{\phi_X^+(\eta) - \phi_X^-(\eta)}{\sqrt{1 - \lvert \eta \rvert^2}},
\end{equation}
Recall that for Definitions \ref{width} and \ref{widthX}, the width of $X$ is the supremum of $w_X$ over $\mathbb{D}^2$. Furthermore, $w_X$ satisfies the following invariance property: for all $A \in \mathrm{O}_0(1,2)$ and $\sigma \in \minko$, we have:
\begin{equation}\label{eq_width_equivariant}
    w_{A_*X+\Lambda(\sigma)}(\eta)=w_{X}(A^{-1}\cdot \eta).
\end{equation}
We have the following useful estimate:
\begin{prop}\label{prop_d_bar_less_width}
Let $X$ be a continuous vector field on the circle. Then for all $p \in \mathbb{H}^2$, we have
\begin{equation}\label{eqq_d_bar_less_width}
\lVert \overline{\partial}\mathrm{HL}(X)_p \rVert \leq 6 w_X(\Pi(p)),
\end{equation}
where $\Pi: \mathbb{H}^2 \to \mathbb{D}^2$ is the radial projection defined in \eqref{radial}.
\end{prop}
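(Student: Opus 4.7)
The plan is to reduce the pointwise estimate at an arbitrary $p\in\mathbb{H}^2$ to the origin $p_0:=(1,0,0)$ by conformal naturality, then compute $\|\overline{\partial}\mathrm{HL}(X)_{p_0}\|$ explicitly in terms of the second Fourier coefficient of $\phi_X$, and finally bound that coefficient by $w_X(0)$ by convex analysis. Both sides of the inequality transform equivariantly: by Proposition \ref{HL_DE} and Corollary \ref{support_function_of_killing} one has $\mathrm{HL}(A_*X+\Lambda(\sigma))=A_*\mathrm{HL}(X)+\Lambda(\sigma)$ for every $A\in\mathrm{Isom}(\mathbb{H}^2)$ and every $\sigma\in\minko$; moreover $\overline{\partial}$ vanishes on any Killing field $\Lambda(\sigma)=\J\grad u$ with $u(p)=\langle p,\sigma\rangle_{1,2}$, since then $\B=\mathrm{Hess}(u)-u\mathbbm{1}=0$ (cf.\ Remark \ref{Killing_are_harmonic}). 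Combined with \eqref{eq_width_equivariant}, this reduces the claim to $\|\overline{\partial}\mathrm{HL}(X)_{p_0}\|\le 6\,w_X(0)$.

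By Lemma \ref{d_bar_and_shape_operator} and the $g^{\mathbb{H}^2}$-isometric character of $\J$, $\|\overline{\partial}\mathrm{HL}(X)_{p_0}\|=\|\B(p_0)\|$. In Klein coordinates on $\mathbb{D}^2$ the hyperbolic metric coincides with the Euclidean one at the origin and has vanishing Christoffel symbols there, so a Taylor expansion of the relation $u_X\circ\Pi^{-1}(\eta)=\overline{u_X}(\eta)/\sqrt{1-|\eta|^2}$ at $\eta=0$ identifies
\[
\B(p_0)=\mathrm{Hess}^{\mathbb{E}^2}_0\overline{u_X},
\]
a symmetric traceless $2\times 2$ matrix (the trace vanishes since the right-hand side of \eqref{platau_HPE2} is $O(|\eta|^2)$ at the origin).

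The PDE \eqref{platau_HPE2} is rotation-invariant, so separating variables $\overline{u_X}(r,\theta)=\sum_{n\in\mathbb{Z}} c_n^{-1}\,\hat\phi_X(n)\,u_n(r)\,e^{in\theta}$ produces the radial ODE $r^2(1-r^2)u_n''+r u_n'-n^2 u_n=0$, whose Frobenius indicial equation at $r=0$ is $m^2=n^2$; hence the solution regular at the origin satisfies $u_n(r)\sim r^{|n|}$, and only the modes $n=\pm 2$ contribute to $\mathrm{Hess}^{\mathbb{E}^2}_0\overline{u_X}$. For $n=2$ the substitution $t=r^2$ converts the ODE to the Gauss hypergeometric equation with parameters $(a,b;c)=(1,\tfrac12;3)$, yielding the normalization $c_2={}_2F_1(1,\tfrac12;3;1)=\Gamma(3)\Gamma(\tfrac32)/(\Gamma(2)\Gamma(\tfrac52))=4/3$. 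A direct expansion of the leading $r^2$ contributions from the modes $n=\pm 2$ in Cartesian coordinates then gives
\[
\|\mathrm{Hess}^{\mathbb{E}^2}_0\overline{u_X}\|=\frac{4}{c_2}\,|\hat\phi_X(2)|=3\,|\hat\phi_X(2)|.
\]

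Both $|\hat\phi_X(2)|$ and $w_X(0)$ are invariant under adding to $\phi_X$ any affine function on $\mathbb{S}^1$ (i.e.\ the Fourier modes $|n|\le 1$). Subtract a support affine function of $\phi_X^-$ at $0\in\mathbb{D}^2$ to arrange $\phi_X\ge 0$ on $\mathbb{S}^1$, $\phi_X^-(0)=0$, $\phi_X^+(0)=w_X(0)=:M$; a concave affine majorant of $\phi_X^+$ attaining $M$ at the origin of $\mathbb{D}^2$ is necessarily non-negative on $\overline{\mathbb{D}^2}$ (since it majorizes $\phi_X\ge 0$ on $\mathbb{S}^1$) and has linear part of Euclidean norm $\le M$, forcing $0\le\phi_X\le 2M$ on $\mathbb{S}^1$. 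A further subtraction of the constant $M$ (which preserves both $|\hat\phi_X(2)|$ and $w_X(0)$) yields $\|\phi_X\|_{L^\infty(\mathbb{S}^1)}\le M$, and hence $|\hat\phi_X(2)|\le\|\phi_X\|_\infty\le M=w_X(0)$. Combining with the Fourier formula of the previous paragraph gives $\|\overline{\partial}\mathrm{HL}(X)_{p_0}\|\le 3\,w_X(0)\le 6\,w_X(0)$, the claimed pointwise estimate (with the sharper constant $3$). The principal technical obstacle is the sharp ODE computation identifying $c_2=4/3$ via the Gauss hypergeometric identity, which pins down the constant in the Fourier formula; the remaining steps (equivariance, Hessian identification, and convex analysis) are soft.
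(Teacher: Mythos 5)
Your proposal is correct, but it reaches the key inequality by a genuinely different route than the paper. The paper first passes to the Poincar\'e model via Proposition \ref{HL_DE} and Corollary \ref{d_bar_isometry}, differentiates the explicit Douady--Earle kernel \eqref{inf_DE} under the integral sign to get $\frac{\partial L_0(X)}{\partial \overline{z}}(0)=\frac{3}{2i\pi}\int_{\mathbb{S}^1}X(x)\,dx$, and then bounds this by $\frac{3}{2\pi}\int_0^{2\pi}|\phi_X|\,d\theta\le 6\,(\phi_X^+(0)-\phi_X^-(0))$ using exactly the normalization and Lemma \ref{simple_resultt} that you also invoke. You instead stay entirely on the Half-Pipe side: you identify $\B(p_0)$ with $\mathrm{Hess}^{\mathbb{E}^2}_0\overline{u_X}$, and extract the constant by a Frobenius/hypergeometric analysis of the radial ODE, arriving at $\lVert\overline{\partial}\mathrm{HL}(X)_{p_0}\rVert=3\,|\hat{\phi}_X(2)|$. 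This agrees with the paper's kernel computation (the integral $\frac{3}{2i\pi}\int_{\mathbb{S}^1}X(x)\,dx$ equals $3i\,\hat{\phi}_X(-2)$ in modulus $3|\hat{\phi}_X(2)|$), so your normalization $c_2={}_2F_1(1,\tfrac12;3;1)=4/3$ checks out; and your final step --- bounding $|\hat{\phi}_X(2)|$ by $\lVert\phi_X-M\rVert_\infty\le M$ after recentering, rather than by the $L^1$ norm --- buys the sharper constant $3$ in place of $6$. What each approach buys: the paper's argument is shorter because the kernel does the ODE work for it, but it depends on the identification $G_*\mathrm{HL}=L_0$; yours is self-contained on the PDE side and makes transparent that only the second Fourier mode of $\phi_X$ is seen by $\overline{\partial}\mathrm{HL}(X)$ at the dual point. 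One step you should justify more carefully: the termwise separated-variables representation of $\overline{u_X}$ is immediate only for trigonometric-polynomial boundary data; for merely continuous $\phi_X$ you need to pass to the limit, e.g.\ by Fej\'er approximation combined with the stability $\lVert\overline{u_\phi}\rVert_\infty\le\lVert\phi\rVert_\infty$ (from $\phi^-\le\overline{u}\le\phi^+$ in Proposition \ref{BF_platau_HP}) and interior Schauder estimates for the strictly elliptic operator \eqref{strictelliptic}, which give continuity of $\phi\mapsto\mathrm{Hess}^{\mathbb{E}^2}_0\overline{u_\phi}$ in the sup norm. This is routine but not optional.
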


The next lemma is a key step to proving Proposition \ref{prop_d_bar_less_width}.

\begin{lemma}\cite[Lemma 6.4]{diaf2023Infearth}\label{simple_resultt}
Let $X$ be a continuous vector field. Assume that $\mathbb{D}^2 \times \{0\}$ is a support plane of $\mathrm{gr}(\phi_X^-)$ at $(0, 0, \phi_X^{-}(0, 0))$ (and so $\phi^-_X(0, 0) = 0).$ Then for all $z \in \mathbb{S}^1$, we have:
    \begin{equation}\label{eq_phi_less_width}
        0\leq\phi_X(z)\leq 2 \  (\phi_X^+(0,0)-\phi_X^-(0,0)).
    \end{equation}
\end{lemma}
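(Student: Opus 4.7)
The hypothesis gives us two pieces of information about the affine function $a \equiv 0$: namely that $a \leq \phi_X^-$ on $\mathbb{D}^2$ and $a(0,0) = \phi_X^-(0,0)$. The first shows $\phi_X^-(0,0) = 0$ and $\phi_X^- \geq 0$ throughout $\mathbb{D}^2$. Since $\phi_X$ is continuous, property (P1) on page 8 yields $\phi_X^-|_{\mathbb{S}^1} = \phi_X$, so I immediately obtain the lower bound $\phi_X(z) \geq 0$ for all $z \in \mathbb{S}^1$, and simultaneously $\phi_X^-(0,0) = 0$.

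For the upper bound, the key observation is that $\phi_X^+$ is concave on $\mathbb{D}^2$, being the pointwise infimum of a family of affine functions (by definition \eqref{phi+}). Moreover, again by (P1), $\phi_X^+|_{\mathbb{S}^1} = \phi_X$. Now for any $z \in \mathbb{S}^1$, the antipode $-z$ also lies in $\mathbb{S}^1$, and $0 = \tfrac{1}{2}z + \tfrac{1}{2}(-z)$. Applying concavity of $\phi_X^+$ at this midpoint and using the boundary identification gives
\begin{equation*}
\phi_X^+(0,0) \;\geq\; \tfrac{1}{2}\phi_X^+(z) + \tfrac{1}{2}\phi_X^+(-z) \;=\; \tfrac{1}{2}\phi_X(z) + \tfrac{1}{2}\phi_X(-z).
\end{equation*}
Since the first step already established $\phi_X(-z) \geq 0$, I can drop that term to obtain $\phi_X(z) \leq 2\phi_X^+(0,0)$. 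Combining with $\phi_X^-(0,0) = 0$ rewrites the right-hand side as $2(\phi_X^+(0,0) - \phi_X^-(0,0))$, as required. There is no real obstacle here; the proof is essentially a one-line consequence of the concavity of $\phi_X^+$ applied at the antipodal pair $\{z,-z\}$ whose midpoint is the origin, once the support plane hypothesis has been translated into the nonnegativity of $\phi_X$ on $\mathbb{S}^1$.
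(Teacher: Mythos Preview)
Your argument is correct. The paper does not include its own proof of this lemma (it cites \cite{diaf2023Infearth} and remarks that the proof there carries over line by line), and your route---nonnegativity of $\phi_X$ from the support-plane hypothesis, then concavity of $\phi_X^+$ applied to the antipodal pair $\{z,-z\}$ with midpoint $0$---is the natural one and almost certainly coincides with the cited argument. The only points worth tightening are cosmetic: the inequality $0\leq\phi_X^-$ is first obtained on $\mathbb{D}^2$ and then extended to $\mathbb{S}^1$ by continuity (via (P1)), and the concavity inequality for $\phi_X^+$ is applied at boundary points, which is legitimate since $\phi_X^+$ is the infimum of affine functions on all of $\overline{\mathbb{D}^2}$ and hence concave there.
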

Lemma 6.4 in \cite{diaf2023Infearth} is stated differently. In fact, the width of \( X \) is used instead of \( \phi_X^+(0,0) - \phi_X^-(0,0) \). However, the proof is exactly the same line by line, so we omit it here.
\begin{proof}[Proof of Proposition \ref{prop_d_bar_less_width}]
Using the equivariance of the two terms in the estimate \eqref{prop_d_bar_less_width} under the isometry group of the hyperbolic plane, it suffices to prove the statement for \( p = (1,0,0) \) and thus \(\Pi(p) = (0,0) \in \mathbb{D}^2\). By adding a Killing vector field, we may also assume that \(\mathbb{D}^2 \times \{0\}\) is a support plane of \(\mathrm{gr}(\phi_X^-)\) at \((0,0)\), so that we are in the configuration of Lemma \ref{simple_resultt}.

Based on the discussion above, we need to show that:
\begin{equation}\label{eq_d_bar_less_width}
    \lVert \overline{\partial} \mathrm{HL}(X)_{(1,0,0)} \rVert \leq 6 w_X(0).
\end{equation}
Let \( L_0(X): \mathbb{B}^2 \to \mathbb{C} \) be the infinitesimal Douady-Earle extension given in \eqref{inf_DE} and consider the isometry \( G:\mathbb{H}^2 \to \mathbb{B}^2 \) defined in \eqref{H2_to_B2}. Note that \( G(1,0,0) = (0,0) \), thus according to Proposition \ref{HL_DE} and Corollary \ref{d_bar_isometry}, the estimate \eqref{eq_d_bar_less_width} is equivalent to showing that
\begin{equation}\label{eq_d_bar_less_width1}
    \left| \frac{\partial L_0(X)}{\partial \overline{z}}(0) \right| \leq 6 w_X(0).
\end{equation}
Since \( w_X(0) = \phi_X^+(0) - \phi_X^-(0) \), then \eqref{eq_d_bar_less_width1} is equivalent to 
\[
\left| \frac{\partial L_0(X)}{\partial \overline{z}}(0) \right| \leq 6(\phi_X^+(0) - \phi_X^-(0)).
\]
Taking the \(\overline{z}\)-derivative in the integral of \eqref{inf_DE} (see \cite[Page 380]{Extension_with_bounded_derivative}), we obtain 
\[
\frac{\partial L_0(X)}{\partial \overline{z}}(z) = \frac{3(1 - \vert z \vert^2)^2}{2i\pi} \int_{\mathbb{S}^1} \frac{X(x)}{(1 - \overline{z}x)^4} dx.
\]
Thus, we compute:
\begin{align*}
\left| \frac{\partial L_0(X)}{\partial \overline{z}}(0) \right| &= \left| \frac{3}{2i\pi} \int_{\mathbb{S}^1} X(x) dx \right| \\
&= \frac{3}{2\pi} \left| \int_{0}^{2\pi} i e^{i\theta} X(e^{i\theta}) d\theta \right| \\
&\leq \frac{3}{2\pi} \int_{0}^{2\pi} \left| i e^{i\theta} \phi_X(e^{i\theta}) \right| d\theta \\
&\leq \frac{3}{2\pi} \int_{0}^{2\pi} \left| \phi_X(e^{i\theta}) \right| d\theta \\
&\leq 6(\phi_X^+(0) - \phi_X^-(0)),
\end{align*}
where we use Lemma \ref{simple_resultt} in the last inequality. This concludes the proof.

\end{proof}

\subsection{Zygmund vector fields.}
The goal of this section is to provide a quantitative estimate between the width of a vector field on the circle and the $\overline{\partial}$ norm of its infinitesimal Douady-Earle extension. We start by recalling some terminology. Given a quadruple $Q=[a,b,c,d]$ of points on $\mathbb{S}^1$ arranged in counter-clockwise order, the \textit{cross-ratio} of $Q$ is given by $$\mathrm{cr}(Q)=\frac{  (b-a)(d-c) }{ (c-b)(d-a)   }.$$ Let $X$ be a vector field on $\mathbb{S}^1$. The \textit{cross-ratio distortion} norm of $X$ is defined as 
$$\lVert  X \rVert_{cr}=\sup_{\mathrm{cr}(Q)=1}\vert X[Q]\vert \in[0,+\infty],$$ where 
$$X[Q]=\frac{X(b)-X(a)}{b-a}-\frac{X(c)-X(b)}{c-b}+\frac{X(d)-X(c)}{d-c}-\frac{X(a)-X(d)}{a-d},$$
for $Q=[a,b,c,d]$. For instance, one can check that $X$ is an extension to $\mathbb{S}^1$ of a Killing vector field of $\mathbb{H}^2$ if and only if $\lVert X\rVert_{cr}=0.$

\begin{defi}
 A continuous vector field $X$ of $\mathbb{S}^1$ is \textit{Zygmund} if and only if the cross-ratio distortion norm of $X$ is finite.
\end{defi}
Recall that an orientation-preserving homeomorphism $\Phi:\mathbb{S}^1\to\mathbb{S}^1$ is said to be \textit{quasisymmetric} if the \textit{cross-ratio norm} defined as
 $$\lVert  \Phi \rVert_{cr}=\sup_{\mathrm{cr}(Q)=1} \vert\ln{\mathrm{cr}\left(\Phi(Q)\right)}    \vert ,$$ is finite. Denote by $\mathcal{QS}(\mathbb{S}^1)$ the space of quasisymmetric homeomorphisms of $\mathbb{S}^1$. Then the \textit{universal Teichmüller space} is the space of quasisymmetric homeomorphisms of the circle up to post-composition with an isometry of $\mathbb{H}^2$:
$$\mathcal{T}(\mathbb{H}^2)=\mathrm{Isom}(\mathbb{H}^2)\backslash \mathcal{QS}(\mathbb{S}^1).$$
Thus, $\mathcal{T}(\mathbb{H}^2)$ can be identified with the space of quasisymmetric homeomorphisms of $\mathbb{S}^1$ fixing $1$, $i$, and $-1$. It is known that $\mathcal{T}(\mathbb{H}^2)$ is an infinite-dimensional complex Banach manifold for which the tangent space at the identity corresponds to Zygmund vector fields on $\mathbb{S}^1$ that vanish at $1$, $i$, and $-1$ (see \cite{Gardiner1999QuasiconformalTT}).

In \cite[Theorem 1.3]{diaf2023Infearth}, we show that the width of $X$ is equivalent to the cross-ratio norm of $X$. In particular, we have:

\begin{cor}\cite[Theorem 1.3 and equation (96)]{diaf2023Infearth}\label{Cor_diaf_inf}
    $X$ is a Zygmund vector field of $\mathbb{S}^1$ if and only if the width $\omega(X)$ is finite.
\end{cor}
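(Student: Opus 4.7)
The statement is attributed directly to \cite[Theorem~1.3 and equation~(96)]{diaf2023Infearth}, where a two-sided quantitative comparison between the Half-Pipe width and the cross-ratio distortion norm is established. My plan is therefore to extract the corollary as a direct consequence of that comparison, and, separately, to sketch how one would reprove the key estimate in the language of this paper should a self-contained argument be preferred.

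The plan, taking the cited result as a black box, is simply to recall that \cite[Theorem~1.3]{diaf2023Infearth} produces universal constants $C_1, C_2 > 0$ such that
\begin{equation*}
C_1 \lVert X \rVert_{cr} \leq \omega(X) \leq C_2 \lVert X \rVert_{cr}
\end{equation*}
for every continuous vector field $X$ on $\mathbb{S}^1$. Since by definition $X$ is Zygmund exactly when $\lVert X\rVert_{cr} < +\infty$, the equivalence with $\omega(X) < +\infty$ follows at once.

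If one wanted a self-contained argument, the natural strategy is as follows. First, exploit the joint invariance of $\omega(X)$ and $\lVert X\rVert_{cr}$: the width is invariant under the $\mathrm{Isom}(\HP)$-action via Lemma \ref{equii} and \eqref{eq_width_equivariant}, while the cross-ratio distortion norm is invariant under composition by Möbius maps of $\mathbb{S}^1$ and under the addition of Killing fields (for which $X[Q] = 0$). This reduces estimating $\omega(X)$ to controlling $w_X(0)$ (after translating in the Klein model and adding a Killing field so that $\mathbb{D}^2\times\{0\}$ is a lower support plane), and reduces the cross-ratio norm to controlling $X[Q]$ on the symmetric quadruple $Q_0 = [1,i,-1,-i]$. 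Then I would relate the two normalized quantities: unfolding \eqref{phi-}--\eqref{phi+}, the quantity $\phi_X^+(0) - \phi_X^-(0) = \omega_X(0)$ is a supremum of symmetric affine defects of $\phi_X$ over $\mathbb{S}^1$, which by a convex-geometric selection argument can be bounded by a signed sum of $\phi_X$ at four suitably chosen boundary points of cross-ratio $1$, yielding $\omega_X(0) \lesssim \lVert X\rVert_{cr}$. The converse direction follows from Lemma \ref{simple_resultt} applied at the normalized quadruple: the bound $\vert\phi_X\vert \leq 2(\phi_X^+(0)-\phi_X^-(0))$ on $\mathbb{S}^1$ translates directly into a bound on $\vert X[Q_0]\vert$ in terms of $\omega_X(0)$.

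The main obstacle in the self-contained route is the first inequality $\omega_X(0) \lesssim \lVert X \rVert_{cr}$: controlling a geometric invariant of the whole convex hull by discrete second differences on the circle requires the careful selection of quadruples realizing the supremum in \eqref{phi-}--\eqref{phi+} up to a universal loss, which is precisely the content of the cited theorem. Absent this, everything else reduces to invariance and elementary estimates already available in the text.
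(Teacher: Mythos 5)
Your proposal matches the paper exactly: the paper gives no independent proof of this corollary, deriving it immediately from the two-sided comparison $C_1\lVert X\rVert_{cr}\leq \omega(X)\leq C_2\lVert X\rVert_{cr}$ established in the cited reference, so that finiteness of one quantity is equivalent to finiteness of the other. Your additional sketch of a self-contained argument is a reasonable bonus but is not part of, nor needed for, the paper's treatment.
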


In \cite{Extension_with_bounded_derivative}, Reich and Chen proved that \( \lVert \partial L_0(X) \rVert_{\infty} \) is finite if and only if \( X \) is a Zygmund vector field. Later, Fan and Hu showed in \cite{FanJun} that the cross-ratio norm \( \lVert X \rVert_{cr} \) and \( \lVert \partial L_0(X) \rVert_{\infty} \) (which is equal to \( \lVert \overline{\partial} \mathrm{HL}(X) \rVert_{\infty} \)) are equivalent. The estimate \eqref{Intro_left_estimate} in Theorem \ref{intro_uniqu_HL} provides a similar result using the width of \( X \).
\subsection{Proof of Theorem \ref{intro_uniqu_HL}}
This subsection is devoted to completing the proof of Theorem \ref{intro_uniqu_HL} stated in the introduction.
\begin{proof}[Proof of Theorem \ref{intro_uniqu_HL}]
First, recall that the uniqueness result $(i)$ in Theorem \ref{intro_uniqu_HL} is already established in Theorem \ref{uniqTH1HL}.
The equivalence between \eqref{1_intro_uniqu} and \eqref{3_intro_uniqu} follows from Corollary \ref{Cor_diaf_inf}. Now we will show the equivalence between \eqref{3_intro_uniqu} and \eqref{4_intro_uniqu} by proving the estimate \eqref{Intro_left_estimate}.
If the width is finite, then the estimate \(\frac{1}{6} \lVert \overline{\partial} \mathrm{HL}(X) \rVert_{\infty} \leq \omega(X)\) follows immediately by taking the supremum in the pointwise estimate \eqref{eqq_d_bar_less_width} of Proposition \ref{prop_d_bar_less_width}. We will now prove the other estimate. Let \(u_X:\mathbb{H}^2 \to \mathbb{R}\) be the unique solution of
\begin{equation}\label{platau_HP}
\Delta^{\mathbb{H}^2} u_X - 2u_X = 0, \quad
\overline{u_X}|_{\mathbb{S}^1} = \phi_X.
\end{equation}
Let \(\B = \mathrm{Hess}(u_X) - u_X \mathbbm{1}\) be the shape operator of the graph of \(u_X\). Since \(\mathrm{HL}(X) = \J\grad(u_X)\), by Proposition \ref{d_bar_and_shape_operator}, we have \(\lVert \overline{\partial} \mathrm{HL}(X) \rVert_{\infty} = \lVert \B \rVert_{\infty}\). Assume that this is finite and let \(\lambda\) and \(-\lambda\) be the eigenvalues of $\B$. Then
$$\lVert \B \rVert_{\infty} = \lVert \lambda \rVert_{\infty},$$ 
where \(\lVert \lambda \rVert_{\infty} := \sup_{p \in \mathbb{H}^2} \lambda(p).\)
For each \( t \geq \lVert \lambda \rVert_{\infty} \), consider the functions \( u_{+t} = u_X + t \) and \( u_{-t} = u_X - t \) and let \(\B_{\pm t} = \mathrm{Hess}(u_{\pm t}) - u_{\pm t} \mathbbm{1} \). Consider \( \overline{u_{-t}} \) and \( \overline{u_{+t}} \), the functions defined on \(\mathbb{D}^2\) (see \eqref{u_et_u_bar}). We apply the same method as in the proof of Proposition \ref{zygumund_imply_extension} to deduce that \(\overline{u_{-t}}\) and \(\overline{u_{+t}}\) are convex and concave functions, respectively. It follows from property ($\mathrm{P2}$) in \ref{P2} that for all \( \eta \in \mathbb{D}^2 \),
$$\phi_X^+ \leq \overline{u_X} + t \sqrt{1 - \lvert \eta \rvert^2}, \quad \overline{u_X} - t \sqrt{1 - \lvert \eta \rvert^2} \leq \phi_X^-.$$
Hence, 
$$\overline{u_X} - t \sqrt{1 - \lvert \eta \rvert^2} \leq \phi_X^- \leq \phi_X^+ \leq \overline{u_X} + t \sqrt{1 - \lvert \eta \rvert^2}.$$
This implies
$$\frac{\phi_X^+(\eta) - \phi_X^-(\eta)}{\sqrt{1 - \lvert \eta \rvert^2}} \leq 2t.$$
Hence,
$$\omega(X) = \sup_{\eta \in \mathbb{D}^2} \frac{\phi_X^+(\eta) - \phi_X^-(\eta)}{\sqrt{1 - \lvert \eta \rvert^2}} \leq 2t,$$
and this holds for all \( t \geq \lVert \lambda \rVert_{\infty}, \) thus, \( \omega(X) \leq 2 \lVert \B\rVert_{\infty}. \) This finishes the proof of the estimate \eqref{Intro_left_estimate} in Theorem \ref{intro_uniqu_HL} and hence the equivalence between \eqref{3_intro_uniqu} and \eqref{4_intro_uniqu}.

We need to show the equivalence between \eqref{1_intro_uniqu} and \eqref{2_intro_uniqu}. Let \(V\) be a harmonic Lagrangian vector field with finite \(\lVert \overline{\partial} V \rVert_{\infty}\). Then by Theorem \ref{uniqTH1HL}, \(V = \mathrm{HL}(X)\) and so, by \eqref{Intro_left_estimate}, the width is finite. Hence, \(X\) is Zygmund by Corollary \ref{Cor_diaf_inf}. This shows the implication \eqref{2_intro_uniqu} \(\implies\) \eqref{1_intro_uniqu}. Conversely, if \(X\) is a Zygmund vector field, then the width is finite by Corollary \ref{Cor_diaf_inf} and hence \(\lVert \overline{\partial} \mathrm{HL}(X) \rVert_{\infty}\) is finite by \eqref{Intro_left_estimate}. This finishes the proof of \eqref{1_intro_uniqu} \(\implies\) \eqref{2_intro_uniqu}.
\end{proof}

\subsection{Little Zygmund vector fields}
The aim of this section is to characterize \textit{little Zygmund} vector fields on the circle in terms of their width. We start this section by discussing the notion of \textit{little Zygmund} vector fields. Given a quadruple $Q=[a,b,c,d]$ of points on $\mathbb{S}^1$, the \textit{minimal scale} $S(Q)$ is defined as:
 $$S(Q)=\min\{\lvert a-b\rvert, \ \lvert b-d\rvert,\ \lvert c-d\rvert, \ \lvert d-a\rvert     \}.$$
A sequence $ \{Q_n = [a_n, b_n, c_n, d_n]\}_{n\in\mathbb{N}}$ of quadruples of $\mathbb{S}^1$ is said to be \textit{degenerating} if $\mathrm{cr}(Q_n)=1$ for each $n$ and $\lim\limits_{n \rightarrow +\infty} S(Q_n)=0$.
\begin{defi}
    A Zygmund vector field $X$ is said to be \textit{little Zygmund} if 
$$\sup\limsup\limits_{n \rightarrow +\infty} \lvert V[Q_n]\rvert=0$$
where the supremum is taken over all degenerating sequences $Q_n$ of quadruples.
\end{defi}
Little Zygmund vector fields are related to the so-called \textit{little Teichmüller space}. Following \cite{Fan_Hu_Little}, an element $\Phi$ in $\mathcal{QS}(\mathbb{S}^1)$ is said to \textit{symmetric} if 
$$\sup\limsup\limits_{n \rightarrow +\infty} \lvert \mathrm{cr}(\Phi(Q_n))\rvert=0,$$
where the supremum is taken over all degenerating quadruples $Q_n$. Denote by $\mathcal{S}(\mathbb{S}^1)$ the space of symmetric homeomorphisms. In \cite{Gardiner_Sullivan}, Gardiner and Sullivan proved that $\mathcal{S}(\mathbb{S}^1)$ is a normal topological subgroup of $\mathcal{QS}(\mathbb{S}^1)$. Furthermore, the \textit{little Teichmüller space}, which is defined as the space of symmetric homeomorphisms of the circle up to post-composition with an isometry of  $\mathbb{H}^2$:
$$\mathcal{T}_0(\mathbb{H}^2):=\mathrm{Isom}(\mathbb{H}^2)\backslash \mathcal{S}(\mathbb{S}^1),$$ is an infinite-dimensional complex manifold modeled on a Banach space. Moreover, the tangent space at the identity corresponds to little Zygmund vector fields on $\mathbb{S}^1$ that vanishes at $1$, $i$ and $-1$. For more details, we refer the reader to the survey \cite{HU_Survey} and the references therein.

Fan and Hu characterize the little Zygmund regularity in terms of the $\overline{\partial}$-operator.
\begin{theorem}\cite[Theorem 3]{FanJun}\label{fan_hu_little}
    Let $X$ be a continuous vector field and $L_0(X)$ be the infinitesimal Douday-Earle extension defined in \eqref{inf_DE}. Then $X$ is little Zygmund if and only if $$\left|\frac{\partial L_0(X)}{\partial \overline{z}}(z)\right| \to 0 \ \text{as}\  \lvert z\rvert\to 1.$$
\end{theorem}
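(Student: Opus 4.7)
The plan is to reduce Fan--Hu's statement to a geometric condition on the mean surface $S=\mathrm{gr}(u_X)\subset\HP$ furnished by Proposition \ref{BF_platau_HP}, and then to characterize that condition in terms of the width $\omega_X$. By Proposition \ref{HL_DE} we have $L_0(X)=G_*\mathrm{HL}(X)$, and Corollary \ref{d_bar_isometry} yields
\[
\left|\frac{\partial L_0(X)}{\partial\bar z}(G(p))\right|=\lVert\overline{\partial}\mathrm{HL}(X)_p\rVert.
\]
Since $G$ sends $\partial\mathbb{H}^2$ to $\mathbb{S}^1$, the condition $|\partial L_0(X)/\partial\bar z(z)|\to 0$ as $|z|\to 1$ is equivalent to $\lVert\overline{\partial}\mathrm{HL}(X)_p\rVert\to 0$ as $p\to\partial\mathbb{H}^2$, and by Lemma \ref{d_bar_and_shape_operator} this norm equals $|\lambda(p)|$, where $\pm\lambda$ are the principal curvatures of $S$.

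First I would prove that $X$ is little Zygmund if and only if $\omega_X(\eta)\to 0$ as $|\eta|\to 1$. This is a "little" version of Corollary \ref{Cor_diaf_inf}. The two-sided comparison between $\omega_X$ and the cross-ratio distortion $|X[Q]|$ on quadruples $Q$ whose scale depends on $\eta$, proved in \cite{diaf2023Infearth}, should translate quantitatively: decay of $\omega_X$ near $\mathbb{S}^1$ corresponds to decay of $|X[Q_n]|$ on degenerating quadruples $Q_n$, which is the defining property of little Zygmund.

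Second, I would prove that $\omega_X(\Pi(p))\to 0$ as $p\to\partial\mathbb{H}^2$ if and only if $|\lambda(p)|\to 0$ in the same limit. One direction is immediate from Proposition \ref{prop_d_bar_less_width} and Lemma \ref{d_bar_and_shape_operator}, which yield $|\lambda(p)|\le 6\,\omega_X(\Pi(p))$. For the converse, given $\varepsilon>0$ and $p_0$ with $|\lambda|<\varepsilon$ near $p_0$, I would apply a hyperbolic isometry sending $p_0$ to the basepoint of $\mathbb{H}^2$ and, after adding a Killing vector field $\Lambda(\sigma)$ to $X$, arrange that the mean surface becomes tangent to the horizontal plane at the origin of $\HP$. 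The parallel-surface sandwich used in the proof of the right estimate in \eqref{Intro_left_estimate} then forces $\phi_X^\pm$ between $\pm t\sqrt{1-|\eta|^2}$ on a disk around the origin, giving the pointwise bound on $\omega_X$ via the isometry invariance \eqref{eq_width_equivariant}.

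The main obstacle is making this last converse genuinely pointwise: the proof of \eqref{Intro_left_estimate} relies on the global assumption $t\ge\lVert\lambda\rVert_\infty$, since convexity of $u_X-t$ on all of $\mathbb{H}^2$ was needed to invoke property $(\mathrm{P2})$. Localizing requires controlling the shape operator of $S$ on a hyperbolic ball of definite radius around $p_0$ by its value at $p_0$ alone. I expect this to follow from interior Schauder estimates for the elliptic equation $\Delta^{\mathbb{H}^2}u_X-2u_X=0$, coupled with a blow-up/compactness argument that exploits the transitive action of $\mathrm{Isom}_0(\mathbb{H}^2)$: once curvature is uniformly small on a definite neighbourhood, the isometry reduction returns to a regime where the parallel-surface argument of \eqref{Intro_left_estimate} applies.
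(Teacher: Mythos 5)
The paper does not prove this statement: Theorem \ref{fan_hu_little} is quoted as an external result from \cite{FanJun}, whose proof is complex-analytic (estimates on the Douady--Earle integral), and the paper's own contribution in this direction (Theorem \ref{width_goes_to_0}) takes the Fan--Hu equivalence as an input. Your proposal is therefore an attempt at a new, geometric proof, and as written it has two genuine gaps.

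First, the step "$X$ little Zygmund $\iff$ $\omega_X(\eta)\to 0$" is asserted, not proved. In the paper the implication $(3)\Rightarrow(1)$ is only obtained by composing $(3)\Rightarrow(2)$ with Fan--Hu's $(2)\Rightarrow(1)$ --- i.e.\ through the very theorem you are trying to prove --- so you cannot borrow it; and the implication $(1)\Rightarrow(3)$ rests on Lemma \ref{teck_little}, itself taken from \cite{FanJun}, whose nontrivial content (that the renormalized fields $X_n$ converge uniformly to $0$, not merely to some limit) is precisely where the little Zygmund hypothesis does its work. Saying the two-sided comparison of \cite{diaf2023Infearth} "should translate quantitatively" to degenerating quadruples is not a proof: Corollary \ref{Cor_diaf_inf} is a statement about finiteness of suprema, and localizing it requires exactly the renormalization-and-compactness argument you have not supplied.

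Second, the converse direction $\lvert\lambda(p)\rvert\to 0 \Rightarrow \omega_X(\Pi(p))\to 0$ cannot be obtained by localizing the parallel-surface sandwich. The width at $\eta$ is a \emph{global} quantity: $\phi_X^{\pm}$ are defined by affine functions compared with $\phi_X$ on all of $\mathbb{S}^1$, and property $(\mathrm{P2})$ requires a function that is convex (resp.\ concave) on the whole of $\overline{\mathbb{D}^2}$ in order to conclude $\overline{u_X}-t\sqrt{1-\lvert\eta\rvert^2}\le\phi_X^-$. If $\lvert\lambda\rvert<\varepsilon$ only outside a compact set $K$, then $\overline{u_{\mp\varepsilon}}$ fails to be convex/concave over $\Pi(K)$ and $(\mathrm{P2})$ simply does not apply; interior Schauder estimates and a blow-up give you control of $u_X$ on a ball around $p_0$, but the support planes realizing $\phi_X^{\pm}(\Pi(p_0))$ may touch $\mathrm{gr}(\phi_X)$ far from $\Pi(p_0)$, so local flatness of the mean surface does not control the thickness of the convex hull there. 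The paper's own logical architecture --- deducing $(2)\Rightarrow(3)$ via Fan--Hu rather than directly --- is a strong hint that this localization is not routine.
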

\begin{remark}\label{remark_little_zygmund}
Recall that by Corollary \ref{d_bar_isometry} and Proposition \ref{HL_DE}, we have for all \(z \in \mathbb{B}^2\):
\begin{equation}
   \lVert \overline{\partial} \mathrm{HL}(X)_{G^{-1}(z)} \rVert =  \left| \frac{\partial L_0(X)}{\partial \overline{z}}(z) \right|,
\end{equation}
where \(G: \mathbb{H}^2 \to \mathbb{B}^2\) is the isometry defined in \eqref{H2_to_B2}. As a consequence, Theorem \ref{fan_hu_little} can be stated by saying that \(X\) is little Zygmund if and only if 
\(\lVert \overline{\partial} \mathrm{HL}(X)_{G^{-1}(z)} \rVert\) tends to $0$ as \(\lvert z \rvert \to 1\), which is equivalent by Lemma \ref{d_bar_and_shape_operator} to the fact that the principal curvature of the mean surface \(\mathrm{gr}(u_X)\) in \(\HP\) tends to zero at infinity.
\end{remark}

The next Theorem provides a characterization of little Zygmund vector fields in terms of the width.
\begin{theorem}[Theorem \ref{Intro_chara_little}]\label{LZVF}
    \label{width_goes_to_0}
Let $X$ be a continuous vector field and consider the function $\omega_{X}:\mathbb{D}^2\to \mathbb{R}$ defined in \eqref{w_xz}. Then the following are equivalent:
\begin{enumerate}
\item $X$ is little Zygmund.
\item $\lVert \overline{\partial}\mathrm{HL}(X)_{G^{-1}(z)}\rVert$ tends to zero as $\lvert z\rvert\to 1$.
\item $\omega_X(z)$ tends to zero as $\lvert z\rvert\to 1$.
\end{enumerate}
\end{theorem}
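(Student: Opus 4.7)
The plan for (1) $\iff$ (2) is to invoke Fan--Hu's Theorem \ref{fan_hu_little}: by Corollary \ref{d_bar_isometry} and Proposition \ref{HL_DE} one has the identity $\lVert \overline{\partial} \mathrm{HL}(X)_{G^{-1}(z)} \rVert = |\partial L_0(X) / \partial \overline{z}(z)|$, so as recorded in Remark \ref{remark_little_zygmund}, the Fan--Hu criterion is precisely that this quantity tends to $0$ as $|z|\to 1$. For (3) $\implies$ (2), the pointwise bound of Proposition \ref{prop_d_bar_less_width} suffices: if $\omega_X(\eta)\to 0$ as $|\eta|\to 1$, then $\lVert \overline{\partial}\mathrm{HL}(X)_p\rVert \leq 6\,\omega_X(\Pi(p)) \to 0$ as $p\to\partial\mathbb{H}^2$, using that $\Pi$ and $G$ map boundary to boundary.

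For the harder direction (2) $\implies$ (3) I would argue by contradiction, using a compactness argument on $\HP$-isometric normalizations. Suppose $c > 0$ and $\eta_n \in \mathbb{D}^2$ satisfy $|\eta_n|\to 1$ and $\omega_X(\eta_n) \geq c$. Using the equivariance \eqref{eq_width_equivariant} and Lemma \ref{equii}, the first step is to apply isometries $\mathrm{Is}(A_n, v_n)$ of $\HP$ so that the transformed vector fields $X_n$ satisfy $\overline{u_{X_n}}(0) = 0$ and $\omega_{X_n}(0) = \omega_X(\eta_n) \geq c$. Since isometries of $\HP$ act as isometries on mean surfaces, $\lVert \B_{X_n}(\eta)\rVert = \lVert \B_X(A_n\cdot\eta)\rVert$; as $A_n\cdot 0 = \eta_n \to \partial\mathbb{D}^2$ and the $\mathrm{Isom}(\mathbb{H}^2)$-action is proper, $A_n(K)$ exits every compact of $\mathbb{D}^2$ for any fixed compact $K$, so hypothesis (2) yields $\lVert \B_{X_n}\rVert \to 0$ locally uniformly on $\mathbb{D}^2$. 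Since little Zygmund implies Zygmund, the bound $\lVert \B_X\rVert_\infty \leq M < \infty$ is preserved under normalization; standard interior Schauder estimates for the uniformly elliptic equation obtained from \eqref{platau_HPE2} should then give relative compactness of $\{\overline{u_{X_n}}\}$ in $C^\infty_{\mathrm{loc}}(\mathbb{D}^2)$, so any subsequential limit $\overline{u_\infty}$ satisfies $\hess(u_\infty) - u_\infty\mathbbm{1} = 0$ and is affine with $\overline{u_\infty}(0) = 0$.

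The main obstacle will be deducing from this \emph{interior} convergence that $\omega_{X_n}(0) = \phi_{X_n}^+(0) - \phi_{X_n}^-(0)$ tends to $0$, since the envelopes $\phi_{X_n}^\pm$ encode global boundary data on $\mathbb{S}^1$. My plan is to localize the global comparison $\omega(X)\leq 2\lVert \B \rVert_\infty$ of Theorem \ref{intro_uniqu_HL}: for each $\epsilon > 0$ and large $R$, the inequality $\lVert \B_{X_n}\rVert \leq \epsilon$ on the Klein-model ball $K_R$ would make $\overline{u_{X_n}} \mp \epsilon\sqrt{1-|\eta|^2}$ convex/concave on $K_R$, while $\overline{u_{X_n}} \mp M\sqrt{1-|\eta|^2}$ is convex/concave on all of $\mathbb{D}^2$. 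The key technical step is to glue these through a convex/concave interpolation to produce a convex (resp.\ concave) function on $\mathbb{D}^2$ with boundary values $\phi_{X_n}$ on $\mathbb{S}^1$ and value at the origin within $\epsilon + o_R(1)$ of $\overline{u_{X_n}}(0) = 0$; applying property (P2) would then give $|\phi_{X_n}^\pm(0)| \leq \epsilon + o_R(1)$, and letting $R\to\infty$ followed by $\epsilon\to 0$ would force $\omega_{X_n}(0)\to 0$, contradicting $\omega_{X_n}(0)\geq c$.
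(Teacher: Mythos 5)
Your treatment of $(1)\Leftrightarrow(2)$ and of $(3)\Rightarrow(2)$ coincides with the paper's (Fan--Hu via Remark \ref{remark_little_zygmund}, and Proposition \ref{prop_d_bar_less_width}, respectively). For the remaining implication you genuinely diverge: the paper closes the cycle by proving $(1)\Rightarrow(3)$, quoting Fan--Hu's Lemma \ref{teck_little} --- after normalizing, the vector fields $V_n=A_{n*}X+K_n$ converge to $0$ uniformly on $\mathbb{S}^1$, so the support functions $\phi_{V_n}\to0$ uniformly and $w_X(z_n)=w_{V_n}(0)\to0$ by the equivariance \eqref{eq_width_equivariant}. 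You instead prove $(2)\Rightarrow(3)$ directly by localizing the estimate $\omega(X)\le 2\lVert\B\rVert_\infty$ from the proof of Theorem \ref{intro_uniqu_HL}. Your route is viable, and the gluing you flag as ``the key technical step'' does go through: with $g=\overline{u_{X_n}}-M\sqrt{1-\lvert\eta\rvert^2}$ convex on $\mathbb{D}^2$ and $f=\overline{u_{X_n}}-\epsilon\sqrt{1-\lvert\eta\rvert^2}-(M-\epsilon)\sqrt{1-r'^2}$ convex on $K_R$ (where $r'$ is the Euclidean radius of a slightly smaller ball $K_{R'}\subset K_R$), one has $f<g$ on the open annulus $K_R\setminus\overline{K_{R'}}$, so $h:=\max(f,g)$ on $K_R$ extended by $g$ outside is locally convex, hence convex, has boundary value $\phi_{X_n}$, and property (P2) gives $\phi_{X_n}^-(0)\ge -\epsilon-(M-\epsilon)\sqrt{1-r'^2}$; the concave side is symmetric, and letting $R'\to\infty$ then $\epsilon\to0$ produces the contradiction with $\omega_{X_n}(0)\ge c$. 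Two remarks: the interior Schauder/compactness paragraph (convergence of $\overline{u_{X_n}}$ to an affine function) is not needed once the gluing is in place --- and a uniform bound on $\overline{u_{X_n}}$ over compact sets, which Schauder would require, is not obviously available since the gradient at the basepoint is not controlled; and the bound $\lVert\B_X\rVert_\infty\le M<\infty$ follows directly from hypothesis $(2)$ (continuity plus decay at the boundary) without invoking the Zygmund property. In sum, the paper's argument is shorter but imports the nontrivial Teichm\"uller-theoretic input of Lemma \ref{teck_little}, whereas yours keeps the hard direction entirely inside the Half-Pipe/convexity framework at the cost of carrying out the convex-gluing construction.
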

The equivalence between $(1)$ and $(2)$ in Theorem \ref{LZVF} is due to Fan and Hu \cite{FanJun} (and Remark \ref{remark_little_zygmund}). The new result on the little Zygmund vector fields concerns the characterisation with the width. To prove this, we need the following Lemma proved in \cite{FanJun}.
\begin{lemma}\cite[Page 1167]{FanJun}\label{teck_little}
    Let $X$ be a little Zygmund vector field and let $z_n$ be a sequence of points in $\mathbb{D}^2$ such that $\lvert z_n\rvert\to1$. Then there exists an isometry $A_n$ of $\mathbb{D}^2$ and a Killing vector field $K_n$ such that the vector field given by $X_n=A_{n*}X+K_n$ satisfies
$$X_n(1)=X_n(-1)=X_n(-i)=0.$$
and $X_n$ converges to $0$ uniformly on $\mathbb{S}^1$. 
\end{lemma}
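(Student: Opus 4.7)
The plan is to construct the normalizing isometries $A_n$ and Killing fields $K_n$ in a standard way, and then exploit the Möbius invariance of the cross-ratio distortion to transfer the little Zygmund hypothesis on $X$ into a uniform estimate on the normalized fields $X_n = A_{n*}X + K_n$.

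After extracting a subsequence I can assume $z_n \to \xi \in \mathbb{S}^1$. I would pick any sequence $A_n \in \mathrm{Isom}(\mathbb{D}^2)$ with $A_n(z_n) = 0$, using the remaining one-dimensional rotational freedom to fix some convention. The space of Killing fields on $\mathbb{H}^2$ is three-dimensional, and evaluation at the triple $(1, -1, -i)$ is a linear isomorphism onto the corresponding tangent spaces, since a non-zero Killing field fixes at most two boundary points of $\mathbb{H}^2$. Hence there is a unique Killing field $K_n$ making $X_n$ vanish at $1, -1, -i$, and $\|X_n\|_{cr} = \|X\|_{cr}$ is finite because $X$ is Zygmund (cross-ratios are Möbius-invariant and Killing fields carry zero cross-ratio distortion).

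The key observation is a Möbius invariance principle for cross-ratio distortion: $(\Phi_*Y)[\Phi(Q)] = Y[Q]$ for every Möbius $\Phi$, which follows by differentiating the cross-ratio identity $\mathrm{cr}(\Phi f_t \Phi^{-1}(\Phi Q)) = \mathrm{cr}(f_t(Q))$ at $t = 0$ along the flow $f_t$ of $Y$; moreover $Y[Q] = 0$ identically when $Y$ is Killing. Therefore $X_n[Q] = X[A_n^{-1}(Q)]$ for every quadruple $Q$. The boundary map $A_n^{-1}|_{\mathbb{S}^1}$ degenerates pointwise to the constant map $\xi$ off the single point $-\bar{\xi}$; consequently, if I fix any $\delta_0 > 0$ and consider any quadruple $Q \subset \mathbb{S}^1$ with $\mathrm{cr}(Q) = 1$ and minimal scale $S(Q) \geq \delta_0$, then $A_n^{-1}(Q)$ has the same cross-ratio $1$ but its minimal scale tends to $0$, uniformly in $Q$. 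Applying the little Zygmund hypothesis to the degenerating family $\{A_n^{-1}(Q)\}$ yields $X_n[Q] \to 0$ uniformly over such macroscopic $Q$.

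To upgrade this localized cross-ratio estimate into uniform convergence $X_n \to 0$ on $\mathbb{S}^1$, I would invoke the classical norm comparison for Zygmund vector fields vanishing at three prescribed boundary points: the supremum norm on $\mathbb{S}^1$ is controlled by the cross-ratio distortion, and this control can be split into a "macroscopic" part (over quadruples with $S(Q) \geq \delta_0$) which tends to zero by the previous paragraph, plus a "microscopic" part controlled by the bounded Zygmund norm $\|X_n\|_{cr} = \|X\|_{cr}$ times a factor that shrinks as $\delta_0 \to 0$. Letting $\delta_0$ go to zero while $n \to \infty$ in the appropriate order gives $\|X_n\|_\infty \to 0$. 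The main obstacle — and the reason Fan and Hu cite their own earlier work — is precisely this quantitative splitting: extracting from the Möbius-covariant control at large scales a genuine uniform statement requires the Gardiner--Sullivan machinery relating Zygmund seminorms to sup norms modulo Killing fields, as developed in \cite{Gardiner_Sullivan, FanJun}. The remaining ingredients (existence of $A_n$ and $K_n$, Möbius invariance of the cross-ratio distortion, and the degeneration of $A_n^{-1}$ on the circle) are essentially formal.
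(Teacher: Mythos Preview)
Your construction of $A_n$ and $K_n$ and the M\"obius invariance identity $X_n[Q]=X[A_n^{-1}(Q)]$ are correct, and your degeneration argument for $A_n^{-1}(Q)$ at macroscopic scales is essentially sound. However, your route to the uniform convergence $X_n\to 0$ diverges from the paper's (i.e.\ Fan--Hu's) and leaves the decisive step unjustified.

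The paper does not use a quantitative ``macroscopic/microscopic splitting'' of the sup norm. Instead it argues by compactness: Zygmund vector fields are $\alpha$-H\"older for every $\alpha<1$ with H\"older constant depending only on $\lVert\cdot\rVert_{cr}$; since $\lVert X_n\rVert_{cr}=\lVert X\rVert_{cr}$ and $X_n$ vanishes at three fixed points, the family $\{X_n\}$ is equicontinuous and uniformly bounded. Arzel\`a--Ascoli then yields a subsequential uniform limit $X_\infty$. For each \emph{fixed} quadruple $Q$ with $\mathrm{cr}(Q)=1$, the sequence $A_n^{-1}(Q)$ is degenerating, so the little Zygmund hypothesis gives $X_\infty[Q]=\lim X_n[Q]=\lim X[A_n^{-1}(Q)]=0$. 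Hence $\lVert X_\infty\rVert_{cr}=0$, so $X_\infty$ is Killing and vanishes at $1,-1,-i$, forcing $X_\infty=0$. A standard subsequence argument finishes. Note that the little Zygmund condition is invoked only \emph{pointwise} in $Q$; no uniformity over macroscopic quadruples is needed.

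Your proposed splitting $\lVert X_n\rVert_\infty\le C_1(\delta_0)\sup_{S(Q)\ge\delta_0}|X_n[Q]|+o_{\delta_0}(1)\cdot\lVert X_n\rVert_{cr}$ is plausible in spirit but you do not prove it, and it is not standard; you yourself flag it as the main obstacle and then defer to the literature. That deferral is misplaced: the Gardiner--Sullivan and Fan--Hu arguments proceed via the compactness route above, not via such an estimate. So as written, your proof has a genuine gap at the step where you pass from macroscopic cross-ratio control to sup-norm control. The compactness argument closes that gap with no extra work.
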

The non-trivial part of the Lemma is to show the uniform convergence to $0$. Indeed, Zygmund vector fields are $\alpha$-Hölder for all $0<\alpha<1$ with a Hölder exponent that depends only on the cross-ratio norm. Then one may apply the Ascoli-Arzelà Theorem to show the uniform convergence of the vector field $X_n$ to some vector field $X$. Therefore, one needs to show that $X=0$, and here the hypothesis of little Zygmund is crucial.

\begin{proof}[Proof of Theorem \ref{width_goes_to_0}]
The equivalence $(1)\iff(2)$ follows from Fan-Hu Theorem \ref{fan_hu_little} and Remark \ref{remark_little_zygmund}. The implication $(3)\implies (2)$ follows from Proposition \ref{prop_d_bar_less_width}. It remains to show the implication $(1)\implies (3)$. As in Lemma \ref{teck_little}, for each $z_n$, let $A_n$ be an isometry of $\mathbb{D}^2$ such that $A_nz_n=0$, and consider $K_n$ to be a Killing vector field such that the vector field given by $V_n=A_{n*}X+K_n$ converges uniformly to $0$.
Let \(\phi_{V_n}\) be the support function of \(V_n\). Since \(\phi_{V_n}(z) = \mathrm{det}(V_n(z), z)\), \(\phi_{V_n}\) converges to \(0\) uniformly on \(\mathbb{S}^1\). On the other hand, using the invariance property of the function \(w_X\) (see \eqref{eq_width_equivariant}), we get
\[ w_X(z_n) = w_{V_n}(0) \to 0, \]
which concludes the proof.
\end{proof}

\begin{remark}\label{remark_weil_petersson}
The infinitesimal Douady-Earle extension is used in \cite{integral_operator_Zygmund} to characterize \textit{Weil-Petersson} vector fields on the circle. These represent the tangent space of the so-called \textit{universal Weil-Petersson Teichmüller space}. The authors of \cite{integral_operator_Zygmund} proved that a vector field $X$ is Weil-Petersson if and only if:
  \[ \int_{\mathbb{H}^2} \left|\frac{\partial L_0(X)}{\partial \overline{z}}(z)\right| ^2 \, \mathrm{d}\mathrm{Vol}_{\mathbb{B}^2} <\infty,\] where $\mathrm{d}\mathrm{Vol}_{\mathbb{B}^2} $ is the volume form of the Poincaré disk $\mathbb{B}^2$.  It is tempting to prove the same characterization with the width. Namely, one may wonder if $X$ is Weil-Petersson if and only if the function $w_X$, defined in \eqref{w_xz}, is square integrable with respect to the volume form of $\mathbb{B}^2$. Proposition \ref{prop_d_bar_less_width} provides one implication. Indeed, if $w_X$ is square integrable, then  $$\left|\frac{\partial L_0(X)}{\partial \overline{z}}(z)\right| =\rVert\overline{\partial}\mathrm{HL}(X)_{G^{-1}(z)}\lVert,$$ is square integrable and hence $X$ is Weil-Petersson by the work of \cite{integral_operator_Zygmund}. It is natural to conjecture that if $X$ is Weil-Petersson, then $w_X$ is square integrable. We leave this question for future investigation. Note that this question is motivated by the work of Bishop \cite{bishop}, which characterized the Weil-Petersson Teichmüller space as those circle homeomorphisms that are conformal weldings of Jordan curves in $\mathbb{S}^2$ with square integrable width in $\mathbb{H}^3$.
\end{remark}

\bibliographystyle{alpha}
\bibliography{harmonic}

\newcommand{\etalchar}[1]{$^{#1}$}
\begin{thebibliography}{ABB{\etalchar{+}}07}

\bibitem[ABB{\etalchar{+}}07]{Note_on_paper_mess}
Lars Andersson, Thierry Barbot, Riccardo Benedetti, Francesco Bonsante, William~M. Goldman, Fran{\c{c}}ois Labourie, Kevin~P. Scannell, and Jean-Marc Schlenker.
\newblock Notes on a paper of {Mess}.
\newblock {\em Geom. Dedicata}, 126:47--70, 2007.

\bibitem[AGR11]{ML_in_tangent_bundle}
Henri Anciaux, Brendan Guilfoyle, and Pascal Romon.
\newblock Minimal {Lagrangian} surfaces in the tangent bundle of a {Riemannian} surface.
\newblock {\em J. Geom. Phys.}, 61(1):237--247, 2011.

\bibitem[Anc14]{Anciaux_space_of_geodesic}
Henri Anciaux.
\newblock Spaces of geodesics of pseudo-{Riemannian} space forms and normal congruences of hypersurfaces.
\newblock {\em Trans. Am. Math. Soc.}, 366(5):2699--2718, 2014.

\bibitem[AR14]{canonical_pseudo}
Henri Anciaux and Pascal Romon.
\newblock A canonical structure on the tangent bundle of a pseudo- or para-{K{\"a}hler} manifold.
\newblock {\em Monatsh. Math.}, 174(3):329--355, 2014.

\bibitem[Bar05]{Barbot_flatspacetime}
Thierry Barbot.
\newblock Globally hyperbolic flat space--times.
\newblock {\em J. Geom. Phys.}, 53(2):123--165, 2005.

\bibitem[BB09]{canorot}
Riccardo Benedetti and Francesco Bonsante.
\newblock {\em Canonical {Wick} rotations in 3-dimensional gravity}, volume 926 of {\em Mem. Am. Math. Soc.}
\newblock Providence, RI: American Mathematical Society (AMS), 2009.

\bibitem[BEE22]{Bon_Emam}
Francesco Bonsante and Christian El~Emam.
\newblock On immersions of surfaces into {{\(\mathrm{SL}(2,\mathbb{C})\)}} and geometric consequences.
\newblock {\em Int. Math. Res. Not.}, 2022(12):8803--8864, 2022.

\bibitem[BF20]{barbotfillastre}
Thierry Barbot and Fran{\c{c}}ois Fillastre.
\newblock Quasi-{Fuchsian} co-{Minkowski} manifolds.
\newblock In {\em In the tradition of Thurston. Geometry and topology}, pages 645--703. Cham: Springer, 2020.

\bibitem[Bis20]{bishop}
Chris Bishop.
\newblock Weil-petersson curves, conformal energies, b-numbers, and minimal surfaces citation.
\newblock 2020.

\bibitem[Bon05]{flatspacetimes_bonsante}
Francesco Bonsante.
\newblock Flat spacetimes with compact hyperbolic {Cauchy} surfaces.
\newblock {\em J. Differ. Geom.}, 69(3):441--521, 2005.

\bibitem[BS10]{Maximalsurface}
Francesco Bonsante and Jean-Marc Schlenker.
\newblock Maximal surfaces and the universal {Teichm{\"u}ller} space.
\newblock {\em Invent. Math.}, 182(2):279--333, 2010.

\bibitem[BS16]{BS_flat_conical}
Francesco Bonsante and Andrea Seppi.
\newblock On {Codazzi} tensors on a hyperbolic surface and flat {Lorentzian} geometry.
\newblock {\em Int. Math. Res. Not.}, 2016(2):343--417, 2016.

\bibitem[BS17]{BS17}
Francesco Bonsante and Andrea Seppi.
\newblock Spacelike convex surfaces with prescribed curvature in {{\((2+1)\)}}-{Minkowski} space.
\newblock {\em Adv. Math.}, 304:434--493, 2017.

\bibitem[BS18]{Areapreserving}
Francesco Bonsante and Andrea Seppi.
\newblock Area-preserving diffeomorphisms of the hyperbolic plane and {{\(K\)}}-surfaces in anti-de {Sitter} space.
\newblock {\em J. Topol.}, 11(2):420--468, 2018.

\bibitem[Dan11]{danciger_thesis}
Jeffrey Danciger.
\newblock {\em Geometric transitions : from hyperbolic to {A}ds geometry}.
\newblock PhD thesis, Stanford University, 2011.

\bibitem[DE86]{Douady_Earle}
Adrien Douady and Clifford~J. Earle.
\newblock Conformally natural extension of homeomorphisms of the circle.
\newblock {\em Acta Math.}, 157:23--48, 1986.

\bibitem[DGK16]{DGK_complete_lorentz}
Jeffrey Danciger, Fran{\c{c}}ois Gu{\'e}ritaud, and Fanny Kassel.
\newblock Geometry and topology of complete {Lorentz} spacetimes of constant curvature.
\newblock {\em Ann. Sci. {\'E}c. Norm. Sup{\'e}r. (4)}, 49(1):1--56, 2016.

\bibitem[Dia24]{diaf2023Infearth}
Farid Diaf.
\newblock The infinitesimal earthquake theorem for vector fields on the circle.
\newblock {\em arxiv preprint: 2311.01262. To appear in Transactions of the American Mathematical Society}, 2024.

\bibitem[Ear88]{Uniqueness_of_operator_L}
Clifford~J. Earle.
\newblock Conformally natural extension of vector fields from {{\(S^{n-1}\)}} to {{\(B^{n}\)}}.
\newblock {\em Proc. Am. Math. Soc.}, 102(1):145--149, 1988.

\bibitem[EES22]{Emam_Seppi1}
Christian El~Emam and Andrea Seppi.
\newblock On the {Gauss} map of equivariant immersions in hyperbolic space.
\newblock {\em J. Topol.}, 15(1):238--301, 2022.

\bibitem[EM07]{Stopdreamingthm}
David Epstein and Vladimir Markovic.
\newblock Extending homeomorphisms of the circle to quasiconformal homeomorphisms of the disk.
\newblock {\em Geom. Topol.}, 11:517--595, 2007.

\bibitem[FH14]{Fan_Hu_Little}
Jinhua Fan and Jun Hu.
\newblock Characterization of the asymptotic {Teichm{\"u}ller} space of the open unit disk through shears.
\newblock {\em Pure Appl. Math. Q.}, 10(3):513--546, 2014.

\bibitem[FH22]{FanJun}
Jinhua Fan and Jun Hu.
\newblock Conformally natural extensions of vector fields and applications.
\newblock {\em Pure Appl. Math. Q.}, 18(3):1147--1186, 2022.

\bibitem[FS19a]{andreafrancois}
Fran{\c{c}}ois Fillastre and Andrea Seppi.
\newblock Spherical, hyperbolic, and other projective geometries: convexity, duality, transitions.
\newblock In {\em Eighteen essays in non-Euclidean geometry}, pages 321--409. Z{\"u}rich: European Mathematical Society (EMS), 2019.

\bibitem[FS19b]{fillastreseppi}
Fran{\c{c}}ois Fillastre and Andrea Seppi.
\newblock Spherical, hyperbolic, and other projective geometries: convexity, duality, transitions.
\newblock In {\em Eighteen essays in non-Euclidean geometry}, pages 321--409. Z{\"u}rich: European Mathematical Society (EMS), 2019.

\bibitem[FV16]{françois_vernolli}
Fran{\c{c}}ois Fillastre and Giona Veronelli.
\newblock Lorentzian area measures and the {Christoffel} problem.
\newblock {\em Ann. Sc. Norm. Super. Pisa, Cl. Sci. (5)}, 16(2):383--467, 2016.

\bibitem[GK05]{An_indi_Kaehler}
Brendan Guilfoyle and Wilhelm Klingenberg.
\newblock An indefinite {K{\"a}hler} metric on the space of oriented lines.
\newblock {\em J. Lond. Math. Soc., II. Ser.}, 72(2):497--509, 2005.

\bibitem[GL99]{Gardiner1999QuasiconformalTT}
Frederick~P. Gardiner and Nikola Lakic.
\newblock Quasiconformal teichmuller theory.
\newblock 1999.

\bibitem[GS92]{Gardiner_Sullivan}
Frederick~P. Gardiner and Dennis~P. Sullivan.
\newblock Symmetric structures on a closed curve.
\newblock {\em Am. J. Math.}, 114(4):683--736, 1992.

\bibitem[GS15]{Salvai_Godoy_smooth_foliation}
Yamile Godoy and Marcos Salvai.
\newblock Global smooth geodesic foliations of the hyperbolic space.
\newblock {\em Math. Z.}, 281(1-2):43--54, 2015.

\bibitem[GT83]{PDE_Book}
David Gilbarg and Neil~S. Trudinger.
\newblock {\em Elliptic partial differential equations of second order. 2nd ed}, volume 224 of {\em Grundlehren Math. Wiss.}
\newblock Springer, Cham, 1983.

\bibitem[Hit82]{Monopole_Hitchin}
Nigel~J. Hitchin.
\newblock Monopoles and geodesics.
\newblock {\em Commun. Math. Phys.}, 83:579--602, 1982.

\bibitem[HSWS13]{integral_operator_Zygmund}
Yun Hu, Jinrong Song, Huaying Wei, and Yuliang Shen.
\newblock An integral operator induced by a {Zygmund} function.
\newblock {\em J. Math. Anal. Appl.}, 401(2):560--567, 2013.

\bibitem[Hu22]{HU_Survey}
Jun Hu.
\newblock Characterizations of circle homeomorphisms of different regularities in the universal {Teichm{\"u}ller} space.
\newblock {\em EMS Surv. Math. Sci.}, 9(2):321--353, 2022.

\bibitem[Kon92]{on_harmonic_vector_field}
Jerzy~J. Konderak.
\newblock On harmonic vector fields.
\newblock {\em Publ. Mat., Barc.}, 36(1):217--228, 1992.

\bibitem[LT93a]{LI_Tam1}
Peter Li and Luen-Fai Tam.
\newblock Uniqueness and regularity of proper harmonic maps.
\newblock {\em Ann. Math. (2)}, 137(1):167--201, 1993.

\bibitem[LT93b]{Li_Tam2}
Peter Li and Luen-fai Tam.
\newblock Uniqueness and regularity of proper harmonic maps. {II}.
\newblock {\em Indiana Univ. Math. J.}, 42(2):591--635, 1993.

\bibitem[LY86]{Li_Yau}
Peter Li and Shing~Tung Yau.
\newblock On the parabolic kernel of the {Schr{\"o}dinger} operator.
\newblock {\em Acta Math.}, 156:154--201, 1986.

\bibitem[Mar17]{Schoen_conjecture}
Vladimir Markovic.
\newblock Harmonic maps and the {Schoen} conjecture.
\newblock {\em J. Am. Math. Soc.}, 30(3):799--817, 2017.

\bibitem[Mar22]{martelli2022introduction}
Bruno Martelli.
\newblock An introduction to geometric topology, 2022.

\bibitem[Mes07]{Mess}
Geoffrey Mess.
\newblock Lorentz spacetimes of constant curvature.
\newblock {\em Geom. Dedicata}, 126:3--45, 2007.

\bibitem[NS94]{affine_diff_geometry}
Katsumi Nomizu and Takeshi Sasaki.
\newblock {\em Affine differential geometry}, volume 111 of {\em Camb. Tracts Math.}
\newblock Cambridge: Cambridge University Press, 1994.

\bibitem[NS22]{ConvexAnal}
Xin Nie and Andrea Seppi.
\newblock Regular domains and surfaces of constant {Gaussian} curvature in {{\(3\)}}-dimensional affine space.
\newblock {\em Anal. PDE}, 15(3):643--697, 2022.

\bibitem[PW67]{Protter1967MaximumPI}
Murray~H. Protter and Hans~F. Weinberger.
\newblock Maximum principles in differential equations.
\newblock 1967.

\bibitem[RC91]{Extension_with_bounded_derivative}
Edgar Reich and Jixiu Chen.
\newblock Extensions with bounded {{\(\bar \partial\)}}-derivative.
\newblock {\em Ann. Acad. Sci. Fenn., Ser. A I, Math.}, 16(2):377--389, 1991.

\bibitem[Roc70]{Rockconvex}
R.~Tyrrell Rockafellar.
\newblock {\em Convex analysis}, volume~28 of {\em Princeton Math. Ser.}
\newblock Princeton University Press, Princeton, NJ, 1970.

\bibitem[RS22]{riolo_seppi}
Stefano Riolo and Andrea Seppi.
\newblock Geometric transition from hyperbolic to anti-de {Sitter} structures in dimension four.
\newblock {\em Ann. Sc. Norm. Super. Pisa, Cl. Sci. (5)}, 23(1):115--176, 2022.

\bibitem[Sep15]{Seppi_thesis}
Andrea Seppi.
\newblock {\em Surfaces in constant curvature three-manifolds and the infinitesimal Teichm¨uller theory}.
\newblock PhD thesis, University of Pavia, 2015.

\bibitem[Sep19]{SEP19}
Andrea Seppi.
\newblock Maximal surfaces in {Anti}-de {Sitter} space, width of convex hulls and quasiconformal extensions of quasisymmetric homeomorphisms.
\newblock {\em J. Eur. Math. Soc. (JEMS)}, 21(6):1855--1913, 2019.

\bibitem[Thu86]{Thurston}
William~P. Thurston.
\newblock {\em Earthquakes in two-dimensional hyperbolic geometry.}
\newblock volume 112 of London Math. Soc. Lecture Note Ser. Cambridge Univ. Press, Cambridge,, 1986.

\end{thebibliography}

\end{document}